\documentclass[12pt,a4paper, oneside,reqno]{article}
  \usepackage{tikz} \usepackage{amsmath} \usepackage{graphicx}
\usepackage[utf8]{inputenc}
\usepackage[english]{babel}

\usepackage[left=1in,right=1in,top=1in,bottom=1in]{geometry}

\usepackage{amsfonts,amssymb,amsmath,amsthm}

\usepackage[symbol]{footmisc} 
\usepackage{longtable,pifont}
\usepackage{pifont}
\usepackage{enumerate}
\usepackage{comment}
\usepackage{cite}
\usepackage{hyperref}
\usepackage{tkz-graph}

\usepackage{fancyhdr}
\theoremstyle{plain}
\newtheorem{theorem}{Theorem}
\newtheorem{lemma}[theorem]{Lemma}
\newtheorem{proposition}[theorem]{Proposition}

\newtheorem{definition}[theorem]{Definition}
\newtheorem{corollary}[theorem]{Corollary}

\usetikzlibrary{arrows}

\newtheorem*{theoremA1}{Theorem A1}
\newtheorem*{theoremA2}{Theorem A2}

\usepackage{fouriernc} 
\usepackage{pifont}

\usepackage{fontawesome5}
\newcommand{\cmark}{\text{\ding{51}}}
\newcommand{\xmark}{\text{\ding{55}}}

\begin{document}


\medskip

\noindent{\Large
The algebraic and geometric classification of \\
noncommutative Jordan superalgebras}
 \footnote{
 The    work is supported by
FCT   2023.08031.CEECIND, UIDB/00212/2023 and UIDP/00212/2023.}

 \medskip

\begin{center}

 {\bf
Hani Abdelwahab\footnote{Department of Mathematics,
 Mansoura University,  Mansoura, Egypt; \ haniamar1985@gmail.com},
   Ivan Kaygorodov\footnote{CMA-UBI, University of  Beira Interior, Covilh\~{a}, Portugal; \    kaygorodov.ivan@gmail.com}   \&
   Abror Khudoyberdiyev \footnote{Institute of Mathematics Academy of
Sciences of Uzbekistan, Tashkent, Uzbekistan; National University of Uzbekistan, Tashkent, Uzbekistan; \ khabror@mail.ru}
}

\end{center}

\noindent {\bf Abstract:}
{\it
The algebraic and geometric classifications of complex
$3$-dimensional noncommutative Jordan superalgebras are given.
In particular, we obtain the algebraic and geometric classification of $3$-dimensional Kokoris and standard superalgebras, and, due to one-to-one correspondences between suitable superalgebras, we have classifications for generic Poisson-Jordan and generic Poisson superalgebras.
As a byproduct, we have the algebraic and geometric classification of the variety of $3$-dimensional anticommutative superalgebras and its principal subvarieties:
Lie, Malcev, binary Lie, Tortkara, anticommutative $\mathfrak{CD}$-, $\mathfrak{s}_4$-, anticommutative terminal  superalgebras,
anticommutative conservative and anticommutative quasi-conservative $\big($rigid$\big)$ superalgebras.
}

 \medskip

\noindent {\bf Keywords}:
{\it
superalgebra,
noncommutative Jordan algebra,
anticommutative algebra,
algebraic classification,
geometric classification.}

\medskip

\noindent {\bf MSC2020}:
17A30 (primary);
17A70,
17A15,
14L30 (secondary).


\tableofcontents

\section*{Introduction}

The algebraic classification (up to isomorphism) of algebras of dimension $n$ of a certain variety
defined by a family of polynomial identities is a classic problem in the theory of non-associative algebras, see \cite{kkl20,  akl}.
There are many results related to the algebraic classification of small-dimensional algebras in different varieties of
associative and non-associative algebras.
 Deformations and geometric properties of a variety of algebras defined by a family of polynomial identities have been an object of study since the 1970's, see \cite{ben,BC99,kkl20,GRH, GRH3, FKS, ikv, kz, als, ahk }  and references in \cite{k23,l24,MS}.
 Burde and Steinhoff constructed the graphs of degenerations for the varieties of    $3$-dimensional and $4$-dimensional Lie algebras~\cite{BC99}.
 Grunewald and O'Halloran studied the degenerations for the variety of $5$-dimensional nilpotent Lie algebras~\cite{GRH}.

Noncommutative Jordan algebras were introduced by Albert in \cite{Alb}. He noted that the structure theories of alternative and Jordan algebras share so many nice properties that it is natural to conjecture that these algebras are members of a more general class with a similar theory. So he introduced the variety of noncommutative Jordan algebras defined by the Jordan identity and the flexibility identity.
Namely, the variety of noncommutative Jordan algebras is defined by the following identities:
\begin{longtable}{rclrcl}
$x(yx) $&$=$&$  (xy)x,$ &
$x^2(yx) $&$=$&$  (x^2y)x.$\end{longtable}
The class of noncommutative Jordan algebras turned out to be vast: for example, apart from alternative and Jordan algebras, it contains quasi-associative and quasi-alternative algebras,
quadratic flexible algebras and all anticommutative algebras. However, the structure theory of this class is far from being nice.
Nevertheless, certain progress was made in the study of the structure theory of noncommutative Jordan algebras and superalgebras
 (see, for example \cite{ccr22,Sch,McC,KLP17,popov,spa,k58,k60,O64,p2v,ps19,ps10,ps13, sh71}).
So,
Schafer gave the first structure theory for noncommutative Jordan algebras of characteristic zero in \cite{Sch}. Kleinfeld and  Kokoris proved that
a simple, flexible, power-associative algebra of finite-dimension over a field of characteristic zero is a noncommutative Jordan algebra  \cite{Kleinfeld}.
A coordinatization theorem for noncommutative Jordan algebras was obtained in a paper by McCrimmon \cite{McC}.
Noncommutative Jordan algebras with additional identity
$\big([x,y],y,y\big)=0$ were studied in papers by Shestakov and Schafer \cite{sh71,S94}.
Strongly prime noncommutative Jordan algebras were studied by
Skosyrskiĭ \cite{sk91}.
A connection between  noncommutative Jordan algebras and  $(-1,-1)$-balanced Freudenthal Kantor triple systems was established by
Elduque,   Kamiya, and Okubo \cite{EKO}. Cabrera Serrano,  Cabrera García, and   Rodríguez Palacios studied
the algebra of multiplications of prime and semiprime noncommutative Jordan algebras \cite{spa, ccr22}. Jumaniyozov, Kaygorodov, and  Khudoyberdiyev classified complex $4$-dimensional nilpotent noncommutative Jordan algebras \cite{ikv18};  Abdelwahab, Abdurasulov, and Kaygorodov classified complex $3$-dimensional  noncommutative Jordan algebras \cite{akl}.
The study of super-generalizations of principal varieties of non-associative algebras is under a certain interest now  \cite{als,BM25,AE96,ahk,kkl20,k23,KM,4,3,2,1,5}.
There are also some results in noncommutative Jordan superalgebras. Namely, Pozhidaev and Shestakov classified all simple finite-dimensional
noncommutative Jordan superalgebras in \cite{ps10,ps13,ps19};
Kaygorodov, Popov, and Lopatin studied the structure of simple noncommutative Jordan superalgebras \cite{KLP17};
Popov described representations of simple noncommutative Jordan superalgebras in \cite{popov, p2v}.

The present paper completely classifies $3$-dimensional superalgebras in various well-known varieties.
Firstly, we develop a method for classifying noncommutative Jordan superalgebras \big(Section \ref{prem}\big).
Then, we obtain  anticommutative $3$-dimensional  superalgebras in Theorem \ref{ant12} (type $(1,2)$) and Theorem \ref{ant21} (type $(2,1)$).
The present results give classifications of the most important subclasses of $3$-dimensional anticommutative superalgebras: so, the classifications of
Lie, Malcev, binary Lie, Tortkara, $\mathfrak{CD}$-, $\mathfrak{s}_4$-, terminal,
conservative and quasi-conservative $\big($rigid$\big)$ superalgebras were obtained \big(Section \ref{anticom}\big).
As a corollary, we  prove a Grishkov--Shestakov's conjecture for $3$-dimensional binary Lie superalgebras \big(Corollary \ref{conj}\big): there are no nontrivial complex $3$-dimensional simple binary Lie superalgebras.
The main theorems of the paper \big(Theorems ${\rm A1}$ and ${\rm A2}$\big) are given, respectively, in
Sections \ref{A1} and \ref{A2}.
They give the full classification of $3$-dimensional noncommutative Jordan superalgebras.
As a byproduct, a classification of noncommutative Jordan superalgebras gives a classification of superalgebras from another two important subvarieties:
Kokoris superalgebras \big(Theorems \ref{K1} and \ref{K2}\big) and
standard superalgebras \big(Theorems \ref{S1} and \ref{S2}\big).

 \medskip

The second part of the paper is dedicated to a description of irreducible components in the varieties of cited superalgebras. Namely, our results can be summarized in the following table\footnote{The information about the case of type $(3,0)$ can be found in \cite{akl,ikv};
  about associative commutative and Jordan superalgebras, see \cite{ahk}.}
  \footnote{$"i$ -- $j$ -- $k"$ means
  that the suitable variety of superalgebras is $i$-dimension,
it has   $j$ irreducible components and $k$ rigid superalgebras.}:

\begin{longtable}{l |rcl |rcl| rcl|   }
\hline{\rm Superalgebra / type} & \multicolumn{3}{|c|}{ ${(1,2)}$} &  \multicolumn{3}{|c|}{ $ {(2,1)}$ }& \multicolumn{3}{|c|}{$  {(3,0)}$} \\
\hline{\rm Lie }  &4 --&2 &-- 1 &  3 --&2 &-- 1 & 6 --& 2& --  1 \\
\hline{\rm Malcev }  &5 --&2 &-- 1 & 3 --&3 &-- 2 &6 -- &2& -- 1\\
\hline{\rm Binary Lie  }  &5 --&5 &-- 4 & 4 --&3 &-- 2 &6 -- &2& -- 1\\
\hline{\rm Tortkara } &4 --&3 &-- 2 &  4 --&2 &-- 2 &8 -- &1& -- 1 \\
\hline{\rm $\mathfrak{aCD}$- } &4 --&3 &-- 2& 3 --&3 &-- 2 & 6 -- &2& -- 1\\
\hline{\rm $\mathfrak{s}_4$- }  &4 --&3 &-- 2& 4 --&2 &-- 1 & 9 -- &1& -- 0\\
\hline{\rm ${\mathfrak a}$-terminal } &4 --&3&  -- 2 &3 --&2 &-- 1 &6 -- &2& -- 1\\
\hline{\rm Anticommutative  }  &7 --&1 &-- 0&  6 --&1 &-- 0 & 9 -- &1& -- 0\\
\hline{\rm AssCom  }  &3 --&4 &-- 4 &  4 --&2 &-- 2& 9 -- &1& -- 1\\
\hline{\rm Jordan }  & 3 --&7 &-- 7& 4 --&7 &-- 7 &9 -- &5& -- 5\\
\hline{\rm Kokoris }  &7 --&6 &-- 3 & 6 --&5 &-- 3 & 9 -- &5& -- 3\\

\hline{\rm Associative }  &4 -- &12&-- 10 & 5 --&13 &-- 13 & 9 -- &8& -- 7\\
\hline{\rm Standard }  &4 --&16 &-- 14 & 5 --&22 &-- 22 & 9 -- &14& -- 13\\
\hline{\rm NCJ }  &7 --&9 &-- 3 & 6 --&10 &-- 4 & 9 -- &7& -- 4\\
\hline
\end{longtable}

Let us also remember the following inclusions:

  \medskip

\begin{center}

\begin{tikzpicture}[node distance=1.5cm, auto, text=black] 
\node (lie) at (0,0) {Lie};

\node (malcev) at (2.5, 1.0) {Malcev};
\node (aCD) at (2.5, 0.6) {$\mathfrak{aCD}$-};
\node (tortkara) at (2.5, 0) {Tortkara};
\node (s4) at (2.5, -0.6) {$\mathfrak{s}_4$-};
\node (at) at (2.5, -1.0) {${\mathfrak a}$-terminal};

\node (binarylie) at (6, 1.0) {Binary Lie};
\node (anticommutative) at (6, 0) {Anticommutative};
\node (kokoris) at (10, 0) {Kokoris};
\node (ncj) at (12.5, 0) {NCJ};
\node (asscom) at (7.5, -1) {AssCom};

\node (jordan) at (10, -0.7) {Jordan};
\node (standard) at (10, -1.4) {Associative};
\node (standard) at (12.5, -1) {Standard};

\node at (1.25, 0.8) {\rotatebox{35}{$\subset$}};
\node at (1.25, 0.4) {\rotatebox{25}{$\subset$}};
\node at (1.25, 0) {$\subset$};
\node at (1.25, -1.0) {\rotatebox{-35}{$\subset$}};
\node at (1.25, -0.5) {\rotatebox{-25}{$\subset$}};

\node at (3.75, 1.0) {\rotatebox{0}{$\subset$}};
\node at (3.75, 0.4) {\rotatebox{-25}{$\subset$}};
\node at (3.75, 0) {$\subset$};
\node at (3.75, -0.5) {\rotatebox{25}{$\subset$}};
\node at (3.75, -1.0) {\rotatebox{35}{$\subset$}};

\node at (6, 0.4) {\rotatebox{-90}{$\subset$}};

\node at (8.5, 0) {\rotatebox{ 0}{$\subset$}};
\node at (11.5, 0) {\rotatebox{ 0}{$\subset$}};

\node at (12.5, -0.5) {\rotatebox{90}{$\subset$}};
\node at (8.6, -0.5) {\rotatebox{35}{$\subset$}};

\node at (8.7, -0.85) {\rotatebox{20}{$\subset$}};
\node at (8.7, -1.25) {\rotatebox{-30}{$\subset$}};

\node at (11.3, -0.85) {\rotatebox{-20}{$\subset$}};
\node at (11.3, -1.25) {\rotatebox{30}{$\subset$}};

\end{tikzpicture}

\end{center}

\section{The algebraic classification of superalgebras}

\begin{definition}
A superalgebra $\rm{A}$ over
a field $\mathbb{C}$ with a ${\mathbb Z}_{2}$-grading $\rm{A=A}_{0}\rm{\oplus A}_{1}$ is called noncommutative Jordan  if its multiplication satisfies
the following two superidentities:
\begin{center}
$(xy) z - x (yz) \  =\   - \big(-1\big)^{|x||y|+|y||z|+|z||x|}\big( (zy) x-z (yx) \big),$
\end{center}

\begin{flushleft}
$\big((xt)(yz) -((xt)y) z \big) + \big(-1\big)^{|x||t|} \big((tx)(yz)-((tx)y) z \big) \ +$
\end{flushleft}

\begin{center}
$\big(-1\big)^{|x|(|y|+|z|+|t|)+|z||y|} \big((tz)(yx) -((tz)y) x\big) + \big(-1\big)^{|y||z|+|z||t|+|t||y|} \big((xz)(yt) -((xz)y) t\big) \ +$
\end{center}
\begin{flushright}

$\big(-1\big)^{|x|(|y|+|z|+|t|)+|z|(|t|+|y|)}\big((zt)(yx)-((zt)y) x\big) +
\big(-1\big)^{|z|(|x|+|y|+|t|)+|t||y|}\big((zx)(yt) - ((zx)y) t\big) \ = \ 0.$
\end{flushright}

\noindent for all homogeneous elements $x,y,z$ in $\rm{A}$, where $\left\vert
x\right\vert $ denotes the degree of $x$ $\big("0"$ for even elements, $"1"$ for odd
elements$\big)$.
\end{definition}

\subsection{Preliminaries: the method for algebraic classification}\label{prem}
In this section, we establish a straightforward method to obtain the
algebraic classification of the noncommutative Jordan superalgebra structures of
type $\left( n,m\right) $ defined over an arbitrary Jordan superalgebra of
type $\left( n,m\right) $.

The  Jordan superproduct and supercommutator are defined as follows:

\begin{longtable}{lcl}
$x\circ y $&$=$&$\frac{1}{2} \big(xy+\left( -1\right) ^{\left\vert x\right\vert \left\vert
y\right\vert }yx \big),$ \\
$\left[ x,y\right] $&$=$&$\frac{1}{2} \big(xy-\left( -1\right) ^{\left\vert x\right\vert
\left\vert y\right\vert }yx \big).$
\end{longtable}
The superalgebra $\left( \rm{A},\circ \right) $ is called the
symmetrized superalgebra of $\rm{A}$ and is denoted by $\rm{A}^{+}.$
If $\rm{A}$ is a noncommutative Jordan superalgebra, then $\rm{A}^{+}$
is a Jordan superalgebra.

\begin{definition}
A superalgebra $({\bf P}, \circ, [\cdot,\cdot])$ is called  a  generic Poisson--Jordan  superalgebra (resp., generic Poisson  superalgebra)\footnote{To the best of our knowledge, such "associative-commutative"  algebras were introduced independently by Cannas and Weinstein  under the name "almost Poisson algebras" and by Shestakov  under the name "general Poisson algebras" $\big($later changed into "generic Poisson algebras" in
 a paper by  Kolesnikov,   Makar-Limanov, and Shestakov (2014)$\big)$.
 We will use the last terminology.} if
  $({\bf P}, \circ)$ is a Jordan (resp., associative commutative) superalgebra,
   $({\bf P},  [\cdot,\cdot])$ is an anticommutative superalgebra and
these two operations are required to satisfy the following   superidentity:
\begin{equation*}
[ x\circ y,z] = (-1)^{|y||z|}[ x,z] \circ y+x\circ [ y,z].
\end{equation*}%

\end{definition}

The class of generic Poisson--Jordan superalgebras is extremely extensive. It contains all Poisson superalgebras, Malcev--Poisson superalgebras, and Malcev--Poisson--Jordan superalgebras, as well as all generic Poisson superalgebras.

\begin{proposition}
$({\rm A},\cdot) $ is a noncommutative Jordan superalgebra if and only
if $({\rm A},\circ ,[\cdot,\cdot])$ is a generic Poisson--Jordan  superalgebra.
\end{proposition}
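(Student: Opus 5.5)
We decompose the multiplication as $xy = x\circ y + [x,y]$, which holds because $\frac12(xy+(-1)^{|x||y|}yx)+\frac12(xy-(-1)^{|x||y|}yx)=xy$. Conversely, any pair consisting of a supercommutative product $\circ$ and a superanticommutative bracket $[\cdot,\cdot]$ defines a product $xy:=x\circ y+[x,y]$, and symmetrizing and antisymmetrizing this product recovers the pair. So the statement reduces to showing two things. First, the first (flexibility) superidentity is equivalent to the Leibniz-type superidentity $[x\circ y,z]=(-1)^{|y||z|}[x,z]\circ y+x\circ[y,z]$. Second, under flexibility, the second (Jordan-type) superidentity is equivalent to $({\rm A},\circ)$ being a Jordan superalgebra. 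This mirrors the classical ungraded fact, due to Schafer and McCrimmon, that a flexible algebra is noncommutative Jordan if and only if ${\rm A}^{+}$ is Jordan. We prove the graded version by the standard argument with signs inserted according to the Koszul rule; equivalently, one may pass to the Grassmann envelope $G({\rm A})=G_0\otimes {\rm A}_0\oplus G_1\otimes {\rm A}_1$ and invoke the ungraded statement.

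\emph{Step 1 (flexibility $\Leftrightarrow$ Leibniz rule).} The flexible superidentity says that the superassociator $(x,y,z)$ is super-antisymmetric in $x$ and $z$. Linearizing in the ungraded picture, flexibility is equivalent to $(x,y,z)+(z,y,x)=0$. Expanding $xy=x\circ y+[x,y]$ in $(x,y,z)+(z,y,x)$ and separating the terms by their symmetry type, this becomes the identity $[x\circ z,y]=[x,y]\circ z+x\circ[z,y]$, i.e.\ $\operatorname{ad}_y$ is a derivation of $\circ$. Equivalently, flexibility means $[L_x,R_x]=0$, and one can use $L_x-R_x=2\operatorname{ad}_x$ and $L_x+R_x=2L^{\circ}_x$. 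In the super setting the same computation goes through with signs, and the resulting sign pattern is exactly the one in the displayed Poisson--Jordan identity. We carry this out on homogeneous elements; the converse implication comes from reading the computation backwards.

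\emph{Step 2 (Jordan part).} Assume flexibility, hence the Leibniz rule from Step 1. Classically, a flexible algebra satisfies $(x^2,y,x)=0$ if and only if $(x^2\circ y)\circ x=x^2\circ(y\circ x)$. The argument writes $(x^2,y,x)$ in terms of $\circ$ and $[\cdot,\cdot]$, noting that $x^2=x\circ x$. It then uses the Leibniz rule to show that every term involving the bracket cancels, leaving exactly the Jordan associator of $\circ$. The second superidentity in the Definition is the full super-linearization of $(x^2y)x=x^2(yx)$. So we perform the same reduction on the linearized form: we substitute $uv=u\circ v+[u,v]$ into each of the six summands, group the pure-$\circ$ terms into the linearized super-Jordan identity for ${\rm A}^{+}$, and show that the mixed terms vanish after applying the Leibniz rule twice together with super-anticommutativity.

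\emph{Main obstacle.} The hard part is the sign bookkeeping in Step 2, namely checking that all bracket-containing terms cancel in the six-term linearized identity with the Koszul signs as printed. To keep this manageable, we plan to avoid direct expansion and use the Grassmann envelope. Both defining superidentities of noncommutative Jordan superalgebras, and those of generic Poisson--Jordan superalgebras, are by construction the Grassmann transforms of the corresponding ungraded identities. Hence ${\rm A}$ satisfies them if and only if $G({\rm A})$ satisfies the ungraded ones. Moreover, $G({\rm A})^{+}=G({\rm A}^{+})$, and the commutator on $G({\rm A})$ is the envelope of the supercommutator. The Proposition then follows from the classical ungraded equivalence applied to $G({\rm A})$. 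The remaining points to verify are that the printed superidentities really are these Grassmann transforms, and that the factor $\frac12$ in the definitions of $\circ$ and $[\cdot,\cdot]$ is harmless over $\mathbb{C}$.
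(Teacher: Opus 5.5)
Your proposal is correct and follows the paper's route. The paper's entire proof is your Step 1 (the flexible superidentity is equivalent to the Leibniz superidentity); it leaves implicit the fact that, under flexibility, the second superidentity is equivalent to ${\rm A}^{+}$ being Jordan, and your Step 2 with the Grassmann-envelope reduction supplies exactly that. One small correction to Step 1 in the ungraded setting (i.e.\ in $G({\rm A})$): expanding $(x,y,z)+(z,y,x)$ literally gives $2\big(h(x,y,z)+h(y,z,x)\big)=0$, where $h(x,y,z)=[x\circ y,z]-[x,z]\circ y-x\circ[y,z]$, rather than the Leibniz rule itself; applying this cyclic relation three times yields $h=-h$, hence $h=0$.
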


\begin{proof}
Since the flexible super-law is equivalent to $[x\circ y,z] = x\circ [y,z] + (-1)^{|x||y|}
y\circ [x,z]$, the proof is finished.
\end{proof}

\begin{definition}
\label{def_homomorphism}\textrm{Let $({\bf P}_1,\circ_1,[\cdot,\cdot]_1)$
and $({\bf P}_2,\circ_2,[\cdot,\cdot]_2)$ be two
generic Poisson--Jordan superalgebras. A   linear map $\phi :%
{\bf P}_1 \to {\bf P}_2$ is a {homomorphism} if it is preserving the products, that is,
\begin{equation*}
\phi (x\circ_1 y) =\phi(x) \circ_2 \phi(y), \hspace{2cm} \phi([x,y]_1) =
[\phi(x),\phi(y)]_2.
\end{equation*}
 }
\end{definition}

Let $({\rm A}_1,\cdot_1)$ and $({\rm A}_2,\cdot_2)$ be two
noncommutative Jordan superalgebras and let $({\rm A}_1,\circ_1,[\cdot,%
\cdot]_1)$ and $({\rm A}_2,\circ_2,[\cdot,\cdot]_2)$ be its associated
generic Poisson--Jordan superalgebras. If $({\rm A}_1,\cdot_1)$ and $({\rm A}_2,\cdot_2)$
are isomorphic, then  it is easy to see that
the generic Poisson--Jordan superalgebras $({\rm A}_1,\circ_1,[\cdot,\cdot]_1)$ and $({\rm A}_2,\circ_2,[\cdot, \cdot]_2)$ are isomorphic. Conversely, we can show that if the generic
Poisson--Jordan superalgebras $({\rm A}_1,\circ_1,[\cdot,\cdot]_1)$ and $(
{\rm A}_2,\circ_2,[\cdot,\cdot]_2)$ are isomorphic, then the
noncommutative Jordan superalgebras $({\rm A}_1,\cdot_1)$ and $({\rm A}%
_2,\cdot_2)$ are isomorphic. So we have the following result:

\begin{proposition}
Every generic Poisson--Jordan  superalgebra
(resp., generic Poisson superalgebra) $({\rm A},\circ, [\cdot,\cdot])$ is
associated with precisely one noncommutative Jordan (resp., Kokoris\footnote{About Kokoris algebras, see subsection \ref{koko}.}) algebra $({\rm A}%
,\cdot)$. That is, we
have a bijective correspondence between generic Poisson--Jordan (resp., generic Poisson) superalgebras
  and
noncommutative Jordan (resp., Kokoris) superalgebras.
\end{proposition}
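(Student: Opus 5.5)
The plan is to make the correspondence explicit in both directions and check that the two constructions are mutually inverse and compatible with isomorphisms. Given a noncommutative Jordan superalgebra $({\rm A},\cdot)$, set $x\circ y$ and $[x,y]$ as in the displayed formulas above. Conversely, given a generic Poisson--Jordan superalgebra $({\rm A},\circ,[\cdot,\cdot])$, define
\[
x\cdot y \;=\; x\circ y+[x,y]
\]
on homogeneous elements and extend bilinearly. The identities $\frac12\big(xy+(-1)^{|x||y|}yx\big)+\frac12\big(xy-(-1)^{|x||y|}yx\big)=xy$ show that one composite is the identity. For the other composite, I will use that $\circ$ is supercommutative and $[\cdot,\cdot]$ is superanticommutative. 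Then the supersymmetrization of $\cdot$ is exactly $\circ$, and its superantisymmetrization is exactly $[\cdot,\cdot]$. So the two maps are mutually inverse bijections on bilinear structures of the given grading type, and it remains to check that they land in the right classes.

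For the class membership, the forward direction follows from the previous Proposition. For the backward direction, given a generic Poisson--Jordan superalgebra, the product $x\cdot y=x\circ y+[x,y]$ has $\cdot^{+}=\circ$ and supercommutator $[\cdot,\cdot]$. The previous Proposition is an ``if and only if'', so $({\rm A},\cdot)$ is noncommutative Jordan. In the generic Poisson case, $\circ$ is moreover associative-commutative, which is precisely the defining extra condition for Kokoris superalgebras. I would take the definition in subsection~\ref{koko} to be ``noncommutative Jordan with ${\rm A}^{+}$ associative'' and cite it.

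Finally, I will check compatibility with homomorphisms, which gives the statement that the correspondence is well defined on isomorphism classes. If $\phi$ is an even linear map with $\phi(x\cdot_1 y)=\phi(x)\cdot_2\phi(y)$, then applying $\phi$ to the defining formulas of $\circ$ and $[\cdot,\cdot]$ shows that $\phi$ preserves both. Here the sign $(-1)^{|x||y|}$ is unchanged because $\phi$ preserves parity. Conversely, if $\phi$ preserves $\circ$ and $[\cdot,\cdot]$, it preserves their sum, which is $\cdot$. Hence isomorphisms coincide in the two categories.

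The main obstacle is essentially bookkeeping rather than substance. The key point is that the converse direction really needs the equivalence in the previous Proposition: flexibility is equivalent to the Leibniz-type identity given that ${\rm A}^{+}$ is Jordan. One also needs that the super Jordan identity for $\cdot$ reduces to the Jordan identity for $\circ$ in the presence of flexibility. If that Proposition's proof is taken only as sketched, I would verify the key step directly. Expanding $(xy)z-x(yz)$ plus its super-flexible counterpart in terms of $\circ$ and $[\cdot,\cdot]$, the $\circ$-associator terms cancel by supercommutativity, and what remains is exactly the generic Poisson--Jordan compatibility identity.
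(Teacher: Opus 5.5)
Your proposal is correct and follows essentially the same route as the paper, which also gets the bijection from the preceding Proposition (noncommutative Jordan $\Leftrightarrow$ generic Poisson--Jordan, with $x\cdot y=x\circ y+[x,y]$ inverting the passage to $\circ$ and $[\cdot,\cdot]$) together with the remark that even linear maps preserve $\cdot$ exactly when they preserve both $\circ$ and $[\cdot,\cdot]$. One small correction to your optional direct check: expanding the linearized super-flexible law gives $2\big(D(x,y,z)+(-1)^{|x|(|y|+|z|)}D(y,z,x)\big)$, where $D(x,y,z)=[x\circ y,z]-(-1)^{|y||z|}[x,z]\circ y-x\circ[y,z]$, and not $D$ itself, so you still need the one-line step of applying this relation cyclically three times to get $D=-D$, hence $D=0$.
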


Let $\mathbb{V=V}_{0}\mathbb{\oplus V}_{1}$ be a ${\mathbb Z}_{2}$-graded vector
space over a field $\mathbb{C}$. A bilinear map%
\begin{equation*}
\theta :\mathbb{V}\times \mathbb{V\to V}
\end{equation*}%
is called supercommutative if for all homogeneous elements $x,y\in \mathbb{V}$, the following graded commutativity condition holds:%
\begin{equation*}
\theta \left( x,y\right) =\left( -1\right) ^{\left\vert x\right\vert
\left\vert y\right\vert }\theta \left(y,x\right) .
\end{equation*}%
Moreover, $\theta $ is called super-skew-symmetric if for all homogeneous
elements $x,y\in \mathbb{V}$, the following graded commutativity condition
holds:%
\begin{equation*}
\theta \left( x,y\right) =-\left( -1\right) ^{\left\vert x\right\vert
\left\vert y\right\vert }\theta \left( y,x\right) .
\end{equation*}

\begin{definition}
Let $\left( \rm{A},\circ  \right) $ be a Jordan superalgebra of type $\left( n,m\right) $. Define ${\rm Z}^{2}\left( \rm{A},\rm{A}\right) $ to
be the set of all super-skew-symmetric bilinear maps $\theta :\rm{A}%
\times \rm{A}\to \rm{A}$ such that:
\begin{equation*}
\theta(x\circ y,z) = (-1)^{|y||z|}\theta(x,z) \circ y +x\circ \theta(y,z).
\end{equation*} \noindent{}for all $x,y,z  \in {\rm A}_0 \cup {\rm A}_1$.
\end{definition}

Observe that, for $\theta \in {\rm Z}^{2}\left( \rm{A},\rm{A}\right) $,
if we define a multiplication $\ast _{\theta }$ on $\rm{A}$ by
\begin{center}
    $x\ast_{\theta }y\ =\ x\circ  y+\theta \left( x,y\right) $ for all $x,y$ in $\rm{A}$,
\end{center}
then $\left( \rm{A},\ast _{\theta }\right) $ is a noncommutative Jordan   superalgebra of type $\left( n,m\right) $. Conversely, if $\left( \rm{A},\cdot \right) $ is a noncommutative Jordan superalgebra of
type $\left( n,m\right) $, then there exists $\theta \in {\rm Z}^{2}\left(
\rm{A},\rm{A}\right) $ such that $\left( \rm{A},\cdot,\right) \cong \left( \rm{A},\ast _{\theta }\right) $. To see this,
consider the super-skew-symmetric bilinear map $\theta :\rm{A}\times \rm{A}\to \rm{A}$ defined by $\theta \left( x,y\right)
=\left[ x,y\right] $ for all $x,y$ in $\rm{A}$. Then $\theta \in
{\rm Z}^{2}\left( \rm{A},\rm{A}\right) $ and $\left( \rm{A},\cdot
,\right) \cong \left( \rm{A},\ast _{\theta }\right) $.

\medskip

Now, let $\left( \rm{A},\cdot \right) $ be a noncommutative Jordan
superalgebra and $\rm{Aut}\left( \rm{A}\right) $ be the automorphism
group of $\rm{A}$. Then $\rm{Aut}\left( \rm{A}\right) $ acts on $%
{\rm Z}^{2}\left( \rm{A},\rm{A}\right) $ by
\begin{equation*}
\left( \theta \ast \phi \right) \left( x,y\right) =\phi ^{-1}\left( \theta
\left( \phi \left( x\right) ,\phi \left( y\right) \right) \right) ,
\end{equation*}%
for $\phi \in \text{Aut}\left( \rm{A}\right) $, and $\theta \in {\rm Z}^{2}\left( \rm{A},\rm{A}\right) $.

\begin{lemma}
Let $\left( \rm{A},\cdot \right) $ be a Jordan superalgebra and $\theta
,\vartheta \in {\rm Z}^{2}\left( \rm{A},\rm{A}\right) $. Then $\left(
\rm{A},\ast _{\theta }\right) $ and $\left( \rm{A},\ast
_{\vartheta }\right) $ are isomorphic if and only if there is a $\phi \in
\rm{Aut}\left( \rm{A}\right) $ with $\theta \ast \phi =\vartheta $.
\end{lemma}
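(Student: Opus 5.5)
The plan is to recover the Jordan part and the skew part of $\ast_\theta$ separately, and to read $\mathrm{Aut}(\mathrm{A})$ as the automorphism group of the Jordan superalgebra $(\mathrm{A},\circ)$ (the product written $\cdot$ in the statement). Throughout, "isomorphism of superalgebras" means an even linear bijection $\phi$, that is, one with $\phi(\mathrm{A}_i)=\mathrm{A}_i$. The tool is the symmetrization/antisymmetrization decomposition used in the preceding propositions.

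For homogeneous $x,y$ we have
\begin{equation*}
\tfrac{1}{2}\big(x\ast_\theta y+(-1)^{|x||y|}y\ast_\theta x\big)=x\circ y,
\end{equation*}
since $\circ$ is supercommutative and $\theta$ is super-skew-symmetric, so the $\theta$-terms cancel. Likewise
\begin{equation*}
\tfrac{1}{2}\big(x\ast_\theta y-(-1)^{|x||y|}y\ast_\theta x\big)=\theta(x,y),
\end{equation*}
because the $\circ$-terms cancel. Hence $(\mathrm{A},\ast_\theta)^{+}=(\mathrm{A},\circ)$ and the supercommutator of $\ast_\theta$ is exactly $\theta$.

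\emph{Sufficiency.} Suppose $\phi\in\mathrm{Aut}(\mathrm{A})$ satisfies $\theta\ast\phi=\vartheta$, i.e. $\phi(\vartheta(x,y))=\theta(\phi(x),\phi(y))$. Then
\begin{equation*}
\phi(x\ast_\vartheta y)=\phi(x\circ y)+\phi(\vartheta(x,y))=\phi(x)\circ\phi(y)+\theta(\phi(x),\phi(y))=\phi(x)\ast_\theta\phi(y).
\end{equation*}
So $\phi$ is an isomorphism $(\mathrm{A},\ast_\vartheta)\to(\mathrm{A},\ast_\theta)$. It is even because $\phi$ is an automorphism of the superalgebra.

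\emph{Necessity.} Let $\phi:(\mathrm{A},\ast_\vartheta)\to(\mathrm{A},\ast_\theta)$ be an isomorphism, and note that $(\mathrm{A},\ast_\theta)\cong(\mathrm{A},\ast_\vartheta)$ is symmetric. Since $\phi$ is even, $|\phi(x)|=|x|$, so the sign $(-1)^{|x||y|}$ is unchanged when we apply $\phi$. Applying $\phi$ to the symmetrization identity for $\ast_\vartheta$ and using multiplicativity gives $\phi(x\circ y)=\phi(x)\circ\phi(y)$, so $\phi\in\mathrm{Aut}(\mathrm{A})$. Applying $\phi$ to the antisymmetrization identity gives $\phi(\vartheta(x,y))=\theta(\phi(x),\phi(y))$, which is precisely $\vartheta=\theta\ast\phi$.

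The only delicate point is that the isomorphism must preserve parity. Without this, the sign factors would not transfer and $\phi$ would not lie in the automorphism group of the graded algebra. I will state explicitly that superalgebra isomorphisms are even by definition, as in Definition~\ref{def_homomorphism}, where the homomorphisms are understood to be graded. Everything else is a direct computation on homogeneous elements, extended by bilinearity.
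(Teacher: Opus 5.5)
Your proposal is correct and is essentially the argument the paper relies on. The paper gives no separate proof of this lemma; it rests on the preceding remark that isomorphisms of $(\mathrm{A},\ast_\theta)$ are exactly the maps preserving both $\circ$ and the supercommutator $[\cdot,\cdot]$ of the associated generic Poisson--Jordan superalgebra, which is what you verify by recovering $\circ$ and $\theta$ as the symmetrization and antisymmetrization of $\ast_\theta$, with your explicit use of evenness being the one detail the paper leaves implicit.
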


Hence, we have a procedure to classify the noncommutative Jordan  superalgebras
of type $\left( n,m\right) $ associated with a given Jordan superalgebra $%
\left( \rm{A},\circ  \right) $ of type $\left( n,m\right) $. It
consists of three steps:

\begin{enumerate}
\item[{\bf Step 1.}] Compute ${\rm Z}^{2}\left( \rm{A},\rm{A}\right) $.

\item[{\bf Step 2.}] Find the orbits of $\rm{Aut}\left( \rm{A}\right) $ on $%
{\rm Z}^{2}\left( \rm{A},\rm{A}\right) $.

\item[{\bf Step 3.}] Choose a representative $\theta $ from each orbit and then construct
the noncommutative Jordan superalgebra $\left( \rm{A},\ast _{\theta
}\right) $.
\end{enumerate}

\medskip

Let us introduce the following notations. Let
\begin{center}
    $e_{1},\ldots
,e_{n},e_{n+1}=f_{1},\ldots,e_{n+m}=f_{m},$
\end{center} be a fixed basis of a Jordan
superalgebra $\left( \rm{A},\circ  \right) $ of type $\left( n,m\right) $.
Define $\mathrm{\Lambda }^{2}\left( \rm{A},\mathbb{C}\right) $ to be
the space of all super-skew-symmetric bilinear forms on $\rm{A}$. Then
\begin{center}$%
\mathrm{\Lambda }^{2}\left( \rm{A},\mathbb{C}\right) =\left\langle
\Delta _{i,j}:1\leq i<j\leq n+m\right\rangle \oplus \left\langle \Delta
_{i,i}:n<i\leq n+m\right\rangle,$
\end{center} where $\Delta _{i,j}$ is the super-skew-symmetric bilinear form $\Delta _{i,j}:\rm{A}\times \rm{A}%
\to \mathbb{C}$\ defined by%
\begin{equation*}
\Delta _{i,j}\left( e_{l},e_{m}\right) :=\left\{
\begin{tabular}{ll}
$1,$ & if $\left( i,j\right) =\left( l,m\right) $ and $i\neq j,$ \\
$-\left( -1\right) ^{\left\vert e_{l}\right\vert \left\vert e_{m}\right\vert
},$ & if $\left( i,j\right) =\left( m,l\right) $ and $i\neq j,$ \\
$1,$ & if $i=l=m$ and $i=j>n,$ \\
$0,$ & otherwise.%
\end{tabular}%
\right.
\end{equation*}%
 Now, if $\theta \in {\rm Z}^{2}\left( \rm{A},\rm{A}\right) $, then $%
\theta $ can be uniquely written as $\theta \left( x,y\right) =\underset{i=1}%
{\overset{n}{\sum }}B_{i}\left( x,y\right) e_{i}$ where $B_{1},B_{2},\ldots
,B_{n}$ is a sequence of super-skew-symmetric bilinear forms on $\rm{A}$%
. Also, we may write $\theta =\left( B_{1},B_{2},\ldots ,B_{n}\right).$
Let $\phi ^{-1}\in \text{Aut}\left( \rm{A}\right) $ be given by the
matrix $\left( b_{ij}\right) $. If
\begin{center}
    $\left( \theta \ast \phi \right) \left(
x,y\right) =\underset{i=1}{\overset{n}{\sum }}B_{i}^{\prime }\left(
x,y\right) e_{i}$, then $B_{i}^{\prime }=\underset{j=1}{\overset{n}{\sum }}%
b_{ij}\phi ^{t}B_{j}\phi $.

\end{center}

\subsection{Noncommutative Jordan superalgebras of type $(1,2)$}

\begin{theorem}
\label{(1,2)} Let ${\rm J}$ be a nontrivial $3$-dimensional Jordan superalgebra of
type $\left( 1,2\right) $. Then $\rm{J}$ is isomorphic to one of the following superalgebras:

\begin{longtable}{l c l l ll | r}
    \hline
    ${\rm J}_{01}$ & $:$ &  & $e_{1} \circ f_{1} = f_{2}$ & &$f_{1} \circ f_{2} = e_{1}$ & $\mathfrak{non-ass}$\\
    ${\rm J}_{02}$ & $:$ &  &  & &$f_{1} \circ f_{2} = e_{1}$ & $\mathfrak{ass}$\\
    ${\rm J}_{03}$ & $:$ &  & $e_{1} \circ f_{1} = f_{2}$ && & $\mathfrak{ass}$\\
    ${\rm J}_{04}$ & $:$ & $e_{1} \circ e_{1} = e_{1}$ & $e_{1} \circ f_{1} = \frac{1}{2} f_{1}$ &&& $\mathfrak{non-ass}$\\
    ${\rm J}_{05}$ & $:$ & $e_{1} \circ e_{1} = e_{1}$ & $e_{1} \circ f_{1} = f_{1}$ & $e_{1} \circ f_{2} = \frac{1}{2} f_{2}$ && $\mathfrak{non-ass}$\\
    ${\rm J}_{06}$ & $:$ & $e_{1} \circ e_{1} = e_{1}$ & $e_{1} \circ f_{1} = f_{1}$ && & $\mathfrak{ass}$\\
    ${\rm J}_{07}$ & $:$ & $e_{1} \circ e_{1} = e_{1}$ &  &  &&   $\mathfrak{ass}$\\
    ${\rm J}_{08}$ & $:$ & $e_{1} \circ e_{1} = e_{1}$ & $e_{1} \circ f_{1} = \frac{1}{2} f_{1}$ & $e_{1} \circ f_{2} = \frac{1}{2} f_{2}$ && $\mathfrak{non-ass}$\\
    ${\rm J}_{09}$ & $:$ & $e_{1} \circ e_{1} = e_{1}$ & $e_{1} \circ f_{1} = f_{1}$ & $e_{1} \circ f_{2} = f_{2}$ &&  $\mathfrak{ass}$\\
    ${\rm J}_{10}$ & $:$ & $e_{1} \circ e_{1} = e_{1}$ & $e_{1} \circ f_{1} = \frac{1}{2} f_{1}$ & $e_{1} \circ f_{2} = \frac{1}{2} f_{2}$ &$f_{1} \circ f_{2} = e_{1}$ & $\mathfrak{non-ass}$\\
    ${\rm J}_{11}$ & $:$ & $e_{1} \circ e_{1} = e_{1}$ & $e_{1} \circ f_{1} = f_{1}$ & $e_{1} \circ f_{2} = f_{2}$ &$f_{1} \circ f_{2} = e_{1}$ & $\mathfrak{non-ass}$\\
    \hline
\end{longtable}

\end{theorem}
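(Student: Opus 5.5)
We classify directly from the structure of a supercommutative product on $\mathrm{J}=\mathrm{J}_0\oplus \mathrm{J}_1$ with $\mathrm{J}_0=\langle e_1\rangle$ and $\mathrm{J}_1=\langle f_1,f_2\rangle$. Since $\circ$ is supercommutative, odd elements anticommute: $f_i\circ f_i=0$ and $f_1\circ f_2=-f_2\circ f_1$. Hence the product is determined by three pieces of data: $e_1\circ e_1=\alpha e_1$, a linear operator $L=L_{e_1}|_{\mathrm{J}_1}$ on the $2$-dimensional odd part, and $f_1\circ f_2=\beta e_1$. Changes of basis act by $e_1\mapsto \lambda e_1$ and $(f_1,f_2)\mapsto g(f_1,f_2)$ with $g\in GL_2$. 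Under these, $L\mapsto \lambda g^{-1}Lg$ and $\beta\mapsto \lambda^{-1}\det(g)\beta$. We then impose the super Jordan identity, and it suffices to check it on basis triples/quadruples by parity.

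\textbf{Case $\alpha\neq 0$.} Normalize so that $e_1\circ e_1=e_1$. The Jordan identity for $(e_1,e_1,e_1,f)$ is the classical Peirce-type relation $2L^3-3L^2+L=0$. So $L$ is diagonalizable with eigenvalues in $\{0,\tfrac12,1\}$. Up to permuting the $f_i$, this leaves six possibilities: $\mathrm{diag}(0,0)$, $(\tfrac12,0)$, $(1,0)$, $(1,\tfrac12)$, $(\tfrac12,\tfrac12)$, $(1,1)$. Next consider $\beta$, using the identities with arguments $(e_1,f_1,f_2)$ in the linearized form. The element $f_1\circ f_2$ lies in $\mathrm{J}_0$, and the Peirce rules $\mathrm{J}_{1/2}\circ \mathrm{J}_{1/2}\subseteq \mathrm{J}_0+\mathrm{J}_1$ and $\mathrm{J}_i\circ \mathrm{J}_j=0$ for $\{i,j\}=\{0,1\}$, together with $\mathrm{J}_1\circ \mathrm{J}_1\subseteq \mathrm{J}_1$ (adapted super-wise), force the following. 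When $\beta\neq 0$, the eigenvalue pair must be $(\tfrac12,\tfrac12)$ or $(1,1)$, and then rescaling $f_2$ gives $\beta=1$. These are $\mathrm{J}_{10}$ and $\mathrm{J}_{11}$. The other eigenvalue pairs give $\beta=0$, producing $\mathrm{J}_{07},\mathrm{J}_{04},\mathrm{J}_{06},\mathrm{J}_{05},\mathrm{J}_{08},\mathrm{J}_{09}$. I would verify the two surviving $\beta\ne 0$ cases directly against the superidentity, since the super sign changes which Peirce products vanish; this is the step needing genuine calculation.

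\textbf{Case $\alpha=0$.} First show $L$ is nilpotent: if $L$ had a nonzero eigenvalue, the Jordan identity on $(e_1,e_1,e_1,f)$ with $e_1^2=0$ gives $L^3=0$, a contradiction. So either $L=0$ or $L$ has a single Jordan block, which we normalize to $e_1\circ f_1=f_2$. Combining with $\beta\in\{0,1\}$, where $\beta$ is normalized using $\lambda$, gives four cases. The case $L=0,\beta=0$ is trivial and excluded, and the remaining three are $\mathrm{J}_{02}$, $\mathrm{J}_{03}$, $\mathrm{J}_{01}$. For $L\neq0$, $\beta\ne0$, we need to check that the Jordan superidentity holds. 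Here $(f_1\circ f_2)\circ f_1=e_1\circ f_1=f_2$ and similar expressions appear, and they must balance under the super signs; I expect they do, as in the odd Kaplansky-type superalgebra. We also need to check that $\beta$ can be scaled to $1$ without destroying $L$'s normal form, which works because $\lambda$ and $\det g$ are independent enough: choose $g=\mathrm{diag}(1,\lambda)$ so that $L$ is preserved, then adjust.

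\textbf{Remaining checks and main obstacle.} Finally, pairwise non-isomorphism follows from invariants: $\dim \mathrm{J}^2$, whether $\mathrm{J}_0^2\neq0$, the spectrum of $L_{e}$ for the idempotent, and whether $\mathrm{J}_1^2\neq0$. The associativity labels are read off by testing $(xy)z=x(yz)$ on basis elements. The main obstacle is the careful sign bookkeeping in the super Jordan identity, especially in determining which eigenvalue pairs admit $\beta\neq0$ when $\alpha\ne 0$. I would settle this by direct substitution of each candidate into the identity rather than relying on ungraded Peirce intuition.
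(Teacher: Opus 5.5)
The paper gives no proof of this theorem. It is stated without argument and is presumably taken from the earlier work on Jordan superalgebras cited in Theorem \ref{asscomjord}. So there is nothing internal to compare with, and your proposal has to stand on its own. It does: it is a correct, self-contained derivation.

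Encoding the structure as $(\alpha,L,\beta)$, acted on by $(\alpha,L,\beta)\mapsto(\lambda\alpha,\lambda g^{-1}Lg,\lambda^{-1}\det(g)\beta)$, is the same kind of reduction the paper carries out for anticommutative superalgebras of type $(1,2)$ in Theorem \ref{ant12}. A single identity, obtained from $x=y=z=e_1$ with $t$ odd, does almost all the work: $2L^3-3\alpha L^2+\alpha^2L=0$. For $\alpha=1$ it gives a diagonalizable $L$ with spectrum in $\{0,\frac12,1\}$. For $\alpha=0$ it gives $L^3=0$, hence $L^2=0$ directly, so no contradiction argument is needed. For the statement as posed, only these necessary conditions matter; checking that the listed structures are Jordan is the converse.

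Points to tighten:

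\begin{enumerate}
\item The super signs do not change the Peirce rules. The Grassmann envelope $G({\rm J})=G_0\otimes{\rm J}_0\oplus G_1\otimes{\rm J}_1$ is an ordinary Jordan algebra, and $1\otimes e_1$ is an idempotent in it. Hence the Peirce spaces $P_0,P_{1/2},P_1$ of $e_1$ satisfy $P_0\circ P_1=0$, $P_i\circ P_{1/2}\subseteq P_{1/2}$ and $P_i\circ P_i\subseteq P_i$ ($i=0,1$) for odd elements too. Since $e_1\in P_1$, your elimination of $\beta$ for the four mixed spectra is already rigorous.

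\item Direct substitution is needed only to confirm that ${\rm J}_{10}$, ${\rm J}_{11}$ and ${\rm J}_{01}$ are Jordan. ${\rm J}_{10}$ is Kaplansky's superalgebra, and ${\rm J}_{11}$ is the superform superalgebra of a nondegenerate skew form on a $2$-dimensional odd space. For ${\rm J}_{01}$, take $a=a_0e_1+a_1f_1+a_2f_2\in G({\rm J})$. Then $ab=(a_1b_2-a_2b_1)e_1+(a_0b_1+a_1b_0)f_2$, so $a^2=2a_1a_2e_1+2a_0a_1f_2$. Next, $a^2(ba)=0$ because neither factor has an $f_1$-component. Finally, $(a^2b)a=-2a_1a_2b_1a_1\,e_1-2a_0a_1b_1a_1\,f_2=0$ because $a_1^2=0$.

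\item Your normalization of $\beta$ in the Jordan-block case fails as written. The choice $g=\mathrm{diag}(1,\lambda)$ fixes $L$, but it sends $\beta\mapsto\lambda^{-1}\det(g)\beta=\beta$. Use $g=\mu\,\mathrm{diag}(1,\lambda)$ with $\mu^2=\beta^{-1}$ instead; this still fixes $L$ and gives $\beta\mapsto\mu^2\beta=1$.

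\item The spectrum invariant you use for non-isomorphism is well defined because ${\rm J}_0$ is one-dimensional. So $\alpha^{-1}e_1$ is the unique nonzero even idempotent.
\end{enumerate}
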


\subsubsection{Anticommutative superalgebras of type $(1,2)$}
\begin{theorem}\label{ant12}
Let ${\rm A}$ be a nontrivial $3$-dimensional anticommutative
superalgebra of type $\left( 1,2\right) $. Then $\rm{A}$ is isomorphic
to one of the following superalgebras:

\begin{longtable}{l c l l l l l l l}
    \hline
     ${\rm A} $&$:$&$[e_{1},f_{1}] $&  $[e_{1},f_{2}]$&
 $[f_{1},f_{1}] $&$[f_{1},f_{2}] $&$[f_{2},f_{2}] $ \\
 \hline
  ${\rm A}_{01}$&$:$& &&&$[f_{1},f_{2}]=e_{1}$\\

 ${\rm A}_{02}$&$:$&& &&& $[f_{2},f_{2}]=e_{1}$ \\

 ${\rm A}_{03}$&$:$&&$[e_{1},f_{2}]=f_{1}$&$[f_{1},f_{1}]=e_{1}$\\

 ${\rm A}_{04}$&$:$&&$[e_{1},f_{2}]=f_{1}$&$[f_{1},f_{1}]=e_{1}$&&$[f_{2},f_{2}]=e_{1}$ \\

 ${\rm A}_{05}$&$:$& & $[e_{1},f_{2}]=f_{1}$& & $[f_{1},f_{2}]=e_{1}$\\

${\rm A}_{06}$&$:$&&$[e_{1},f_{2}]=f_{1}$ \\

${\rm A}_{07}$&$:$& &$[e_{1},f_{2}]=f_{1}$ &&& $[f_{2},f_{2}]=e_{1}$ \\

 ${\rm A}_{08}^{\alpha}$&$:$&$[e_{1},f_{1}]=f_{1}$& $[e_{1},f_{2}]=\alpha f_{2}$  \\

 ${\rm A}_{09}^{\alpha}$&$:$&  $[e_{1},f_{1}]=f_{1}$ &$[e_{1},f_{2}]=\alpha f_{2}$ && $[f_{1},f_{2}]=e_{1}$&\\

${\rm A}_{10}^{\alpha}$&$:$&$[e_{1},f_{1}]=f_{1}$& $[e_{1},f_{2}]= \alpha f_{2}$ &&& $[f_{2},f_{2}]=e_{1}$  \\

 ${\rm A}_{11}^{\alpha \neq 1,\beta}$&$:$& $[e_{1},f_{1}]=f_{1}$&$[e_{1},f_{2}]=\alpha f_{2}$&
$[f_{1},f_{1}]=e_{1}$&$[f_{1},f_{2}]=\beta e_{1}$&$[f_{2},f_{2}]=e_{1}$\\

 ${\rm A}_{12}^{\alpha \neq 1}$&$:$& $[e_{1},f_{1}]=f_{1}$& $[e_{1},f_{2}]=\alpha f_{2}$&&
$[f_{1},f_{2}]=e_{1}$&  $[f_{2},f_{2}]=e_{1}$ \\

  ${\rm A}_{13}$&$:$& $[e_{1},f_{1}]=f_{1}$ && $[f_{1},f_{1}]=e_{1}$&\\

${\rm A}_{14}$&$:$&$[e_{1},f_{1}]=f_{1}$ &&$[f_{1},f_{1}]=e_{1}$&$[f_{1},f_{2}]=e_{1}$&  \\

 ${\rm A}_{15}^{\alpha }$&$:$&$[e_{1},f_{1}]=f_{1}$&  $[e_{1},f_{2}]=f_{1}+f_{2}$&
 $[f_{1},f_{1}]=e_{1}$&&$[f_{2},f_{2}]=\alpha e_{1}$ \\

 ${\rm A}_{16}$&$:$& $[e_{1},f_{1}]=f_{1}$& $[e_{1},f_{2}]=f_{1}+f_{2}$ &&$[f_{1},f_{2}]=e_{1}$&\\

 ${\rm A}_{17}$&$:$&$[e_{1},f_{1}]=f_{1}$& $[e_{1},f_{2}]=f_{1}+f_{2}$ \\

${\rm A}_{18}$&$:$&$[e_{1},f_{1}]=f_{1}$& $[e_{1},f_{2}]=f_{1}+f_{2}$ &&&$[f_{2},f_{2}]=e_{1}$\\
    \hline
\end{longtable}

\noindent All listed superalgebras are non-isomorphic except:
\begin{center}${\rm A}_{08}^{\alpha
}\cong {\rm A}_{08}^{{\alpha^{-1} }},$ \
${\rm A}_{09}^{\alpha }\cong {\rm A}_{09}^{{\alpha^{-1} }},$ and
${\rm A}_{11}^{\alpha ,\beta }\cong {\rm A}_{11}^{{\alpha^{-1} },\beta }\cong {\rm A}_{11}^{{\alpha^{-1} },-\beta }\cong {\rm A}_{11}^{\alpha ,-\beta }$.
\end{center}
\end{theorem}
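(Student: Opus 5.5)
We classify the super-skew-symmetric products on a graded space $V=V_0\oplus V_1$ with $V_0=\langle e_1\rangle$, $V_1=\langle f_1,f_2\rangle$, up to the action of the group of even automorphisms $GL(V_0)\times GL(V_1)=\mathbb{C}^*\times GL_2(\mathbb{C})$. Super-skew-symmetry forces $[e_1,e_1]=0$, while $[f_i,f_j]=[f_j,f_i]$ is symmetric with values in $V_0$ and $[e_1,f_i]\in V_1$. Hence an anticommutative superalgebra of type $(1,2)$ is determined by a pair $(M,Q)$. Here $M\in \mathrm{End}(V_1)$ is the operator $f\mapsto[e_1,f]$, and $Q$ is the symmetric bilinear form on $V_1$ given by $[f,g]=Q(f,g)e_1$. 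An element $(\lambda,g)\in \mathbb{C}^*\times GL_2$ transforms $e_1\mapsto \lambda e_1$ and $f\mapsto gf$. This gives the action $M\mapsto \lambda\, g^{-1}Mg$ and $Q\mapsto \lambda^{-1} g^{t}Qg$ (with the appropriate convention on the side of the action). So the problem reduces to classifying pairs (operator, symmetric form) under simultaneous conjugation/congruence, together with an independent rescaling of $M$ and of $Q$ by inverse scalars. Nontriviality means $(M,Q)\neq(0,0)$.

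\textbf{Case $M=0$.} Only the rank of $Q$ matters. Rank $1$ gives ${\rm A}_{02}$, and rank $2$ gives ${\rm A}_{01}$ (choose a basis with $Q$ off-diagonal).

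\textbf{Case $M\neq 0$ nilpotent.} We may take $M$ in Jordan form $f_2\mapsto f_1$, $f_1\mapsto 0$. Its centralizer, together with $\lambda$, gives the stabilizer: matrices $g=\begin{pmatrix} a&b\\0&a\end{pmatrix}$ combined with $\lambda$ such that $\lambda g^{-1}Mg=M$, which forces $\lambda=1$ after rescaling. We then reduce $Q=\begin{pmatrix} q_{11}&q_{12}\\ q_{12}&q_{22}\end{pmatrix}$ under $g$. If $q_{11}\neq0$, the form $Q$ transforms so that $b$ kills $q_{12}$, and we are left with $q_{22}=0$ or $q_{22}\neq 0$. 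Here we must be careful: $\lambda$ can still be used jointly with a rescaling of the basis $f_1$ versus $f_2$, namely $g=\mathrm{diag}(a,d)$ with $\lambda=a/d$. This normalizes the two remaining nonzero entries and yields ${\rm A}_{03},{\rm A}_{04}$. If $q_{11}=0$ and $q_{12}\ne0$, we can kill $q_{22}$ and obtain ${\rm A}_{05}$. If $q_{11}=q_{12}=0$, we get ${\rm A}_{06}$ or ${\rm A}_{07}$.

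\textbf{Case $M$ not nilpotent.} Scale by $\lambda$ so that one eigenvalue equals $1$. If $M$ is diagonalizable, it is $\mathrm{diag}(1,\alpha)$, and swapping $f_1,f_2$ and rescaling gives $\alpha\leftrightarrow\alpha^{-1}$ (for $\alpha\ne0$). If $M$ is not diagonalizable, it is the Jordan block with eigenvalue $1$. Now classify $Q$ under the stabilizer of $M$.
\begin{itemize}
\item For $\alpha\neq \pm1$ with $\alpha\neq 0$, the stabilizer consists of diagonal $g$ with $\lambda=1$. The entries $q_{11},q_{22}$ scale independently by squares and $q_{12}$ by a product. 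This gives the generic family ${\rm A}_{11}^{\alpha,\beta}$, in which $\beta^2=q_{12}^2/(q_{11}q_{22})$ is the invariant, together with the degenerate zero patterns ${\rm A}_{08}$, ${\rm A}_{09}$, ${\rm A}_{10}$, ${\rm A}_{12}$, ${\rm A}_{13}$, ${\rm A}_{14}$.
\item For $\alpha=1$, $GL_2$ acts on $Q$ by full congruence, so only the rank matters. This is why ${\rm A}_{11},{\rm A}_{12}$ exclude $\alpha=1$.
\item For $\alpha=-1$, the extra $\lambda=-1$ combined with the swap enlarges the stabilizer. This must be tracked to get the isomorphisms listed in the statement.
\item For $\alpha=0$ the eigenvalues are $1$ and $0$. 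This case is treated separately, leading to ${\rm A}_{13},{\rm A}_{14}$.
\item For the Jordan block, the stabilizer consists of upper triangular Toeplitz $g$ with $\lambda=1$. This gives ${\rm A}_{15}^{\alpha},\dots,{\rm A}_{18}$.
\end{itemize}

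Finally, we verify non-isomorphism using invariants: the Jordan type and eigenvalue ratio of $M$ up to scaling, the rank of $Q$, whether $\ker Q$ or the isotropic lines are $M$-eigenlines, and the invariant $\beta^2$.

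The main obstacle is the bookkeeping in the non-nilpotent case. Each subcase of $\alpha$ ($\alpha=1$, $\alpha=-1$, $\alpha=0$, generic) has a different stabilizer. We must check that every zero pattern of $Q$ is either reduced to a listed form or is genuinely distinct, so that the family ${\rm A}_{11}$ is neither over- nor under-identified.
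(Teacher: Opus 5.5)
Your proposal is correct and takes essentially the same route as the paper. The paper also splits $\theta$ into the operator block $(\alpha_4,\dots,\alpha_7)$, which transforms by $a_{11}$-scaled conjugation and is reduced to one of $0$, $\begin{pmatrix}0&1\\0&0\end{pmatrix}$, $\mathrm{diag}(1,\alpha)$ or $\begin{pmatrix}1&1\\0&1\end{pmatrix}$; it then normalizes the symmetric form $(\alpha_1,\alpha_2,\alpha_3)$ under the corresponding stabilizer, handling $\alpha=1$ by full congruence and using the swap $f_1\leftrightarrow f_2$ with $a_{11}=\alpha^{-1}$, exactly as you do.

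The only slip is bookkeeping. ${\rm A}_{13}$ and ${\rm A}_{14}$ arise only for $\alpha=0$: for $\alpha\neq0$ the pattern $q_{11}\neq0=q_{22}$ is sent by the swap to ${\rm A}_{10}^{\alpha^{-1}}$ or ${\rm A}_{12}^{\alpha^{-1}}$. What sets $\alpha=0$ apart is not its stabilizer, which is the same diagonal group as for generic $\alpha$, but the fact that this swap is unavailable.
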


\begin{proof}
Let $\theta =\big(
B_{1},B_{2},B_{3}\big) \neq 0$ be an arbitrary element of ${\rm Z}^{2}  (
\mathbb{C}^{1,2},\mathbb{C}^{1,2}  ) $. Then%
\begin{equation*}
\theta \ = \ \big(
\alpha _{1}\Delta _{22}+\alpha _{2}\Delta _{33}+\alpha_{3}\Delta _{23},\
\alpha _{4}\Delta _{12}+\alpha _{5}\Delta _{13},\
\alpha_{6}\Delta _{12}+\alpha _{7}\Delta _{13}\big) ,
\end{equation*}%
for some $\alpha _{1},\ldots ,\alpha _{7}\in \mathbb{C}$. The automorphism
group ${\rm Aut}\left( \mathbb{C}^{1,2}\right) $
consists of the automorphisms $\phi $ given by a matrix of the following
form:%
\begin{equation*}
\phi =%
\begin{pmatrix}
a_{11} & 0 & 0 \\
0 & a_{22} & a_{23} \\
0 & a_{32} & a_{33}%
\end{pmatrix}%
.
\end{equation*}

Let $\phi =\bigl(a_{ij}\bigr)\in $ ${\rm Aut}\left( \mathbb{C}^{1,2}\right)
$. Then
\begin{center}$\theta \ast \phi \ =\ \big( \beta _{1}\Delta _{22}+\beta _{2}\Delta
_{33}+\beta _{3}\Delta _{23},\
\beta _{4}\Delta _{12}+\beta _{5}\Delta_{13},\
\beta _{6}\Delta _{12}+\beta _{7}\Delta _{13}\big),$ where
\end{center}
\begin{longtable}{lcl}
$\beta _{1} $&$=$&$ \quad {a^{-1}_{11}}\big( \alpha _{1}a_{22}^{2}+2\alpha
_{3}a_{22}a_{32}+\alpha _{2}a_{32}^{2}\big) ,$ \\
$\beta _{2} $&$=$&${\quad a^{-1}_{11}}\big( \alpha _{1}a_{23}^{2}+2\alpha
_{3}a_{23}a_{33}+\alpha _{2}a_{33}^{2}\big) ,$ \\
$\beta _{3} $&$=$&$ {\quad a^{-1}_{11}}\big( \alpha _{1}a_{22}a_{23}+\alpha
_{2}a_{32}a_{33}+\alpha _{3}a_{22}a_{33}+\alpha _{3}a_{23}a_{32}\big) ,$ \\
$\beta _{4} $&$=$&${\quad a_{11}}\Xi\big( \alpha
_{4}a_{22}a_{33}-\alpha _{6}a_{22}a_{23}+\alpha _{5}a_{32}a_{33}-\alpha
_{7}a_{23}a_{32}\big) ,$ \\
$\beta _{5} $&$=$&${\quad a_{11}}\Xi\big( \alpha
_{5}a_{33}^{2}-\alpha _{6}a_{23}^{2}+\alpha _{4}a_{23}a_{33}-\alpha
_{7}a_{23}a_{33}\big) ,$ \\
$\beta _{6} $&$=$&$-\ {a_{11}}\Xi\big( \alpha
_{5}a_{32}^{2}-\alpha _{6}a_{22}^{2}+\alpha _{4}a_{22}a_{32}-\alpha
_{7}a_{22}a_{32}\big) ,$ \\
$\beta _{7} $&$=$&$-\ {a_{11}}\Xi\big( \alpha
_{4}a_{23}a_{32}- \alpha _{6}a_{22}a_{23}+\alpha _{5}a_{32}a_{33}-\alpha
_{7}a_{22}a_{33}\big) .$
\end{longtable}%
and   $\Xi= \big(a_{22}a_{33}-a_{23}a_{32}\big)^{-1}.$
Whence
\begin{center}$%
\begin{pmatrix}
\beta _{4} & \beta _{5} \\
\beta _{6} & \beta _{7}%
\end{pmatrix}%
\ =\ a_{11}%
\begin{pmatrix}
a_{22} & a_{23} \\
a_{32} & a_{33}%
\end{pmatrix}%
^{-1}%
\begin{pmatrix}
\alpha _{4} & \alpha _{5} \\
\alpha _{6} & \alpha _{7}%
\end{pmatrix}%
\begin{pmatrix}
a_{22} & a_{23} \\
a_{32} & a_{33}%
\end{pmatrix}%
$.\end{center}
So we may assume
\begin{equation*}
\begin{pmatrix}
\alpha _{4} & \alpha _{5} \\
\alpha _{6} & \alpha _{7}%
\end{pmatrix}%
\ \in\  \left\{
\begin{pmatrix}
0 & 0 \\
0 & 0%
\end{pmatrix}%
, \
\begin{pmatrix}
0 & 1 \\
0 & 0%
\end{pmatrix}%
, \
\begin{pmatrix}
1 & 0 \\
0 & \alpha%
\end{pmatrix}%
, \
\begin{pmatrix}
1 & 1 \\
0 & 1%
\end{pmatrix}%
\right\} .
\end{equation*}

\begin{itemize}
\item $%
\begin{pmatrix}
\alpha _{4} & \alpha _{5} \\
\alpha _{6} & \alpha _{7}%
\end{pmatrix}%
=%
\begin{pmatrix}
0 & 0 \\
0 & 0%
\end{pmatrix}%
$. In this case, we have
\begin{center}$%
\begin{pmatrix}
-\beta _{3} & -\beta _{2} \\
\beta _{1} & \beta _{3}%
\end{pmatrix}%
\ =\ \frac{1}{a_{11}}%
\begin{pmatrix}
a_{33} & -a_{23} \\
-a_{32} & a_{22}%
\end{pmatrix}%
\begin{pmatrix}
-\alpha _{3} & -\alpha _{2} \\
\alpha _{1} & \alpha _{3}%
\end{pmatrix}%
\begin{pmatrix}
a_{22} & a_{23} \\
a_{32} & a_{33}%
\end{pmatrix}%
$.\end{center}
Since $\theta \neq 0$, we may assume%
\begin{equation*}
\begin{pmatrix}
-\alpha _{3} & -\alpha _{2} \\
\alpha _{1} & \alpha _{3}%
\end{pmatrix}%
\ \in\  \left\{
\begin{pmatrix}
-1 & 0 \\
0 & 1%
\end{pmatrix}%
,%
\begin{pmatrix}
0 &  -1 \\
0 & 0%
\end{pmatrix}%
\right\} .
\end{equation*}%
So we get the superalgebras ${\rm A}_{01}$ and ${\rm A}_{02}$.

\item $%
\begin{pmatrix}
\alpha _{4} & \alpha _{5} \\
\alpha _{6} & \alpha _{7}%
\end{pmatrix}%
=%
\begin{pmatrix}
0 & 1 \\
0 & 0%
\end{pmatrix}%
$.

\begin{itemize}
\item $\alpha _{1}\neq 0$. Let $\lambda =\alpha _{1}\alpha _{2}- \alpha _{3}^{2},$
then $\phi=\phi_1$  if  $\lambda  =0$ and $\phi=\phi_2$  if  $\lambda  \neq 0$:%
\begin{equation*}
\phi_1 \ = \ \begin{pmatrix}
\frac{1}{\alpha _{1}} & 0 & 0 \\
0 & \frac{1}{\alpha _{1}} & -\frac{\alpha _{3}}{\alpha _{1}} \\
0 & 0 & 1%
\end{pmatrix}%
,\
\phi_2 \ = \  \begin{pmatrix}
\frac{\sqrt{\lambda }}{\alpha_{1}} & 0 & 0 \\
0 & \frac{\sqrt[4]{\lambda}}{\alpha _{1}} & -\frac{\alpha _{3}}{\alpha_1\sqrt[4]{\lambda}} \\
0 & 0 & \frac{1}{\sqrt[4]{\lambda}}%
\end{pmatrix}
.
\end{equation*}%
Then $\theta \ast \phi \in \big\{ \left( \Delta _{22},\ \Delta _{13},\ 0\right),\ \left( \Delta _{22}+\Delta _{33},\ \Delta _{13},\ 0\right) \big\} $. So we
get the superalgebras ${\rm A}_{03}$ and ${\rm A}_{04}$.

\item $\alpha _{1}=0,\alpha _{3}\neq 0$. We choose $\phi $ to be the
following automorphism:%
\begin{equation*}
\phi =%
\begin{pmatrix}
1 & 0 & 0 \\
0 & \alpha^{-\frac 12} _{3} & -\frac{\alpha _{2}\alpha
_{3}^{-\frac 32}}{2} \\
0 & 0 &  \alpha _{3}^{-\frac 12}%
\end{pmatrix}%
.
\end{equation*}%
Then $\theta \ast \phi =\big( \Delta _{23},\ \Delta _{13},\ 0\big) $. So we
get the superalgebra ${\rm A}_{05}$.

\item $\alpha _{1}=\alpha _{3}=0$. If $\alpha _{2}=0$, we get the
superalgebra ${\rm A}_{06}$. If $\alpha _{2}\neq 0$, we choose $\phi $ to
be the following automorphism:%
\begin{equation*}
\phi =%
\begin{pmatrix}
\alpha _{2} & 0 & 0 \\
0 & \alpha _{2} & 0 \\
0 & 0 & 1%
\end{pmatrix}%
.
\end{equation*}%
Then $\theta \ast \phi \ =\ \big( \Delta _{33},\ \Delta _{13}, \ 0\big) $. So we
get the superalgebra ${\rm A}_{07}$.
\end{itemize}

\item $%
\begin{pmatrix}
\alpha _{4} & \alpha _{5} \\
\alpha _{6} & \alpha _{7}%
\end{pmatrix}%
=%
\begin{pmatrix}
1 & 0 \\
0 & 1%
\end{pmatrix}%
$. Let $\phi $ be the following automorphism:%
\begin{equation*}
\phi =%
\begin{pmatrix}
a_{11} & 0 & 0 \\
0 & a_{22} & a_{23} \\
0 & a_{32} & a_{33}%
\end{pmatrix}%
.
\end{equation*}%
Hence $%
\begin{pmatrix}
\beta _{4} & \beta _{5} \\
\beta _{6} & \beta _{7}%
\end{pmatrix}%
=%
\begin{pmatrix}
1 & 0 \\
0 & 1%
\end{pmatrix}%
$ and $%
\begin{pmatrix}
-\beta _{3} & -\beta _{2} \\
\beta _{1} & \beta _{3}%
\end{pmatrix}%
=%
\begin{pmatrix}
a_{33} & -a_{23} \\
-a_{32} & a_{22}%
\end{pmatrix}%
\begin{pmatrix}
-\alpha _{3} & -\alpha _{2} \\
\alpha _{1} & \alpha _{3}%
\end{pmatrix}%
\begin{pmatrix}
a_{22} & a_{23} \\
a_{32} & a_{33}%
\end{pmatrix}%
$. So we may assume%
\begin{equation*}
\begin{pmatrix}
-\alpha _{3} & -\alpha _{2} \\
\alpha _{1} & \alpha _{3}%
\end{pmatrix}%
\ \in \  \left\{
\begin{pmatrix}
0 & 0 \\
0 & 0%
\end{pmatrix}%
, \
\begin{pmatrix}
-1 & 0 \\
0 & 1%
\end{pmatrix}%
, \
 \begin{pmatrix}
0 & - 1 \\
0 & 0%
\end{pmatrix}
\right\} .
\end{equation*}%
Thus we get the superalgebras ${\rm A}_{08}^{1},$ ${\rm A}_{09}^{1},$ and ${\rm A}_{10}^{1}$.

\item $%
\begin{pmatrix}
\alpha _{4} & \alpha _{5} \\
\alpha _{6} & \alpha _{7}%
\end{pmatrix}%
=%
\begin{pmatrix}
1 & 0 \\
0 & \alpha%
\end{pmatrix}%
$ with $\alpha \neq 1$.

\begin{itemize}
\item $\alpha _{1}\alpha _{2}\neq 0$. We choose $\phi $ to be the following
automorphism:%
\begin{equation*}
\phi =%
\begin{pmatrix}
1 & 0 & 0 \\
0 & \frac{1}{\sqrt{\alpha _{1}}} & 0 \\
0 & 0 & \frac{1}{\sqrt{\alpha _{2}}}%
\end{pmatrix}%
\text{.}
\end{equation*}%
Then $\theta \ast \phi \ =\ \big( \Delta _{22}+\Delta _{33}+\beta _{3}\Delta_{23},\ \Delta _{12},\ \alpha \Delta _{13}\big) $. So we get
${\rm A}_{11}^{\alpha \neq 1,\beta }$.

\item $\alpha _{1}=0,\alpha _{2}\neq 0$. Let $\phi=\phi_1 $ if $\alpha _{3}=0$ and $\phi=\phi_2$  if $\alpha _{3}\neq 0$:%
\begin{equation*}
\phi_1 \ = \ \begin{pmatrix}
1 & 0 & 0 \\
0 & 1 & 0 \\
0 & 0 & \frac{1}{\sqrt{\alpha _{2}}}%
\end{pmatrix}%
, \
\phi_2 \ = \ \begin{pmatrix}
1 & 0 & 0 \\
0 & \frac{\sqrt{\alpha _{2}}}{\alpha _{3}} & 0 \\
0 & 0 & \frac{1}{\sqrt{\alpha _{2}}}%
\end{pmatrix}%
.
\end{equation*}%
Then $\theta \ast \phi \in \big\{ \left( \Delta _{33}, \ \Delta _{12},\ \alpha \Delta _{13}\right) ,\ \left( \Delta _{33}+\Delta _{23},\ \Delta _{12},\ \alpha
\Delta _{13}\right) \big\} $. Hence we get the superalgebras ${\rm A}_{10}^{\alpha \neq 1}$ and ${\rm A}_{12}^{\alpha \neq 1}$.

\item $\alpha _{1}\neq 0,\alpha _{2}=0$. Assume first $\alpha =0$.
Let $\phi = \phi_1$  if $\alpha _{3}=0$ and $\phi=\phi_2$
if $\alpha _{3}\neq 0$:%
\begin{equation*}
\phi_1 \ = \ \begin{pmatrix}
1 & 0 & 0 \\
0 & \frac{1}{\sqrt{\alpha _{1}}} & 0 \\
0 & 0 & 1%
\end{pmatrix}%
, \
\phi_2 \ = \ \begin{pmatrix}
1 & 0 & 0 \\
0 & \frac{1}{\sqrt{\alpha _{1}}} & 0 \\
0 & 0 & \frac{\sqrt{\alpha _{1}}}{\alpha _{3}}%
\end{pmatrix}%
\text{.}
\end{equation*}%
Then $\theta \ast \phi \in \big\{ \left( \Delta _{22},\ \Delta _{12},\ 0\right), \ \left( \Delta _{22}+\Delta _{23},\ \Delta _{12},\ 0\right) \big\} $. So we
get   ${\rm A}_{13}$ and ${\rm A}_{14}$.

Assume now $\alpha \neq 0$.
Let $\phi=\phi_1 $  if $\alpha _{3}=0$ and $\phi=\phi_2$ if $\alpha _{3}\neq 0$:%
\begin{equation*}
\phi_1 \ = \ \begin{pmatrix}
{\alpha^{-1} } & 0 & 0 \\
0 & 0 & \frac{ {\bf i}}{\sqrt{\alpha \alpha _{1}}}  \\
0 & 1 & 0%
\end{pmatrix}%
\allowbreak, \
\phi_2 \ = \ \begin{pmatrix}
{\alpha^{-1} } & 0 & 0 \\
0 & 0 & \frac{1}{\sqrt{\alpha \alpha _{1}}} \\
0 & \frac{\alpha _{1}}{\alpha _{3}\sqrt{\alpha \alpha _{1}}} & 0%
\end{pmatrix}%
.
\end{equation*}%
Then $\theta \ast \phi \in \big\{ ( -\Delta _{33},\ \Delta _{12},\ {\alpha^{-1} }\Delta _{13}),\
  ( \Delta _{33}+\Delta _{23},\ \Delta _{12}, \
{\alpha^{-1} }\Delta _{13}) \big\} $. So we get
${\rm A}_{10}^{{\alpha^{-1} }}$ and ${\rm A}_{12}^{ {\alpha^{-1} }%
}$.

\item $\alpha _{1}=\alpha _{2}=0$. If $\alpha _{3}=0$, then
$\theta \ =\ \big( 0,\ \Delta _{12},\ \alpha \Delta _{13}\big) $. So we get the superalgebras $%
{\rm A}_{08}^{\alpha }$. If $\alpha _{3}\neq 0$, we choose $\phi $ to be
the following automorphism:%
\begin{equation*}
\phi =%
\begin{pmatrix}
1 & 0 & 0 \\
0 & 1 & 0 \\
0 & 0 & \frac{1}{\alpha _{3}}%
\end{pmatrix}%
.
\end{equation*}%
Then $\theta \ast \phi \ =\ \big( \Delta _{23},\ \Delta _{12},\ \alpha \Delta_{13}\big) $. So we get the superalgebras ${\rm A}_{09}^{\alpha \neq 1}$%
.
\end{itemize}

\item $%
\begin{pmatrix}
\alpha _{4} & \alpha _{5} \\
\alpha _{6} & \alpha _{7}%
\end{pmatrix}%
=%
\begin{pmatrix}
1 & 1 \\
0 & 1%
\end{pmatrix}%
$.

\begin{itemize}
\item $\alpha _{1}\neq 0$. We choose $\phi $ to be the following
automorphism:%
\begin{equation*}
\phi =%
\begin{pmatrix}
1 & 0 & 0 \\
0 & \alpha^{-\frac 12} _{1}  & - \alpha _{3} \alpha _{1}^{-\frac 32}%
 \\
0 & 0 &  \alpha^{-\frac 12} _{1}
\end{pmatrix}%
.
\end{equation*}%
Then $\theta \ast \phi \ =\ \big( \Delta _{22}+\alpha \Delta_{33},\ \Delta _{12}+\Delta_{13},\
\Delta _{13}\big) $. Thus we get the superalgebras ${\rm A}_{15}^{\alpha }$.

\item $\alpha _{1}=0,\alpha _{3}\neq 0$. We choose $\phi $ to be the
following automorphism:%
\begin{equation*}
\phi =%
\begin{pmatrix}
1 & 0 & 0 \\
0 &  {\alpha^{-\frac 12}  _{3}} & -\frac{\alpha _{2}\alpha^{-\frac 32} _{3}}{2 } \\
0 & 0 &  {\alpha^{-\frac 12}  _{3}}
\end{pmatrix}%
.
\end{equation*}%
Then $\theta \ast \phi \ =\ \big( \Delta _{23},\ \Delta _{12}+\Delta _{13},\ \Delta_{13}\big) $. So we get the superalgebra  ${\rm A}_{16}$.

\item $\alpha _{1}=\alpha _{3}=0$. If $\alpha _{2}=0$, we get the
superalgebra ${\rm A}_{17}$. If $\alpha _{2}\neq 0$, we choose $\phi $ to
be the following automorphism:%
\begin{equation*}
\phi =%
\begin{pmatrix}
1 & 0 & 0 \\
0 & \alpha^{-\frac 12} _{2}  & 0 \\
0 & 0 &\alpha^{-\frac 12} _{2}
\end{pmatrix}%
.
\end{equation*}%
Then $\theta \ast \phi \ =\ \big( \Delta _{33},\ \Delta _{12}+\Delta _{13},\ \Delta
_{13}\big) $. Hence we get the superalgebra ${\rm A}_{18}$.
\end{itemize}
\end{itemize}
\end{proof}

\subsubsection{The classification Theorem A1}\label{A1}
\begin{theoremA1}
Let $\rm{N}$ be a nontrivial $3$-dimensional noncommutative Jordan superal\-gebra of
type $\left( 1,2\right) $. Then $\rm{N}$ is isomorphic to one Jordan superalgebra listed in Theorem \ref{(1,2)},
or one anticommutative superalgebra listed in Theorem \ref{ant12},
or to one of the following
superalgebras:

\begin{longtable}{lllllllllllllll}

 \hline
${\rm N}_{01}^{\alpha \neq 0}$&$:$&$e_{1}f_{1}=\left( 1+\alpha \right) f_{2}$&$f_{1}e_{1}=\left( 1-\alpha \right) f_{2} $\\
&&$f_{1}f_{2}=\left(\alpha +1\right) e_{1}$&$f_{2}f_{1}=\left( \alpha -1\right) e_{1}$\\

${\rm N}_{02}$&$:$&$e_{1}f_{1}=f_{2}$&$f_{1}e_{1}=f_{2}$&$f_{1}f_{1}=e_{1}$\\
&&$f_{1}f_{2}=e_{1}$&$f_{2}f_{1}=-e_{1}
$\\

${\rm N}_{03}^{\alpha \neq 0}$&$:$&$f_{1}f_{2}=\left( \alpha+1\right) e_{1}$&$f_{2}f_{1}=\left( \alpha -1\right) e_{1}$\\

${\rm N}_{04}$&$:$&$f_{1}f_{2}=e_{1}$&$f_{2}f_{1}=-e_{1}$&$f_{2}f_{2}=-e_{1}$\\

${\rm N}_{05}^{\alpha }$&$:$&$e_{1}f_{1}=\left( 1+\alpha \right)f_{2}$&$f_{1}e_{1}=\left( 1-\alpha \right) f_{2}$&$f_{1}f_{1}=e_{1}$\\

${\rm N}_{06}^{\alpha \neq 0}$&$:$&$e_{1}f_{1}=\left( 1+\alpha\right) f_{2}$&$f_{1}e_{1}=\left( 1-\alpha \right) f_{2}$\\

${\rm N}_{07}^{\alpha \neq0}$&$:$&$e_{1}e_{1}=e_{1}$&$e_{1}f_{1}=\left( \frac{1}{2}+\alpha \right) f_{1}$&$f_{1}e_{1}=\left( \frac{1}{2}-\alpha \right) f_{1}$\\

${\rm N}_{08}^{\alpha \neq0}$&$:$&
$e_{1}e_{1}=e_{1}$&$e_{1}f_{1}=f_{1}$&$f_{1}e_{1}=f_{1}$\\
&&$e_{1}f_{2}=\left(
\frac{1}{2}+\alpha \right) f_{2}$&$f_{2}e_{1}=\left( \frac{1}{2}-\alpha \right) f_{2}$\\

 ${\rm N}_{09}$&$:$&$e_{1}e_{1}=e_{1}$&$e_{1}f_{1}=f_{1}$\\
 &&$f_{1}e_{1}=f_{1}$&$f_{1}f_{1}=e_{1}$\\

${\rm N}_{10}^{(\alpha ,\beta)\neq0 }$&$:$&
$e_{1}e_{1}=e_{1}$&$e_{1}f_{1}=\left(  \frac{1}{2}+\alpha \right) f_{1}$&$f_{1}e_{1}=\left( \frac{1}{2}-\alpha\right) f_{1}$\\
&&$e_{1}f_{2}=\left( \frac{1}{2}+\beta \right)
f_{2}$&$f_{2}e_{1}=\left( \frac{1}{2}-\beta \right) f_{2}$\\

${\rm N}_{11}^{\alpha }$&$:$&
$e_{1}e_{1}=e_{1}$&$e_{1}f_{1}=\left( \frac{1}{2}+\alpha \right) f_{1}$&$
f_{1}e_{1}=\left( \frac{1}{2}-\alpha \right)f_{1}$\\
&&$
e_{1}f_{2}=f_{1}+\left( \frac{1}{2}+\alpha \right)f_{2}$&$
f_{2}e_{1}=-f_{1}+\left( \frac{1}{2}-\alpha \right) f_{2}$\\

${\rm N}_{12}^{\alpha }$&$:$&
$e_{1}e_{1}=e_{1}$&$e_{1}f_{1}=\frac{1}{2}f_{1}+\alpha f_{2}$&$f_{1}e_{1}=\frac{1}{2}f_{1}-\alpha f_{2}$\\&&$e_{1}f_{2}=f_{1}+
\frac{1}{2}f_{2}$&$f_{2}e_{1}=-f_{1}+\frac{1}{2}f_{2}$&$f_{1}f_{1}=-2\alpha e_{1}$\\&&$f_{1}f_{2}=e_{1}$&$f_{2}f_{1}=-e_{1}$&$f_{2}f_{2}=2e_{1}$\\

 \hline
\end{longtable}

\noindent All listed superalgebras are non-isomorphic except:
${\rm N}_{03}^{\alpha}\cong {\rm N}_{03}^{-\alpha }$ and ${\rm N}_{10}^{\alpha ,\beta }\cong
{\rm N}_{10}^{\beta ,\alpha }.$
\end{theoremA1}

\subsubsection{The proof of  Theorem A1}
Let $\rm{N}$ be a nontrivial $3$-dimensional noncommutative Jordan
superalgebra of type $\left( 1,2\right) $. Then $\rm{N}^{+}$ is a $3$-dimensional Jordan superalgebra of type $\left( 1,2\right) $. Then we may
assume $\rm{N}^{+}\in \big\{ {\rm J}_{01},\ldots ,{\rm J}_{11}\big\} $. So we have the following cases:

\begin{enumerate}[I.]
    \item

\underline{$\rm{N}^{+}={\rm J}_{01}$}. Let $\theta \ =\ \big(B_{1},B_{2},B_{3}\big) \neq 0$ be an arbitrary element of ${\rm Z}^{2}\left(
{\rm J}_{01},{\rm J}_{01}\right) $. Then
\begin{center}
     $\theta\ =\
     \big( \alpha_{1}\Delta _{22}+\alpha _{2}\Delta _{23},\ 0,\ \alpha _{2}\Delta _{12}\big)$
for some $\alpha _{1},\alpha _{2}\in \mathbb{C}$.
\end{center} The automorphism group ${\rm Aut}\left( {\rm J}_{01}\right) $ consists of the automorphisms $\phi$ given by a matrix of the following form:
\begin{equation*}
\phi =%
\begin{pmatrix}
a_{11} & 0 & 0 \\
0 & \epsilon & 0 \\
0 & a_{32} & \epsilon a_{11}%
\end{pmatrix}%
:\epsilon ^{2}=1.
\end{equation*}

\begin{itemize}
\item $\alpha _{2}\neq 0$. We choose $\phi $ to be the following
automorphism:%
\begin{equation*}
\phi =%
\begin{pmatrix}
1 & 0 & 0 \\
0 & 1 & 0 \\
0 & - \frac{\alpha _{1}}{2\alpha _{2}} & 1%
\end{pmatrix}%
.
\end{equation*}%
Then $\theta \ast \phi \ =\ \big( \alpha _{2}\Delta _{23},\ 0,\ \alpha _{2}\Delta_{12}\big) $. So we get the superalgebras ${\rm N}_{01}^{\alpha
\neq 0}$.

\item $\alpha _{2}=0$. We choose $\phi $ to be the following automorphism:%
\begin{equation*}
\phi =%
\begin{pmatrix}
\alpha _{1} & 0 & 0 \\
0 & 1 & 0 \\
0 & 0 & \alpha _{1}%
\end{pmatrix}%
.
\end{equation*}%
Then $\theta \ast \phi  \ =\ \big( \Delta _{22},\ 0,\ 0\big) $. So we get the
superalgebra ${\rm N}_{02}$.
\end{itemize}

\item \underline{$\rm{N}^{+}={\rm J}_{02}$}. Let $\theta \ =\ \big(
B_{1},B_{2},B_{3}\big) $ be an arbitrary element of ${\rm Z}^{2}\left( {\rm J}_{02},{\rm J}_{02}\right) $. Then
\begin{center}
    $\theta \ = \ \big( \alpha _{1}\Delta_{22}+\alpha _{2}\Delta _{33}+\alpha _{3}\Delta _{23},\ 0,\ 0\big)$ for some $%
\alpha _{1},\alpha _{2},\alpha _{3}\in \mathbb{C}$.
\end{center} The automorphism group ${\rm Aut}\left( {\rm J}_{02}\right)$ consists
of the automorphisms $\phi $ given by a matrix of the following form:%
\begin{equation*}
\phi =%
\begin{pmatrix}
a_{22}a_{33}-a_{23}a_{32} & 0 & 0 \\
0 & a_{22} & a_{23} \\
0 & a_{32} & a_{33}%
\end{pmatrix}%
.
\end{equation*}

Let $\phi =\bigl(a_{ij}\bigr)\in $ ${\rm Aut}\left( {\rm J}_{02}\right) $%
. Then $\theta \ast \phi \ =\ \left(
\beta _{1}\Delta _{22}+\beta _{2}\Delta_{33}+\beta _{3}\Delta _{23},\ 0,\ 0\right) $ where%
\begin{longtable}{lcl}
$\beta _{1} $&$=$&$\frac{ \alpha
_{1}a_{22}^{2}+2\alpha _{3}a_{22}a_{32}+\alpha _{2}a_{32}^{2}}{a_{22}a_{33}-a_{23}a_{32}},$\\
$\beta _{2} $&$=$&$\frac{\alpha
_{1}a_{23}^{2}+2\alpha _{3}a_{23}a_{33}+\alpha _{2}a_{33}^{2}}{a_{22}a_{33}-a_{23}a_{32}},$ \\
$\beta _{3} $&$=$&$\frac{\alpha
_{1}a_{22}a_{23}+\alpha _{2}a_{32}a_{33}+\alpha _{3}a_{22}a_{33}+\alpha
_{3}a_{23}a_{32}}{a_{22}a_{33}-a_{23}a_{32}}.$
\end{longtable}
Whence%
\begin{equation*}
\begin{pmatrix}
-\beta _{3} & -\beta _{2} \\
\beta _{1} & \beta _{3}%
\end{pmatrix}%
=%
\begin{pmatrix}
a_{22} & a_{23} \\
a_{32} & a_{33}%
\end{pmatrix}%
^{-1}%
\begin{pmatrix}
-\alpha _{3} & -\alpha _{2} \\
\alpha _{1} & \alpha _{3}%
\end{pmatrix}%
\begin{pmatrix}
a_{22} & a_{23} \\
a_{32} & a_{33}%
\end{pmatrix}%
.
\end{equation*}%
So we may assume $%
\begin{pmatrix}
-\alpha _{3} & -\alpha _{2} \\
\alpha _{1} & \alpha _{3}%
\end{pmatrix}%
\in \left\{
\begin{pmatrix}
-\alpha & 0 \\
0 & \alpha%
\end{pmatrix}%
, \
\begin{pmatrix}
0 & 1 \\
0 & 0%
\end{pmatrix}%
\right\} $. Therefore we obtain the superalgebras ${\rm N}_{03}^{\alpha
  \neq 0}$ and ${\rm N}_{04}$.

\item
\underline{$\rm{N}^{+}={\rm J}_{03}$}. Let $\theta \ =\ \big(
B_{1},B_{2},B_{3}\big) \neq 0$ be an arbitrary element of ${\rm Z}^{2}\left(
{\rm J}_{03},{\rm J}_{03}\right) $. Then
\begin{center}$\theta \ =\ \big( \alpha
_{1}\Delta _{22},\ 0,\ \alpha _{2}\Delta _{12}\big)$ for some $\alpha
_{1},\alpha _{2}\in \mathbb{C}$.\end{center} The automorphism group  ${\rm Aut}\left( {\rm J}_{03}\right)$ consists of the automorphisms $%
\phi $ given by a matrix of the following form:%
\begin{equation*}
\phi =%
\begin{pmatrix}
a_{11} & 0 & 0 \\
0 & a_{22} & 0 \\
0 & a_{32} & a_{11}a_{22}%
\end{pmatrix}%
.
\end{equation*}

Let $\phi =\bigl(a_{ij}\bigr)\in $ ${\rm Aut}\left( {\rm J}_{03}\right).$ Then
\begin{center}$\theta \ast \phi \ =\ \left( \beta _{1}\Delta _{22},\ 0,\ \beta _{2}\Delta
_{12}\right),$ where
$\beta _{1}=\alpha _{1}a^{-1}_{11}a_{22}^{2}$ and $%
\beta _{2}=\alpha _{2}$.\end{center}
So we have the representatives
$\big( \Delta_{22},\ 0,\ \alpha \Delta _{12}\big)$ and
$\big( 0,\ 0,\ \alpha \Delta _{12}\big).$
So we get the superalgebras ${\rm N}_{05}^{\alpha }$ and ${\rm N}_{06}^{\alpha \neq 0}$.

\item
\underline{$\rm{N}^{+}={\rm J}_{04}$}. Let $\theta \ =\ \big(
B_{1},B_{2},B_{3}\big) $ be an arbitrary element of ${\rm Z}^{2}\left( {\rm J}_{04},{\rm J}_{04}\right) $. Then
\begin{center}$\theta \ =\
\big( 0,\ \alpha _{1}\Delta_{12},\ 0\big) $ for some $\alpha _{1}\in \mathbb{C}^{\ast}$.\end{center}
The
automorphism group ${\rm Aut}\left( {\rm J}_{04}\right) $ consists of the automorphisms $\phi $ given by a matrix of
the following form:%
\begin{equation*}
\phi =%
\begin{pmatrix}
1 & 0 & 0 \\
0 & a_{22} & 0 \\
0 & 0 & a_{33}%
\end{pmatrix}%
.
\end{equation*}

Let $\phi =\bigl(a_{ij}\bigr)\in $ ${\rm Aut}\left( {\rm J}_{04}\right).$
Then $\theta \ast \phi =\theta $. Hence we have the representatives $%
\big( 0,\ \alpha \Delta _{12},\ 0\big) $. Therefore we obtain the
superalgebras ${\rm N}_{07}^{\alpha \neq 0}$.

\item
\underline{$\rm{N}^{+}={\rm J}_{05}$}. Let $\theta \ =\ \big(B_{1},B_{2},B_{3}\big) \neq 0$ be an arbitrary element of ${\rm Z}^{2}\left(
{\rm J}_{05},{\rm J}_{05}\right) $. Then
\begin{center}
    $\theta\  =\ \big( 0,\ 0,\ \alpha_{1}\Delta _{13}\big)$ for some $\alpha _{1}\in \mathbb{C}^{\ast}$.
\end{center} The
automorphism group ${\rm Aut}\left( {\rm J}_{05}\right) $ consists of the automorphisms $\phi $ given by a matrix of the following form:
\begin{equation*}
\phi =%
\begin{pmatrix}
1 & 0 & 0 \\
0 & a_{22} & 0 \\
0 & 0 & a_{33}%
\end{pmatrix}%
.
\end{equation*}

Let $\phi =\bigl(a_{ij}\bigr)\in $ ${\rm Aut}\left( {\rm J}_{05}\right).$
Then $\theta \ast \phi =\theta $. Hence, we have the representatives $%
\big( 0,\ 0,\ \alpha \Delta _{13}\big) $. Thus, we get the superalgebras $%
{\rm N}_{08}^{\alpha \neq 0}$.

\item
\underline{$\rm{N}^{+}={\rm J}_{06}$}. Let $\theta \ =\ \big(B_{1},B_{2},B_{3}\big) \neq 0$ be an arbitrary element of ${\rm Z}^{2}\left(
{\rm J}_{06},{\rm J}_{06}\right) $. Then
\begin{center}
    $\theta \ =\ \big( \alpha_{1}\Delta _{22},\ 0,\ 0\big)$ for some $\alpha _{1}\in \mathbb{C}^{\ast}$.
\end{center}
The automorphism group ${\rm Aut}\left( {\rm J}_{06}\right) $ consists of the automorphisms $\phi $ given by a matrix of
the following form:%
\begin{equation*}
\phi =%
\begin{pmatrix}
1 & 0 & 0 \\
0 & a_{22} & 0 \\
0 & 0 & a_{33}%
\end{pmatrix}%
.
\end{equation*}%
Let $\phi =\bigl(a_{ij}\bigr)\in $ ${\rm Aut}\left( {\rm J}_{06}\right) $%
. Then $\theta \ast \phi \  =\
\left( \alpha _{1}a_{22}^{2}\Delta_{22},\ 0,\ 0\right) $. Hence we have the representative
$\big( \Delta_{22},\ 0,\ 0\big) $. Thus we get the superalgebra ${\rm N}_{09}$.%

\item
\underline{$\rm{N}^{+}={\rm J}_{07}$}. Then ${\rm Z}^{2}\left(
{\rm J}_{07},{\rm J}_{07}\right) =\left\{ 0\right\} $.

\item
\underline{$\rm{N}^{+}={\rm J}_{08}$}. Let $\theta \ =\ \big(B_{1},B_{2},B_{3}\big) $ be an arbitrary element of ${\rm Z}^{2}\left( {\rm J}_{08},{\rm J}_{08}\right) $.
 Then
 \begin{center}
     $\theta \ =\ \big( 0,\ \alpha _{1}\Delta_{12}+\alpha _{2}\Delta _{13},\ \alpha _{3}\Delta_{12}+\alpha _{4}\Delta_{13}\big)$ for some $\alpha _{1},\alpha _{2},\alpha _{3},\alpha _{4}\in
\mathbb{C}$.
 \end{center} The automorphism group ${\rm Aut}\left(
{\rm J}_{08}\right) $ consists of the automorphisms $\phi $ given by a
matrix of the following form:
\begin{equation*}
\phi =%
\begin{pmatrix}
1 & 0 & 0 \\
0 & a_{22} & a_{23} \\
0 & a_{32} & a_{33}%
\end{pmatrix}%
.
\end{equation*}%
Let $\phi =\bigl(a_{ij}\bigr)\in $ ${\rm Aut}\left( {\rm J}_{08}\right)
$. Then $\theta \ast \phi \ =\
\left( 0,\ \beta _{1}\Delta _{12}+\beta _{2}\Delta_{13},\ \beta _{3}\Delta _{12}+\beta _{4}\Delta _{13}\right),$ where%
\begin{longtable}{lcl}
$\beta _{1} $&$=$&$\frac{\alpha
_{1}a_{22}a_{33}-\alpha _{3}a_{22}a_{23}+\alpha _{2}a_{32}a_{33}-\alpha
_{4}a_{23}a_{32}}{a_{22}a_{33}-a_{23}a_{32}},$ \\[2mm]
$\beta _{2} $&$=$&$\frac{\alpha
_{2}a_{33}^{2}-\alpha _{3}a_{23}^{2}+\alpha _{1}a_{23}a_{33}-\alpha
_{4}a_{23}a_{33}}{a_{22}a_{33}-a_{23}a_{32}},$ \\[2mm]
$\beta _{3} $&$=$&$-\frac{\alpha
_{2}a_{32}^{2}-\alpha _{3}a_{22}^{2}+\alpha _{1}a_{22}a_{32}-\alpha
_{4}a_{22}a_{32}}{a_{22}a_{33}-a_{23}a_{32}},$\\[2mm]
$\beta _{4} $&$=$&$-\frac{\alpha
_{1}a_{23}a_{32}-\alpha _{3}a_{22}a_{23}+\alpha _{2}a_{32}a_{33}-\alpha
_{4}a_{22}a_{33}}{a_{22}a_{33}-a_{23}a_{32}}.$
\end{longtable}%
Whence $%
\begin{pmatrix}
\beta _{1} & \beta _{2} \\
\beta _{3} & \beta _{4}%
\end{pmatrix}%
=%
\begin{pmatrix}
a_{22} & a_{23} \\
a_{32} & a_{33}%
\end{pmatrix}%
^{-1}%
\begin{pmatrix}
\alpha _{1} & \alpha _{2} \\
\alpha _{3} & \alpha _{4}%
\end{pmatrix}%
\begin{pmatrix}
a_{22} & a_{23} \\
a_{32} & a_{33}%
\end{pmatrix}%
\allowbreak $. So we may assume%
\begin{equation*}
\begin{pmatrix}
\alpha _{1} & \alpha _{2} \\
\alpha _{3} & \alpha _{4}%
\end{pmatrix}%
\in \left\{
\begin{pmatrix}
\alpha & 0 \\
0 & \beta%
\end{pmatrix}%
,%
\begin{pmatrix}
\alpha & 1 \\
0 & \alpha%
\end{pmatrix}%
\right\} .
\end{equation*}%
Thus we get the superalgebras ${\rm N}_{10}^{\alpha ,\beta }$ and $%
{\rm N}_{11}^{\alpha }$.

\item
\underline{$\rm{N}^{+}={\rm J}_{09}$}. Then ${\rm Z}^{2}\left( {\rm J}_{09},{\rm J}_{09}\right) =\left\{ 0\right\} $.

\item
\underline{$\rm{N}^{+}={\rm J}_{10}$}. Let $\theta \ =\ \big(B_{1},B_{2},B_{3}\big) $ be an arbitrary element of ${\rm Z}^{2}\left( {\rm J}_{10},{\rm J}_{10}\right) $. Then
\begin{center}
$\theta \ =\
\big( -2\alpha _{1}\Delta _{22}+2\alpha _{2}\Delta _{23}+2\alpha
_{3}\Delta _{33}, \ \alpha _{2}\Delta _{12}+\alpha _{3}\Delta _{13},\ \alpha
_{1}\Delta _{12}-\alpha _{2}\Delta _{13}\big),$
\end{center}
for some $\alpha _{1},\alpha _{2},\alpha _{3},\alpha _{4}\in \mathbb{C}$.
The automorphism group  ${\rm Aut}\left( {\rm J}_{10}\right) $ consists of the automorphisms $\phi $ given by a matrix of
the following form:
\begin{equation*}
\begin{pmatrix}
1 & 0 & 0 \\
0 & a_{22} & a_{23} \\
0 & a_{32} & a_{33}
\end{pmatrix}
, \mbox{ where } \ a_{22}a_{33} - a_{23}a_{32} =1.
\end{equation*}%
Without any loss of generality we may assume $\alpha _{3}\neq 0$. To see
this, assume $\alpha _{3}=0$. Let $\phi =\bigl(a_{ij}\bigr)\in $ ${\rm Aut}%
\left( {\rm J}_{10}\right) $. Write%
\begin{center}
$\theta \ast \phi\ =\ \big( -2\beta _{1}\Delta _{22}+2\beta _{2}\Delta_{23}+2\beta _{3}\Delta _{33},\ \beta _{2}\Delta _{12}+\beta _{3}\Delta_{13},\
\beta _{1}\Delta _{12}-\beta _{2}\Delta _{13}\big).$
\end{center}
Now, let $\phi=\phi_1 $ if $\alpha _{2}\neq
0$ and $\phi=\phi_2$   if $\alpha _{2}=0$:%
\begin{equation*}
\phi_1 \ =\ \begin{pmatrix}
1 & 0 & 0 \\
0 & \alpha _{2} & 1 \\
0 & \frac{\alpha _{1}-1}{2} & \frac{\alpha
_{1}+1}{2\alpha _{2}}
\end{pmatrix}%
, \
\phi_2 \ = \ \begin{pmatrix}
1 & 0 & 0 \\
0 & 0 & -\frac{{\bf i}}{\sqrt{\alpha _{1}}} \\
0 & - {\bf i}\sqrt{\alpha _{1}} & 0%
\end{pmatrix}%
.
\end{equation*}%
Then $\beta _{3}=1$. So we may assume $\alpha _{3}\neq 0$. Choose $\phi $ to
be the following automorphism:%
\begin{equation*}
\phi =%
\begin{pmatrix}
1 & 0 & 0 \\
0 & \sqrt{2\alpha _{3}} & 0 \\
0 & -{\alpha _{2}} \sqrt{{2}{\alpha^{-1} _{3}}} & \frac{1}{\sqrt{%
2\alpha _{3}}}%
\end{pmatrix}%
.
\end{equation*}%
Then $\theta \ast \phi \ =\ \big( -2\alpha\Delta _{22}+2\Delta _{33}, \
\Delta_{13},\ \alpha\Delta _{12}\big) $. So we get the superalgebras ${\rm N}_{12}^{\alpha }$.

\item
\underline{$\rm{N}^{+}={\rm J}_{11}$}. Then ${\rm Z}^{2}\left( {\rm J}_{11},{\rm J}_{11}\right) =\left\{ 0\right\} $.

\end{enumerate}

\subsection{Noncommutative Jordan superalgebras of type $(2,1)$}

\begin{theorem}
\label{(2,1)}Let $\mathbf{J}$ be a $3$-dimensional Jordan superalgebra of
type $\left( 2,1\right) $. Then $\mathbf{J}$ is isomorphic to one of the
following superalgebras:

\begin{longtable}{l llllll |r }

 \hline
$\mathbf{J}_{01} $&$:$&$e_{1}\circ e_{1}=e_{1}$&&$e_{2}\circ e_{2}=e_{2}$ & && $\mathfrak{ass}$\\

$\mathbf{J}_{02}$&$:$&$e_{1}\circ e_{1}=e_{1}$&&$e_{2}\circ e_{2}=e_{2}$&$e_{1}\circ f_{1}=f_{1}$ && $\mathfrak{ass}$\\

$\mathbf{J}_{03}$&$:$&$e_{1}\circ e_{1}=e_{1}$&&$e_{2}\circ e_{2}=e_{2}$&$e_{1}\circ f_{1}=\frac{1}{2}f_{1}$ &&$\mathfrak{non-ass}$\\

$\mathbf{J}_{04}$&$:$&$e_{1}\circ e_{1}=e_{1}$&&$e_{2}\circ e_{2}=e_{2}$&$e_{1}\circ f_{1}=\frac{1}{2}f_{1}$&$e_{2}\circ f_{1}=\frac{1}{2} f_{1}$ &$\mathfrak{non-ass}$\\

$\mathbf{J}_{05}$&$:$&$e_{1}\circ e_{1}=e_{1}$ &&&&& $\mathfrak{ass}$\\

$\mathbf{J}_{06}$&$:$&$e_{1}\circ e_{1}=e_{1}$&&&$e_{1}\circ f_{1}=\frac{1}{2}f_{1}$ & &$\mathfrak{non-ass}$\\

 $\mathbf{J}_{07}$&$:$&$e_{1}\circ e_{1}=e_{1}$&&&$e_{1}\circ f_{1}=f_{1}$  & &$\mathfrak{ass}$\\

$\mathbf{J}_{08}$&$:$&$e_{1}\circ e_{1}=e_{1}$&$e_{1}\circ e_{2}=e_{2}$ &&&&$\mathfrak{ass}$\\

$\mathbf{J}_{09}$&$:$&$e_{1}\circ e_{1}=e_{1}$&$e_{1}\circ e_{2}=e_{2}$&&$e_{1}\circ f_{1}=\frac{1}{2}f_{1}$ &&$\mathfrak{non-ass}$\\

 $\mathbf{J}_{10}$&$:$&$e_{1}\circ e_{1}=e_{1}$&$e_{1}\circ e_{2}=e_{2} $&&$e_{1}\circ f_{1}=f_{1}$ &&$\mathfrak{ass}$\\

 $\mathbf{J}_{11}$&$:$&$e_{1}\circ e_{1}=e_{1}$&$e_{1}\circ e_{2}=\frac{1}{2}e_{2}$&&&&$\mathfrak{non-ass}$\\

 $\mathbf{J}_{12}$&$:$&$e_{1}\circ e_{1}=e_{1}$&$e_{1}\circ e_{2}=\frac{1}{2}e_{2}$&&$e_{1}\circ f_{1}=\frac{1}{2}f_{1}$ &&$\mathfrak{non-ass}$\\

 $\mathbf{J}_{13}$&$:$&$e_{1}\circ e_{1}=e_{1}$&$e_{1}\circ e_{2}=\frac{1}{2}e_{2}$&&$e_{1}\circ f_{1}=f_{1}$ &&$\mathfrak{non-ass}$\\

 $\mathbf{J}_{14}$&$:$&$e_{1}\circ e_{1}=e_{2}$ &&&&&$\mathfrak{ass}$\\

 \hline

\end{longtable}
\end{theorem}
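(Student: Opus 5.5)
Since $f_{1}$ is odd, supercommutativity gives $f_{1}\circ f_{1}=(-1)^{1}f_{1}\circ f_{1}$, so $f_{1}\circ f_{1}=0$. Hence a Jordan superalgebra $\mathbf{J}$ of type $(2,1)$ is determined by two pieces of data. The first is its even part $\mathbf{J}_{0}$, which is a $2$-dimensional Jordan algebra. The second is the action of $\mathbf{J}_{0}$ on the $1$-dimensional odd part $\mathbf{J}_{1}=\langle f_{1}\rangle$, which has the form $a\circ f_{1}=\lambda(a)f_{1}$ for a linear functional $\lambda:\mathbf{J}_{0}\to\mathbb{C}$. The plan is therefore to classify such pairs $(\mathbf{J}_{0},\lambda)$ up to the action of ${\rm Aut}(\mathbf{J}_{0})\times\mathbb{C}^{\ast}$, where $\mathbb{C}^{\ast}$ rescales $f_{1}$ and so does not change $\lambda$. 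In practice we only need ${\rm Aut}(\mathbf{J}_{0})$ acting on $\lambda$ by $\lambda\mapsto\lambda\circ\phi$.

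\emph{Step 1: the even part.} We use the known list of $2$-dimensional complex Jordan algebras, up to isomorphism:
\begin{itemize}
\item $\mathbb{C}\oplus\mathbb{C}$, with $e_{1}^{2}=e_{1}$, $e_{2}^{2}=e_{2}$;
\item the unital algebra $e_{1}^{2}=e_{1}$, $e_{1}e_{2}=e_{2}$;
\item $e_{1}^{2}=e_{1}$, $e_{1}e_{2}=\tfrac12 e_{2}$;
\item $e_{1}^{2}=e_{1}$ alone (that is, $\mathbb{C}\oplus$ a zero ideal);
\item the nilpotent algebra $e_{1}^{2}=e_{2}$;
\item the zero algebra.
\end{itemize}

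\emph{Step 2: the constraints on $\lambda$.} Next we write out the super Jordan identity in the cases with exactly one odd argument, equal to $f_{1}$. With two or three odd arguments every term involves $f_{1}\circ f_{1}=0$ or lands in a vanishing component, so those cases are automatic. With one odd argument the identity says that $f_{1}$ spans a $1$-dimensional Jordan bimodule over $\mathbf{J}_{0}$. Concretely, for all $a,b\in\mathbf{J}_{0}$ one gets polynomial relations such as $2\lambda(a)^{3}-3\lambda(a)\lambda(a^{2})+\lambda(a^{3})=0$, together with its linearizations, which mix $\lambda(a)$, $\lambda(b)$, $\lambda(a\circ b)$ and $\lambda(a\circ(a\circ b))$.

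In the individual cases these relations reduce to familiar facts:
\begin{itemize}
\item for an idempotent $e$ one gets $\lambda(e)\in\{0,\tfrac12,1\}$ (the Peirce values);
\item for a nilpotent element $n$ one gets $\lambda(n)=0$;
\item for orthogonal idempotents $e_{1},e_{2}$, the condition that $f_{1}$ lie in a single joint Peirce space leaves only $(\lambda(e_{1}),\lambda(e_{2}))\in\{(0,0),(1,0),(\tfrac12,0),(\tfrac12,\tfrac12)\}$, up to swapping $e_{1}$ and $e_{2}$;
\item if $e_{1}\circ e_{2}=e_{2}$ or $e_{1}\circ e_{2}=\tfrac12e_{2}$ with $e_{2}^{2}=0$, the identity forces $\lambda(e_{2})=0$, since $\lambda(e_{2})\ne0$ would make $e_{2}$ act invertibly while $e_{2}$ is nilpotent.
\end{itemize}

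\emph{Step 3: orbits and the final list.} For each $\mathbf{J}_{0}$ we list the admissible $\lambda$ and reduce modulo ${\rm Aut}(\mathbf{J}_{0})$. For example, in $\mathbb{C}\oplus\mathbb{C}$ the swap $e_{1}\leftrightarrow e_{2}$ identifies $(0,1)$ with $(1,0)$. The resulting cases should be:
\begin{itemize}
\item $\mathbb{C}\oplus\mathbb{C}$ gives $\mathbf{J}_{01}$–$\mathbf{J}_{04}$;
\item $e_{1}^{2}=e_{1}$ alone gives $\mathbf{J}_{05}$–$\mathbf{J}_{07}$;
\item the unital algebra $e_{1}^{2}=e_{1}$, $e_{1}e_{2}=e_{2}$ gives $\mathbf{J}_{08}$–$\mathbf{J}_{10}$;
\item $e_{1}e_{2}=\tfrac12e_{2}$ gives $\mathbf{J}_{11}$–$\mathbf{J}_{13}$;
\item in the nilpotent and zero cases $\lambda=0$ is forced, giving $\mathbf{J}_{14}$ and the trivial superalgebra, which is excluded.
\end{itemize}
Pairwise non-isomorphism follows because the isomorphism class of $\mathbf{J}_{0}$ and the ${\rm Aut}(\mathbf{J}_{0})$-orbit of $\lambda$ are invariants. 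For instance, the multiset of Peirce eigenvalues of $f_{1}$ with respect to the idempotents of $\mathbf{J}_{0}$ is such an invariant.

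\emph{Main obstacle.} The expected difficulty is Step 2. We must write the super Jordan identity with the correct signs and check that each candidate $\lambda$ genuinely satisfies it. The delicate cases are the ones excluded above: $\lambda(e_{1})=\lambda(e_{2})=1$ in $\mathbb{C}\oplus\mathbb{C}$, mixed values such as $(1,\tfrac12)$, and nonzero $\lambda(e_{2})$ in the algebras where $e_{2}$ is nilpotent. I would handle these by evaluating the linearized identity on the triples $(e_{1},e_{2},f_{1})$ and $(e_{1},e_{1},f_{1})$ directly, and by using the Peirce decomposition relative to an idempotent of $\mathbf{J}_{0}$.
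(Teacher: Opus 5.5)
The paper gives no proof of this theorem. It quotes the list from the existing classification of Jordan superalgebras and uses it only as input for the method of Subsection~\ref{prem}. Your proposal is therefore an independent, self-contained route, and it is correct, reading the statement, as in Theorem~\ref{(1,2)}, for nontrivial $\mathbf{J}$. The reduction works for two reasons:
\begin{itemize}
\item $\mathbf{J}_1=\langle f_1\rangle$ is a square-zero ideal, so every term of the super Jordan identity with two or more odd arguments vanishes.
\item With one odd argument no signs occur.
\end{itemize}
So the problem is exactly the classification of $1$-dimensional Jordan bimodules over the six $2$-dimensional Jordan algebras, modulo ${\rm Aut}(\mathbf{J}_0)$.

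The one thing you should state precisely is the bimodule condition. Inserting $f_1$ into an $x$-slot of $(x^2y)x=x^2(yx)$ gives
\[
2\lambda(a)^2\lambda(b)+\lambda(a^2\circ b)=2\lambda(a)\lambda(a\circ b)+\lambda(a^2)\lambda(b)\qquad (a,b\in\mathbf{J}_0),
\]
while inserting it into the $y$-slot gives nothing. Your cubic is the case $b=a$. Its linearizations are not the same relations: the full linearization differs from three times the displayed identity by $2\lambda\bigl((a\circ b)\circ a-a^2\circ b\bigr)$. So the displayed identity is what you should verify for sufficiency; it does hold in all thirteen nonzero cases.

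For necessity, the cubic already does all the work here:
\begin{itemize}
\item On $\mathbb{C}\oplus\mathbb{C}$, with $p=\lambda(e_1)$ and $q=\lambda(e_2)$, its coefficients are $2p^3-3p^2+p$, $2q^3-3q^2+q$, $3pq(2p-1)$ and $3pq(2q-1)$. These give exactly your six solutions, four up to the swap, and exclude $(1,1)$ and $(1,\tfrac12)$.
\item In the other algebras, $e_2^2=0$ forces $2\lambda(e_2)^3=0$. Then $\lambda(e_1)\in\{0,\tfrac12,1\}$, or $\lambda(e_1)=0$ when $e_1^2=e_2$.
\end{itemize}
In the three algebras with idempotent $e_1$ and $e_2^2=0$, automorphisms fix $e_1$ modulo $\langle e_2\rangle$. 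Since $\lambda(e_2)=0$, $\lambda$ is invariant and no further identifications occur, giving $4+3+3+3+1=14$.

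Compared with the paper's bare citation, your argument makes the list checkable in a few lines and explains why exactly these fourteen structures arise.
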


\subsubsection{Anticommutative superalgebras of type $(2,1)$}

\begin{theorem}\label{ant21}
Let $\mathbf{A}$ be a nontrivial $3$-dimensional anticommutative
superalgebra of type $\left( 2,1\right) $. Then $\mathbf{A}$ is isomorphic
to one of the following superalgebras:

\begin{longtable}{lllllllllllllllllllllllllll}

\hline
   ${\rm A} $&$:$&$[e_{1},e_{2}] $&  $[e_{1},f_{1}]$&
 $[e_{2},f_{1}] $&$[f_{1},f_{1}] $  \\
\hline

$\mathbf{A}_{01}^{\alpha} $&$:$&$[e_{1},e_{2}]=e_{1}$& &$[e_{2},f_{1}]=\alpha f_{1}$& $[f_{1},f_{1}]=e_{2}$\\

$\mathbf{A}_{02}^{\alpha}$&$:$&$[e_{1},e_{2}]=e_{1}$& $[e_{1},f_{1}]=f_{1}$& $[e_{2},f_{1}]=\alpha f_{1}$& $[f_{1},f_{1}]=e_{2}$\\

$\mathbf{A}_{03}^{\alpha}$&$:$&$[e_{1},e_{2}]=e_{1}$&& $[e_{2},f_{1}]=\alpha f_{1}$&  $[f_{1},f_{1}]=e_{1}$\\

$\mathbf{A}_{04}$&$:$&$[e_{1},e_{2}]=e_{1}$& $[e_{1},f_{1}]=f_{1}$& &$[f_{1},f_{1}]=e_{1}$\\

$\mathbf{A}_{05}^{\alpha}$&$:$&$[e_{1},e_{2}]=e_{1}$& &$[e_{2},f_{1}]=\alpha f_{1}$ \\

$\mathbf{A}_{06}$&$:$&$[e_{1},e_{2}]=e_{1}$& $[e_{1},f_{1}]=f_{1}$& \\

$\mathbf{A}_{07}$&$:$&&$[e_{1},f_{1}]=f_{1}$&& \\

$\mathbf{A}_{08}$&$:$&&$[e_{1},f_{1}]=f_{1}$& &$[f_{1},f_{1}]=e_{2}$\\

$\mathbf{A}_{09}$&$:$&&$[e_{1},f_{1}]=f_{1}$& &$[f_{1},f_{1}]=e_{1}$\\

$\mathbf{A}_{10}$&$:$&&&&$[f_{1},f_{1}]=e_{1}$\\
\hline
\end{longtable}

\end{theorem}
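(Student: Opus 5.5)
The plan is to run Steps 1--3 of Subsection \ref{prem} for the trivial Jordan superalgebra $\mathbb{C}^{2,1}$, where every product is zero. For it, ${\rm Z}^{2}(\mathbb{C}^{2,1},\mathbb{C}^{2,1})$ is the whole space of super-skew-symmetric even bilinear maps, and ${\rm Aut}(\mathbb{C}^{2,1})=GL_2(\mathbb{C})\times\mathbb{C}^{\ast}$ acts by block-diagonal matrices. Parity forces the following data. First, $[e_1,e_2]=u$ with $u\in V_0=\langle e_1,e_2\rangle$. Second, $[e_i,f_1]=\lambda(e_i)f_1$ for a linear form $\lambda$ on $V_0$. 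Third, $[f_1,f_1]=w\in V_0$. So a nonzero $\theta$ is a triple $(u,\lambda,w)$. An automorphism $\phi={\rm diag}(g,c)$ transforms it by $u\mapsto (\det g)\, g^{-1}u$, $\lambda\mapsto \lambda\circ g$ and $w\mapsto c^{2}g^{-1}w$. I will classify the orbits of these triples; the statement claims no non-isomorphism, so only the existence of a representative in the list is needed.

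\emph{Case $u\neq 0$.} I choose $e_1$ spanning the image line of the bracket on $V_0$ and rescale $e_2$ so that $[e_1,e_2]=e_1$. The residual group consists of $e_1\mapsto a e_1$, $e_2\mapsto b e_1+e_2$ together with $f_1\mapsto c f_1$. Under it $\lambda(e_1)$ scales by $a$, and $\lambda(e_2)=:\alpha$ is shifted by $b\lambda(e_1)$.
\begin{itemize}
\item If $\lambda(e_1)\neq 0$, I normalise $\lambda(e_1)=1$ and keep the shift $b$ free for use on $w$.
\item If $\lambda(e_1)=0$, then $\alpha$ is an invariant and $a,b,c$ are all free.
\end{itemize}
Next I treat $w=w_1e_1+w_2e_2$.
\begin{itemize}
\item If $w_2\neq 0$, I use $b$ to kill $w_1$ and $c$ to get $w=e_2$. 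This gives $\mathbf{A}_{01}^{\alpha}$, or $\mathbf{A}_{02}^{\alpha}$ when $\lambda(e_1)=1$; in the latter case the shift $b$ has been spent on $w$, which is why $\alpha$ survives as a parameter.
\item If $w_2=0\neq w_1$, I scale to $w=e_1$. When $\lambda(e_1)=0$ this gives $\mathbf{A}_{03}^{\alpha}$. When $\lambda(e_1)=1$, I use $b=-\alpha$ to make $\lambda(e_2)=0$, which gives $\mathbf{A}_{04}$.
\item If $w=0$, I obtain $\mathbf{A}_{05}^{\alpha}$, or $\mathbf{A}_{06}$ after again killing $\alpha$ with $b$.
\end{itemize}

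\emph{Case $u=0$.} If $\lambda\neq 0$, I choose a basis with $\lambda(e_1)=1$ and $\lambda(e_2)=0$; the stabiliser is then $e_1\mapsto e_1+pe_2$, $e_2\mapsto qe_2$, together with $c$.
\begin{itemize}
\item If $w=0$, this gives $\mathbf{A}_{07}$.
\item If $0\neq w\in\ker\lambda=\langle e_2\rangle$, I scale to get $\mathbf{A}_{08}$.
\item If $\lambda(w)\neq 0$, I rescale $f_1$ so that $\lambda(w)=1$ and take $w$ itself as the new $e_1$. This is compatible with the normalisation, because $w-e_1\in\ker\lambda$, and it gives $\mathbf{A}_{09}$.
\end{itemize}
If $\lambda=0$, then $w\neq 0$ by nontriviality, and taking $e_1=w$ gives $\mathbf{A}_{10}$.

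The main obstacle I expect is the bookkeeping in the case $u\neq 0$. The single translation parameter $b$ is needed both to normalise $\lambda(e_2)$ and to normalise $w$, so I must check carefully which of the two it can be spent on in each subcase; this decides whether $\alpha$ remains a parameter (as in $\mathbf{A}_{02}^{\alpha}$) or can be removed (as in $\mathbf{A}_{04}$ and $\mathbf{A}_{06}$). I also need to verify that the transformation rule $u\mapsto(\det g)\,g^{-1}u$ preserves the normal form $[e_1,e_2]=e_1$ under the claimed stabiliser. Both checks are short direct computations of the type $B_i'=\sum_j b_{ij}\phi^{t}B_j\phi$ used earlier in the paper.
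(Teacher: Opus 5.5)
Your proposal is correct and follows essentially the same route as the paper: normalise $[e_1,e_2]$ to $e_1$ or $0$, then split according to the odd action $\lambda$ (the paper's $(\alpha_5,\alpha_6)$) and the vector $[f_1,f_1]$ (the paper's $(\alpha_2,\alpha_4)$), spending the residual translation $e_2\mapsto be_1+e_2$ exactly as the paper's automorphisms $\phi_1,\phi_2$ do. The differences are cosmetic: you describe the action invariantly on triples $(u,\lambda,w)$ rather than through the explicit formulas for $\beta_1,\dots,\beta_6$, and in the case $u\neq 0$ you branch on $\lambda(e_1)$ before branching on $w$.
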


\begin{proof}
 Let $\theta \ =\ \big(B_{1},B_{2},B_{3}\big) \neq 0$ be an arbitrary element of ${\rm Z}^{2}(
\mathbb{C}^{2,1},\mathbb{C}^{2,1}) $. Then
\begin{center}$\theta \ =\
\big( \alpha_{1}\Delta _{12}+\alpha _{2}\Delta _{33},\
\alpha _{3}\Delta _{12}+\alpha_{4}\Delta _{33},\
\alpha _{5}\Delta _{13}+\alpha _{6}\Delta _{23}\big) $
for some $\alpha _{1},\ldots ,\alpha _{6}\in \mathbb{C}$.\end{center}
The automorphism
group ${\rm Aut}\left( \mathbb{C}^{2,1}\right)$
consists of the automorphisms $\phi $ given by a matrix of the following
form:%
\begin{equation*}
\phi =%
\begin{pmatrix}
a_{11} & a_{12} & 0 \\
a_{21} & a_{22} & 0 \\
0 & 0 & a_{33}%
\end{pmatrix}%
.
\end{equation*}%
Let $\phi =\bigl(a_{ij}\bigr)\in $ ${\rm Aut}\left( \mathbb{C}^{2,1}\right)
$. Then
\begin{center}$\theta \ast \phi \ =\ \left( \beta _{1}\Delta _{12}+\beta _{2}\Delta
_{33},\ \beta _{3}\Delta _{12}+\beta _{4}\Delta _{33}, \ \beta _{5}\Delta
_{13}+\beta _{6}\Delta _{23}\right),$\end{center} where%
\begin{longtable}{lcllcllcl}
$\beta _{1} $&$=$&$\alpha _{1}a_{22}-\alpha _{3}a_{12},$ &
$\beta _{3} $&$=$&$\alpha _{3}a_{11}-\alpha _{1}a_{21}, $ &
$\beta _{5} $&$=$&$\alpha _{5}a_{11}+\alpha _{6}a_{21},$ \\

$\beta _{2} $&$=$&$\frac{a_{33}^{2}\big( \alpha
_{2}a_{22}-\alpha _{4}a_{12}\big) }{a_{11}a_{22}-a_{12}a_{21}},$ &
$\beta _{4} $&$=$&$-\frac{a_{33}^{2}\big( \alpha_{2}a_{21}-\alpha _{4}a_{11}\big)}{a_{11}a_{22}-a_{12}a_{21}},$ &
$\beta _{6} $&$=$&$\alpha _{5}a_{12}+\alpha _{6}a_{22}.$
\end{longtable}%
From here, we may assume $\left( \alpha _{1},\alpha _{3}\right) \in
\big\{\left( 1,0\right) ,\left( 0,0\right) \big\} $.

\begin{itemize}
\item $\left( \alpha _{1},\alpha _{3}\right) =\left( 1,0\right) $.

\begin{itemize}
\item $\alpha _{4}\neq 0$. Let
$\phi=\phi_1 $ if $\alpha _{5}=0$ and $\phi=\phi_2$ if $\alpha _{5}\neq 0$:
\begin{equation*}
\phi_1 \ = \
\begin{pmatrix}
1 & \frac{\alpha _{2}}{\alpha _{4}} & 0 \\
0 & 1 & 0 \\
0 & 0 & \frac{1}{\sqrt{\alpha _{4}}}%
\end{pmatrix}%
, \
\phi_2 \ = \ \begin{pmatrix}
\frac{1}{\alpha _{5}} & \frac{\alpha _{2}}{\alpha _{4}} & 0 \\
0 & 1 & 0 \\
0 & 0 & \frac{1}{\sqrt{\alpha _{4}}}%
\end{pmatrix}%
.
\end{equation*}%
Then
\begin{center}
    $\theta \ast \phi \in
    \big\{ \left( \Delta _{12},\ \Delta _{33},\ \alpha\Delta _{23}\right) ,\
    \left( \Delta _{12},\ \Delta _{33},\ \ \Delta _{13}+\alpha
\Delta _{23}\right) \big\} $.
\end{center} So we get the superalgebras $\mathbf{A}_{01}^{\alpha }$ and $\mathbf{A}_{02}^{\alpha }$.

\item $\alpha _{4}=0,\alpha _{2}\neq 0$. Let $\phi=\phi_1$  if $\alpha _{5}=0$ and  $\phi=\phi_2$ if $\alpha _{5}\neq 0$:%
\begin{equation*}
\phi_1\ = \ \begin{pmatrix}
1 & 0 & 0 \\
0 & 1 & 0 \\
0 & 0 & \frac{1}{\sqrt{\alpha _{2}}}%
\end{pmatrix}%
, \
\phi_2 \ = \ \begin{pmatrix}
\frac{1}{\alpha _{5}} & -\frac{\alpha _{6}}{\alpha _{5}} & 0 \\
0 & 1 & 0 \\
0 & 0 & \frac{1}{\sqrt{\alpha _{2}\alpha _{5}}}%
\end{pmatrix}%
.
\end{equation*}%
Then
\begin{center}$\theta \ast \phi \ \in \
\big\{ \left( \Delta _{12}+\Delta _{33},\ 0,\ \alpha\Delta _{23}\right),\
\left( \Delta _{12}+\Delta _{33},\ 0,\ \Delta _{13}\right) \big\}$.
\end{center}
So we get the superalgebras $\mathbf{A}_{03}^{\alpha }$ and $\mathbf{A}_{04}$.

\item $\alpha _{4}=\alpha _{2}=0$. Let $\phi=\phi_1 $ if $\alpha _{5}=0$ and
$\phi=\phi_2$  if $\alpha _{5}\neq 0$:%
\begin{equation*}
\phi_1\ =\ \begin{pmatrix}
1 & 0 & 0 \\
0 & 1 & 0 \\
0 & 0 & 1%
\end{pmatrix}%
, \
\phi_2 \ = \ \begin{pmatrix}
\frac{1}{\alpha _{5}} & -\frac{\alpha _{6}}{\alpha _{5}} & 0 \\
0 & 1 & 0 \\
0 & 0 & 1%
\end{pmatrix}%
.
\end{equation*}%
Then
\begin{center}$\theta \ast \phi \in \big\{
\left( \Delta _{12},\ 0,\ \alpha \Delta_{23}\right), \
\left( \Delta _{12},\ 0,\ \Delta _{13}\right) \big\} $.
\end{center} So we
get the superalgebras $\mathbf{A}_{05}^{\alpha }$ and $\mathbf{A}_{06}$.
\end{itemize}

\item $\left( \alpha _{1},\alpha _{3}\right) =\left( 0,0\right) $. Then we
may assume $\left( \alpha _{5},\alpha _{6}\right) \in \left\{ \left(
1,0\right) ,\left( 0,0\right) \right\} $.

\begin{itemize}
\item $\left( \alpha _{5},\alpha _{6}\right) =\left( 1,0\right) $. If $%
\left( \alpha _{2},\alpha _{4}\right) =\left( 0,0\right) $, we get the
superalgebra $\mathbf{A}_{07}$. Assume now $\left( \alpha _{2},\alpha
_{4}\right) \neq \left( 0,0\right) $. Let $\phi=\phi_1 $ if $\alpha _{2}=0$ and
$\phi=\phi_2$   if $\alpha _{2}\neq 0$:%
\begin{equation*}
\phi_1\ =\ \begin{pmatrix}
1 & 0 & 0 \\
0 & \alpha _{4} & 0 \\
0 & 0 & 1%
\end{pmatrix}%
, \
\phi_2 \ = \ \begin{pmatrix}
1 & 0 & 0 \\
\frac{\alpha _{4}}{\alpha _{2}} & 1 & 0 \\
0 & 0 & \frac{1}{\sqrt{\alpha _{2}}}%
\end{pmatrix}%
.
\end{equation*}%
Then $\theta \ast \phi \in \big\{ \left( 0,\ \Delta _{33},\ \Delta _{13}\right),\
\left( \Delta _{33},\ 0,\ \Delta _{13}\right) \big\}$. So we get
  $\mathbf{A}_{08}$ and $\mathbf{A}_{09}$.

\item $\left( \alpha _{5},\alpha _{6}\right) =\left( 0,0\right) $. Then we
may assume $\left( \alpha _{2},\alpha _{4}\right) =\left( 1,0\right) $. So
we get     $\mathbf{A}_{10}$.
\end{itemize}
\end{itemize}
\end{proof}

\subsubsection{The classification Theorem A2}\label{A2}
\begin{theoremA2}Let $\mathbf{N}$ be a nontrivial $3$-dimensional noncommutative Jordan superalgebra of
type $\left( 2,1\right) $. Then $\mathbf{N}$ is isomorphic to one
Jordan superalgebra listed in Theorem \ref{(2,1)},
or one anticommutative superalgebra listed in Theorem \ref{ant21},
or to one of the following
superalgebras:

\begin{longtable}{lllllllllllllllllll}

$\mathbf{N}_{01} $&$:$&$e_{1}e_{1}=e_{1}$&$e_{1}f_{1}=f_{1}$&$f_{1}e_{1}=f_{1}$\\
&&$e_{2}e_{2}=e_{2}$&$f_{1}f_{1}=e_{1}$\\

$\mathbf{N}_{02}^{\alpha \neq 0}$&$:$&
$e_{1}e_{1}=e_{1}$&$e_{1}f_{1}=\left( \frac{1}{2}+\alpha \right) f_{1}$&$f_{1}e_{1}=\left( \frac{1}{2}-\alpha
\right) f_{1}$&$e_{2}e_{2}=e_{2}$\\

$\mathbf{N}_{03}^{\alpha \neq 0}$&$:$&
$e_{1}e_{1}=e_{1}$&$e_{1}f_{1}=\left( \frac{1}{2}+\alpha \right) f_{1}$&$f_{1}e_{1}=\left( \frac{1}{2}-\alpha
\right) f_{1}$\\
&&$e_{2}f_{1}=\left( \frac{1}{2}-\alpha \right)f_{1}$&$f_{1}e_{2}=\left( \frac{1}{2}+\alpha \right) f_{1}$&$e_{2}e_{2}=e_{2}$\\

$\mathbf{N}_{04}$&$:$&$e_{1}e_{1}=e_{1}$&$f_{1}f_{1}=e_{2}$\\

$\mathbf{N}_{05}$&$:$&$e_{1}e_{1}=e_{1}$&$e_{2}f_{1}=f_{1}$ &$f_{1}e_{2}=-f_{1}$&$f_{1}f_{1}=e_{2}$\\

$\mathbf{N}_{06}$&$:$&$e_{1}e_{1}=e_{1}$&$e_{2}f_{1}=f_{1}$&$f_{1}e_{2}=-f_{1}$\\

$\mathbf{N}_{07}^{\alpha \neq 0}$&$:$&$e_{1}e_{1}=e_{1}$&$e_{1}f_{1}=\left( \frac{1}{2}+\alpha \right)f_{1}$&$f_{1}e_{1}=\left( \frac{1}{2}-\alpha\right) f_{1}$\\

$\mathbf{N}_{08}$&$:$&$e_{1}e_{1}=e_{1}$&$e_{1}f_{1}=f_{1}$&$f_{1}e_{1}=f_{1}$&$f_{1}f_{1}=e_{1}$\\

$\mathbf{N}_{09}^{\alpha \neq0}$&$:$&$e_{1}e_{1}=e_{1}$&$e_{1}e_{2}=e_{2}$&$e_{2}e_{1}=e_{2}$\\
&&$e_{1}f_{1}=\left( \frac{1}{2}+\alpha \right) f_{1}$&$f_{1}e_{1}=\left( \frac{1}{2}-\alpha
\right) f_{1}$\\

$\mathbf{N}_{10}$&$:$&$e_{1}e_{1}=e_{1}$&$e_{1}e_{2}=e_{2}$&$e_{2}e_{1}=e_{2}$\\
&&$e_{1}f_{1}=f_{1}$&$f_{1}e_{1}=f_{1}$&  $f_{1}f_{1}=e_{2}$ \\

$\mathbf{N}_{11}$&$:$&$e_{1}e_{1}=e_{1}$&$e_{1}e_{2}=e_{2}$&$e_{2}e_{1}=e_{2}$&$e_{1}f_{1}=f_{1}$\\
&&$f_{1}e_{1}=f_{1}$&$e_{2}f_{1}=f_{1}$&$f_{1}e_{2}=-f_{1}$& $f_{1}f_{1}=e_{2}$ \\

$\mathbf{N}_{12}$&$:$&$e_{1}e_{1}=e_{1}$&$e_{1}e_{2}=e_{2}$&$e_{2}e_{1}=e_{2}$\\
&&$e_{1}f_{1}=f_{1}$&$f_{1}e_{1}=f_{1}$&$e_{2}f_{1}=f_{1}$&$f_{1}e_{2}=-f_{1}$\\

$\mathbf{N}_{13}^{\alpha \neq 0}$&$:$&$e_{1}e_{1}=e_{1}$&
$e_{1}e_{2}=\left(
\frac{1}{2}+\alpha \right) e_{2}$&$e_{2}e_{1}=\left( \frac{1}{2}-\alpha
\right) e_{2}$\\

$\mathbf{N}_{14}^{\alpha }$&$:$&
$e_{1}e_{1}=e_{1}$&$e_{1}e_{2}=\left( \frac{1}{2}+\alpha \right) e_{2}$&$e_{2}e_{1}=\left( \frac{1}{2}-\alpha \right)
e_{2}$&$e_{1}f_{1}=\frac{1}{2}f_{1}$\\
&&$f_{1}e_{1}=\frac{1}{2}f_{1}$&$e_{2}f_{1}=f_{1}$&$f_{1}e_{2}=-f_{1}$&$f_{1}f_{1}=e_{2}$\\

 $\mathbf{N}_{15}^{\alpha }$&$:$&
$e_{1}e_{1}=e_{1}$&$e_{1}e_{2}=\left( \frac{1}{2}+\alpha \right) e_{2}$&$e_{2}e_{1}=\left( \frac{1}{2}-\alpha \right)
e_{2}$\\
&&$e_{1}f_{1}=\frac{1}{2}f_{1}$&$f_{1}e_{1}=\frac{1}{2}f_{1}$&$e_{2}f_{1}=f_{1}$&$f_{1}e_{2}=-f_{1}$\\

$\mathbf{N}_{16}^{\alpha ,\beta }$&$:$&
$e_{1}e_{1}=e_{1}$&$e_{1}e_{2}=\left( \frac{1}{2}+\alpha \right) e_{2}$&$e_{2}e_{1}=\left( \frac{1}{2}-\alpha
\right) e_{2}$\\
&&$e_{1}f_{1}=\left( \frac{1}{2}+\beta \right)
f_{1}$&$f_{1}e_{1}=\left( \frac{1}{2}-\beta \right) f_{1}$&$f_{1}f_{1}=e_{2}$\\

$\mathbf{N}_{17}^{\alpha ,\beta }$&$:$&$e_{1}e_{1}=e_{1}$&
$e_{1}e_{2}=\left( \frac{1}{2}+\alpha \right) e_{2}$&$e_{2}e_{1}=\left( \frac{1}{2}-\alpha
\right) e_{2}$\\
&&$e_{1}f_{1}=\left( \frac{1}{2}+\beta \right)f_{1}$&$f_{1}e_{1}=\left( \frac{1}{2}-\beta \right) f_{1}$\\

$\mathbf{N}_{18}^{\alpha \neq 0}$&$:$&$e_{1}e_{1}=e_{1}$&
$e_{1}e_{2}=\left( \frac{1}{2}+\alpha \right) e_{2}$&
$e_{2}e_{1}=\left( \frac{1}{2}-\alpha\right) e_{2}$\\
&&
$e_{1}f_{1}=f_{1}$&$f_{1}e_{1}=f_{1}$\\

$\mathbf{N}_{19}$&$:$&$e_{1}e_{1}=e_{2}$&$e_{1}f_{1}=f_{1}$&$f_{1}e_{1}=-f_{1}$&$f_{1}f_{1}=e_{2}$\\

$\mathbf{N}_{20}$&$:$&$e_{1}e_{1}=e_{2}$&$f_{1}f_{1}=e_{2}$\\

$\mathbf{N}_{21}$&$:$&$e_{1}e_{1}=e_{2}$&$e_{1}f_{1}=f_{1}$&$f_{1}e_{1}=-f_{1}$

\end{longtable}
\noindent All listed superalgebras are non-isomorphic except: $\mathbf{N}_{03}^{\alpha
}\cong \mathbf{N}_{03}^{-\alpha }.$
\end{theoremA2}

\subsubsection{The proof of  Theorem A2}
Let $\mathbf{N}$ be a nontrivial $3$-dimensional noncommutative Jordan
superalgebra of type $\left( 2,1\right) $. Then $\mathbf{N}^{+}$ is a $3$%
-dimensional Jordan superalgebra of type $\left( 2,1\right) $. Then we may
assume $\mathbf{N}^{+}\in \big\{ \mathbf{J}_{01},\ldots ,\mathbf{J}_{14}\big\} $. So we have the following cases:

\begin{enumerate}[I.]

\item
\underline{$\mathbf{N}^{+}=\mathbf{J}_{01}$}. Then ${\rm Z}^{2}\left( \mathbf{J}_{01},\mathbf{J}_{01}\right) =\left\{ 0\right\} $.

\item
\underline{$\mathbf{N}^{+}=\mathbf{J}_{02}$}. Let $\theta \ =\ \big(B_{1},B_{2},B_{3}\big) \neq 0$ be an arbitrary element of ${\rm Z}^{2}\left(
\mathbf{J}_{02},\mathbf{J}_{02}\right) $. Then
\begin{center}$\theta \ =\ \big( \alpha
_{1}\Delta _{33},\ 0,\ 0\big)$ for some $\alpha _{1}\in \mathbb{C}^{\ast
^{{}}}$.\end{center} The automorphism group ${\rm Aut}\left(
\mathbf{J}_{02}\right) $ consists of the automorphisms $\phi $ given by a
matrix of the following form:%
\begin{equation*}
\phi =%
\begin{pmatrix}
1 & 0 & 0 \\
0 & 1 & 0 \\
0 & 0 & a_{33}%
\end{pmatrix}%
.
\end{equation*}%
Let $\phi =\bigl(a_{ij}\bigr)\in $ ${\rm Aut}\left( \mathbf{J}_{02}\right)
$. Then $\theta \ast \phi \ =\
\big( \alpha _{1}a_{33}^{2}\Delta_{33},\ 0,\ 0\big) $. Hence we have the representative $\big( \Delta
_{33},\ 0, \ 0\big) $. Thus we get the superalgebra $\mathbf{N}_{01}$.

\item
\underline{$\mathbf{N}^{+}=\mathbf{J}_{03}$}. Let $\theta \ =\ \big(B_{1},B_{2},B_{3}\big) \neq 0$ be an arbitrary element of ${\rm Z}^{2}\left(
\mathbf{J}_{03},\mathbf{J}_{03}\right) $. Then
\begin{center}$\theta \ =\ \big( 0,\ 0,\ \alpha_{1}\Delta _{13}\big)$ for some $\alpha _{1}\in \mathbb{C}^{\ast}$.\end{center}
The
automorphism group ${\rm Aut}\left( \mathbf{J}_{03}\right) $ consists of the automorphisms $\phi $ given by a matrix of
the following form:%
\begin{equation*}
\phi =%
\begin{pmatrix}
1 & 0 & 0 \\
0 & 1 & 0 \\
0 & 0 & a_{33}%
\end{pmatrix}%
.
\end{equation*}%
Let $\phi =\bigl(a_{ij}\bigr)\in $ ${\rm Aut}\left( \mathbf{J}_{03}\right)
$. Then $\theta \ast \phi =\theta $. Hence we have the representative  $
\big( 0,\ 0,\  \alpha \Delta _{13}\big) $. Thus we get the
superalgebras $\mathbf{N}_{02}^{\alpha \neq 0}$.

\item
\underline{$\mathbf{N}^{+}=\mathbf{J}_{04}$}. Let $\theta \ =\ \big( B_{1},B_{2},B_{3}\big) \neq 0$ be an arbitrary element of ${\rm Z}^{2}\left(
\mathbf{J}_{04},\mathbf{J}_{04}\right) $. Then
\begin{center}
$\theta \ = \ \big(0,\ 0,\ \alpha _{1}\Delta _{13}-\alpha _{1}\Delta _{23}\big)$ for some $%
\alpha _{1}\in \mathbb{C}^{\ast}$.\end{center} The automorphism group ${\rm Aut}\left( \mathbf{J}_{04}\right) $ consists of the
automorphisms $\phi $ given by a matrix of the following form:%
\begin{equation*}
\phi _{1}=%
\begin{pmatrix}
1 & 0 & 0 \\
0 & 1 & 0 \\
0 & 0 & a_{33}%
\end{pmatrix}%
,\
\phi _{2}=%
\begin{pmatrix}
0 & 1 & 0 \\
1 & 0 & 0 \\
0 & 0 & a_{33}%
\end{pmatrix}%
.
\end{equation*}%
Let $\phi =\bigl(a_{ij}\bigr)\in $ ${\rm Aut}\left( \mathbf{J}_{04}\right)
$. Then $\theta \ast \phi =\theta $ if $\phi =\phi _{1}$ and $\theta \ast
\phi =-\theta $ if $\phi =\phi _{2}$. So we get the superalgebras $\mathbf{N}_{03}^{\alpha \neq 0}$.

\item
\underline{$\mathbf{N}^{+}=\mathbf{J}_{05}$}. Let $\theta \ =\ \big(B_{1},B_{2},B_{3}\big) \neq 0$ be an arbitrary element of ${\rm Z}^{2}\left(
\mathbf{J}_{05},\mathbf{J}_{05}\right) $. Then \begin{center}
$\theta\ =\ \big( 0,\ \alpha_{1}\Delta _{33},\ \alpha _{2}\Delta _{23}\big)$ for some $\alpha
_{1},\alpha _{2}\in \mathbb{C}$.\end{center} The automorphism group ${\rm Aut}\left( \mathbf{J}_{05}\right)$ consists of the automorphisms
$\phi $ given by a matrix of the following form:%
\begin{equation*}
\phi =%
\begin{pmatrix}
1 & 0 & 0 \\
0 & a_{22} & 0 \\
0 & 0 & a_{33}%
\end{pmatrix}%
.
\end{equation*}%
Let $\phi =\bigl(a_{ij}\bigr)\in $ ${\rm Aut}\left( \mathbf{J}_{05}\right)
$. Then $\theta \ast \phi \ =\ \left( 0,\ {\alpha _{1}}{a^{-1}_{22}}
a_{33}^{2}\Delta _{33},\ \alpha _{2}a_{22}\Delta _{23}\right) $. Hence we have
representatives
$\big( 0,\ \Delta _{33},\ 0\big),$
$\big( 0,\ \Delta_{33},\ \Delta _{23}\big) ,$ and
$\big( 0,\ 0,\ \Delta _{23}\big) $. Thus we get the
superalgebras $\mathbf{N}_{04},$ $\mathbf{N}_{05},$ and $\mathbf{N}_{06}$.

\item
\underline{$\mathbf{N}^{+}=\mathbf{J}_{06}$}. Let $\theta \ =\ \big(
B_{1},B_{2},B_{3}\big) \neq 0$ be an arbitrary element of ${\rm Z}^{2}\left(
\mathbf{J}_{06},\mathbf{J}_{06}\right) $. Then
\begin{center}$\theta \ =\ \big(
0, \ 0,\ \alpha _{1}\Delta _{13}\big)$ for some $\alpha _{1}\in \mathbb{C}%
^{\ast}$.\end{center} The automorphism group ${\rm Aut}\left(
\mathbf{J}_{06}\right) $ consists of the automorphisms $\phi $ given by a
matrix of the following form:%
\begin{equation*}
\phi =%
\begin{pmatrix}
1 & 0 & 0 \\
0 & a_{22} & 0 \\
0 & 0 & a_{33}%
\end{pmatrix}%
.
\end{equation*}%
Let $\phi =\bigl(a_{ij}\bigr)\in $ ${\rm Aut}\left( \mathbf{J}_{06}\right).$
Then $\theta \ast \phi =\theta $. Hence, we have the representatives $%
\big( 0,\ 0,\ \alpha \Delta _{13}\big) $. Thus, we get the superalgebras $\mathbf{N}_{07}^{\alpha \neq 0}$.

\item
\underline{$\mathbf{N}^{+}=\mathbf{J}_{07}$}. Let $\theta\  =\ \big(B_{1},B_{2},B_{3}\big) \neq 0$ be an arbitrary element of ${\rm Z}^{2}\left(
\mathbf{J}_{07},\mathbf{J}_{07}\right) $. Then
\begin{center}
    $\theta \ =\ \big( \alpha_{1}\Delta _{33},\ 0,\ 0\big)$ for some $\alpha _{1}\in \mathbb{C}^{\ast}$.
\end{center}
The automorphism group  ${\rm Aut}\left( \mathbf{J}_{07} \right) $ consists of the automorphisms $\phi $ given by a matrix of
the following form:%
\begin{equation*}
\phi =%
\begin{pmatrix}
1 & 0 & 0 \\
0 & a_{22} & 0 \\
0 & 0 & a_{33}%
\end{pmatrix}%
.
\end{equation*}%
Let $\phi =\bigl(a_{ij}\bigr)\in $ ${\rm Aut}\left( \mathbf{J}_{07}\right).$
Then $\theta \ast \phi \ =\ \big( \alpha _{1}a_{33}^{2}\Delta_{33},\ 0,\ 0\big).$
Hence we have the representative $\big( \Delta_{33},\ 0,\ 0\big) $. Thus we get the superalgebra $\mathbf{N}_{08}$.

\item
\underline{$\mathbf{N}^{+}=\mathbf{J}_{08}$}. Then ${\rm Z}^{2}\left( \mathbf{J}_{08},\mathbf{J}_{08}\right) =\left\{ 0\right\} $.

\item
\underline{$\mathbf{N}^{+}=\mathbf{J}_{09}$}. Let $\theta \ =\ \big( B_{1},B_{2},B_{3}\big) \neq 0$ be an arbitrary element of ${\rm Z}^{2}\left(
\mathbf{J}_{09},\mathbf{J}_{09}\right) $. Then
\begin{center}$\theta \ =\ \big(0,\ 0,\ \alpha _{1}\Delta _{13}\big)$ for some $\alpha _{1}\in \mathbb{C}%
^{\ast}$.\end{center}
The automorphism group ${\rm Aut}\left(
\mathbf{J}_{09}\right)$ consists of the automorphisms $\phi $ given by a
matrix of the following form:
\begin{equation*}
\phi =
\begin{pmatrix}
1 & 0 & 0 \\
0 & a_{22} & 0 \\
0 & 0 & a_{33}%
\end{pmatrix}%
.
\end{equation*}%
Let $\phi =\bigl(a_{ij}\bigr)\in $ ${\rm Aut}\left( \mathbf{J}_{09}\right)
$. Then $\theta \ast \phi =\theta $. Hence, we have representatives
$\big( 0,\ 0,\ \alpha \Delta _{13}\big) $. Thus, we get the superalgebras $%
\mathbf{N}_{09}^{\alpha \neq 0}$.

\item
\underline{$\mathbf{N}^{+}=\mathbf{J}_{10}$}. Let $\theta \ =\ \big(B_{1},B_{2},B_{3}\big) \neq 0$ be an arbitrary element of ${\rm Z}^{2}\left(
\mathbf{J}_{10},\mathbf{J}_{10}\right) $. Then
\begin{center}
$\theta\ =\ \big( 0,\ \alpha_{1}\Delta _{33},\ \alpha _{2}\Delta _{23}\big)$ for some $\alpha
_{1},\alpha _{2}\in \mathbb{C}$.\end{center}
The automorphism group ${\rm Aut}\left( \mathbf{J}_{10}\right) $ consists of the automorphisms
$\phi $ given by a matrix of the following form:%
\begin{equation*}
\phi =%
\begin{pmatrix}
1 & 0 & 0 \\
0 & a_{22} & 0 \\
0 & 0 & a_{33}%
\end{pmatrix}%
.
\end{equation*}%
Let $\phi =\bigl(a_{ij}\bigr)\in $ ${\rm Aut}\left( \mathbf{J}_{10}\right)
$. Then
$\theta \ast \phi \ =\
\left( 0,\ {\alpha _{1}}{a^{-1}_{22}} a_{33}^{2}\Delta _{33},\ \alpha _{2}a_{22}\Delta _{23}\right) $. Hence we have
the representatives
$\big( 0,\ \Delta _{33},\ 0\big),$
$\big( 0,\ \Delta_{33},\ \Delta _{23}\big),$ and
$\big( 0,\ 0,\ \Delta _{23}\big) $. Thus we get the
superalgebras $\mathbf{N}_{10},$ $\mathbf{N}_{11},$ and $\mathbf{N}_{12}$.

\item
\underline{$\mathbf{N}^{+}=\mathbf{J}_{11}$}. Let $\theta \ =\ \big(B_{1},B_{2},B_{3}\big) \neq 0$ be an arbitrary element of ${\rm Z}^{2}\left(
\mathbf{J}_{11},\mathbf{J}_{11}\right) $. Then
\begin{center}
    $\theta \ =\ \big( 0,\ \alpha_{1}\Delta _{12},\ 0\big)$ for some $\alpha _{1}\in \mathbb{C}^{\ast}$.
\end{center}
The automorphism group ${\rm Aut}\left( \mathbf{J}_{11}\right) $ consists of the automorphisms $\phi $ given by a matrix of
the following form:%
\begin{equation*}
\phi =%
\begin{pmatrix}
1 & 0 & 0 \\
a_{21} & a_{22} & 0 \\
0 & 0 & a_{33}%
\end{pmatrix}%
.
\end{equation*}%
Let $\phi =\bigl(a_{ij}\bigr)\in $ ${\rm Aut}\left( \mathbf{J}_{11}\right)
$. Then $\theta \ast \phi =\theta $. Hence, we have the representatives $\big( 0,\ \alpha \Delta _{12},\ 0\big) $. Thus, we get the superalgebras $\mathbf{N}_{13}^{\alpha \neq 0}$.

\item
\underline{$\mathbf{N}^{+}=\mathbf{J}_{12}$}. Let $\theta \ =\ \big(B_{1},B_{2},B_{3}\big) \neq 0$ be an arbitrary element of ${\rm Z}^{2}\left(
\mathbf{J}_{12},\mathbf{J}_{12}\right) $. Then
\begin{center}
    $\theta \  =\ \big( 0,\ \alpha_{1}\Delta _{12}+\alpha _{2}\Delta _{33},\ \alpha _{3}\Delta _{13}+\alpha
_{4}\Delta _{23}\big)$ for some $\alpha _{1},\alpha _{2},\alpha
_{3},\alpha _{4}\in \mathbb{C}$.
\end{center}
The automorphism group ${\rm Aut}\left( \mathbf{J}_{12}\right) $ consists of the automorphisms
$\phi $ given by a matrix of the following form:%
\begin{equation*}
\phi =%
\begin{pmatrix}
1 & 0 & 0 \\
a_{21} & a_{22} & 0 \\
0 & 0 & a_{33}%
\end{pmatrix}%
.
\end{equation*}%
Let $\phi =\bigl(a_{ij}\bigr)\in $ ${\rm Aut}\left( \mathbf{J}_{12}\right)
$. Then%
\begin{center}
$\theta \ast \phi \ =\ \left( 0, \ \alpha _{1}\Delta _{12}+{\alpha _{2}}{a^{-1}_{22}}a_{33}^{2}\Delta _{33},\
\left( \alpha _{3}+\alpha _{4}a_{21}\right) \Delta
_{13}+\alpha _{4}a_{22}\Delta _{23}\right) .$
\end{center}
Hence we have representatives \begin{center}
$\big( 0,\ \alpha \Delta _{12}+\Delta_{33},\ \Delta _{23}\big),$
$\big( 0,\ \alpha \Delta _{12},\ \Delta _{23}\big),$
$\big( 0,\ \alpha \Delta _{12}+\Delta _{33},\ \beta \Delta _{13}\big),$ and
$\big(0,\ \alpha \Delta _{12},\ \beta \Delta _{13}\big) $. \end{center}
Thus we get the
superalgebras $\mathbf{N}_{14}^{\alpha },$ $\mathbf{N}_{15}^{\alpha },$
$\mathbf{N}_{16}^{\alpha ,\beta },$ and $\mathbf{N}_{17}^{\alpha ,\beta }$.

\item \underline{$\mathbf{N}^{+}=\mathbf{J}_{13}$}. Let $\theta \ =\ \big(
B_{1},B_{2},B_{3}\big) \neq 0$ be an arbitrary element of ${\rm Z}^{2}\left(
\mathbf{J}_{13},\mathbf{J}_{13}\right) $. Then
\begin{center}$\theta \ =\ \big( 0,\ \alpha _{1}\Delta _{12},\ 0\big)$ for some $\alpha _{1}\in \mathbb{C}^{\ast}$.\end{center}
The automorphism group ${\rm Aut}\left( \mathbf{J}_{13}\right)$ consists of the automorphisms $\phi $ given by a matrix of
the following form:%
\begin{equation*}
\phi =%
\begin{pmatrix}
1 & 0 & 0 \\
a_{21} & a_{22} & 0 \\
0 & 0 & a_{33}%
\end{pmatrix}%
.
\end{equation*}%
Let $\phi =\bigl(a_{ij}\bigr)\in $ ${\rm Aut}\left( \mathbf{J}_{13}\right)
$. Then $\theta \ast \phi =\theta $. Hence, we have the representatives $\big( 0,\ \alpha \Delta _{12},\ 0\big).$
Thus, we get the superalgebras $\mathbf{N}_{18}^{\alpha \neq 0}$.

\item
\underline{$\mathbf{N}^{+}=\mathbf{J}_{14}$}. Let $\theta \ =\ \big(B_{1},B_{2},B_{3}\big) \neq 0$ be an arbitrary element of ${\rm Z}^{2}\left(
\mathbf{J}_{14},\mathbf{J}_{10}\right) $. Then
\begin{center}
    $\theta \ =\ \big( 0,\ \alpha_{1}\Delta _{33},\ \alpha _{2}\Delta _{13}\big)$ for some $\alpha_{1},\alpha _{2}\in \mathbb{C}$.
\end{center} The automorphism group ${\rm Aut}\left( \mathbf{J}_{14}\right) $ consists of the automorphisms
$\phi$ given by a matrix of the following form:
\begin{equation*}
\phi =%
\begin{pmatrix}
a_{11} & 0 & 0 \\
a_{21} & a_{11}^{2} & 0 \\
0 & 0 & a_{33}%
\end{pmatrix}%
.
\end{equation*}%
Let $\phi =\bigl(a_{ij}\bigr)\in $ ${\rm Aut}\left( \mathbf{J}_{14}\right)
$. Then $\theta \ast \phi \ =\
\left( 0,\ {\alpha _{1}}{a_{11}^{-2}}a_{33}^{2}\Delta _{33},\ \alpha _{2}a_{11}\Delta _{13}\right) $. Hence we have
 representatives
$\big( 0,\ \Delta _{33},\ \Delta _{13}\big),$
$\big(0,\ \Delta _{33},\ 0\big),$ and
$\big(0,\ 0,\ \Delta _{13}\big)$.
Thus we get the
superalgebras $\mathbf{N}_{19},$ $\mathbf{N}_{20},$ and $\mathbf{N}_{21}$.

\end{enumerate}

 \subsection{Corollaries}

 \subsubsection{Kokoris  superalgebras}\label{koko}
A flexible algebra $\rm A$ is called   {a Kokoris  algebra}
if $\rm A^+$ is associative.
Kokoris  algebras were introduced by Kokoris in 1960
and they also appear in \cite{sk91}.
  $({\rm A},\cdot) $ is a Kokoris superalgebra if
and only if $({\bf A},\circ ,[\cdot,\cdot])$ is a generic Poisson
superalgebra. Also, observe that Poisson superalgebras are related to  Lie admissible Kokoris superalgebras.

\begin{theorem}\label{K1}
Let $\bf{K}$ be a complex $3$-dimensional Kokoris superalgebra of type $(1,2)$.
Then $\bf{K}$ is an associative commutative superalgebra listed in Theorem \ref{(1,2)},
an anticommutative superalgebra listed in Theorem \ref{ant12} or
it is isomorphic to one
of the following algebras:
\begin{center}

${\rm N}_{03}^{\alpha \neq 0},$
${\rm N}_{04},$
${\rm N}_{05},$
${\rm N}_{06}^{\alpha \neq 0},$ or
${\rm N}_{09}.$
\end{center}

\end{theorem}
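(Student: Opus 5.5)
The plan is to read Theorem \ref{K1} off Theorem A1, using the characterization of Kokoris superalgebras stated just before it: $(\mathrm{K},\cdot)$ is Kokoris if and only if it is flexible and $\mathrm{K}^+$ is associative. Every Kokoris superalgebra is noncommutative Jordan, since it is flexible and $\mathrm{K}^+$ is associative, hence Jordan. Equivalently, $(\mathrm{K},\circ,[\cdot,\cdot])$ is a generic Poisson superalgebra. Therefore every nontrivial $3$-dimensional Kokoris superalgebra of type $(1,2)$ occurs, up to isomorphism, in the list of Theorem A1.

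Conversely, an algebra from that list is Kokoris exactly when its symmetrized superalgebra is associative. Since isomorphisms of $\mathrm{N}$ induce isomorphisms of $\mathrm{N}^+$, the property depends only on which $\mathrm{J}_{i}$ the superalgebra is built over. So I sort the list by $\mathrm{N}^+$, as in the proof of Theorem A1.

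\begin{itemize}
\item Jordan superalgebras ($\theta=0$): the Kokoris ones are exactly the associative ones, marked $\mathfrak{ass}$ in Theorem \ref{(1,2)}, namely $\mathrm{J}_{02},\mathrm{J}_{03},\mathrm{J}_{06},\mathrm{J}_{07},\mathrm{J}_{09}$.
\item Anticommutative superalgebras: here $\mathrm{N}^+$ has zero multiplication, which is associative, so all of Theorem \ref{ant12} is included.
\item Remaining families: the proof of Theorem A1 records that $\mathrm{N}_{01},\mathrm{N}_{02}$ arise from $\mathrm{J}_{01}$, $\mathrm{N}_{03},\mathrm{N}_{04}$ from $\mathrm{J}_{02}$, $\mathrm{N}_{05},\mathrm{N}_{06}$ from $\mathrm{J}_{03}$, $\mathrm{N}_{07}$ from $\mathrm{J}_{04}$, $\mathrm{N}_{08}$ from $\mathrm{J}_{05}$, $\mathrm{N}_{09}$ from $\mathrm{J}_{06}$, $\mathrm{N}_{10},\mathrm{N}_{11}$ from $\mathrm{J}_{08}$, and $\mathrm{N}_{12}$ from $\mathrm{J}_{10}$. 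Keeping only those over an $\mathfrak{ass}$ Jordan superalgebra leaves $\mathrm{N}_{03}^{\alpha\neq0},\mathrm{N}_{04},\mathrm{N}_{05}^{\alpha},\mathrm{N}_{06}^{\alpha\neq0},\mathrm{N}_{09}$, which is the stated list. The $\mathrm{J}_{07}$ and $\mathrm{J}_{09}$ cases contribute nothing, because ${\rm Z}^2=0$ there.
\end{itemize}

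The isomorphism relations and parameter restrictions are inherited directly from Theorem A1.

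The main obstacle is justifying the associativity labels $\mathfrak{ass}$ and $\mathfrak{non-ass}$ in Theorem \ref{(1,2)}. Each label should be confirmed by an explicit superassociator. For example, $\mathrm{J}_{01}$ fails because $(e_1\circ f_1)\circ f_2 = f_2\circ f_2 = 0$ while $e_1\circ(f_1\circ f_2)=e_1\circ e_1=0$, which does not yet give a witness, so one more triple has to be tried. I would first try $(f_1\circ f_1)\circ$ combinations and $(f_1\circ f_2)\circ f_1$ versus $f_1\circ(f_2\circ f_1)$, using super-signs, to exhibit a nonzero associator. Similarly, the Peirce-$\tfrac12$ algebras $\mathrm{J}_{04},\mathrm{J}_{05},\mathrm{J}_{08},\mathrm{J}_{10},\mathrm{J}_{11}$ fail via $(e_1\circ e_1)\circ f_i\ne e_1\circ(e_1\circ f_i)$, where $\tfrac12\ne\tfrac14$. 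For $\mathrm{J}_{11}$ the witness is $e_1\circ(f_1\circ f_2)=e_1$ versus $(e_1\circ f_1)\circ f_2=e_1$, so again a different triple is needed. After these checks, the classification follows.
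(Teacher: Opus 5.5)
Your proposal is correct and follows the paper's route. Theorem \ref{K1} is read off from Theorem A1 by keeping three kinds of algebras:
\begin{itemize}
\item the associative commutative and the anticommutative superalgebras;
\item the algebras whose symmetrization $\mathrm{N}^+$ is one of the $\mathfrak{ass}$ Jordan superalgebras $\mathrm{J}_{02},\mathrm{J}_{03},\mathrm{J}_{06}$; here $\mathrm{J}_{07}$ and $\mathrm{J}_{09}$ contribute nothing because ${\rm Z}^2=0$, and the listed ${\rm N}_{05}$ is the family ${\rm N}_{05}^{\alpha}$.
\end{itemize}

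Your ``main obstacle'' is not needed, because the associativity labels are already part of Theorem \ref{(1,2)}. If you want explicit witnesses for $\mathrm{J}_{01}$ and $\mathrm{J}_{11}$, the triple $(f_1,f_2,f_1)$ works. In $\mathrm{J}_{01}$ it gives $(f_1\circ f_2)\circ f_1=f_2$ but $f_1\circ(f_2\circ f_1)=-f_2$, and in $\mathrm{J}_{11}$ it gives $f_1$ versus $-f_1$. Note that the Peirce-$\tfrac12$ argument you list does not apply to $\mathrm{J}_{11}$, where $e_1$ acts as the identity, as you yourself noticed.
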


\begin{theorem}\label{K2}
Let $\bf{K}$ be a complex $3$-dimensional Kokoris superalgebra of type $(2,1)$.
Then $\bf{K}$ is an associative commutative superalgebra listed in Theorem \ref{(2,1)},
an anticommutative superalgebra listed in Theorem \ref{ant21} or
it is isomorphic to one
of the following algebras:

\begin{center}

${\bf N}_{01},$
${\bf N}_{04},$
${\bf N}_{05},$
${\bf N}_{06},$
${\bf N}_{08},$
${\bf N}_{10},$
${\bf N}_{11},$
${\bf N}_{12},$
${\bf N}_{19},$
${\bf N}_{20},$ or
${\bf N}_{21}.$

\end{center}

\end{theorem}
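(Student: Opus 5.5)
The plan is to reduce Theorem \ref{K2} to Theorem A2 through the correspondence between Kokoris superalgebras and generic Poisson superalgebras. A Kokoris superalgebra $\mathbf{K}$ is flexible, and ${\bf K}^+$ is associative, hence supercommutative associative and in particular Jordan. So $\mathbf{K}$ is a noncommutative Jordan superalgebra, and it appears in the list of Theorem A2. Conversely, an algebra from that list is Kokoris exactly when its symmetrized superalgebra ${\bf N}^+$ is associative. The classification therefore comes down to selecting, from Theorem A2, the superalgebras whose symmetrization is associative. Isomorphism classes are inherited directly from Theorem A2, since isomorphic superalgebras have isomorphic symmetrizations.

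I will use the structure of the proof of Theorem A2, where each superalgebra was built as $({\bf J},\ast_\theta)$ with ${\bf J}={\bf N}^+$ one of $\mathbf{J}_{01},\dots,\mathbf{J}_{14}$. The table in Theorem \ref{(2,1)} marks each $\mathbf{J}_i$ as $\mathfrak{ass}$ or $\mathfrak{non-ass}$. The associative ones are $\mathbf{J}_{01},\mathbf{J}_{02},\mathbf{J}_{05},\mathbf{J}_{07},\mathbf{J}_{08},\mathbf{J}_{10},\mathbf{J}_{14}$. The cases split as follows.

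\emph{Trivial $\theta$.} If $\theta=0$, the algebra is the Jordan superalgebra itself, and it is Kokoris iff it is associative commutative.

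\emph{$\theta\neq0$ and the trivial Jordan product.} Then ${\bf N}^+$ is the zero product, which is associative. This gives every anticommutative superalgebra of Theorem \ref{ant21}.

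\emph{$\theta\neq0$ and ${\bf N}^+$ associative.} Reading off the cases of the proof of Theorem A2 gives:
\begin{center}
$\mathbf{J}_{02}\mapsto \mathbf{N}_{01}$; $\mathbf{J}_{05}\mapsto \mathbf{N}_{04},\mathbf{N}_{05},\mathbf{N}_{06}$; $\mathbf{J}_{07}\mapsto\mathbf{N}_{08}$; $\mathbf{J}_{10}\mapsto\mathbf{N}_{10},\mathbf{N}_{11},\mathbf{N}_{12}$; $\mathbf{J}_{14}\mapsto\mathbf{N}_{19},\mathbf{N}_{20},\mathbf{N}_{21}$.
\end{center}
For $\mathbf{J}_{01}$ and $\mathbf{J}_{08}$ we have ${\rm Z}^2=\{0\}$, so they contribute nothing new. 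All remaining $\mathbf{N}_i$ have non-associative symmetrization ($\mathbf{J}_{03},\mathbf{J}_{04},\mathbf{J}_{06},\mathbf{J}_{09},\mathbf{J}_{11},\mathbf{J}_{12},\mathbf{J}_{13}$) and are excluded.

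The only step that needs real verification is the $\mathfrak{ass}/\mathfrak{non-ass}$ labels in Theorem \ref{(2,1)}. These follow from a short check of super-associativity on basis elements. For example, $\mathbf{J}_{03}$ fails because $(e_1\circ e_1)\circ f_1=\tfrac12 f_1\neq \tfrac14 f_1=e_1\circ(e_1\circ f_1)$. For $\mathbf{J}_{14}$, associativity holds because all triple products vanish. I would also verify that an algebra whose symmetrization is, say, $\mathbf{J}_{05}$ does not secretly coincide with one coming from a non-associative ${\bf J}$. This cannot happen, since ${\bf N}^+$ is an isomorphism invariant. The main obstacle is bookkeeping rather than mathematics: making sure every orbit from the cases of the proof is accounted for, including the parameter-free ones. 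This yields exactly the stated list.
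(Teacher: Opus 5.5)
Your proposal is correct and follows essentially the same route as the paper. The paper gives no separate proof: Theorem \ref{K2} is stated as a corollary of Theorem A2 via the Kokoris/generic Poisson correspondence. That is, one keeps exactly the noncommutative Jordan superalgebras whose symmetrization ${\bf N}^+$ is associative. Your case-by-case reading of the proof of Theorem A2 does precisely this, with ${\bf N}^+\in\{\mathbf{J}_{02},\mathbf{J}_{05},\mathbf{J}_{07},\mathbf{J}_{10},\mathbf{J}_{14}\}$, together with the associative commutative Jordan and the anticommutative cases, and it yields exactly the stated list.
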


\subsubsection{Associative    superalgebras}

\begin{theorem}\label{ass1}
Let $\rm{A}$ be a complex $3$-dimensional associative superalgebra of type $(1,2)$.
Then $\rm{A}$ is   one
of the following algebras:

\begin{center}
${\rm J}_{02},$
${\rm J}_{03},$
${\rm J}_{06},$
${\rm J}_{07},$
${\rm J}_{09},$
${\rm A}_{01},$
${\rm A}_{02},$
${\rm A}_{06},$
${\rm N}_{03}^{\alpha \neq0},$
${\rm N}_{04},$
${\rm N}_{06}^{\alpha \neq0},$ \\
${\rm N}_{07}^{\frac 12},$
${\rm N}_{07}^{-\frac 12},$
${\rm N}_{08}^{\frac 12},$
${\rm N}_{08}^{-\frac 12},$
${\rm N}_{09},$
${\rm N}_{10}^{\frac 12,\frac 12},$
${\rm N}_{10}^{\frac 12,-\frac 12},$ or
${\rm N}_{10}^{-\frac 12,-\frac 12}.$

\end{center}
\end{theorem}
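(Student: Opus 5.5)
Every associative superalgebra is a noncommutative Jordan superalgebra: flexibility holds trivially, and the Jordan superidentity holds because every associator vanishes. So a $3$-dimensional associative superalgebra of type $(1,2)$ is, up to isomorphism, one of the superalgebras in Theorem A1. That is, it is either a Jordan superalgebra ${\rm J}_{01},\dots,{\rm J}_{11}$ of Theorem \ref{(1,2)}, or an anticommutative superalgebra ${\rm A}_{01},\dots,{\rm A}_{18}$ of Theorem \ref{ant12}, or one of ${\rm N}_{01},\dots,{\rm N}_{12}$. The plan is to go through this finite list, including its parametric families, and decide for each member whether the super-associator $(x,y,z)=(xy)z-x(yz)$ vanishes on all homogeneous basis triples. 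Since these are non-isomorphic representatives, the surviving ones give the list with no repetitions.

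First, the Jordan part. A supercommutative superalgebra is associative exactly when it is associative as a superalgebra. The table in Theorem \ref{(1,2)} already marks which ${\rm J}_{i}$ are $\mathfrak{ass}$, giving ${\rm J}_{02},{\rm J}_{03},{\rm J}_{06},{\rm J}_{07},{\rm J}_{09}$.

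Second, the anticommutative part. Here I use the identity $(xy)z - x(yz)$ with $xy=-(-1)^{|x||y|}yx$. For associative anticommutative superalgebras, the standard argument gives all triple products zero, i.e. nilpotency of index $3$. Indeed, $(xy)z=x(yz)$ combined with the sign rules yields $xyz=-(-1)^{\cdot}xzy$ and $xyz = -(-1)^{\cdot}yxz$; the composition of sign changes around a cycle gives $xyz=-xyz$. So only algebras with $A^2\cdot A=A\cdot A^2=0$ survive. Among ${\rm A}_{01},\dots,{\rm A}_{18}$, any product involving $[e_1,f_i]\neq0$ together with $[f_j,f_k]=e_1$ produces a nonzero triple product, and any $[e_1,f_1]=f_1$ is non-nilpotent. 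Inspection then leaves exactly ${\rm A}_{01},{\rm A}_{02},{\rm A}_{06}$: their only products are $[f_1,f_2]=e_1$, $[f_2,f_2]=e_1$, or $[e_1,f_2]=f_1$ alone, and all triple products vanish.

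Third, the genuinely noncommutative ${\rm N}_{i}$. I would check the associator on basis triples family by family.

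For ${\rm N}_{01}$, ${\rm N}_{02}$, ${\rm N}_{05}$, ${\rm N}_{11}$, ${\rm N}_{12}$, I expect a single triple to fail, for example $(e_1f_1)f_2$ versus $e_1(f_1f_2)$. This should hold for every parameter value.

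For ${\rm N}_{03}$, ${\rm N}_{04}$, ${\rm N}_{06}$, all products land in a square-zero ideal, so all triple products vanish and they survive for every admissible $\alpha$.

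For ${\rm N}_{09}$, a direct check works: $e_1$ acts as identity on $\langle e_1,f_1\rangle$ and $f_1f_1=e_1$.

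The families ${\rm N}_{07}$, ${\rm N}_{08}$, ${\rm N}_{10}$ are Peirce-type with $e_1e_1=e_1$, $e_1f=\lambda f$, $fe_1=\mu f$ and $\lambda+\mu=1$. Associativity on $(e_1,e_1,f)$, $(e_1,f,e_1)$, $(f,e_1,e_1)$ gives $\lambda^2=\lambda$ and $\mu^2=\mu$. Hence $(\lambda,\mu)\in\{(1,0),(0,1)\}$, i.e. the parameter $\alpha$ (resp. $\beta$) lies in $\{\pm\frac12\}$; the value $\alpha=0$ is excluded by the family's definition. In ${\rm N}_{08}$ the summand $f_1$ has $\lambda=\mu=1$, which is fine. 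Using the isomorphism ${\rm N}_{10}^{\alpha,\beta}\cong{\rm N}_{10}^{\beta,\alpha}$, this yields ${\rm N}_{07}^{\pm\frac12}$, ${\rm N}_{08}^{\pm\frac12}$, and ${\rm N}_{10}^{\frac12,\frac12},{\rm N}_{10}^{\frac12,-\frac12},{\rm N}_{10}^{-\frac12,-\frac12}$.

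The main obstacle is bookkeeping in the third step, particularly ${\rm N}_{12}^{\alpha}$ and ${\rm N}_{11}^{\alpha}$. There one must find, for every $\alpha$, a failing triple, and the super signs must be tracked carefully. For ${\rm N}_{11}$ I would first try the triple $(e_1,e_1,f_2)$: it gives $(\frac12+\alpha)$-terms with an extra $f_1$-coefficient $1+2\alpha$ versus $2(\frac12+\alpha)$. If these happen to agree, I would switch to $(f_2,e_1,e_1)$ to pin down a contradiction with $\lambda^2=\lambda$.
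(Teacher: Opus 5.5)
Your route (view associative superalgebras as noncommutative Jordan ones and filter the list of Theorem A1 by the associator) is the one implicit in the paper. Your treatment of the following parts is correct:
\begin{itemize}
\item the Jordan part;
\item the anticommutative part: associativity forces $A^2A=AA^2=0$, leaving ${\rm A}_{01},{\rm A}_{02},{\rm A}_{06}$;
\item ${\rm N}_{03}^{\alpha}$, ${\rm N}_{04}$, ${\rm N}_{06}^{\alpha}$, ${\rm N}_{09}$;
\item the Peirce families ${\rm N}_{07},{\rm N}_{08},{\rm N}_{10}$.
\end{itemize}

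The step that fails is your claim that ${\rm N}_{05}^{\alpha}$ is non-associative for every $\alpha$. In ${\rm N}_{05}^{\alpha}$ the only nonzero triple products of basis elements are $(f_1f_1)f_1=(1+\alpha)f_2$ and $f_1(f_1f_1)=(1-\alpha)f_2$. So at $\alpha=0$ every associator vanishes. Indeed ${\rm N}_{05}^{0}$, with $e_1f_1=f_1e_1=f_2$ and $f_1f_1=e_1$, is simply $x\mathbb{C}[x]/(x^4)$ with $x=f_1$ odd, $e_1=x^2$, $f_2=x^3$.

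It is not isomorphic to anything in the stated list. It is nilpotent with $A^3=\langle f_2\rangle\neq 0$. The non-nilpotent members of the list all contain an idempotent $e_1e_1=e_1$. The nilpotent ones (${\rm J}_{02},{\rm J}_{03},{\rm A}_{01},{\rm A}_{02},{\rm A}_{06},{\rm N}_{03}^{\alpha},{\rm N}_{04},{\rm N}_{06}^{\alpha}$) all satisfy $A^3=0$. So your filtering, carried out correctly, does not yield the statement as written: it shows the list must be enlarged by ${\rm N}_{05}^{0}$. The omission is in the statement itself, but your proposal reaches the stated list only by asserting something false.

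Separately, the sample witness $(e_1,f_1,f_2)$ you suggest gives $0=0$ for ${\rm N}_{01},{\rm N}_{02},{\rm N}_{05},{\rm N}_{11}$. For ${\rm N}_{12}^{\alpha}$ it detects failure only when $\alpha\neq\frac14$. Witnesses that actually work:
\begin{itemize}
\item ${\rm N}_{01}^{\alpha}$: no single basis triple suffices, but $(f_1,f_1,e_1)$ forces $\alpha^2=1$ while $(f_1,e_1,f_1)$ forces $\alpha^2=-1$.
\item ${\rm N}_{02}$: $(e_1f_1)f_1=-e_1\neq 0=e_1(f_1f_1)$.
\item ${\rm N}_{11}^{\alpha}$: with $\lambda=\frac12+\alpha$, one has $(e_1e_1)f_2=f_1+\lambda f_2$ but $e_1(e_1f_2)=2\lambda f_1+\lambda^2 f_2$. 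You would need both $1=1+2\alpha$ and $\lambda^2=\lambda$, which is impossible. Your comparison of ``$1+2\alpha$ versus $2(\frac12+\alpha)$'' compares two equal quantities; the correct comparison is $1$ versus $1+2\alpha$.
\item ${\rm N}_{12}^{\alpha}$: $(f_1f_2)f_1=\frac12 f_1+\alpha f_2$ while $f_1(f_2f_1)=-\frac12 f_1+\alpha f_2$, for every $\alpha$.
\end{itemize}

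Finally, for ${\rm N}_{03},{\rm N}_{04},{\rm N}_{06}$ the triple products vanish because all products lie in the two-sided annihilator ($\langle e_1\rangle$, resp.\ $\langle f_2\rangle$). That is stronger than merely lying in a square-zero ideal, and it is what the argument needs.
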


\begin{theorem}\label{ass2}
Let $\bf{A}$ be a complex $3$-dimensional associative superalgebra of type $(2,1)$.
Then $\bf{A}$ is   one
of the following algebras:

\begin{center}
${\bf J}_{01},$
${\bf J}_{02},$
${\bf J}_{05},$
${\bf J}_{07},$
${\bf J}_{08},$
${\bf J}_{10},$
${\bf J}_{14},$
${\bf A}_{10},$
${\bf N}_{01},$
${\bf N}_{02}^{\frac 12},$
${\bf N}_{02}^{-\frac 12},$
${\bf N}_{03}^{\frac 12},$
${\bf N}_{04},$
${\bf N}_{07}^{\frac 12},$
${\bf N}_{07}^{-\frac 12},$
${\bf N}_{08},$ \\
${\bf N}_{09}^{\frac 12},$
${\bf N}_{09}^{-\frac 12},$
${\bf N}_{10},$
${\bf N}_{13}^{\frac 12},$
${\bf N}_{13}^{-\frac 12},$
${\bf N}_{17}^{\frac 12,\frac 12},$
${\bf N}_{17}^{-\frac 12,\frac 12},$
${\bf N}_{17}^{\frac 12,-\frac 12},$
${\bf N}_{17}^{-\frac 12,-\frac 12},$
${\bf N}_{18}^{\frac 12},$
${\bf N}_{18}^{-\frac 12},$
or ${\bf N}_{20}.$

\end{center}

\end{theorem}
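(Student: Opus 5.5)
The plan is to use that every associative superalgebra is a noncommutative Jordan superalgebra. Flexibility is immediate, since the associator vanishes, and the super Jordan identity follows from associativity. Hence any $3$-dimensional associative superalgebra $\mathbf{A}$ of type $(2,1)$ is isomorphic to one of the superalgebras in Theorem A2: a Jordan superalgebra from Theorem \ref{(2,1)}, an anticommutative one from Theorem \ref{ant21}, or some $\mathbf{N}_{i}$. The proof then amounts to checking the associativity superidentity $(xy)z=x(yz)$ on basis triples for every entry of these three lists, and keeping exactly those that pass. Since all lists are given by explicit structure constants on the basis $e_1,e_2,f_1$, this is a finite (parametric) computation.

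I will go through the three families in order.

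\emph{Jordan part.} The table in Theorem \ref{(2,1)} already marks the associative ones: $\mathbf{J}_{01},\mathbf{J}_{02},\mathbf{J}_{05},\mathbf{J}_{07},\mathbf{J}_{08},\mathbf{J}_{10},\mathbf{J}_{14}$. The others contain a Peirce half-action $e_1\circ x=\frac12 x$ with $e_1$ idempotent, and then $(e_1e_1)x\neq e_1(e_1x)$.

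\emph{Anticommutative part.} An anticommutative associative superalgebra satisfies $[[x,y],z]=[x,[y,z]]$. Whenever some $[e_i,\cdot]$ acts nontrivially, a triple such as $(e_1,e_2,e_2)$ or $(e_1,f_1,f_1)$ yields a contradiction; for instance, in $\mathbf{A}_{07}$ one has $[[e_1,f_1],f_1]=[f_1,f_1]=0$ but $[e_1,[f_1,f_1]]=0$, so a finer triple is needed. I will check each $\mathbf{A}_i$ directly. Only $\mathbf{A}_{10}$, with $f_1f_1=e_1$ and $e_1$ annihilating everything, survives.

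\emph{The $\mathbf{N}_i$.} For the one-parameter families with $e_1e_1=e_1$, $e_1f_1=(\frac12+\alpha)f_1$ and $f_1e_1=(\frac12-\alpha)f_1$, associativity on $(e_1,e_1,f_1)$ gives $(\frac12+\alpha)^2=\frac12+\alpha$, forcing $\alpha=\pm\frac12$ (as $\alpha\neq0$). The triple $(f_1,e_1,e_1)$ is then consistent, and $(e_1,f_1,e_1)$ also holds. The same mechanism applies to the $e_2$-coefficients in $\mathbf{N}_{09},\mathbf{N}_{13},\mathbf{N}_{16},\mathbf{N}_{17},\mathbf{N}_{18}$. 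For $\mathbf{N}_{03}$, the relation $e_2f_1=(\frac12-\alpha)f_1$ together with $e_1f_1=(\frac12+\alpha)f_1$ forces $\alpha=\frac12$ up to the isomorphism $\alpha\mapsto-\alpha$.

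Entries containing $e_2f_1=f_1$, $f_1e_2=-f_1$ together with $e_2e_2=0$ fail, for example at $(e_2,e_2,f_1)$ in $\mathbf{N}_{05},\mathbf{N}_{06},\mathbf{N}_{11},\mathbf{N}_{12},\mathbf{N}_{14},\mathbf{N}_{15},\mathbf{N}_{19},\mathbf{N}_{21}$. Entries with $f_1f_1=e_2$ must also pass the super-sign-free triple $(f_1,f_1,f_1)$, i.e. $e_2f_1=f_1e_2$, which kills the same ones. In $\mathbf{N}_{16}$, the products $f_1f_1=e_2$ and $e_1f_1$ force $2\beta$-type relations compatible with $e_1e_2$; checking $(e_1,f_1,f_1)$ gives $\frac12+\beta=\frac12+\alpha$ and $(f_1,f_1,e_1)$ gives $\frac12-\beta=\frac12-\alpha$, with $\alpha,\beta\in\{\pm\frac12\}$ from the idempotent conditions. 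I expect this to leave only cases already isomorphic to listed ones; $\mathbf{N}_{16}$ does not appear in the final list, so I must verify that no such case survives.

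The main obstacle is bookkeeping. The two-parameter families $\mathbf{N}_{16}^{\alpha,\beta}$ and $\mathbf{N}_{17}^{\alpha,\beta}$ require carefully running through the $(\pm\frac12,\pm\frac12)$ choices. For $\mathbf{N}_{16}$ I must confirm that every such choice fails some triple (for example, $(f_1,e_1,f_1)$ gives $(\frac12-\beta)e_2=(\frac12+\beta)e_2$, i.e. $\beta=0$, contradicting $\beta=\pm\frac12$). The remaining cases of the list are straightforward.
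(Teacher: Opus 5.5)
Your approach is the same as the paper's: Theorem \ref{ass2} appears there as a corollary of Theorem A2, obtained by keeping exactly those entries of its list that satisfy $(xy)z=x(yz)$, and your case analysis reproduces the stated list, including ruling out every $\mathbf{N}_{16}^{\alpha,\beta}$ via the triple $(f_1,e_1,f_1)$. One small slip: $\mathbf{N}_{19}$ and $\mathbf{N}_{21}$ have $e_1e_1=e_2$, $e_1f_1=f_1$, $f_1e_1=-f_1$ and no $e_2$-action, so they fail at $(e_1,e_1,f_1)$ rather than at $(e_2,e_2,f_1)$; likewise $\mathbf{A}_{07}$ fails at $(e_1,e_1,f_1)$.
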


\subsubsection{Standard    superalgebras}
The notions of standard   algebras were introduced in \cite{Alb}.
An algebra is defined to be  {standard} in case the following two
identities hold:
\begin{eqnarray*}
(x,y,z)+(z,x,y)-(x,z,y) &=&0, \\
(x,y,wz)+(w,y,xz)+(z,y,wx) &=&0.
\end{eqnarray*}%
Standard algebras include all associative algebras and   Jordan
algebras.
It is proved that the variety of standard algebras is just the minimal variety containing the variety of associative algebras and the variety of Jordan algebras   \cite{hac18}.
Moreover, every standard algebra is a noncommutative Jordan algebra and is therefore power-associative.
By some  direct verification of standard    identities in noncommutative Jordan superalgebras,  we have the following statements.

\begin{definition}
    A superalgebra is defined to be  {standard} in case the following two
superidentities hold:
\begin{eqnarray*}
(x,y,z)+ (-1)^{ |z|\big(|x|+|y|\big)}(z,x,y)-(-1)^{|z||y|}(x,z,y) &=&0, \\
(x,y,wz)+(-1)^{|w|\big(|x|+|y|\big)+|x||y|}(w,y,xz)+(-1)^{|x|\big(|y|+|z|+w|\big)+|z|\big(|y|+|w|\big)}(z,y,wx) &=&0.
\end{eqnarray*}%
\end{definition}

\begin{theorem}\label{S1}
Let $\bf{S}$ be a complex $3$-dimensional standard superalgebra of type $(1,2)$.
Then $\bf{S}$ is an associative superalgebra listed in  Theorem \ref{ass1} or
a non-associative Jordan  superalgebra listed in Theorem \ref{(1,2)}.

\end{theorem}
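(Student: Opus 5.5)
Since every standard superalgebra is a noncommutative Jordan superalgebra, the plan is to run through the list of Theorem A1 and test each superalgebra against the two standard superidentities. Theorem A1 says that a nontrivial $3$-dimensional noncommutative Jordan superalgebra of type $(1,2)$ is either one of ${\rm J}_{01},\ldots,{\rm J}_{11}$, one of ${\rm A}_{01},\ldots,{\rm A}_{18}$, or one of ${\rm N}_{01},\ldots,{\rm N}_{12}$ (with parameters). So ${\bf S}$ lies in this finite list of families, and we only need to decide which members satisfy
\[
(x,y,z)+(-1)^{|z|(|x|+|y|)}(z,x,y)-(-1)^{|z||y|}(x,z,y)=0
\]
and the second, four-variable superidentity.

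Two groups are handled without new computation. Associative superalgebras satisfy both identities because every associator vanishes, so the whole list of Theorem \ref{ass1} is standard. Jordan superalgebras are standard, which is the super-analogue of the classical fact recalled before the definition. We confirm this directly for the non-associative ${\rm J}_{01},{\rm J}_{04},{\rm J}_{05},{\rm J}_{08},{\rm J}_{10},{\rm J}_{11}$ by checking the identities on basis elements. This check is finite: the superidentities are multilinear, so verifying them on homogeneous basis triples and quadruples suffices.

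The remaining cases are the non-associative anticommutative superalgebras of Theorem \ref{ant12} and the non-associative ${\rm N}_i$. For each of them we want to exhibit a single homogeneous basis triple violating the first identity, or a quadruple violating the second. A structural shortcut is available here. For an anticommutative superalgebra, the first identity reduces, via the super-antisymmetry of the product, to a multiple of the super-Jacobian. Taking suitable choices such as $x=z$ should therefore force super-Jacobi-type relations together with vanishing of products of the form $(xy)x$ and $x(yx)$, and this will make everything associative, hence ${\rm A}_{01}$, ${\rm A}_{02}$ or ${\rm A}_{06}$. In practice we compute, for each ${\rm A}_i$, a triple such as $(e_1,f_2,f_2)$ or $(f_1,f_2,f_2)$ that gives a nonzero left-hand side.

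For the parametric families ${\rm N}_{01}^{\alpha},{\rm N}_{03}^{\alpha},\ldots,{\rm N}_{12}^{\alpha}$, the plan is to evaluate the identities on the few nonzero triples. Examples are $(e_1,e_1,f_i)$, $(e_1,f_i,e_1)$ and $(f_i,e_1,e_1)$, which produce polynomial conditions in $\alpha,\beta$. For instance, in ${\rm N}_{07}^\alpha$ the triple $(e_1,e_1,f_1)$ should yield $\alpha^2=\tfrac14$. The expectation is that the surviving parameters are exactly the associative values appearing in Theorem \ref{ass1}, namely ${\rm N}_{07}^{\pm\frac12}$, ${\rm N}_{08}^{\pm\frac12}$, ${\rm N}_{10}^{\pm\frac12,\pm\frac12}$, ${\rm N}_{03}^{\alpha}$, ${\rm N}_{06}^{\alpha}$, ${\rm N}_{09}$ and ${\rm N}_{04}$, while ${\rm N}_{01},{\rm N}_{02},{\rm N}_{05},{\rm N}_{11},{\rm N}_{12}$ and the other parameter values fail. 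The main obstacle is the sign bookkeeping in the second superidentity for families with odd–odd products (${\rm N}_{01},{\rm N}_{02},{\rm N}_{05},{\rm N}_{12}$), since the first identity alone may not exclude them. There we would first try quadruples involving $f_1,f_2$ twice, for example $(x,y,w,z)=(f_1,e_1,f_2,f_1)$.
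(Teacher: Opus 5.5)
Your route is the paper's own: its proof of this statement is only a direct check of the two superidentities over the list of Theorem A1. However, your picture of where the obstructions come from is wrong. Noncommutative Jordan superalgebras are super-flexible, so $-(-1)^{|z||y|}(x,z,y)=(-1)^{|x|(|y|+|z|)}(y,z,x)$, and the first identity is just the vanishing of the super-cyclic sum of associators. Writing $xy=x\circ y+[x,y]$ and using that $[\cdot,\cdot]$ is a superderivation of $\circ$, this sum is twice the super-Jacobian of $[\cdot,\cdot]$. So the first identity only says that the bracket is a Lie superalgebra. It therefore holds for all parameter values in ${\rm A}_{08}^{\alpha}$, ${\rm A}_{17}$, ${\rm N}_{02}$, ${\rm N}_{07}^{\alpha}$, ${\rm N}_{08}^{\alpha}$, ${\rm N}_{10}^{\alpha,\beta}$ and ${\rm N}_{11}^{\alpha}$.

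Consequences for your plan:
\begin{itemize}
\item Your anticommutative shortcut to associativity is false, and no triple excludes ${\rm A}_{08}^{\alpha}$ or ${\rm A}_{17}$.
\item In ${\rm N}_{07}^{\alpha}$ the triple $(e_1,e_1,f_1)$ gives $abf_1-abf_1=0$ (with $a=\frac12+\alpha$, $b=\frac12-\alpha$), not $\alpha^2=\frac14$.
\item These cases are decided by the second identity. At $(x,y,w,z)=(e_1,e_1,e_1,f_1)$ it gives $2\alpha(\frac14-\alpha^2)f_1$ in ${\rm N}_{07}^{\alpha}$, and $-2f_1$ in ${\rm A}_{08}^{\alpha}$ and ${\rm A}_{17}$. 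At $(f_1,f_1,f_1,f_1)$ it gives $e_1$ in ${\rm N}_{02}$.
\item Conversely, ${\rm N}_{01}^{\alpha}$, ${\rm N}_{05}^{\alpha\neq 0}$ and ${\rm N}_{12}^{\alpha}$ already fail the first identity, because their brackets are not Lie.
\end{itemize}

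More seriously, the outcome you predict is not what the check produces, so two of your exclusion steps cannot be carried out.

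First, take ${\rm N}_{10}^{\alpha,\beta}$ with $a_1=\frac12+\alpha$, $a_2=\frac12+\beta$, $b_i=1-a_i$. The only nonzero associators are $(e_1,e_1,f_i)=a_ib_if_i$ and $(f_i,e_1,e_1)=-a_ib_if_i$. The instances of the second identity that involve them have $y=e_1$ and $\{x,w,z\}=\{e_1,e_1,f_i\}$. They therefore have a single odd argument, so all signs equal $+1$, and the identity reduces exactly to $\alpha(\frac14-\alpha^2)=\beta(\frac14-\beta^2)=0$. Hence ${\rm N}_{10}^{\pm\frac12,0}$ satisfy both superidentities. Yet they are neither associative, since $(e_1,e_1,f_2)=\frac14f_2$, nor Jordan, since $e_1f_1\neq f_1e_1$. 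They are the odd analogues of ${\bf N}_{17}^{\pm\frac12,0}$, which Theorem~\ref{S2} does list in type $(2,1)$.

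Second, ${\rm N}_{05}^{0}$ is given by $e_1f_1=f_1e_1=f_2$, $f_1f_1=e_1$. This is the associative superalgebra generated by an odd $f_1$ with $e_1=f_1^2$, $f_2=f_1^3$, $f_1^4=0$. It is associative, hence standard, but it is missing from Theorem~\ref{ass1}.

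Carried out correctly, your verification therefore refutes the statement as written. The standard superalgebras of type $(1,2)$ are the associative ones (Theorem~\ref{ass1} plus ${\rm N}_{05}^{0}$), the Jordan ones, and ${\rm N}_{10}^{\pm\frac12,0}$. The paper's one-line ``direct verification'' overlooks the same cases, so it offers nothing that would rescue these steps.
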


\begin{theorem}\label{S2}
Let $\bf{S}$ be a complex $3$-dimensional standard superalgebra of type $(2,1)$.
Then $\bf{S}$ is an associative superalgebra listed in  Theorem \ref{ass2},
a non-associative Jordan  superalgebra listed in Theorem \ref{(2,1)},  or  isomorphic to  ${\bf N}_{17}^{0, \pm \frac 12}$
or
${\bf N}_{17}^{\pm \frac 12,0}.$

\end{theorem}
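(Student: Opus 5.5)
Since every standard superalgebra is a noncommutative Jordan superalgebra, Theorem A2 together with Theorems \ref{(2,1)} and \ref{ant21} lists all candidates for $\mathbf{S}$ of type $(2,1)$, and the plan is to test the two standard superidentities on each candidate. Two reductions come first. Associative superalgebras are standard, so everything in Theorem \ref{ass2} qualifies. Jordan superalgebras are standard as well: the first identity, combined with supercommutativity, is exactly the super Jordan identity in linearized form, and the second also holds for Jordan superalgebras. So all of Theorem \ref{(2,1)} qualifies. What remains is to examine the nontrivial anticommutative superalgebras $\mathbf{A}_{01}$--$\mathbf{A}_{10}$ and the genuinely noncommutative families $\mathbf{N}_{01}$--$\mathbf{N}_{21}$ that are not associative.

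For the anticommutative candidates, I would show that a standard anticommutative superalgebra has all products zero (or is associative, as $\mathbf{A}_{10}$ is). In the first identity, the combination $(x,y,z)+(-1)^{\ldots}(z,x,y)-(-1)^{\ldots}(x,z,y)$ with anticommutativity yields expressions like $2(xy)z$ up to sign terms. Plugging in a basis element pair with a nonzero product, such as $x=e_1,y=e_2,z=e_2$ for $\mathbf{A}_{01}$--$\mathbf{A}_{06}$, or odd triples involving $f_1$, should produce a nonzero left-hand side. I would check each case individually with one well-chosen triple.

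For the $\mathbf{N}$-families, I would write each product as $x\circ y+\theta(x,y)$ with $\theta\in{\rm Z}^2$ from the proof of Theorem A2. Most families are parametrized by a scalar $\alpha$ (and possibly $\beta$) that enters through left and right multiplications on one-dimensional pieces: $e_1$ acts on $f_1$ by $\frac12\pm\alpha$, and on $e_2$ similarly. I would evaluate the first identity on the triples $(e_1,e_1,f_1)$, $(e_1,f_1,e_1)$, $(f_1,e_1,e_1)$, and, for families containing $e_2$, on triples like $(e_1,e_2,e_1)$. This gives polynomial equations such as $\alpha(\alpha^2-\frac14)=0$ or $\alpha\beta=0$-type conditions. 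Combined with the second identity on triples like $(e_1,e_1,e_1f_1)$, these should force the parameters into $\{0,\pm\frac12\}$. The values $\pm\frac12$ are associative and already listed; $\alpha=0$ is Jordan and is excluded from most families by $\alpha\neq0$. The only surviving non-associative, non-Jordan members are expected to be $\mathbf{N}_{17}^{0,\pm\frac12}$ and $\mathbf{N}_{17}^{\pm\frac12,0}$. For these, I would verify both identities fully on all basis triples, respectively quadruples.

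The main obstacle is the second identity, which is quartic, carries sign factors, and must be checked exhaustively for the $\mathbf{N}_{17}$ survivors. For these I would exploit the Peirce-like structure: $e_1$ acts diagonally, so each associator is a scalar multiple of a basis vector. This should reduce the verification to a finite table of eigenvalue identities in $\frac12\pm\alpha$ and $\frac12\pm\beta$.
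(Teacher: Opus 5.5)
Your route (filter the candidates of Theorem A2, together with the Jordan and anticommutative lists, by testing the two superidentities) is the paper's ``direct verification''. However, several of the mechanisms you describe are wrong, and some steps fail as written.

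First, for a supercommutative superalgebra the first identity holds identically. It is the second identity that becomes the linearized super-Jordan identity, so your conclusion that Jordan superalgebras are standard still holds.

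More seriously, for an anticommutative superalgebra the first identity is exactly twice the super-Jacobi identity, not an expression like $2(xy)z$. Two consequences follow:
\begin{itemize}
\item No triple can exclude the Lie superalgebras $\mathbf{A}_{03}^{-\frac12}$, $\mathbf{A}_{05}^{\alpha}$, $\mathbf{A}_{07}$.
\item Your sample triple $(e_1,e_2,e_2)$ gives $0$ in every anticommutative superalgebra, since a super-Jacobian with a repeated even argument vanishes.
\end{itemize}
You must use the quartic identity instead. With $x=y=w=e_2$, $z=e_1$ it gives $2e_1\neq 0$ in all of $\mathbf{A}_{01}$--$\mathbf{A}_{06}$. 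With $x=y=w=e_1$, $z=f_1$ it gives $-2f_1\neq 0$ in $\mathbf{A}_{07}$--$\mathbf{A}_{09}$. Only the associative $\mathbf{A}_{10}$ survives.

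The Peirce-type families have the same problem: the first identity gives no condition on your triples. If $e_1f_1=\lambda f_1$ and $f_1e_1=\mu f_1$ with $\lambda+\mu=1$, then $(e_1,e_1,f_1)=\lambda\mu f_1=-(f_1,e_1,e_1)$ and $(e_1,f_1,e_1)=0$, so the identity holds automatically. The equation $\alpha(\alpha^2-\frac14)=0$ comes from the second identity with $x=y=w=e_1$, $z=f_1$, whose value is $\lambda\mu(2\lambda-1)f_1$. The analogous test with $z=e_2$ handles the parameter on $e_2$.

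Your shortcut ``$\alpha=0$ is Jordan and $\pm\frac12$ is associative'' is also false for $\mathbf{N}_{14}^{\alpha}$, $\mathbf{N}_{15}^{\alpha}$ and $\mathbf{N}_{16}^{\alpha,\beta}$. For example, $\mathbf{N}_{16}^{0,0}$ is not supercommutative because $f_1f_1=e_2$, and $\mathbf{N}_{16}^{\frac12,\frac12}$ has $(f_1,e_1,f_1)=-e_2$. Your parameter analysis also never reaches the parameter-free $\mathbf{N}_{05}$, $\mathbf{N}_{06}$, $\mathbf{N}_{11}$, $\mathbf{N}_{12}$, $\mathbf{N}_{19}$, $\mathbf{N}_{21}$. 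These need their own tests:
\begin{itemize}
\item Whenever $e_2f_1=f_1=-f_1e_2$ and $e_2e_2=0$ (respectively $e_1e_1=e_2$ and $e_1f_1=f_1=-f_1e_1$), the second identity with $x=y=w=e_2$ (respectively $e_1$) and $z=f_1$ gives $-2f_1$.
\item For $\mathbf{N}_{16}^{\alpha,\beta}$, the first identity at $(x,y,z)=(e_1,f_1,f_1)$ gives $(4\beta-2\alpha)e_2$. The second identity at $x=y=e_1$, $w=z=f_1$ gives $(\frac14-\alpha^2-4\beta^2)e_2$. These are incompatible with $\alpha,\beta\in\{0,\pm\frac12\}$.
\end{itemize}

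With these repairs your case analysis does yield exactly the stated list. The only non-associative non-Jordan survivors are $\mathbf{N}_{17}^{0,\pm\frac12}$ and $\mathbf{N}_{17}^{\pm\frac12,0}$, among the members of $\mathbf{N}_{17}^{\alpha,\beta}$ with $\alpha,\beta\in\{0,\pm\frac12\}$.
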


 \subsubsection{Anticommutative superalgebras}\label{anticom}

The present subsection collects the algebraic classifications of some  interesting subclasses of anticommutative superalgebras.
Some of the classifications are known\footnote{Each Lie superalgebra is a Malcev superalgebra, and the classification of Malcev superalgebras is known from \cite{AE96}.
The classification of Tortkara superalgebras was given in \cite{BM25}.}, and others are new, but we present all of them to give the complete picture of the differences between these classes of superalgebras.

\begin{definition}\label{def}
    Let $({\rm A}, [\cdot, \cdot])$ be an anticommutative superalgebra, then

\begin{enumerate}
    \item[{\rm (1)}]
 ${\rm A}$ is a Lie superalgebra, if
\begin{center}
    $\big[[x,y],z\big] \ = \  \big[x,[y,z]\big] + \big(-1\big)^{|y||z|} \big[[x,z],y\big].$
\end{center}

\item[{\rm (2)}]  ${\rm A}$ is a Malcev superalgebra $\big($see, {\rm \cite{AE96}}$\big)$, if
\begin{flushleft}
$\Big[\big[[x,y],z\big],t\Big]-
\Big[x,\big[[y,z],t\big]\Big]+\big(-1\big)^{|x||y|}\Big[y,\big[x, [z,t]\big]\Big]=$
\end{flushleft}

\begin{flushright}
$
  \big(-1\big)^{|t|\big(|y|+|z|\big)}\Big[\big[[x,t],y\big],z\Big] +  \big(-1\big)^{|y||z|}\big[[x,z],[y,t]\big].$\end{flushright}

\item[{\rm (3)}]  ${\rm A}$ is a binary Lie superalgebra $\big($see, {\rm \cite{GRS}}$\big)$, if

\begin{flushleft}
$\Big[\big[[x,y],z\big],t\Big]+(-1)^{|x||y|}\Big(\Big[y, \big[ [x,z], t\big]\Big] + \Big[y, \big[ x, [z,t]\big]\Big] - \Big[\big[ y, [x,z]\big],t\Big]\Big) +
$
\end{flushleft}

\begin{flushright}
$(-1)^{|z||t|}\Big(\Big[x, \big[ [y,t], z\big]\Big]  - \Big[\big[[x,y],t\big],z\Big] - \Big[\big[x, [y,t]\big],z\Big]  \Big)=  \Big[x, \big[ y, [z,t]\big]\Big].$\end{flushright}

\item[{\rm (4)}]  ${\rm A}$ is a Tortkara superalgebra $\big($see, {\rm \cite{BM25}}$\big)$, if

\begin{flushleft}
$\Big[ \big[[x,y],z\big]  -  \big [x,[y, z]\big] - \big(-1\big)^{ |y||z|} \big[[x, z], y\big], t\Big]-  \big[[x, y],   [z, t] \big]\ =$
\end{flushleft}

\begin{flushright}
$(-1)^{|x||y|} \Big(\Big[y, \big[x,[z, t] \big]-  \big [[x,z], t\big] -\big(-1 \big)^{ |x||z|} \big[z,[x, t]\big] \Big] +  \big(-1\big)^{   |x||z|} \big[ [y, z],[x, t]\big] \Big). $
\end{flushright}

\item[{\rm (5)}]  ${\rm A}$  is an ${\mathfrak a}\mathfrak{CD}$-superalgebra $\big($see, {\rm \cite{KZ}}$\big)$, if

\begin{flushleft}
$\Big[\big[[x, y], z\big],  t\Big] + \big(-1\big)^{|x||y| } \Big[  y, \big[[x, z], t\big]\Big] -   \Big[x,\big[[y, z], t\big]\Big] =$
\end{flushleft}

\begin{flushright}
$\big(-1\big)^{|z||t|} \Big( \Big[\big[[x, y], t\big], z\Big]
+\big(-1\big)^{|x||y|} \Big[y,\big[[x, t], z\big]\Big]-   \Big[x,\big[[y, t], z\big]\Big] \Big).$\end{flushright}

\item[{\rm (6)}]  ${\rm A}$ is an $\mathfrak{s}_4$-superalgebra $\big($see, {\rm \cite{F01,D09}}$\big)$, if

\begin{flushleft}
$\Big[\big[[x, y], z\big]-\big[x,[y, z]\big]  - \big(-1\big)^{|y||z|} \big[[x,z],y\big], t\Big]    +$\end{flushleft}
\begin{flushleft}\quad \quad \quad \quad \quad $\Big[x,\big[[y, z], t\big]-\big[y,[z,t]\big]  - (-1)^{|z||t|} \big[[y, t],z\big]\Big]
+$\end{flushleft}

\begin{center}$\big(-1\big)^{   |x||y|   }  \Big[y,\big[x,[z, t]\big]- \big[[x, z], t\big]      + \big(-1\big)^{   |z||t|} \big[[x, t],z\big]\Big]
  +$\end{center}
\begin{flushright}
$   \big(-1\big)^{|z||t|}     \Big[\big[x,[y, t]\big] -   \big[[x, y], t\big]
+ \big(-1\big)^{|y||t| } \Big[\big[[x, t], y\big], z\Big]
 \ =\ 0.$
\end{flushright}

\item[{\rm (7)}]  ${\rm A}$ is a   rigid\footnote{Kac and Cantarini \cite{CK} used the term "rigid" algebras, but early, Kantor in 1989 used the term "quasi-conservative" for these algebras \cite{mr}. "Rigid" superalgebras by Kac-Cantarini are different from rigid superalgebras, which will appear in our geometric classification. To avoid a misunderstanding, we are using only the Kantor notation. } (quasi-conservative)  superalgebra $\big($see, {\rm \cite{CK}}$\big)$, if
there is a new multiplication $*$ and a bilinear form $\varphi: {\rm A} \times {\rm A} \to \mathbb C,$ such that

\begin{flushleft}
$\Big[t, \big[z, [x,y]\big]- \big[ [z,x], y\big]\Big]+\big(-1\big)^{|z||x|}\Big( \big[[t,x], [z,y]\big]-   \Big[t, \big[ x, [z, y]\big]\Big] \Big)+$\end{flushleft}

\begin{center}$
\big(-1\big)^{|z||t|}\Big(\Big[\big[z,[t,x]\big],y\Big]- \Big[z, \big[[t,x],y\big]\Big]-\big(-1\big)^{|x||y| } \Big[ \big[z, [t,y]\big],x\Big]\Big)+$\end{center}

\begin{flushright}
$\big(-1\big)^{|t|\big(|z|+|x|\big)} \Big(\big[[z,x],[t,y]\big] -\Big[z, \big[x, [t,y]\big]\Big] \Big)\ =\ $ \end{flushright}
\begin{flushright}
$
\big(-1\big)^{|z||t|} \Big(\big[[z*t,x],y\big] -
\big(-1\big)^{|x||y|}\big[  [z*t,y],x\big]
- \big[z*t, [x,y]\big]   + \varphi(z,t) [x,y] \Big).$
\end{flushright}

\begin{enumerate}

\item[{\rm (7.1)}] If $\varphi(z,t)=0$ and $z*t = \frac 1 3 [x,y]$ for all $z,t \in {\rm A}$ then the superalgebra ${\rm A}$ is called ${\mathfrak a}$-terminal $\big($see, {\rm \cite{p20}}$\big)$.

\item[{\rm (7.2)}] If $\varphi(z,t)=0$ for all $z,t \in {\rm A},$ then the superalgebra ${\rm A}$ is called conservative $\big($see, {\rm \cite{p20}}$\big)$.

\end{enumerate}

\end{enumerate}
\end{definition}

\begin{theorem}
Let ${\rm A}$ be a nontrivial $3$-dimensional anticommutative
superalgebra of type $\left( 1,2\right) $. Then

\begin{longtable}{l |c |c| c| c| c| c| c| c| c|c|c|}
    \hline
     ${\rm A}$& {\rm (1)} & {\rm (2)} &  {\rm (3)} &
 {\rm (4)} & {\rm (5)} & {\rm (6)}  & {\rm (7.1)} & {\rm (7.2)} & {\rm (7)} \\
 \hline
  ${\rm A}_{01}$& \cmark & \cmark& \cmark& \cmark& \cmark& \cmark& \cmark& \cmark & \cmark\\\hline

 ${\rm A}_{02}$& \cmark & \cmark& \cmark& \cmark& \cmark& \cmark& \cmark& \cmark & \cmark\\\hline

 ${\rm A}_{03}$& \xmark & \xmark & \xmark & \xmark & \xmark & \xmark & \xmark& \xmark & \xmark \\\hline

 ${\rm A}_{04}$& \xmark & \xmark & \xmark & \xmark & \xmark & \xmark & \xmark& \xmark & \xmark\\\hline

  ${\rm A}_{05}$& \xmark & \cmark & \cmark & \xmark & \xmark & \xmark & \xmark  & \cmark &  \cmark \\\hline

${\rm A}_{06}$& \cmark & \cmark& \cmark& \cmark& \cmark& \cmark& \cmark& \cmark & \cmark\\\hline

${\rm A}_{07}$& \xmark & \cmark & \cmark & \cmark & \cmark & \cmark & \cmark & \cmark & \cmark\\\hline

 ${\rm A}_{08}^{\alpha}$& \cmark & \cmark& \cmark& \cmark& \cmark& \cmark& \cmark& \cmark & \cmark \\\hline

 ${\rm A}_{09}^{\alpha}$& \xmark & \xmark & \xmark & \xmark & \xmark& \xmark & \xmark& \xmark & \xmark\\\hline

${\rm A}_{10}^{\alpha}$& \xmark & \xmark & \cmark$_{\alpha=1}$  & \xmark & \xmark & \xmark& \xmark& \xmark & \xmark\\
\hline
${\rm A}_{11}^{\alpha \neq 1,\beta}$& \xmark & \xmark & \xmark  & \xmark & \xmark & \xmark& \xmark& \xmark& \xmark\\ \hline

 ${\rm A}_{12}^{\alpha \neq 1}$& \xmark & \xmark & \xmark  & \xmark & \xmark & \xmark& \xmark& \xmark& \xmark\\\hline

  ${\rm A}_{13}$& \xmark & \xmark & \cmark & \xmark & \xmark & \xmark& \xmark & \xmark & \xmark\\\hline

${\rm A}_{14}$& \xmark & \xmark & \xmark  & \xmark & \xmark & \xmark& \xmark  & \xmark & \xmark \\\hline

 ${\rm A}_{15}^{\alpha }$& \xmark & \xmark & \xmark  & \xmark & \xmark & \xmark& \xmark    & \xmark & \xmark\\\hline

 ${\rm A}_{16}$& \xmark & \xmark & \xmark  & \xmark & \xmark & \xmark& \xmark  & \xmark & \xmark \\\hline

 ${\rm A}_{17}$& \cmark & \cmark& \cmark& \cmark& \cmark& \cmark& \cmark & \cmark & \cmark\\\hline

${\rm A}_{18}$& \xmark & \xmark & \cmark & \xmark & \xmark & \xmark& \xmark & \xmark & \xmark\\
    \hline
\end{longtable}

\end{theorem}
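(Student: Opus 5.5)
The statement is a finite verification: for each of the eighteen families ${\rm A}_{01},\ldots,{\rm A}_{18}$ of Theorem \ref{ant12} and each of the nine identities (1)--(7.2) of Definition \ref{def}, decide whether the identity holds, possibly depending on parameters. My plan is to work directly with the multilinear superidentities on basis elements. Each identity is linear in every argument, so it suffices to check it on homogeneous basis quadruples (or triples) from $\{e_1,f_1,f_2\}$.

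The first step is to exploit the structure of type $(1,2)$ to prune the cases. Only products of odd elements land in the even part, and only products $[e_1,f_i]$ land in the odd part. Hence any nonzero iterated bracket of three or four elements must have total parity matching a nonzero product, and many quadruples give zero automatically. I would also record a few reductions:
\begin{itemize}
\item \textbf{Inclusions.} Lie $\subset$ Malcev $\subset$ binary Lie, and Lie is contained in all of (4)--(7). A single verification of the super Jacobi identity therefore yields a full row of \cmark. This handles ${\rm A}_{01},{\rm A}_{02},{\rm A}_{06},{\rm A}_{08}^{\alpha},{\rm A}_{17}$. For example, in ${\rm A}_{08}^{\alpha}$ all brackets of odd elements vanish and $e_1$ acts diagonally, so Jacobi is immediate.
\item \textbf{Negative direction.} Binary Lie and quasi-conservative (7) are the weakest conditions. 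To get a full row of \xmark{} it suffices to exhibit one basis quadruple violating (3) and one violating (7), and then use the inclusions. The table suggests that (3) and (7) are independent, so I would check them separately.
\end{itemize}

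For the remaining algebras, namely ${\rm A}_{05},{\rm A}_{07},{\rm A}_{10}^{\alpha},{\rm A}_{13},{\rm A}_{18}$, I would compute the Jacobian $J(x,y,z)=[[x,y],z]-[x,[y,z]]-(-1)^{|y||z|}[[x,z],y]$ on basis triples. For instance:
\begin{itemize}
\item In ${\rm A}_{07}$, the triple $(e_1,f_2,f_2)$ gives a nonzero Jacobian, so ${\rm A}_{07}$ is not Lie.
\item Identities (4) and (6) are written in terms of Jacobians, so once the Jacobian is known they reduce to short checks. Similarly, (5) and (2) become small polynomial conditions.
\end{itemize}
For the parametric families ${\rm A}_{10}^{\alpha}$, ${\rm A}_{11}^{\alpha,\beta}$, ${\rm A}_{12}^{\alpha}$ and ${\rm A}_{15}^{\alpha}$, the identities become polynomial equations in the parameters. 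For example, binary Lie for ${\rm A}_{10}^{\alpha}$ should reduce to a condition like $(\alpha-1)c=0$ with $c\neq 0$ evaluated at $(e_1,f_2,f_2,f_2)$. I would solve these to find the exceptional parameter values, such as $\alpha=1$.

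I expect the main obstacle to be identity (7). It is existential: it asks for some bilinear $*$ and some form $\varphi$. Showing that it holds requires constructing $*$ and $\varphi$. For ${\rm A}_{05}$ and ${\rm A}_{07}$ I would first try $\varphi=0$ with $*$ determined by a linear ansatz, since the table asserts (7.2) for them. Showing that (7) fails requires proving that the resulting linear system in the unknown structure constants of $*$ and $\varphi$ is inconsistent. For this I would treat the right-hand side as linear in the $3\cdot 9$ unknowns $z*t$ and the $9$ values $\varphi(z,t)$, specialise to a few well-chosen basis quadruples (for example those with $x=y=f_i$, where $[x,y]$ is a multiple of $e_1$), and derive a contradiction such as $0=1$. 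Identity (7.1) is then the special case with $*$ fixed, which is a direct check.
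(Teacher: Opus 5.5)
Your treatment of (7) is essentially the paper's. The paper also dismisses (1)--(6) and (7.1) as routine. It gives an explicit $*$ with $\varphi=0$ for ${\rm A}_{05}$. It refutes (7) in the other rows by one or two basis quadruples that lead to a contradiction, typically incompatible values of $\varphi$ or of $z*t$.

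\textbf{The gap: your shortcut for the all-\xmark{} rows.} Deducing a full row of \xmark{} from one quadruple violating (3) and one violating (7) requires each of (1)--(7.2) to lie inside binary Lie or inside quasi-conservative superalgebras. Your inclusions are Lie $\subset$ Malcev $\subset$ binary Lie, Lie $\subset$ (4)--(7), and (7.1), (7.2) $\subset$ (7). None of these places Tortkara (4) or $\mathfrak{s}_4$ (6) inside (3) or (7), and the paper's own tables show that no such inclusion holds:
\begin{itemize}
\item ${\bf A}_{06}$ of type $(2,1)$ is Tortkara and $\mathfrak{s}_4$ but not binary Lie.
\item $\mathcal{A}_2$ of type $(3,0)$ satisfies (4) and (6) while failing both (3) and (7).
\item Column (6) of the $(3,0)$ table is \cmark{} for every algebra, so $\mathfrak{s}_4$ is far weaker than binary Lie.
\end{itemize}
Hence your claim that binary Lie and quasi-conservative are ``the weakest conditions'' is false.

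\textbf{What you need to do instead.} Quadruples violating (3) and (7) do not establish the \xmark{} entries in columns (4) and (6) for ${\rm A}_{03}$, ${\rm A}_{04}$, ${\rm A}_{09}^{\alpha}$, ${\rm A}_{11}^{\alpha,\beta}$, ${\rm A}_{12}^{\alpha}$, ${\rm A}_{14}$, ${\rm A}_{15}^{\alpha}$ and ${\rm A}_{16}$. For each of these rows, and for all parameter values, you must directly exhibit a basis quadruple violating the Tortkara identity and one violating the $\mathfrak{s}_4$ identity; this is what the paper's ``routine'' check amounts to. Column (5) likewise needs either a direct check or a proof that $\mathfrak{aCD}$-superalgebras are binary Lie, an inclusion you do not state.

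\textbf{A smaller slip.} In ${\rm A}_{07}$ the Jacobian vanishes on $(e_1,f_2,f_2)$ and on its rearrangements $(f_2,e_1,f_2)$ and $(f_2,f_2,e_1)$. The nonzero value is $J(f_2,f_2,f_2)=3f_1$, and that is the witness you want for non-Lie.
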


\begin{proof}
Checking identities for superalgebras  $(1)$ -- $(6)$ and $(7.1)$ is a routine process, so it will be omitted. The case of conservative and quasi-conservative superalgebras will be given in more detail (we consider only the non-terminal case).

\begin{enumerate}
    \item[${\rm A}_{03}$:]
    Taking $z=f_1, t=e_1, x=f_2,$ and $y=f_2$ in the quasi-conservative relation from (7) of Definition \ref{def},  we find a contradiction with the existence of $*$ and $\varphi.$

  \item[${\rm A}_{04}$:]
    Taking $z=e_1, t=e_1, x=f_2,$ and $y=e_1$ in the quasi-conservative relation from (7) of Definition \ref{def},  we find
    $\varphi(e_1,e_1)=0,$
    but if we take  $z=e_1, t=e_1, x=f_2,$ and $y=f_1,$ then   $\varphi(e_1,e_1)=2.$
That gives a contradiction with the existence of $*$ and $\varphi.$

  \item[${\rm A}_{05}$:] This superalgebra
  is conservative due to the additional multiplication given by
\begin{center}
      $f_1* f_2=e_1,$
  $e_1*f_2=f_1,$
  $f_2*f_1=-2e_1,$
  $f_2*e_1=2f_1.$
\end{center}

  \item[${\rm A}_{09}^\alpha$:]
    Taking $z=f_1, t=f_1, x=f_2,$ and $y=f_2$ in the quasi-conservative relation from (7) of Definition \ref{def},  we find  a contradiction with the existence of $*$ and $\varphi.$

  \item[${\rm A}_{10}^\alpha$:]
    Taking $z=f_2, t=f_1, x=e_1,$ and $y=f_2$ in the quasi-conservative relation from (7) of Definition \ref{def},  we find  $\alpha=\frac 12,$ but for
    $z=f_1, t=f_2, x=f_2,$ and $y=e_1$ we have  $\alpha=0.$
    That gives a contradiction with the existence of $*$ and $\varphi.$

  \item[${\rm A}_{11}^{\alpha, \beta}$:]
    Taking $z=f_2, t=f_1, x=f_2,$ and $y=e_1$ in the quasi-conservative relation from (7) of Definition \ref{def},  we find $\alpha=\frac 12$,
    but if we take  $z=f_1, t=f_2, x=e_1,$ and $y=f_2$  then   $\alpha=0.$
That gives a contradiction with the existence of $*$ and $\varphi.$

  \item[${\rm A}_{12}^{\alpha}$:]
    Taking $z=f_1, t=f_1, x=e_1,$ and $y=f_2$ in the quasi-conservative relation from (7) of Definition \ref{def},  we find  $\alpha = 1,$ but if we take
    $z=f_2, t=f_2, x=f_1,$ and $y=f_1$  then   $\alpha=0.$
That gives a contradiction with the existence of $*$ and $\varphi.$

  \item[${\rm A}_{13}$:]
    Taking $z=e_1, t=f_1, x=f_1,$ and $y=f_1$ in the quasi-conservative relation from (7) of Definition \ref{def},  we find
    $e_1 * f_1 = \frac 23 f_1 + \lambda f_2$ for some $\lambda \in \mathbb C.$ But if we take
   $z=e_1, t=f_1, x=f_1,$ and $y=e_1,$   then  $e_1 * f_1 = f_1 + \mu f_2$  for some $\mu \in \mathbb C.$
That gives a contradiction with the existence of $*$ and $\varphi.$

  \item[${\rm A}_{14}$:]
    Taking $z=f_1, t=f_1, x=f_2,$ and $y=f_2$ in the quasi-conservative relation from (7) of Definition \ref{def},  we find a contradiction with the existence of $*$ and $\varphi.$

  \item[${\rm A}_{15}^\alpha$:]
    Taking $z=f_1, t=f_2, x=f_1,$ and $y=e_1$ in the quasi-conservative relation from (7) of Definition \ref{def},  we find a contradiction with the existence of $*$ and $\varphi.$

  \item[${\rm A}_{16}$:]
    Taking $z=e_1, t=f_1, x=e_1,$ and $y=e_1$ in the quasi-conservative relation from (7) of Definition \ref{def},  we find a contradiction with the existence of $*$ and $\varphi.$

      \item[${\rm A}_{18}$:]
    Taking $z=f_2, t=f_1, x=f_2,$ and $y=e_1$ in the quasi-conservative relation from (7) of Definition \ref{def},  we find that $\varphi(f_2,f_1)=0$ and $\varphi(f_2,f_1)=1.$
    That gives a contradiction with the existence of $*$ and $\varphi.$

\end{enumerate}
\end{proof}

\begin{theorem}
Let ${\rm A}$ be a nontrivial $3$-dimensional anticommutative
superalgebra of type $\left(2,1\right) $. Then

\begin{longtable}{l |c |c| c| c| c| c| c| c| c|c|c|}
    \hline
     ${\rm A}$& {\rm (1)} & {\rm (2)} &  {\rm (3)} &
 {\rm (4)} & {\rm (5)} & {\rm (6)}  & {\rm (7.1)} & {\rm (7.2)} & {\rm (7)} \\
 \hline

 $\mathbf{A}_{01}^{\alpha}$ & \xmark & \xmark & \cmark$_{\alpha=-1}$  & \xmark & \xmark & \xmark& \xmark & \xmark& \xmark \\

\hline

$\mathbf{A}_{02}^{\alpha}$& \xmark & \xmark & \xmark  & \xmark & \xmark & \xmark& \xmark& \xmark& \xmark  \\\hline

$\mathbf{A}_{03}^{\alpha}$&
\cmark$_{\alpha=-\frac 12}$ & \cmark$_{\alpha \in \big\{ -\frac 12, 1\big\}}$& \cmark& \cmark$_{\alpha=-1}$ & \cmark$_{\alpha=- \frac 12}$ & \cmark& \cmark$_{\alpha=- \frac 12}$ & \cmark & \cmark\\\hline

$\mathbf{A}_{04}$& \xmark & \xmark & \xmark  & \xmark & \xmark & \xmark& \xmark & \xmark& \xmark \\\hline

$\mathbf{A}_{05}^{\alpha}$& \cmark & \cmark& \cmark& \cmark& \cmark& \cmark& \cmark & \cmark & \cmark\\\hline

$\mathbf{A}_{06}$& \xmark & \xmark & \xmark  & \cmark & \xmark & \cmark & \xmark& \cmark & \cmark\\\hline

$\mathbf{A}_{07}$& \cmark & \cmark& \cmark& \cmark& \cmark& \cmark& \cmark & \cmark & \cmark\\\hline

$\mathbf{A}_{08}$& \xmark & \xmark & \cmark  & \xmark & \cmark & \cmark & \xmark& \cmark& \cmark\\\hline

$\mathbf{A}_{09}$& \xmark & \xmark& \cmark& \xmark & \xmark & \xmark & \xmark& \xmark & \xmark\\\hline

$\mathbf{A}_{10}$& \cmark & \cmark& \cmark& \cmark& \cmark& \cmark& \cmark& \cmark & \cmark\\
\hline

\end{longtable}

\end{theorem}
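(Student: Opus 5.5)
The statement is a table of membership facts for the ten superalgebras $\mathbf{A}_{01}^{\alpha},\ldots,\mathbf{A}_{10}$ of Theorem \ref{ant21} in the classes (1)--(7) of Definition \ref{def}. I will check each entry directly, algebra by algebra, as in the type $(1,2)$ case.

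\emph{Step 1: reduce the identities to basis checks.} Every identity in (1)--(6) and (7.1) is multilinear and graded. It therefore suffices to evaluate it on homogeneous basis elements $x,y,z,t\in\{e_1,e_2,f_1\}$, keeping track of parities. The only odd element is $f_1$, and the only nonzero brackets are $[e_1,e_2]$, $[e_1,f_1]$, $[e_2,f_1]$ and $[f_1,f_1]$, so the computations are short.

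\emph{Step 2: treat the easy algebras.} Several algebras are genuine Lie superalgebras:
\begin{itemize}
\item $\mathbf{A}_{05}^{\alpha}$, $\mathbf{A}_{07}$ and $\mathbf{A}_{10}$, since all their triple brackets either vanish or satisfy the super Jacobi identity;
\item $\mathbf{A}_{03}^{-1/2}$, which is Lie because $\big[[f_1,f_1],e_2\big]=[e_1,e_2]=e_1$ must be balanced by $2\big[f_1,[f_1,e_2]\big]=-2\alpha e_1$.
\end{itemize}
Lie superalgebras lie in every class, by the inclusions displayed in the introduction and because Lie superalgebras are terminal and conservative. For the parametric family $\mathbf{A}_{03}^{\alpha}$ I will compute the Jacobian and each identity once with $\alpha$ generic. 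The resulting polynomial conditions in $\alpha$ give the listed exceptional values: $\alpha\in\{-\tfrac12,1\}$ for Malcev, $\alpha=-1$ for Tortkara, and $\alpha=-\tfrac12$ for $\mathfrak{aCD}$ and terminality. The same approach handles $\mathbf{A}_{01}^{\alpha}$ for binary Lie, where the condition will be $\alpha=-1$. For each negative entry I will exhibit one explicit quadruple of basis elements on which the identity fails.

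\emph{Step 3: the conservative and quasi-conservative columns.} This is the main obstacle, since (7) is existential in $*$ and $\varphi$. For positive entries that are not Lie, namely $\mathbf{A}_{03}^{\alpha}$ for general $\alpha$, $\mathbf{A}_{06}$ and $\mathbf{A}_{08}$, I will solve the relation for an unknown multiplication $*$ with $\varphi=0$. I will treat the structure constants of $*$ as unknowns, write (7) on all basis quadruples, and obtain a linear system. I will try first an ansatz where $*$ is supported on the same brackets as $[\cdot,\cdot]$ with scaled coefficients, as for ${\rm A}_{05}$. For negative entries ($\mathbf{A}_{01}$, $\mathbf{A}_{02}$, $\mathbf{A}_{04}$, $\mathbf{A}_{09}$) I will look for two quadruples $(z,t,x,y)$ that share the same pair $(z,t)$. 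The right-hand side then depends on the same unknowns $z*t$ and $\varphi(z,t)$ in both, and the two equations force incompatible values, for instance $\varphi(e_1,e_2)$ taking two different values. A natural first choice is $z=e_1$, $t=e_2$ with $(x,y)=(f_1,f_1)$ and $(x,y)=(e_2,f_1)$, exploiting the odd square $[f_1,f_1]$.
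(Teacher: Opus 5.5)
Your route is the paper's: routine checks of (1)--(6) and (7.1) on homogeneous basis quadruples; an explicit $*$ with $\varphi=0$ for $\mathbf{A}_{03}^{\alpha}$, $\mathbf{A}_{06}$, $\mathbf{A}_{08}$; and, for $\mathbf{A}_{01}^{\alpha}$, $\mathbf{A}_{02}^{\alpha}$, $\mathbf{A}_{04}$, $\mathbf{A}_{09}$, quadruples that force incompatible values of $z*t$ or $\varphi(z,t)$. There is, however, one false step in your Step 2. Lie superalgebras do \emph{not} lie in every class, because the Lie class is not contained in the Tortkara class. The inclusion drawn in the introduction is misleading here: already $\mathfrak{g}_4$ in the type $(3,0)$ table is Lie but not Tortkara. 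In a Lie superalgebra both Jacobian brackets in (4) vanish, so (4) collapses to a genuine extra condition
\[ [[x,y],[z,t]]+(-1)^{|x|(|y|+|z|)}[[y,z],[x,t]]=0 . \]
You correctly show that $\mathbf{A}_{03}^{-1/2}$ is Lie. Take $(x,y,z,t)=(e_2,f_1,f_1,e_2)$ in it. From $[e_2,f_1]=-\frac12 f_1$, $[f_1,e_2]=\frac12 f_1$ and $[e_2,e_2]=0$, the left side equals $-\frac14 e_1\neq 0$. So your inclusion argument would certify $\mathbf{A}_{03}^{-1/2}$ as Tortkara, contradicting the Tortkara entry for $\mathbf{A}_{03}^{\alpha}$, which is restricted to $\alpha=-1$. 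It also leaves the Tortkara entries of $\mathbf{A}_{05}^{\alpha}$, $\mathbf{A}_{07}$, $\mathbf{A}_{10}$ without a valid reason. The repair is immediate: these three are metabelian. Their derived superalgebras lie in $\langle e_1,f_1\rangle$, $\langle f_1\rangle$, $\langle e_1\rangle$ respectively, and the bracket vanishes there. Hence $[[x,y],[z,t]]=0$ and (4) holds. The other inclusions you invoke (Malcev, binary Lie, $\mathfrak{aCD}$, $\mathfrak{s}_4$, $\mathfrak a$-terminal, conservative) are fine.

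In Step 3, drop the same-support ansatz, because it cannot succeed. In $\mathbf{A}_{08}$, the quadruple $(z,t,x,y)=(e_1,e_1,f_1,f_1)$ gives left side $4e_2$ and right side $(2a+\varphi(e_1,e_1))e_2$, where $a$ is the $e_1$-coefficient of $e_1*e_1$. With $\varphi=0$ this forces $a=2$, even though $[e_1,e_1]=0$. The paper accordingly takes $e_1*e_1=2e_1$, $e_1*f_1=f_1*e_1=f_1$ there, and for $\mathbf{A}_{03}^{\alpha}$ it needs $e_2*e_2=(1+2\alpha)e_2$. Go straight to the full linear system, and note that it decouples over basis pairs. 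The unknowns $z*t$ and $\varphi(z,t)$ occur only in the equations with that pair $(z,t)$. Hence (7), respectively (7.2) with $\varphi=0$, holds if and only if each of these small systems is solvable. This makes both your positive constructions and your two-quadruple contradictions systematic. Your specific guess $(z,t)=(e_1,e_2)$ is unverified; the paper uses, e.g., $(z,t)=(e_1,f_1)$ with $(x,y)=(f_1,f_1)$ and $(f_1,e_1)$ for $\mathbf{A}_{09}$.
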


\begin{proof}
Checking identities for superalgebras  $(1)$ -- $(6)$ and $(7.1)$ is a routine process, so it will be omitted. The case of conservative and quasi-conservative superalgebras will be given in more detail.

\begin{enumerate}
      \item[${\mathbf A}_{01}^\alpha$:]
    Taking $z=e_1, t=f_1, x=f_1,$ and $y=e_2$ in the quasi-conservative relation from (7) of Definition \ref{def},  we find that $\alpha=0,$ but for
    $z=f_1, t=e_1, x=e_2,$ and $y=f_1,$ we have that $\alpha=-\frac 12.$
    That gives a contradiction with the existence of $*$ and $\varphi.$

      \item[${\mathbf A}_{02}^\alpha$:]
    Taking $z=f_1, t=f_1, x=e_1,$ and $y=e_2$ in the quasi-conservative relation from (7) of Definition \ref{def},  we find   a contradiction with the existence of $*$ and $\varphi.$

      \item[${\mathbf A}_{03}^\alpha$:] This superalgebra is conservative due to the additional multiplication given by
      \begin{center}
          $f_1 * e_2 =(1 +\alpha) f_1,$
          $e_2 * f_1 = \alpha f_1,$
          $e_2*e_2 = (1+2\alpha) e_2.$
      \end{center}

      \item[${\mathbf A}_{04}$:]
    Taking $z=f_1,$ $t=e_1,$ $x=f_1,$ and $y=f_1$ in the quasi-conservative relation from (7) of Definition \ref{def},  we find that $e_1 * f_1 = f_1 + \lambda f_2$ for some $\lambda \in \mathbb C.$ But for
    $z=f_1,$ $t=e_1,$ $x=e_1,$ and $y=e_2,$ we have that $f_1 * e_1 =-f_1 + \mu f_2$ for some $\mu \in \mathbb C.$
    That gives a contradiction with the existence of $*$ and $\varphi.$

\item[${\mathbf A}_{06}$:] This superalgebra is conservative due to the additional multiplication given by
      \begin{center}
          $f_1 * e_1 =- f_1,$
          $f_1 * e_2 = - f_1,$
          $e_1*f_1 =   f_1,$
 $e_2*e_1 =   -e_1,$
 $e_2*e_2 =   -e_2.$

      \end{center}

\item[${\mathbf A}_{08}$:] This superalgebra is conservative due to the additional multiplication given by
      \begin{center}
          $f_1 * e_1 = f_1,$
          $e_1 * f_1 =  f_1,$
          $e_1*e_1 =   2e_1.$

      \end{center}
     \item[${\mathbf A}_{09}$:]
    Taking $z=e_1,$ $t=f_1,$ $x=f_1,$ and $y=f_1$ in the quasi-conservative relation from (7) of Definition \ref{def},  we find that $e_1 * f_1 = \frac 23 f_1 + \lambda f_2$ for some $\lambda \in \mathbb C.$
    But for
    $z=e_1,$ $t=f_1,$ $x=f_1,$ and $y=e_1,$ we have that $f_1 * e_1 =f_1 + \mu f_2$ for some $\mu \in \mathbb C.$
    That gives a contradiction with the existence of $*$ and $\varphi.$

\end{enumerate}

\end{proof}

\begin{corollary}\label{conj}
    The conjecture of Grishkov-Shestakov\footnote{\cite[Conjecture 1.1]{GRS}:
   Let $ \rm B$   be a nontrivial complex  finite-dimensional simple binary Lie superalgebra. Then $\rm  B$ is a simple Lie superalgebra or ${\rm dim} \ \rm  B = 2$.} is true for $3$-dimensional binary Lie superalgebras.
\end{corollary}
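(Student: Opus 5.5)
Since the grading of a $3$-dimensional superalgebra has type $(3,0)$, $(2,1)$, $(1,2)$ or $(0,3)$, the plan is to split the statement into these four cases and dispose of each one, leaning on the classifications and the identity table already established. Throughout, a simple binary Lie superalgebra means one with nonzero product and no proper nonzero graded ideals. The claim to prove is: every such algebra ${\rm B}$ of dimension $3$ is a simple Lie superalgebra.

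\emph{Type $(0,3)$.} Here ${\rm B}={\rm B}_1$ and ${\rm B}_1{\rm B}_1\subseteq {\rm B}_0=0$, so the product vanishes. Such an algebra is trivial and there is nothing to prove.

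\emph{Type $(3,0)$.} A binary Lie superalgebra concentrated in even degree is an ordinary binary Lie algebra. By the known classification of $3$-dimensional anticommutative and binary Lie algebras (\cite{akl,ikv}, cited in the introduction), every $3$-dimensional binary Lie algebra is a Lie algebra. The only simple one is $\mathfrak{sl}_2$, which is a simple Lie (super)algebra, so the conjecture holds in this case.

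\emph{Types $(1,2)$ and $(2,1)$.} These are the substantive cases. By Theorems \ref{ant12} and \ref{ant21}, ${\rm B}$ is isomorphic to one of the algebras listed there. By the identity tables of the two preceding theorems, the binary Lie ones (column (3)) are:
\begin{itemize}
\item in type $(1,2)$: ${\rm A}_{01}$, ${\rm A}_{02}$, ${\rm A}_{05}$, ${\rm A}_{06}$, ${\rm A}_{07}$, ${\rm A}_{08}^{\alpha}$, ${\rm A}_{10}^{1}$, ${\rm A}_{13}$, ${\rm A}_{17}$, ${\rm A}_{18}$;
\item in type $(2,1)$: $\mathbf{A}_{01}^{-1}$, $\mathbf{A}_{03}^{\alpha}$, $\mathbf{A}_{05}^{\alpha}$, $\mathbf{A}_{07}$, $\mathbf{A}_{08}$, $\mathbf{A}_{09}$, $\mathbf{A}_{10}$.
\end{itemize}
For each of these I will exhibit a proper nonzero graded ideal, usually the square ${\rm B}^2$ or a suitable graded subspace.
\begin{itemize}
\item \emph{Ideal is $\langle e_1\rangle$.} For ${\rm A}_{01}$, ${\rm A}_{02}$ and ${\rm A}_{07}$, ${\rm B}^2\subseteq\langle e_1,f_1\rangle$. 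In fact $\langle e_1\rangle$ is an ideal in ${\rm A}_{01}$ and ${\rm A}_{02}$, since $e_1$ annihilates everything. In ${\rm A}_{07}$ one has $[e_1,f_2]=f_1$, so $\langle e_1\rangle$ is not an ideal there; instead $\langle e_1,f_1\rangle$ is.
\item \emph{Ideal is $\langle f_1\rangle$ or $\langle f_1,f_2\rangle$.} For ${\rm A}_{06}$, ${\rm A}_{08}^\alpha$ and ${\rm A}_{17}$, ${\rm B}^2$ lies in the odd part, which is a proper ideal.
\item \emph{Ideal is ${\rm B}^2$, checked by hand.} For ${\rm A}_{05}$, ${\rm A}_{10}^{1}$, ${\rm A}_{13}$ and ${\rm A}_{18}$, ${\rm B}^2$ is a proper subspace ($\langle e_1,f_1\rangle$ or similar), and I will check directly that it is an ideal. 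Since ${\rm B}^2$ is always an ideal, only properness and nonzeroness need checking; where ${\rm B}^2$ is the whole algebra, I will instead look for the one-dimensional $\langle e_1\rangle$ or $\langle f_1\rangle$.
\item \emph{Type $(2,1)$.} In every listed case ${\rm B}^2\neq{\rm B}$: the span of the products always misses $e_2$ or is contained in $\langle e_1,f_1\rangle$ or $\langle e_1,e_2\rangle$ or $\langle f_1\rangle$. Take $\mathbf{A}_{03}^\alpha$, where ${\rm B}^2=\langle e_1,f_1\rangle$ (or $\langle e_1\rangle$ when $\alpha=0$). When ${\rm B}^2={\rm B}$ fails, ${\rm B}^2$ is itself a proper nonzero graded ideal.
\end{itemize}

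The main obstacle is simply to be careful that ${\rm B}^2$ is genuinely proper in each listed case. The most delicate cases are those where several products span two dimensions, such as ${\rm A}_{05}$, ${\rm A}_{18}$ and $\mathbf{A}_{01}^{-1}$; there I will write out ${\rm B}^2$ explicitly. The conclusion is that no $3$-dimensional binary Lie superalgebra of type $(1,2)$ or $(2,1)$ is simple. In type $(3,0)$ the only simple one is the Lie algebra $\mathfrak{sl}_2$, and type $(0,3)$ is trivial, so the conjecture holds in dimension $3$.
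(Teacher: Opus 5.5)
Your strategy is the paper's own. The corollary is read off from Theorems \ref{ant12} and \ref{ant21} together with column (3) of the two identity tables. You extract the binary Lie superalgebras of types $(1,2)$ and $(2,1)$ correctly, and your treatment of types $(3,0)$ and $(0,3)$ is fine. However, the verification step rests on claims about ${\rm B}^2$ that are false in three cases, so as written it fails there.

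\begin{itemize}
\item In ${\rm A}_{10}^{1}$ the products $[e_1,f_1]=f_1$, $[e_1,f_2]=f_2$, $[f_2,f_2]=e_1$ span the whole algebra.
\item In ${\rm A}_{18}$ the products $f_1$, $f_1+f_2$, $e_1$ also span everything.
\item So ${\rm B}^2={\rm B}$ for both, although you list them among the cases where ${\rm B}^2$ is proper.
\item More seriously, your type $(2,1)$ bullet asserts ${\rm B}^2\neq{\rm B}$ for every listed algebra. This is false for $\mathbf{A}_{01}^{-1}$, where $[e_1,e_2]=e_1$, $[e_2,f_1]=-f_1$, $[f_1,f_1]=e_2$ give ${\rm B}^2={\rm B}$.
\item Your list of possible shapes of ${\rm B}^2$ also misses $\mathbf{A}_{08}$, where ${\rm B}^2=\langle e_2,f_1\rangle$; that one is at least proper.
\end{itemize}

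The repair is immediate and in the spirit of your fallback remark.
\begin{itemize}
\item In ${\rm A}_{10}^{1}$ and ${\rm A}_{18}$, $\langle f_1\rangle$ is a graded ideal, since $[f_1,f_1]=[f_1,f_2]=0$ and $[f_1,e_1]=-f_1$.
\item In $\mathbf{A}_{01}^{-1}$, $\langle e_1\rangle$ is a graded ideal, since $[e_1,e_1]=[e_1,f_1]=0$ and $[e_1,e_2]=e_1$.
\end{itemize}
With these substitutions every listed binary Lie superalgebra has a proper nonzero graded ideal, and the argument is complete.

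Be aware that the real content sits in the negative entries of column (3). The algebras ${\rm A}_{09}^{\alpha}$ with $\alpha\neq 0$, ${\rm A}_{11}^{\alpha,\beta}$ with $\alpha\neq 0$, ${\rm A}_{12}^{\alpha}$ with $\alpha\neq 0$, ${\rm A}_{15}^{\alpha}$, ${\rm A}_{16}$ and $\mathbf{A}_{02}^{\alpha}$ are simple and not Lie. The corollary therefore holds precisely because they fail the binary Lie identity, which you, like the paper, take from the table rather than verify.
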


\begin{theorem}
Let ${\rm A}$ be a nontrivial $3$-dimensional anticommutative
superalgebra of type $\left(3,0\right) $. Then

\begin{longtable}{l |lll |c |c| c| c| c| c| c| c| c|c|c|}
    \hline
     ${\rm A}$& ${\rm product}\footnote{The classification of $3$-dimensional anticommutative algebras is given in \cite{ikv}.} $ &&&{\rm (1,2,3,5,7)} &
 {\rm (4)} &   {\rm (6)}    \\
 \hline

$\mathfrak{g}_{1}$ &
$[e_2,e_3] =e_1$ && & \cmark  & \cmark &\cmark   \\
\hline

$\mathfrak{g}_2$
&  $[e_1,e_3] =e_1$ &  $[e_2,e_3]=e_2$  &&  \cmark &\cmark &\cmark   \\
\hline

$\mathfrak{g}^{\alpha}_3$&
$[e_1,e_3] =e_1+e_2$ & $[e_2,e_3]=\alpha e_2$& & \cmark &\cmark &\cmark   \\
\hline

$\mathfrak{g}_4$&
$[e_1,e_2] =e_3$&$ [e_1,e_3]=-e_2$&$  [e_2,e_3]=e_1$&  \cmark &\xmark &\cmark  \\
\hline

$\mathcal{A}_1^{\alpha}$&
 $[e_1,e_2] =e_1+e_2$& $[e_1,e_3]=e_2$&$    [e_2,e_3]=\alpha e_3$ & \xmark & \cmark$_{\alpha=0}$ &  \cmark  \\
\hline

$\mathcal{A}_2$&
$[e_1,e_2]=e_1$&$  [e_2,e_3]=e_2$ &&   \xmark & \cmark  &   \cmark  \\
\hline

$\mathcal{A}_3$&
$[e_1,e_2]=e_3$&$ [e_1,e_3]=e_1$&$ [e_2,e_3]=e_2$ & \xmark & \xmark    &  \cmark  \\
\hline
\end{longtable}
\end{theorem}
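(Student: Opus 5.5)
The statement is a finite table of membership checks: for each of the seven $3$-dimensional anticommutative algebras (type $(3,0)$, so every element is even and all super-signs equal $+1$), decide whether it satisfies the Lie, Malcev, binary Lie, $\mathfrak{aCD}$, terminal/conservative identities (grouped as column (1,2,3,5,7)), the Tortkara identity (4), and the $\mathfrak{s}_4$ identity (6). I would first specialize every identity in Definition \ref{def} to the purely even case. All of them are then multilinear, so it suffices to evaluate them on basis triples and quadruples. I would organize each check through the Jacobian $J(x,y,z)=[[x,y],z]+[[y,z],x]+[[z,x],y]$, since every identity in the list can be rewritten in terms of $J$ and brackets.

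The first step is the positive entries coming from Lie algebras. The algebras $\mathfrak{g}_1,\ldots,\mathfrak{g}_4$ are the Lie algebras ($\mathfrak{g}_4\cong\mathfrak{sl}_2$). Each listed variety contains Lie algebras (the inclusion diagram in the Introduction), so every \cmark\ in columns (1,2,3,5,7) and (6) for these algebras is immediate. Column (4) is special because Tortkara does not contain all Lie algebras. Specialized to the even case, the Tortkara identity says $[J(x,y,z),t]$ is expressed through $[[x,y],[z,t]]$-type terms; for Lie algebras it reduces to $[[x,y],[z,t]]+[[y,z],[x,t]]=0$ type conditions. This holds in metabelian Lie algebras, where $[A,A]$ is abelian, so $\mathfrak{g}_1,\mathfrak{g}_2,\mathfrak{g}^\alpha_3$ are Tortkara. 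For $\mathfrak{g}_4$ I would plug $x=z=e_1$, $y=t=e_2$ and get a nonzero element, which gives the \xmark.

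The second step handles the non-Lie algebras $\mathcal{A}^\alpha_1,\mathcal{A}_2,\mathcal{A}_3$. Since dimension is $3$, $J$ is alternating and determined by the single vector $J(e_1,e_2,e_3)$. I would compute it:
\begin{itemize}
\item $\mathcal{A}_2$ gives $J=[[e_1,e_2],e_3]+[[e_2,e_3],e_1]=[e_1,e_3]+[e_2,e_1]=e_1$, which is nonzero.
\end{itemize}
Similar computations give the Jacobian for $\mathcal{A}_1^\alpha$ and $\mathcal{A}_3$. A nonzero Jacobian yields \xmark\ for Lie. For the remaining identities in that column I would also aim at \xmark. The cleanest route is to cite the known classifications in dimension $3$: \cite{ikv} for Malcev and binary Lie, where in dimension $3$ both coincide with Lie, and the papers cited in Definition \ref{def} for the $\mathfrak{CD}$ and terminal cases. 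As a self-contained fallback, I would exhibit one explicit basis quadruple violating each identity.

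The last two columns I would compute directly. For the $\mathfrak{s}_4$ identity I would observe that each bracketed inner expression is a signed Jacobian-like combination. I would then try to show that, in dimension $3$, the whole left side collapses to a multiple of $\sum \pm[J(\cdot,\cdot,\cdot),\cdot]+[\cdot,J(\cdot,\cdot,\cdot)]$ that cancels by antisymmetry. This would give \cmark\ for all seven algebras, consistent with the $9$-dimensional irreducible variety in the table.

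For Tortkara on $\mathcal{A}^\alpha_1$ I would substitute basis quadruples and find that the only obstruction is a term proportional to $\alpha$, giving \cmark\ exactly at $\alpha=0$. I would expect $\mathcal{A}_2$ to pass outright and $\mathcal{A}_3$ to fail on a quadruple such as $(e_1,e_2,e_1,e_3)$.

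I expect the main obstacle to be the $\mathfrak{s}_4$ column. A blanket structural argument that it holds for every $3$-dimensional anticommutative algebra may fail because of sign conventions in item (6). If the collapse does not come out cleanly, I would verify it on the $\binom{3}{?}$ basis quadruples, with repeated indices, for each algebra separately.
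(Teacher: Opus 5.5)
\paragraph{The gap: column (7).} You file (7) under ``terminal/conservative identities'', but quasi-conservativity in Definition \ref{def}(7) is not an identity in the bracket. It asks for the \emph{existence} of a product $*$ and a bilinear form $\varphi$. The negative entries for $\mathcal{A}_1^{\alpha},\mathcal{A}_2,\mathcal{A}_3$ are therefore non-existence statements, and a basis substitution does not ``violate'' anything. It only yields linear equations on the structure constants of $*$ and the values of $\varphi$, and you must find substitutions whose equations are inconsistent. This is the one part the paper writes out; it dismisses (1)--(6) as routine. Citing work on terminal algebras cannot replace it, since terminal $\subset$ conservative $\subset$ quasi-conservative (${\rm A}_{05}$ in type $(1,2)$ is conservative but not terminal).

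A clean way to organise the check: for even elements the right-hand side of (7) is $J(z*t,x,y)+\varphi(z,t)[x,y]$, and in dimension $3$ one has $J(w,x,y)=\det(w,x,y)\,J(e_1,e_2,e_3)$. So for each fixed $(z,t)$ the left-hand side must equal $\beta(x,y)\,J(e_1,e_2,e_3)+c\,[x,y]$ for some alternating form $\beta$ and scalar $c$.
\begin{itemize}
\item For $\mathcal{A}_3$ and $\mathcal{A}_1^{\alpha}$, the paper's substitution $(z,t,x,y)=(e_1,e_1,e_2,e_3)$ gives left-hand sides $2e_1$ and $(\alpha-1)e_1-\alpha e_2+\alpha e_3$ respectively. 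Neither is of that form for any $\alpha$.
\item For $\mathcal{A}_2$ the same analysis goes the other way, with (7) as printed. Its left-hand side always lies in $\mathbb{C}e_1$ and is alternating in $(x,y)$, while $J(w,x,y)=-\det(w,x,y)e_1$. Hence $\varphi=0$ together with a uniquely determined bilinear $*$ (for example $e_1*e_2=e_1$) satisfies the relation.
\end{itemize}
The paper's argument for $\mathcal{A}_2$ starts from $x=y=e_2$. There the right-hand side vanishes identically, so it cannot constrain $*$. So the $\mathcal{A}_2$ entry in column $(1,2,3,5,7)$ cannot be proved as stated, by your route or the paper's.

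\paragraph{Other corrections.} Two of your concrete witnesses fail.
\begin{itemize}
\item For a Lie algebra, (4) reduces to $[[x,y],[z,t]]+[[y,z],[x,t]]=0$, which vanishes identically when $x=z$. So your choice $x=z=e_1$, $y=t=e_2$ gives $0$ for $\mathfrak{g}_4$. Use $x=y=e_1$, $z=e_2$, $t=e_3$ instead, which gives $[[e_1,e_2],[e_1,e_3]]=e_1\neq0$.
\item In general, (4) for even elements reads $[J(x,y,z),t]-[J(x,z,t),y]=[[x,y],[z,t]]+[[y,z],[x,t]]$. Your quadruple $(e_1,e_2,e_1,e_3)$ for $\mathcal{A}_3$ again has $x=z$ and gives $0=0$. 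The quadruple $(x,y,z,t)=(e_1,e_3,e_2,e_3)$ gives $0$ on the left and $2e_3$ on the right.
\item A harmless sign slip: in $\mathcal{A}_2$, $J(e_1,e_2,e_3)=-e_1$, not $e_1$.
\end{itemize}
Your metabelian argument for $\mathfrak{g}_1,\mathfrak{g}_2,\mathfrak{g}_3^{\alpha}$ in column (4) is correct.

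Your $\mathfrak{s}_4$ idea also works exactly, and it is better than a case check. Read the unbalanced last term of (6) as $[[x,[y,t]]-[[x,y],t]+[[x,t],y],z]$. Then the even-case left side equals $\sum_{i=1}^{4}(-1)^i[J(x_1,\ldots,\widehat{x_i},\ldots,x_4),x_i]$ with $(x_1,x_2,x_3,x_4)=(x,y,z,t)$. This is an alternating $4$-linear map, so it vanishes identically on every $3$-dimensional anticommutative algebra.
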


\begin{proof}
Checking identities for superalgebras  $(1)$ -- $(6)$ is a routine process so it will be omitted. The case of conservative and quasi-conservative superalgebras will be given in more detail.
As each algebra $\mathfrak{g}_i$ is Lie, it is conservative and quasi-conservative.

\begin{enumerate}
    \item[$\mathcal{A}_1^{\alpha}$:]
    Taking $z=e_1, t=e_1, x=e_2,$ and $y=e_3$ in the quasi-conservative relation from (7) of Definition \ref{def}, we find the contradiction with existence of $*$ and $\varphi.$

    \item[$\mathcal{A}_2$:]
    Taking $z=e_1, t=e_2, x=e_2,$ and $y=e_2$ in the quasi-conservative relation from (7) of Definition \ref{def},  we find that $e_1*e_2 \in \langle e_2 \rangle,$ but if we take
    $z=e_1, t=e_2, x=e_2,$ and $y=e_3,$ we find that $e_1*e_2+e_1 \in \langle e_2 \rangle,$
    the contradiction with the existence of $*$ and $\varphi.$

    \item[$\mathcal{A}_3$:]
    Taking $z=e_1, t=e_1, x=e_2,$ and $y=e_3$ in the quasi-conservative relation from (7) of Definition \ref{def}, we find the contradiction with the existence of $*$ and $\varphi.$

\end{enumerate}

\end{proof}

\section{The geometric classification of superalgebras}

\subsection{Preliminaries: degenerations and geometric classification}

Let \( V = V_0 \oplus V_1 \) be a \( \mathbb{Z}_2 \)-graded vector space with a fixed homogeneous basis
\( \big\{e_1, \ldots, e_m, f_1, \ldots, f_n \big\}\).
A  superalgebra structure on \(V\) can be described via structure constants
\((\alpha_{ij}^k, \beta_{ij}^k, \gamma_{ij}^k, \delta_{ij}^k) \in \mathbb{C}^{m^3+3mn^2}\), where the multiplication is defined as:
\[
e_i e_j = \sum_{k=1}^{m} \alpha_{ij}^k e_k, \quad
e_i f_j = \sum_{k=1}^{n} \beta_{ij}^k f_k, \quad
f_i e_j = \sum_{k=1}^{n} \gamma_{ij}^k f_k, \quad
f_i f_j = \sum_{k=1}^{m} \delta_{ij}^k e_k.
\]
Let $\mathcal{S}^{m,n}$ denote the set of all superalgebras of dimension $(m,n)$ defined by a family of polynomial superidentities $T$, regarded as a subset $\mathbb{L}(T)$ of the affine variety $\operatorname{Hom}(V \otimes V, V)$. Then $\mathcal{S}^{m,n}$ is a Zariski-closed subset of the variety $\operatorname{Hom}(V \otimes V, V)$.

The group $G = (\operatorname{Aut} V)_0 \simeq \operatorname{GL}(V_0) \oplus \operatorname{GL}(V_1)$ acts on $\mathcal{S}^{m,n}$ by conjugation:
\[
(g * \mu)(x \otimes y) = g \mu(g^{-1} x \otimes g^{-1} y),
\]
for all $x, y \in V$, $\mu \in \mathbb{L}(T)$, and $g \in G$.

Let $\mathcal{O}(\mu)$ denote the orbit of $\mu \in \mathbb{L}(T)$ under the action of $G$, and let $\overline{\mathcal{O}(\mu)}$ be the Zariski closure of $\mathcal{O}(\mu)$. Suppose $J, J' \in \mathcal{S}^{m,n}$ are represented by $\lambda, \mu \in \mathbb{L}(T)$, respectively. We say that $\lambda$ degenerates to $\mu$, denoted $\lambda \to \mu$, if $\mu \in \overline{\mathcal{O}(\lambda)}$. In this case, we have $\overline{\mathcal{O}(\mu)} \subset \overline{\mathcal{O}(\lambda)}$. Therefore, the notion of degeneration does not depend on the particular representatives, and we write $J \to J'$ instead of $\lambda \to \mu$, and $\mathcal{O}(J)$ instead of $\mathcal{O}(\lambda)$.
We write $J \not\to J'$ to indicate that $J' \notin \overline{\mathcal{O}(J)}$.

If $J$ is represented by $\lambda \in \mathbb{L}(T)$, we say that $J$ is \textit{rigid} in $\mathbb{L}(T)$ if $\mathcal{O}(\lambda)$ is an open subset of $\mathbb{L}(T)$. A subset of a variety is called \textit{irreducible} if it cannot be written as a union of two proper closed subsets. A maximal irreducible closed subset is called an \textit{irreducible component}. In particular, $J$ is rigid in $\mathcal{S}^{m,n}$ if and only if $\overline{\mathcal{O}(\lambda)}$ is an irreducible component of $\mathbb{L}(T)$. It is a well-known fact that every affine variety admits a unique decomposition into finitely many irreducible components.

To find degenerations, we use the standard methods, described in \cite{GRH,als,ahk,BC99} and so on.
To prove a non-degeneration $J \not\to J',$ we use the standard argument from Lemma, whose proof is the same as the proof of   \cite[Lemma 1.5]{GRH}.

\begin{lemma}\label{gmain}
Let $\mathfrak{B}$ be a Borel subgroup of ${\rm GL}(\mathbb V)$ and ${\rm R}\subset \mathbb{L}(T)$ be a $\mathfrak{B}$-stable closed subset.
If $J  \to J'$ and  the superalgebra $J $ can be represented by a structure $\mu\in{\rm R}$, then there is $\lambda\in {\rm R}$ representing $J'$.
\end{lemma}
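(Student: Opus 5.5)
The argument is the standard one from \cite[Lemma 1.5]{GRH}, adapted to the graded setting, and rests on two facts: orbits are $G$-stable and closures of $\mathfrak{B}$-stable sets are $\mathfrak{B}$-stable; and the flag variety $G/\mathfrak{B}$ is complete (projective). Here $G=\mathrm{GL}(V_0)\times \mathrm{GL}(V_1)$ is the group acting on $\mathbb{L}(T)$, and I read $\mathfrak{B}$ as a Borel subgroup of this $G$, that is, a product of Borel subgroups of the two factors, since only even automorphisms act.

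First I will show that $G\cdot \mathrm{R}$ is closed in $\mathbb{L}(T)$. Consider the closed subset
\[
Z=\{(g\mathfrak{B},\lambda)\in G/\mathfrak{B}\times \mathbb{L}(T)\;:\; g^{-1}*\lambda\in \mathrm{R}\}.
\]
It is well defined because $\mathrm{R}$ is $\mathfrak{B}$-stable. It is closed because it is the image of the closed set $\{(g,\lambda): g^{-1}*\lambda\in\mathrm{R}\}$, which is saturated for the free right $\mathfrak{B}$-action, under the quotient map $G\times\mathbb{L}(T)\to G/\mathfrak{B}\times \mathbb{L}(T)$. That quotient map is open and surjective, so images of saturated closed sets are closed.

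Next, $G/\mathfrak{B}=G_0/\mathfrak{B}_0\times G_1/\mathfrak{B}_1$ is a product of flag varieties, hence complete. The projection $G/\mathfrak{B}\times\mathbb{L}(T)\to\mathbb{L}(T)$ is therefore closed, and its image of $Z$ is exactly $G\cdot\mathrm{R}$. So $G\cdot \mathrm{R}$ is closed, and it is $G$-stable.

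Now suppose $J\to J'$, with $J$ represented by $\mu\in\mathrm{R}$. Then $\mathcal{O}(\mu)\subset G\cdot\mathrm{R}$, and since $G\cdot\mathrm{R}$ is closed, $\overline{\mathcal{O}(\mu)}\subset G\cdot \mathrm{R}$. Because $J'$ is a degeneration of $J$, some structure $\nu$ representing $J'$ lies in $\overline{\mathcal{O}(\mu)}$, so $\nu=g*\lambda$ for some $g\in G$ and $\lambda\in\mathrm{R}$. Then $\lambda=g^{-1}*\nu$ is in the orbit of $\nu$, so $\lambda$ represents $J'$ and lies in $\mathrm{R}$, as required.

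The only delicate step is the closedness of $Z$ and the completeness of $G/\mathfrak{B}$ in the super setting. This step works because the acting group is the even part $G$, a reductive group whose Borel subgroups are products; the rest is formal.
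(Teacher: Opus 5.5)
Your proof is correct and is the standard argument behind \cite[Lemma 1.5]{GRH}, which is the only justification the paper gives for this lemma. The argument shows that $G\cdot{\rm R}$ is closed and $G$-stable because $G/\mathfrak{B}$ is complete, so it contains $\overline{\mathcal{O}(\mu)}$, and therefore some structure representing $J'$ lies in ${\rm R}$. Your reading of $\mathfrak{B}$ as a Borel subgroup of the even group ${\rm GL}(V_0)\times{\rm GL}(V_1)$, rather than of the full ${\rm GL}(\mathbb{V})$, is the right one, since only even automorphisms act on $\mathbb{L}(T)$.
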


To prove our results, we will need the following statement.

\begin{theorem}[see, \cite{ahk}]\label{asscomjord}
The following statements are true:

\begin{enumerate}
    \item[$({\rm 1})$]
    The variety of associative commutative  superalgebras of type $(1,2)$
    has dimension $3,$
    four rigid superalgebras and
    four irreducible components given by
$\mathcal{C}_1=\overline{\mathcal{O}({\rm J}_{02})},$
$\mathcal{C}_2=\overline{\mathcal{O}({\rm J}_{06})},$
$\mathcal{C}_3=\overline{\mathcal{O}({\rm J}_{07})},$ and
$\mathcal{C}_4=\overline{\mathcal{O}({\rm J}_{09})}.$

    \item[$({\rm 2})$]
    The variety of associative commutative  superalgebras of type $(2,1)$
    has dimension $4,$
    two  rigid superalgebras and
    two irreducible components given by
$\mathcal{C}_1=\overline{\mathcal{O}({\bf J}_{01})}$  and
$\mathcal{C}_2=\overline{\mathcal{O}({\bf J}_{02})}.$

    \item[$({\rm 3})$]
    The variety of Jordan  superalgebras of type $(1,2)$
    has dimension $3,$
    seven rigid superalgebras and
    seven irreducible components given by
$\mathcal{C}_1=\overline{\mathcal{O}({\rm J}_{01})},$
$\mathcal{C}_2=\overline{\mathcal{O}({\rm J}_{04})},$
$\mathcal{C}_3=\overline{\mathcal{O}({\rm J}_{05})},$
$\mathcal{C}_4=\overline{\mathcal{O}({\rm J}_{06})},$
$\mathcal{C}_5=\overline{\mathcal{O}({\rm J}_{07})},$
$\mathcal{C}_6=\overline{\mathcal{O}({\rm J}_{10})},$ and
$\mathcal{C}_7=\overline{\mathcal{O}({\rm J}_{11})}.$

   \item[$({\rm 2})$]
    The variety of Jordan   superalgebras of type $(2,1)$
    has dimension $4,$
    seven   rigid superalgebras and
    seven irreducible components given by
$\mathcal{C}_1=\overline{\mathcal{O}({\bf J}_{01})},$
$\mathcal{C}_2=\overline{\mathcal{O}({\bf J}_{02})},$
$\mathcal{C}_3=\overline{\mathcal{O}({\bf J}_{03})},$
$\mathcal{C}_4=\overline{\mathcal{O}({\bf J}_{04})},$
$\mathcal{C}_5=\overline{\mathcal{O}({\bf J}_{08})},$
$\mathcal{C}_6=\overline{\mathcal{O}({\bf J}_{12})},$ and
$\mathcal{C}_7=\overline{\mathcal{O}({\bf J}_{13})}.$

\end{enumerate}

\end{theorem}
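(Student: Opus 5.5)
The statement is Theorem \ref{asscomjord}, a summary of results from \cite{ahk}. Within the paper it is simply invoked, but here is how I would reprove it independently, using the algebraic classifications already available: Theorem \ref{(1,2)} and Theorem \ref{(2,1)}, with the associative-commutative members marked $\mathfrak{ass}$.

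\textbf{Orbit dimensions.} For each superalgebra $J$ in the relevant list, I would compute its orbit dimension as $\dim \mathcal{O}(J)=\dim G-\dim \mathrm{Der}_0(J)$. Here $G=\mathrm{GL}_1\times\mathrm{GL}_2$ (dimension $5$) for type $(1,2)$, and $\mathrm{GL}_2\times \mathrm{GL}_1$ (also dimension $5$) for type $(2,1)$. The even derivations are read off exactly as the automorphism groups were in the proofs of Theorems A1 and A2. For example, $\mathrm{Aut}({\rm J}_{02})$ is $4$-dimensional, so $\dim\mathcal{O}({\rm J}_{02})=1$. The number $3$ (resp.\ $4$) claimed as the dimension of each variety is the maximum orbit dimension over the listed candidates for components. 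There are no parametric families in these lists, so no family contributes an extra dimension.

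\textbf{Degenerations.} Next I would exhibit explicit degenerations showing that every non-listed superalgebra lies in the closure of a listed one. Each degeneration is given by a one-parameter basis change $E_i^t$ such that the structure constants converge as $t\to 0$. Typical examples are ${\rm J}_{01}\to {\rm J}_{02}$ and ${\rm J}_{01}\to{\rm J}_{03}$, and, in type $(2,1)$, ${\bf J}_{01}\to{\bf J}_{08}\to{\bf J}_{14}$ and ${\bf J}_{02}\to{\bf J}_{10}$. For instance, ${\bf J}_{01}\to{\bf J}_{08}$ follows from $E_1=e_1+e_2$, $E_2=te_2$.

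\textbf{Non-degenerations.} Finally I would prove that the listed superalgebras do not degenerate to one another, so that each is rigid and their orbit closures are exactly the components. When the orbit dimensions differ in the wrong direction, this is immediate. Otherwise I would use Lemma \ref{gmain} with $\mathfrak{B}$-stable closed conditions. Useful invariants are:
\begin{enumerate}
\item[$\bullet$] $\dim A^2$;
\item[$\bullet$] $A_1A_1=0$;
\item[$\bullet$] the property $e\circ f\in\langle f\rangle$ for a suitable filtration;
\item[$\bullet$] the upper semicontinuity of $\dim\mathrm{Ann}$;
\item[$\bullet$] the existence of idempotents, via the triangular form of the multiplication operators.
\end{enumerate}
Two idempotent features are closed under degeneration in the relevant sense and are especially useful: the Peirce eigenvalue $\tfrac12$ on odd elements, as in ${\rm J}_{04}$, ${\rm J}_{05}$, ${\rm J}_{10}$, ${\bf J}_{03}$, ${\bf J}_{04}$, ${\bf J}_{12}$, ${\bf J}_{13}$; and the presence of a unit. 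In this way the rigid non-associative ones, ${\rm J}_{01},{\rm J}_{04},{\rm J}_{05},{\rm J}_{10},{\rm J}_{11}$, can be separated from ${\rm J}_{06},{\rm J}_{07}$. For the associative commutative varieties the same arguments apply after restricting to the $\mathfrak{ass}$ entries.

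\textbf{Main obstacle.} I expect the hardest part to be the non-degenerations between superalgebras of equal or small orbit dimension. Examples are ${\rm J}_{07}$ versus ${\rm J}_{06}$ and ${\rm J}_{09}$, where ${\rm J}_{07}$ must be shown not to lie in the closure of the larger orbits even though it has a nilpotent odd part, and likewise ${\bf J}_{02}$, ${\bf J}_{03}$, ${\bf J}_{04}$, whose orbits may be small. For these cases I would first try the invariant "the odd part is annihilated by $A_0$, or is a sum of Peirce $\tfrac12$/$1$ spaces." Since degeneration cannot create idempotent-related eigenvalues, I would prove this by looking at the characteristic polynomial of $L_{e}$ restricted to $A_1$ on the closure of the orbit of any element with $e\circ e=e$. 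The closing tally, $7$ components in each Jordan case and $4$ and $2$ in the associative commutative cases, then follows.
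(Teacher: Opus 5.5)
There is a genuine gap: your plan contains a degeneration that refutes item (4) as written, and you do not notice it. You correctly note ${\bf J}_{01}\to{\bf J}_{08}$ via $E_1=e_1+e_2$, $E_2=te_2$, $F_1=f_1$. Indeed $E_1\circ E_1=E_1$, $E_1\circ E_2=E_2$, and $E_2\circ E_2=tE_2\to 0$. Since $\dim\mathcal{O}({\bf J}_{08})=3<4=\dim\mathcal{O}({\bf J}_{01})$, this gives $\overline{\mathcal{O}({\bf J}_{08})}\subsetneq\overline{\mathcal{O}({\bf J}_{01})}$. So ${\bf J}_{08}$ is neither rigid nor the generic point of a component of the Jordan variety of type $(2,1)$, contrary to item (4).

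Item (4) also contradicts item (2). ${\bf J}_{08}$ is associative commutative, so (2) already places it in $\overline{\mathcal{O}({\bf J}_{01})}\cup\overline{\mathcal{O}({\bf J}_{02})}$. Your closing tally of seven components therefore cannot be reached with ${\bf J}_{08}$ on the list. The seventh component should be ${\bf J}_{11}$, and your proposal never treats ${\bf J}_{11}$. The paper itself uses ${\bf J}_{11}$ in Theorem \ref{stgeo2}, where ${\bf J}_{11},{\bf J}_{12},{\bf J}_{13}$ appear and ${\bf J}_{08}$ does not. You should have flagged the misprint rather than claim a proof of (4) verbatim.

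The missing idea in type $(2,1)$ is the invariant that isolates ${\bf J}_{11},{\bf J}_{12},{\bf J}_{13}$. Their even part is the non-associative Jordan algebra $e_1\circ e_1=e_1$, $e_1\circ e_2=\frac12 e_2$. By contrast, ${\bf J}_{01},\dots,{\bf J}_{10}$ and ${\bf J}_{14}$ have associative even part, and associativity of $A_0$ is a closed $G$-stable condition. Your Peirce-$\frac12$-on-odd-elements invariant does not see this: ${\bf J}_{13}$ has $e_1\circ f_1=f_1$, and its $\frac12$ sits on the even element $e_2$.

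The orbit dimensions are $4$ for ${\bf J}_{01},\dots,{\bf J}_{04}$, $3$ for ${\bf J}_{05},\dots,{\bf J}_{10}$, and $2$ for ${\bf J}_{11},\dots,{\bf J}_{14}$. The needed degenerations all use $F_1=f_1$, $E_2=te_2$, and $E_1=e_1$ or $E_1=e_1+e_2$:
\[
{\bf J}_{01}\to{\bf J}_{05},{\bf J}_{08},\qquad {\bf J}_{02}\to{\bf J}_{07},{\bf J}_{10},\qquad {\bf J}_{03}\to{\bf J}_{06},{\bf J}_{09},
\]
together with ${\bf J}_{08}\to{\bf J}_{14}$. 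Combined, these give exactly the components ${\bf J}_{01},{\bf J}_{02},{\bf J}_{03},{\bf J}_{04},{\bf J}_{11},{\bf J}_{12},{\bf J}_{13}$. Note also that ${\bf J}_{02},{\bf J}_{03},{\bf J}_{04}$, which you list among the hard cases, have maximal orbit dimension $4$ and are components for free.

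Two smaller corrections.
\begin{enumerate}
\item[$\bullet$] The presence of a unit is not a closed condition, since the zero superalgebra lies in every orbit closure. It therefore cannot be used with Lemma \ref{gmain}. Idempotent arguments must be phrased as closed identities, such as $\mathrm{tr}(L_x|_{A_1})=\lambda\,\mathrm{tr}(L_x|_{A_0})$ for all $x\in A_0$. This identity separates ${\rm J}_{07}$ ($\lambda=0$) from ${\rm J}_{06}$ ($\lambda=1$).
\item[$\bullet$] In the associative commutative variety of type $(1,2)$ you cannot use ${\rm J}_{01}\to{\rm J}_{03}$, because ${\rm J}_{01}$ is not associative. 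Use ${\rm J}_{06}\to{\rm J}_{03}$ instead, for example with $E_1=te_1$, $F_1=f_1+f_2$, $F_2=tf_1$.
\end{enumerate}
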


\subsection{Anticommutative superalgebras}

\begin{theorem}\label{geoant12}
For subvarieties of anticommutative superalgebras of type $(1,2),$ the following statements are true:

\begin{enumerate}
    \item[$({\rm 1})$]
    The variety of Lie superalgebras has dimension $4,$
    one rigid superalgebra and
    two irreducible components given by
$\mathcal{C}_1=\overline{\mathcal{O}( {\rm A}_{01})}$ and
$\mathcal{C}_2=\overline{\mathcal{O}({\rm A}_{08}^{\alpha})}.$

  \item[$({\rm 2})$]
    The variety of Malcev superalgebras has dimension $4,$
    one    rigid superalgebra and
    two irreducible components given by
$\mathcal{C}_1=\overline{\mathcal{O}( {\rm A}_{05})}$ and
$\mathcal{C}_2=\overline{\mathcal{O}({\rm A}_{08}^{\alpha})}.$

  \item[$({\rm 3})$]
    The variety of binary Lie superalgebras has dimension $4,$
    four  rigid superalgebras and
    five irreducible components given by
$\mathcal{C}_1=\overline{\mathcal{O}( {\rm A}_{05})},$ \
$\mathcal{C}_2=\overline{\mathcal{O}( {\rm A}_{10}^1)},$ \
$\mathcal{C}_3=\overline{\mathcal{O}( {\rm A}_{13})},$ \
$\mathcal{C}_4=\overline{\mathcal{O}( {\rm A}_{18})},$ and
$\mathcal{C}_5=\overline{\mathcal{O}({\rm A}_{08}^{\alpha})}.$ \

  \item[$({\rm 4})$]
    The variety of Tortkara superalgebras, $\mathfrak{aCD}$-superalgebras,
    $\mathfrak{s}_4$-superalgebras and
    $\mathfrak a$-terminal superalgebras coincide, and they have dimension $4,$
    two    rigid superalgebras and
    three irreducible components given by
$\mathcal{C}_1=\overline{\mathcal{O}( {\rm A}_{01})},$ \
$\mathcal{C}_2=\overline{\mathcal{O}( {\rm A}_{07})},$ and
$\mathcal{C}_3=\overline{\mathcal{O}({\rm A}_{08}^{\alpha})}.$

   \item[$({\rm 5})$]
    The variety of anticommutative superalgebras has dimension $7,$
    irreducible and
    without rigid  superalgebras.

\end{enumerate}

\end{theorem}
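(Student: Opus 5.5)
The proof has three parts. First compute orbit dimensions and list the degenerations that exhibit the claimed components. Then prove the needed non-degenerations, and finally check parts (1)--(5) variety by variety.

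\textbf{Orbit dimensions.} For each superalgebra ${\rm A}_{i}$ of Theorem \ref{ant12} I compute $\dim\mathcal{O}({\rm A}_i)=\dim G-\dim{\rm Der}({\rm A}_i)$, where $G={\rm GL}_1\times{\rm GL}_2$ has dimension $5$ and the derivations are even. For a parametric family such as ${\rm A}_{08}^{\alpha}$, ${\rm A}_{11}^{\alpha,\beta}$ or ${\rm A}_{15}^{\alpha}$, I add the number of essential parameters to get the dimension of the family closure. The structure constants of an anticommutative superalgebra of type $(1,2)$ are seven numbers: $[e_1,f_i]$ contributes four, and $[f_i,f_j]$ contributes three since it is symmetric. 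So the whole variety is $\mathbb{C}^7$, which is irreducible of dimension $7$. It has no rigid superalgebra because every orbit has dimension at most $5<7$. This is part (5); it is also covered by the two-parameter family ${\rm A}_{11}^{\alpha,\beta}$, whose closure has dimension $2+5=7$.

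\textbf{Degenerations.} For each subvariety I first use the table of the preceding theorem to list its members. For example, the Lie superalgebras of type $(1,2)$ are ${\rm A}_{01},{\rm A}_{02},{\rm A}_{06},{\rm A}_{08}^{\alpha},{\rm A}_{17}$. I then exhibit degenerations by explicit parametric bases $E_i^t$, in the standard way of \cite{GRH,BC99}. I will construct:
\begin{itemize}
\item ${\rm A}_{08}^{\alpha}\to{\rm A}_{17},{\rm A}_{06}$ (a Jordan-block limit) and ${\rm A}_{01}\to{\rm A}_{02}$ (a scaling of $f_1$ and $f_2$);
\item ${\rm A}_{05}\to{\rm A}_{01},{\rm A}_{06}$ for Malcev;
\item ${\rm A}_{07}\to{\rm A}_{06},{\rm A}_{02}$ for the Tortkara-type varieties;
\item ${\rm A}_{10}^1$, ${\rm A}_{13}$ and ${\rm A}_{18}$ degenerating to the remaining small binary Lie algebras, for example ${\rm A}_{18}\to{\rm A}_{07}$.
\end{itemize}
For the family ${\rm A}_{08}^{\alpha}$ I use family degenerations $\{{\rm A}_{08}^{\alpha}\}\to{\rm A}_{17}$, choosing $\alpha=\alpha(t)\to1$ together with the basis change $f_2\mapsto f_2+t^{-1}f_1$. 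In each variety this reduces the list of candidate generic points to exactly the stated components. For part (4) I read off from the table that columns (4), (5), (6) and (7.1) give the same set ${\rm A}_{01},{\rm A}_{02},{\rm A}_{06},{\rm A}_{07},{\rm A}_{08}^{\alpha},{\rm A}_{17}$, so these four varieties coincide.

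\textbf{Non-degenerations.} It remains to show that the listed closures are not contained in one another. Here I will use dimension comparisons, together with Lemma \ref{gmain} applied to Borel-stable closed invariant sets such as the following:
\begin{itemize}
\item $[{\rm A},{\rm A}]\subseteq{\rm A}_1$, or $[{\rm A}_1,{\rm A}_1]=0$;
\item $\dim[{\rm A},{\rm A}]\le k$, $\dim{\rm Ann}({\rm A})\ge k$, and $\dim{\rm Der}\ge k$;
\item the condition that the trace form of ${\rm ad}\,e_1$ on ${\rm A}_1$ vanishes.
\end{itemize}
For instance, ${\rm A}_{01}$ is nilpotent with $[{\rm A}_0,{\rm A}_1]=0$, while ${\rm A}_{08}^{\alpha}$ satisfies $[{\rm A}_1,{\rm A}_1]=0$. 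Neither of these conditions holds for the other algebra, and both orbit closures have dimension $4$, so neither lies in the closure of the other. Rigidity of the non-parametric components follows once they are shown to be components.

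\textbf{Main obstacle.} I expect the binary Lie case to be the hardest step, with its five components including the three rigid superalgebras ${\rm A}_{10}^1$, ${\rm A}_{13}$ and ${\rm A}_{18}$, all of small orbit dimension. There I must find invariants that separate, for example, ${\rm A}_{13}$ from ${\rm A}_{05}$ and from the family ${\rm A}_{08}^{\alpha}$. I would first try the invariant $\dim\{x\in{\rm A}_1:[x,x]=0\}$ and the rank of ${\rm ad}\,e_1$ restricted to ${\rm A}_1$, and fall back on the Borel-stable conditions of Lemma \ref{gmain} if those are insufficient.
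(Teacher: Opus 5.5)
\textbf{Part (3) cannot be completed as you plan, because its claim about ${\rm A}_{10}^1$ is false.} You intend to find invariants separating ${\rm A}_{10}^1$ from the other candidate components, but ${\rm A}_{18}$ degenerates to it. In ${\rm A}_{18}$ take $E_1^t=e_1$, $F_1^t=t^{-1}f_1$, $F_2^t=f_2$. Then $[E_1^t,F_1^t]=F_1^t$, $[E_1^t,F_2^t]=tF_1^t+F_2^t$, $[F_2^t,F_2^t]=E_1^t$ and $[F_1^t,F_1^t]=[F_1^t,F_2^t]=0$. Hence ${\rm A}_{18}\xrightarrow{(e_1,\ t^{-1}f_1,\ f_2)}{\rm A}_{10}^1$.

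This agrees with the orbit dimensions. An automorphism of ${\rm A}_{10}^1$ must fix $e_1$, since ${\rm ad}\,e_1$ is the identity on ${\rm A}_1$. It must also preserve the rank-one form $x\mapsto[x,x]$ on ${\rm A}_1$, so it has the form $f_1\mapsto\lambda f_1$, $f_2\mapsto\mu f_1\pm f_2$. Thus $\dim{\rm Aut}({\rm A}_{10}^1)=2$ and $\dim\mathcal{O}({\rm A}_{10}^1)=3$, while $\dim\mathcal{O}({\rm A}_{18})=4$.

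${\rm A}_{18}$ is binary Lie (it is one of the claimed components), and the binary Lie variety is closed and $G$-stable. Therefore $\overline{\mathcal{O}({\rm A}_{10}^1)}\subsetneq\overline{\mathcal{O}({\rm A}_{18})}$, and no invariant can do what you want. The paper's own argument breaks at the same point: it asserts $\dim\mathcal{O}({\rm A}_{10}^1)=4$ and deduces the component from equality of dimensions, and that value is wrong. By the degenerations and dimension counts above, the binary Lie variety has four components, $\overline{\mathcal{O}({\rm A}_{05})}$, $\overline{\mathcal{O}({\rm A}_{13})}$, $\overline{\mathcal{O}({\rm A}_{18})}$ and $\overline{\bigcup_\alpha\mathcal{O}({\rm A}_{08}^{\alpha})}$, with three rigid superalgebras, not the five components and four rigid ones stated.

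You also place the difficulty in the wrong spot. ${\rm A}_{05}$, ${\rm A}_{13}$ and ${\rm A}_{18}$ each have a one-dimensional derivation algebra, so their orbits have dimension $4$. That is the dimension of the whole binary Lie variety, and also of the closure of the ${\rm A}_{08}^{\alpha}$ family. These four closures are separated by dimension alone, which is the paper's route; no invariant such as $\dim\{x\in{\rm A}_1:[x,x]=0\}$ is needed.

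There are also some smaller slips:
\begin{itemize}
\item $\dim\mathcal{O}({\rm A}_{01})=3$, not $4$. Your argument against ${\rm A}_{08}^{\alpha}$ uses $[{\rm A}_1,{\rm A}_1]=0$ versus $[{\rm A}_0,{\rm A}_1]=0$, so it survives.
\item ${\rm A}_{01}\to{\rm A}_{02}$ is not a diagonal scaling; it needs a change such as $(e_1,\ tf_1,\ \frac12 f_1+f_2)$.
\item In the Malcev case your list omits ${\rm A}_{05}\to{\rm A}_{07}$, e.g. via $(2te_1,\ 2t^2f_1,\ f_1+tf_2)$. This degeneration is indispensable: ${\rm A}_{07}$ is Malcev with $[{\rm A}_1,{\rm A}_1]\neq0$, so it lies outside the ${\rm A}_{08}^{\alpha}$ family closure. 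Your ${\rm A}_{05}\to{\rm A}_{06}$ is redundant.
\end{itemize}
Your treatment of (5) is correct and simpler than the paper's list of degenerations from ${\rm A}_{11}^{\alpha,\beta}$: the variety is $\mathbb{C}^7$ and every orbit has dimension at most $5$.
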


\begin{proof}
Below, we provide all necessary arguments for generations and non-degenerations.

\begin{enumerate}
    \item[Lie.]
 ${\rm A}_{08}^{\alpha}$ is anticommutative (as an algebra), but
${\rm A}_{01}$ is commutative (as an algebra), hence   ${\rm A}_{08}^{\alpha} \not\to {\rm A}_{01}.$ The rest of degenerations are

${\rm A}_{01}\xrightarrow{ \big(      e_1,\  tf_1,\  \frac 12 f_1+ f_2 \big)} {\rm A}_{02};$ \
${\rm A}_{08}^{-1} \xrightarrow{ \big(  te_1,\  tf_1-tf_2,\  f_1+f_2 \big)} {\rm A}_{06};$ \
${\rm A}_{08}^{\frac{1-t}{1+t}} \xrightarrow{ \big(  (1+t)e_1,\  tf_1-tf_2,\  f_1+f_2 \big)} {\rm A}_{17}.$

  \item[Mal.] ${\rm dim} \  \big({\rm A}_{08}^\alpha\big)^2  > {\rm dim} \  \big({\rm A}_{05}\big)^2,$ hence
 ${\rm A}_{05} \not\to {\rm A}_{08}^{\alpha}.$ The rest of degenerations are
  \begin{center}
${\rm A}_{05}\xrightarrow{ \big(      te_1,\ f_1+t^2f_2,\   tf_2  \big)} {\rm A}_{01};$ \
${\rm A}_{05}\xrightarrow{ \big(     2 te_1,\ 2 t^2f_1,\   f_1+tf_2 \big)} {\rm A}_{07}.$ \
  \end{center}

\item[BL.]
${\rm dim} \  {\mathcal{O}( {\rm A}_{10}^1)} \ = \
{\rm dim} \ {\mathcal{O}( {\rm A}_{13})} \ = \
{\rm dim} \ {\mathcal{O}( {\rm A}_{18})}\ = \
{\rm dim} \ {\mathcal{O}({\rm A}_{08}^{\alpha})} \ = \ 4.$
Hence, they give irreducible components.

\item[Tor.]
${\rm dim} \  {\mathcal{O}( {\rm A}_{01})} \ = \
{\rm dim} \ {\mathcal{O}( {\rm A}_{07})} \ = \ 3.$
Hence, they give irreducible components.

\item[Ant.]  To simplify our degenerations, we will use the following notations:

\begin{center}

${\mathfrak X}:=\sqrt{(\alpha-1)^2-4 t^2};$ \
${\mathfrak Y}:=\sqrt{(t+1) \left(\alpha t^2+1\right)};$\
${\mathfrak Z}:=-\frac{  \sqrt{\alpha+t}}{(\alpha-1) (\alpha+2 t-1)}.$

\end{center}

\begin{longtable}{lcr}
${\rm A}_{11}^{-1,-1} $&${\xrightarrow{ \big(    {\bf i} te_1,\
\frac{(1+{\bf i})\sqrt{t}}{2\sqrt{2}} f_1-\frac{(1+{\bf i})\sqrt{t}}{2\sqrt{2}}f_2,\
\frac{1-{\bf i} }{2\sqrt{2t}}f_1+\frac{1-{\bf i} }{2\sqrt{2t}}f_2  \big)}}$&$ {\rm A}_{03}$ \\

${\rm A}_{11}^{-1,\frac{1-t^2}{1+t^2}}$&$ {\xrightarrow{ \big(     te_1,\
-\frac{\sqrt{t^3+t}}{2} f_1-\frac{\sqrt{t^3+t}}{2} f_2,\
-\frac{\sqrt{t+t^{-1}}}{2 }f_1+\frac{\sqrt{t+t^{-1}}}{2}f_2\big)}} $&${\rm A}_{04}$ \\

${\rm A}_{11}^{-1,0}$&${\xrightarrow{ \big(     te_1,\
\frac{t}{\sqrt{2}}f_1-\frac{{\bf i} t}{\sqrt{2}}f_2,\
\frac{1}{\sqrt{2}}f_1+\frac{{\bf i}}{\sqrt{2}}f_2\big)}} $&${\rm A}_{05}$ \\

${\rm A}_{11}^{\frac{{1+\mathfrak X}+\alpha}{1-{\mathfrak X}+\alpha},-\frac{\alpha-1}{2 t}}$&${\xrightarrow{ \big(    \frac{1}{2} \left(1-{\mathfrak X}+\alpha\right)e_1,\
t^2f_1+\frac{2 t^3}{{\mathfrak X}+\alpha-1}f_2,\
-\frac{1}{2} t \left({\mathfrak X}-\alpha+1\right)f_1+t^2f_2\big)}} $&${\rm A}_{08}^\alpha$ \\

${\rm A}_{11}^{\frac{1+{\mathfrak X}+\alpha}{1-{\mathfrak X}+\alpha},\frac{{\mathfrak X}+\alpha-4 t^5+2 t^2-1}{2 (\alpha-1) t^4}}$&${\xrightarrow{ \big(    \frac{1}{2} \left(1-{\mathfrak X}+\alpha\right)e_1,\
t^2f_1+\frac{2 t^3}{{\mathfrak X}+\alpha-1}f_2,\
-\frac{1}{2} t \left({\mathfrak X}-\alpha+1\right)f_1+t^2f_2\big)}}$&$ {\rm A}_{09}^\alpha$ \\

${\rm A}_{11}^{\frac{\alpha-t^2+t-1}{(\alpha+t-1) (\alpha+t)}, \
-1 }$&${\xrightarrow{ \big(   ( \alpha+t)e_1,\
 t (\alpha+t-1) {\mathfrak Z} \big(f_1+f_2\big),\
{\mathfrak Z}\big( (\alpha+t-1)^2 f_1- t^2 f_2\big) \big)}} $&${\rm A}_{10}^\alpha$ \\

${\rm A}_{11}^{\frac{(\alpha-1) (\alpha+2 t-1)}{(\alpha+t-1)^2 (\alpha+t)}, \ 1 }$&${\xrightarrow{ \big(   (\alpha+t)e_1,\
t (\alpha+t-1) {\mathfrak Z} \big( f_1+ f_2 \big),\
{\mathfrak Z} \big( (\alpha+t-1)^2  f_1 +  t^2 f_2\big)\big)}} $&${\rm A}_{12}^\alpha$ \\

${\rm A}_{11}^{\frac{1}{t^{10}}-1, \ \frac{t^5 \left(2 t^{10}-3\right)}{\sqrt{2}} }$&${\xrightarrow{ \big(   t^{10}e_1,\
-\sqrt{2} t^{10}f_1+\frac{t^5}{1-2 t^{10}}f_2,\
-\frac{\sqrt{2} t^8 \left(t^{10}-1\right)}{2 t^{10}-1}f_1+\frac{t^{13}}{2 t^{10}-1}f_2\big)}} $&${\rm A}_{13}$ \\

${\rm A}_{11}^{\frac{1}{t^4}-1, \ \frac{1-2 t^6+2 t^4+t^2}{t \sqrt{4-8 t^4+8 t^2} }   }$&${\xrightarrow{ \big(   t^4e_1,\
\frac{2 t^5}{1-2 t^4}f_1+\frac{t^2 \sqrt{1-2 t^4+2 t^2}}{2 t^4-1}f_2,\
t^3 \left(\frac{1}{2 t^4-1}-1\right)f_1+\frac{t^4 \sqrt{1-2 t^4+2 t^2}}{1-2 t^4}f_2\big)}} $&${\rm A}_{14}$ \\

${\rm A}_{11}^{\frac{1-t}{t+1}, \ \frac{1-\alpha t^2}{\alpha t^2+1}   }$&${\xrightarrow{ \big(   (t+1)e_1,\
\frac{{\mathfrak Y}}{2}f_1+\frac{{\mathfrak Y}}{2}f_2,\
\frac{{\mathfrak Y}}{2 t}f_1-\frac{{\mathfrak Y}}{2 t}f_2\big)}} $&${\rm A}_{15}$ \\

${\rm A}_{11}^{\frac{1-t}{t+1}, \ -\frac{2 t^5+t+1}{2 t^2 \sqrt{t^6+t^2+t}}   }$&${\xrightarrow{ \big(   (t+1)e_1,\
\sqrt {t^6+t^2+t }f_1-t^3f_2,\
{\sqrt{t^4+1+t^{-1} }}f_1+t^2f_2
\big)}}$&$ {\rm A}_{16}$ \\

${\rm A}_{11}^{\frac{1-t}{t+1}, \ \frac{t-1}{2}    }$&${\xrightarrow{ \big(   (t+1)e_1,\
tf_1-tf_2,\
1f_1+1f_2
\big)}}$&$ {\rm A}_{18}$ \\
\end{longtable}

\end{enumerate}
\end{proof}

\begin{theorem}\label{geoant21}

For subvarieties of anticommutative superalgebras of type $(2,1),$ the following statements are true:

\begin{enumerate}
    \item[$({\rm 1})$]
    The variety of Lie and ${\mathfrak a}$-terminal superalgebras coincide and they have  dimension $3,$
    one rigid superalgebra and
    two irreducible components given by
$\mathcal{C}_1=\overline{\mathcal{O}({\bf A}_{03}^{-\frac 12} )}$ and
$\mathcal{C}_2=\overline{\mathcal{O}({\bf A}_{05}^{\alpha })}.$

  \item[$({\rm 2})$]
    The variety of Malcev superalgebras has dimension $3,$
    two      rigid superalgebras and
    three  irreducible components given by
$\mathcal{C}_1=\overline{\mathcal{O}({\bf A}_{03}^{-\frac 12} )},$ \
$\mathcal{C}_2=\overline{\mathcal{O}({\bf A}_{03}^{1} )},$ and
$\mathcal{C}_3=\overline{\mathcal{O}({\bf A}_{05}^{\alpha })}.$ \

  \item[$({\rm 3})$]
    The variety of binary Lie superalgebras has dimension $4,$
    two   rigid superalgebras and
    three irreducible components given by
$\mathcal{C}_1=\overline{\mathcal{O}( {\bf A}_{01}^{-1})},$ \
$\mathcal{C}_2=\overline{\mathcal{O}( {\bf A}_{03}^{\alpha})},$ and
$\mathcal{C}_3=\overline{\mathcal{O}( {\bf A}_{09})} .$ \

  \item[$({\rm 4})$]
    The variety of Tortkara superalgebras   has dimension $4,$
    two    rigid superalgebras and
    two irreducible components given by
$\mathcal{C}_1=\overline{\mathcal{O}( {\bf A}_{03}^{-1})}$  and
$\mathcal{C}_2=\overline{\mathcal{O}( {\bf A}_{06})} .$ \

 \item[$({\rm 5})$]
    The variety of $\mathfrak{aCD}$-superalgebras has dimension $3,$
    two    rigid superalgebras and
    three  irreducible components given by
$\mathcal{C}_1=\overline{\mathcal{O}({\bf A}_{03}^{-\frac 12} )},$ \
$\mathcal{C}_2=\overline{\mathcal{O}({\bf A}_{05}^{\alpha })},$ and
$\mathcal{C}_3=\overline{\mathcal{O}({\bf A}_{08})}.$ \

 \item[$({\rm 6})$]
    The variety of $\mathfrak{s}_4$-superalgebras has dimension $4,$
    one    rigid superalgebra and
    two  irreducible components given by
$\mathcal{C}_1=\overline{\mathcal{O}({\bf A}_{03}^{\alpha} )}$ and
$\mathcal{C}_2=\overline{\mathcal{O}({\bf A}_{08})}.$ \

   \item[$({\rm 7})$]
    The variety of anticommutative superalgebras has dimension $6,$
    irreducible and without rigid superalgebras.

\end{enumerate}

\end{theorem}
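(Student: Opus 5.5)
Since each statement in Theorem \ref{geoant21} concerns a finite list of orbits, I would argue exactly as in the proof of Theorem \ref{geoant12}. The first step is to read off from the table for type $(2,1)$ which of $\mathbf{A}_{01}^{\alpha},\ldots,\mathbf{A}_{10}$ lie in each subvariety. For example, the Lie (and $\mathfrak a$-terminal) superalgebras are $\mathbf{A}_{03}^{-\frac12}$, $\mathbf{A}_{05}^{\alpha}$, $\mathbf{A}_{07}$ and $\mathbf{A}_{10}$. These two columns coincide, which already gives the coincidence claimed in $({\rm 1})$.

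The second step is to compute, for every algebra on these lists, the orbit dimension
\[
{\rm dim}\,\mathcal{O}(\mathrm A)=5-{\rm dim}\,\mathrm{Der}_0(\mathrm A),
\]
where $5={\rm dim}\big({\rm GL}_2\oplus{\rm GL}_1\big)$ and $\mathrm{Der}_0$ denotes even derivations. For a one-parameter family, the closure of the union of its orbits has dimension one larger than a single orbit. This fixes the dimension of each variety and tells us which algebras or families are candidates for components. For instance, the closure of $\mathbf{A}_{05}^{\alpha}$ is $3$-dimensional, and so is $\overline{\mathcal{O}(\mathbf{A}_{03}^{-\frac12})}$.

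The third step proves that the list of components is exhaustive. For each remaining algebra I would exhibit a parametrized change of homogeneous basis $E_i(t),F_1(t)$ realizing a degeneration from one of the claimed generic algebras, as in the proof of Theorem \ref{geoant12}. Typical examples:
\begin{itemize}
\item pure rescalings such as $\mathbf{A}_{03}^{-\frac12}\to\mathbf{A}_{10}$ (kill $[e_1,e_2]$ by $E_2=te_2$ and rescale $f_1$);
\item degenerations from a family member with parameter depending on $t$, such as $\mathbf{A}_{05}^{1/t}\to\mathbf{A}_{07}$ via $E_1=te_2$, $E_2=e_1$, $F_1=f_1$, or sending suitable members of the family $\mathbf{A}_{03}^{\alpha}$ to $\mathbf{A}_{08}$ and $\mathbf{A}_{09}$ in the binary Lie and $\mathfrak{s}_4$ cases.
\end{itemize}

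The fourth step proves the non-degenerations, which show that the claimed components are really distinct and maximal. Here I would use:
\begin{itemize}
\item orbit-dimension comparisons: an algebra cannot degenerate onto one with an orbit of the same or larger dimension, and a family cannot degenerate onto one of its own dimension;
\item the invariant ${\rm dim}\,\mathrm A^2$;
\item Lemma \ref{gmain}, applied with Borel-stable closed conditions such as $[\mathrm A_1,\mathrm A_1]=0$ (this separates $\mathbf{A}_{05}^{\alpha}$ from $\mathbf{A}_{03}$-type algebras), $[\mathrm A_1,\mathrm A_1]\subseteq\langle e_2\rangle$, or $[e_1,\mathrm A]\subseteq\langle e_1\rangle$.
\end{itemize}
Each such condition must hold for every algebra in one closure and fail for the algebra to be excluded.

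Statement $({\rm 7})$ needs no case analysis. By the proof of Theorem \ref{ant21}, every anticommutative structure of type $(2,1)$ is $\big(\alpha_1\Delta_{12}+\alpha_2\Delta_{33},\ \alpha_3\Delta_{12}+\alpha_4\Delta_{33},\ \alpha_5\Delta_{13}+\alpha_6\Delta_{23}\big)$, so the variety is the affine space $\mathbb C^6$. It is therefore irreducible of dimension $6$. Since every orbit has dimension at most $5<6$, no orbit is open, so there are no rigid superalgebras.

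I expect the main obstacle to be the binary Lie, Tortkara and $\mathfrak{s}_4$ cases, where the varieties have dimension $4$. There one has to find explicit $t$-dependent bases, with a parameter running inside the family $\mathbf{A}_{03}^{\alpha}$, that produce $\mathbf{A}_{06}$, $\mathbf{A}_{08}$, $\mathbf{A}_{09}$ or $\mathbf{A}_{01}^{-1}$, or else prove they are impossible. For these I would first try bases mixing $e_1$ and $e_2$ with coefficients $t^k$, combined with $\alpha=\alpha(t)\to\infty$. If that fails, I would look for a Borel-stable condition separating the algebra from that closure, so that it becomes a component.
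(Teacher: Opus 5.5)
Your framework matches the paper's: membership read off from the table, $\dim\mathcal{O}=5-\dim{\rm Der}_0$, explicit degenerations, and closed invariants such as $[{\rm A}_1,{\rm A}_1]=0$. Your argument for (7) is correct and cleaner than the paper's list of degenerations from ${\bf A}_{02}^{\alpha}$: the variety is the linear space $\mathbb{C}^6$, while every orbit has dimension at most $\dim({\rm GL}_2\oplus{\rm GL}_1)=5$.

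The gap is that steps 2--4 remain a template, and step 2 itself rules out some of the degenerations you announce.
\begin{itemize}
\item ${\rm Der}_0({\bf A}_{03}^{\alpha})$ is $2$-dimensional, spanned by $e_1\mapsto 2e_1,\ f_1\mapsto f_1$ and by $e_2\mapsto e_1$. So $\overline{\bigcup_\alpha\mathcal{O}({\bf A}_{03}^{\alpha})}$ is irreducible of dimension $4$.
\item ${\rm Der}_0$ of ${\bf A}_{01}^{-1}$, ${\bf A}_{06}$, ${\bf A}_{09}$ is $1$-dimensional, spanned respectively by $e_1\mapsto e_1$, $f_1\mapsto f_1$, $e_2\mapsto e_2$. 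So these three algebras have $4$-dimensional orbits.
\item A $4$-dimensional orbit cannot lie in the closure of the family. It would be open and dense there, and would therefore meet the constructible dense set $\bigcup_\alpha\mathcal{O}({\bf A}_{03}^{\alpha})$.
\end{itemize}
Hence the degeneration of ${\bf A}_{03}^{\alpha}$ onto ${\bf A}_{09}$ that you propose in the binary Lie case does not exist. Looking for bases producing ${\bf A}_{06}$ or ${\bf A}_{01}^{-1}$ is likewise futile. These algebras are components wherever they occur: ${\bf A}_{01}^{-1}$ and ${\bf A}_{09}$ in (3), and ${\bf A}_{06}$ in (4) and (6). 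This is how the paper uses them.

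The same computation shows that (6) cannot be proved as printed. By the table, ${\bf A}_{06}$ is an $\mathfrak{s}_4$-superalgebra. Its $4$-dimensional orbit lies neither in $\overline{\bigcup_\alpha\mathcal{O}({\bf A}_{03}^{\alpha})}$ nor in $\overline{\mathcal{O}({\bf A}_{08})}$. The latter is only $3$-dimensional, because ${\rm Der}_0({\bf A}_{08})$ is spanned by $e_1\mapsto e_2$ and by $e_2\mapsto 2e_2,\ f_1\mapsto f_1$.

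Conversely, the degeneration you propose for the $\mathfrak{s}_4$ case does exist. Take $\alpha=t^{-1}$ and the basis $E_1=-te_1+te_2$, $E_2=t^2e_2$, $F_1=tf_1$. Then $[E_1,F_1]=F_1$, $[F_1,F_1]=E_2-tE_1$, $[E_1,E_2]=t^2E_1-tE_2$ and $[E_2,F_1]=tF_1$. So ${\bf A}_{03}^{1/t}\to{\bf A}_{08}$, and ${\bf A}_{08}$ is not a component.

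The second $\mathfrak{s}_4$-component is therefore $\overline{\mathcal{O}({\bf A}_{06})}$. This is exactly what the paper's proof establishes: it records $\dim\mathcal{O}({\bf A}_{06})=4$ next to this very degeneration. The ${\bf A}_{08}$ in the statement of (6) is a misprint. Your plan does not notice that its own degeneration contradicts the claim it is proving, so as it stands it would end up asserting a false item. You need to state and prove the corrected (6).

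In (4), the only non-degeneration not settled by dimension is ${\bf A}_{06}\not\to{\bf A}_{03}^{-1}$. Your invariant $[{\rm A}_1,{\rm A}_1]=0$ handles it, exactly as in the paper.
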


\begin{proof}
Below, we provide all necessary arguments for generations and non-degenerations.

\begin{enumerate}
    \item[Lie.] We have to find the following two degenerations
 ${\bf A}_{03}^{-\frac 12}\xrightarrow{ \big( e_1,\  te_2,\   f_1  \big)} {\bf A}_{10},$ \
${\bf A}_{05}^{\frac 1t}\xrightarrow{ \big( e_1+te_2,\   t^2e_2,\  tf_1  \big)} {\bf A}_{07}$
   and mention that
${\rm dim} \ \mathcal{O}({\bf A}_{03}^{-\frac 12}) \ = \
 {\rm dim} \ \mathcal{O}({\bf A}_{05}^{\alpha }) \ = \ 3.$

    \item[Mal.] It is only enough to mention that \
${\rm dim} \ \mathcal{O}({\bf A}_{03}^{-\frac 12}) \ = \
{\rm dim} \ \mathcal{O}({\bf A}_{03}^{1}) \ = \
 {\rm dim} \ \mathcal{O}({\bf A}_{05}^{\alpha }) \ = \ 3.$

     \item[BL.] We have to find the following two degenerations
${\bf A}_{03}^{\alpha }\xrightarrow{ \big(      e_1,\   e_2,\   tf_1  \big)} {\bf A}_{05}^{\alpha},$
${\bf A}_{03}^{\frac{1}{t} }\xrightarrow{ \big(    e_1+te_2,\   t^2e_2,\ tf_1  \big)} {\bf A}_{07}$
   and mention that \begin{center}

${\rm dim} \  \mathcal{O}( {\bf A}_{01}^{-1}) \  =\
{\rm dim} \  \mathcal{O}({\bf A}_{03}^{\alpha})  \ =\
 {\rm dim}\   \mathcal{O}({\bf A}_{09}) \  = \  4.$

   \end{center}
 \item[Tor.] We have to find the following two degenerations
${\bf A}_{06}\xrightarrow{ \big(    te_1 ,\  \alpha e_1+e_2,\    f_1  \big)} {\bf A}_{05}^{\alpha},$
${\bf A}_{03}^{-1}\xrightarrow{ \big(    e_1 ,\  t e_2,\    f_1  \big)} {\bf A}_{10}$
and mention that
$ {\bf A}_{06} \not\to  {\bf A}_{03}^{-1}$ due to $\big( (  {\bf A}_{06})_1 \big)^2=0.$

\item[$\mathfrak{aCD}.$]
 It is only enough to mention that
${\rm dim} \ \mathcal{O}( {\bf A}_{03}^{-\frac 12}) \ = \
 {\rm dim} \ \mathcal{O}({\bf A}_{05}^{\alpha }) \  = \
 {\rm dim} \ \mathcal{O}({\bf A}_{08}) \ = \   3.$

 \item[${\mathfrak s}_4$.] We have to find the two following degenerations
${\bf A}_{03}^{\frac{1}{t}}\xrightarrow{ \big(   -te_1+te_2 ,\ t^2e_2,\   t  f_1  \big)} {\bf A}_{08}$
and mention that
${\rm dim} \ \mathcal{O}({\bf A}_{03}^{\alpha}) \ = \
 {\rm dim} \ \mathcal{O}({\bf A}_{06}) \ = \   4.$

\item[Ant.]   All needed degenerations are given below.

\begin{longtable}{lcr|lcr}
${\bf A}_{02}^{\alpha } $&${\xrightarrow{ \big(
  te_1,\
e_2,\
 f_1  \big)}}$&$ {\rm A}_{01}^\alpha$ &

${\bf A}_{02}^{\alpha } $&${\xrightarrow{ \big(
  t^2e_2,\
te_1,\
 t f_1  \big)}}$&$ {\rm A}_{03}^\alpha$ \\
\hline
${\bf A}_{02}^{\frac{1}{t^2} } $&${\xrightarrow{ \big(
\frac{t-1}{t}e_1+te_2,\
-\frac{1}{t^2}e_1+1e_2,\
 t f_1  \big)}}$&$ {\rm A}_{04}$ &

${\bf A}_{02}^{0} $&${\xrightarrow{ \big(
e_1+e_2,\
 e_2,\
 t f_1  \big)}}$&$ {\rm A}_{06}$ \\
\hline

\multicolumn{6}{c}{
${\bf A}_{02}^{\frac{1}{t^2}} $\ ${\xrightarrow{ \big(
-e_1+2 t^2e_2,\
-\frac{1}{t}e_1+te_2,\
 t f_1  \big)}}$ \ $ {\rm A}_{09}$} \\

\end{longtable}

\end{enumerate}
\end{proof}

\subsection{Kokoris superalgebras}

\begin{theorem}\label{Kgeo1}
The variety of complex $3$-dimensional Kokoris    superalgebras of type $(1,2)$  has
dimension  $7$   and it has  $6$  irreducible components defined by
\begin{center}
$\mathcal{C}_1=\overline{\mathcal{O}( {\rm J}_{07})},$ \
$\mathcal{C}_2=\overline{\mathcal{O}( {\rm J}_{09})},$ \
$\mathcal{C}_3=\overline{\mathcal{O}( {\rm A}_{11}^{\alpha,\beta})},$ \
$\mathcal{C}_4=\overline{\mathcal{O}( {\rm N}_{03}^{\alpha})},$ \
$\mathcal{C}_5=\overline{\mathcal{O}( {\rm N}_{05}^{\alpha})},$ and
$\mathcal{C}_6=\overline{\mathcal{O}( {\rm N}_{09})}.$ \

 \end{center}
In particular, there are only $3$ rigid superalgebras in this variety.

\end{theorem}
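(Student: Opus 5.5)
The plan follows the standard scheme for geometric classifications. By Theorem~\ref{K1}, the Kokoris superalgebras of type $(1,2)$ are of three kinds: the associative commutative superalgebras ${\rm J}_{02},{\rm J}_{03},{\rm J}_{06},{\rm J}_{07},{\rm J}_{09}$, the anticommutative ones ${\rm A}_{01},\ldots,{\rm A}_{18}$, and ${\rm N}_{03}^{\alpha},{\rm N}_{04},{\rm N}_{05}^{\alpha},{\rm N}_{06}^{\alpha},{\rm N}_{09}$. The structure group is $G={\rm GL}_1\times{\rm GL}_2$, of dimension $5$.

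\textbf{Orbit dimensions.} For each of the six proposed generators I compute $\dim{\rm Der}$, giving $\dim\mathcal{O}=5-\dim{\rm Der}$. A one-parameter family adds one, and a two-parameter family adds two. I expect the following values:
\begin{itemize}
\item $\overline{\mathcal{O}({\rm A}_{11}^{\alpha,\beta})}$ has dimension $7$, since ${\rm Der}({\rm A}_{11}^{\alpha,\beta})$ is trivial for generic parameters;
\item $\overline{\mathcal{O}({\rm N}_{03}^{\alpha})}$ has dimension $4$, since $\dim{\rm Der}({\rm N}_{03}^{\alpha})=2$;
\item $\overline{\mathcal{O}({\rm N}_{05}^{\alpha})}$ has dimension around $4$ to $5$;
\item ${\rm N}_{09}$, ${\rm J}_{07}$ and ${\rm J}_{09}$ have small orbits, with ${\rm J}_{07}$ and ${\rm J}_{09}$ giving $3$-dimensional components as in Theorem~\ref{asscomjord}.
\end{itemize}
This fixes the dimension $7$ and the count of three rigid superalgebras, namely ${\rm J}_{07}$, ${\rm J}_{09}$ and ${\rm N}_{09}$.

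\textbf{Degenerations.} Every remaining superalgebra must lie in one of the six closures. For the anticommutative ones this is already done in the proof of Theorem~\ref{geoant12}(5): ${\rm A}_{11}^{\alpha,\beta}$ degenerates to every other ${\rm A}_{i}$. For ${\rm J}_{02},{\rm J}_{03},{\rm J}_{06}$, Theorem~\ref{asscomjord}(1) gives ${\rm J}_{02}\in\overline{\mathcal{O}({\rm J}_{02})}$ only. I will instead find explicit degenerations such as ${\rm N}_{03}^{\alpha}\to{\rm J}_{02}$ (let the skew part vanish by letting $\alpha\to\infty$ after rescaling) and ${\rm N}_{05}^{\alpha}\to{\rm J}_{03}$ and ${\rm N}_{09}\to{\rm J}_{06}$. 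For ${\rm N}_{04}$ and ${\rm N}_{06}^{\alpha}$, I will write parametric bases $E_1^t,F_1^t,F_2^t$, in the style of those in Theorem~\ref{geoant12}, giving ${\rm N}_{03}^{\alpha(t)}\to{\rm N}_{04}$ (a Jordan-block limit of the matrix $\begin{pmatrix}-\alpha&0\\0&\alpha\end{pmatrix}$ appearing in the proof of Theorem A1) and ${\rm N}_{05}^{\alpha}\to{\rm N}_{06}^{\alpha}$ (scale $f_1\mapsto tf_1$).

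\textbf{Non-degenerations.} I must show that none of the six closures lies in another; this is the main obstacle. I will use invariants preserved under degeneration, in the following order:
\begin{enumerate}
\item whether the superalgebra is anticommutative, commutative, or neither, since these are closed conditions;
\item $\dim A^2$, and the dimension of the annihilator, as semicontinuous invariants;
\item the existence of an idempotent: ${\rm J}_{07},{\rm J}_{09},{\rm N}_{09}$ satisfy $e_1e_1=e_1$, so they cannot come from algebras with nilpotent even part, such as ${\rm A}_{11}$, ${\rm N}_{03}$ or ${\rm N}_{05}$, or from lower-dimensional orbits;
\item Lemma~\ref{gmain} with a Borel-stable closed set, where the invariants above do not suffice.
\end{enumerate}
Specific cases:
\begin{itemize}
\item ${\rm A}_{11}^{\alpha,\beta}$ cannot degenerate to ${\rm N}_{03}$, ${\rm N}_{05}$, ${\rm N}_{09}$ or the ${\rm J}_{i}$, because anticommutativity is closed.
\item ${\rm N}_{03}^{\alpha}\not\to{\rm N}_{05}^{\alpha}$, because ${\rm N}_{03}$ satisfies the closed condition $e_1{\rm A}=0$.
\item ${\rm N}_{05}\not\to{\rm N}_{03}$, using a Borel-stable set in which some $f$-product vanishes.
\end{itemize}
Where orbit dimensions coincide, no degeneration between families is possible beyond parameter limits, which closes the argument.
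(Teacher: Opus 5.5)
Your overall route is the paper's: degenerations absorb the non-generic algebras, orbit-closure dimensions are compared, and closed invariants give the non-degenerations. However, your dimension estimates are wrong in a way that hides the one non-degeneration that actually needs an argument.

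\textbf{The dimensions.} In fact $\dim{\rm Der}({\rm N}_{09})=1$, so $\dim\mathcal{O}({\rm N}_{09})=4$, which is not small. For ${\rm N}_{05}^{\alpha}$ one also gets $\dim{\rm Der}=2$, so its closure has dimension exactly $4$, as does that of ${\rm N}_{03}^{\alpha}$. On the other hand, $\dim{\rm Der}({\rm J}_{07})=\dim{\rm Der}({\rm J}_{09})=4$, so these orbits are $1$-dimensional. The number $3$ in Theorem~\ref{asscomjord} is the dimension of the whole associative commutative variety, attained by ${\rm J}_{06}$, not of these components.

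\textbf{What is and is not covered.} Equality of dimensions does settle the mutual position of ${\rm N}_{03}^{\alpha}$, ${\rm N}_{05}^{\alpha}$ and ${\rm N}_{09}$, so your two specific ${\rm N}_{03}/{\rm N}_{05}$ checks are superfluous. The closed condition $e_1e_1=0$ correctly rules out ${\rm A}_{11}^{\alpha,\beta},{\rm N}_{03}^{\alpha},{\rm N}_{05}^{\alpha}\to{\rm J}_{07},{\rm J}_{09}$. But nothing in your plan rules out ${\rm N}_{09}\to{\rm J}_{07}$ or ${\rm N}_{09}\to{\rm J}_{09}$, and both are dimensionally possible ($4>1$).
\begin{itemize}
\item ${\rm N}_{09}\not\to{\rm J}_{09}$ follows from $\dim({\rm N}_{09})^2=2<3=\dim({\rm J}_{09})^2$.
\item ${\rm N}_{09}\not\to{\rm J}_{07}$ is invisible to every invariant you list. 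Both are commutative as ordinary algebras, $\dim A^2$ drops from $2$ to $1$, the annihilator grows from $1$ to $2$, and both have an idempotent.
\end{itemize}
You need a finer closed condition. One choice is the linear condition ${\rm tr}\big(L_{e_1}|_{A_1}\big)=c_{11}^1$, i.e.\ $c_{12}^2+c_{13}^3=c_{11}^1$. It is $G$-stable: since $A_0$ is one-dimensional, both sides are multiplied by $a$ under $e_1\mapsto ae_1$, and both are unchanged under ${\rm GL}(A_1)$. It holds for ${\rm N}_{09}$ ($1+0=1$) and fails for ${\rm J}_{07}$ ($0\neq 1$) and for ${\rm J}_{09}$ ($2\neq 1$). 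The paper uses the Borel-stable set $\{c_{11}^1=c_{12}^2,\ c_{13}^3=0\}$ for the same purpose. Without such an argument, you have not shown that $\overline{\mathcal{O}({\rm J}_{07})}$ is a component, and so neither the count of $6$ components nor that of $3$ rigid superalgebras is established.

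\textbf{Two smaller slips.} First, ${\rm J}_{02}$ is simply the member ${\rm N}_{03}^{0}$ of the family: the symmetric part of ${\rm N}_{03}^{\alpha}$ is ${\rm J}_{02}$, and its skew part is $[f_1,f_2]=\alpha e_1$. So ${\rm J}_{02}\in\mathcal{C}_4$ via $\alpha\to 0$. Letting $\alpha\to\infty$ after rescaling $e_1$ gives ${\rm A}_{01}$, not ${\rm J}_{02}$. Second, ${\rm N}_{05}^{\alpha}\to{\rm N}_{06}^{\alpha}$ requires scaling both odd vectors, e.g.\ $(e_1,tf_1,tf_2)$ or the paper's $(te_1,tf_1,t^2f_2)$. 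Scaling only $f_1$ sends $e_1f_1$ and $f_1e_1$ to $0$ as well.
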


\begin{proof}
All needed degenerations are given below.
\begin{longtable}{lcr|lcr}

${\rm N}_{09}^{0} $&${\xrightarrow{ \big(   te_1,\
tf_1,\
 t^2f_2  \big)}}$&$ {\rm J}_{03}$ &
 ${\rm N}_{05}^{0} $&${\xrightarrow{ \big(   e_1,\
tf_1,\
 f_2  \big)}}$&$ {\rm J}_{06}$  \\

\hline

${\rm N}_{03}^{-\frac{t}{t+2}} $&${\xrightarrow{ \big(    2 te_1,\
tf_1,\
f_1+(t+2)f_2  \big)}}$&$ {\rm N}_{04}$ &

${\rm N}_{05}^{\alpha} $&${\xrightarrow{ \big(   te_1,\
tf_1,\
 t^2f_2  \big)}}$&$ {\rm N}_{06}^\alpha$ \\

\end{longtable}
\noindent
All non-degenerations follow from the following observations.
\begin{enumerate}
    \item[$\bullet$]
$\big \{ {\rm N}_{03}^{\alpha}, \ {\rm N}_{05}^{\alpha}  \big\}  \not\to
\big\{  {\rm J}_{07},  {\rm J}_{09} \big\},$ since ${\rm N}_{03}^{\alpha}$ and ${\rm N}_{05}^{\alpha} $ are nilpotent.

\item[$\bullet$]
$ {\rm N}_{09}   \not\to
\big\{  {\rm J}_{07},  {\rm J}_{09} \big\},$ since
${\rm N}_{09}$ satisfies $\big\{ c_{11}^1=c_{12}^2, c_{13}^3=0 \big\}.$

\item[$\bullet$]
${\rm dim} \  \overline{\mathcal{O}( {\rm N}_{03}^{\alpha})}\ =\
{\rm dim} \ \overline{\mathcal{O}( {\rm N}_{05}^{\alpha})}  \ = \
{\rm dim} \ \overline{\mathcal{O}( {\rm N}_{09})} \ = \ 4.$

\end{enumerate}

\end{proof}

\begin{theorem}\label{Kgeo2}
The variety of complex $3$-dimensional Kokoris    superalgebras of type $(2,1)$  has
dimension  $6$   and it has  $5$  irreducible components defined by
\begin{center}
$\mathcal{C}_1=\overline{\mathcal{O}( {\bf J}_{01})},$ \
$\mathcal{C}_2=\overline{\mathcal{O}( {\bf J}_{02})},$ \
$\mathcal{C}_3=\overline{\mathcal{O}( {\bf A}_{02}^{\alpha})},$ \
$\mathcal{C}_4=\overline{\mathcal{O}( {\bf N}_{05})},$ \ and
$\mathcal{C}_5=\overline{\mathcal{O}( {\bf N}_{09}^{\alpha})}.$ \

 \end{center}
In particular, there are only $3$ rigid superalgebras in this variety.

\end{theorem}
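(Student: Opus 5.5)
The plan is to follow the scheme of the proof of Theorem \ref{Kgeo1}. By Theorem \ref{K2}, the variety of complex $3$-dimensional Kokoris superalgebras of type $(2,1)$ consists of the associative commutative superalgebras of Theorem \ref{(2,1)}, the anticommutative superalgebras of Theorem \ref{ant21}, and ${\bf N}_{01}, {\bf N}_{04}, {\bf N}_{05}, {\bf N}_{06}, {\bf N}_{08}, {\bf N}_{10}, {\bf N}_{11}, {\bf N}_{12}, {\bf N}_{19}, {\bf N}_{20}, {\bf N}_{21}$. The list of components in the statement contains ${\bf N}_{09}^{\alpha}$, which does not appear in Theorem \ref{K2}: its symmetrized superalgebra is ${\bf J}_{09}$, which is not associative. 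Before anything else I will recheck this entry against Theorem \ref{K2}, since it may be a misprint for another member of the ${\bf N}$-list. With that caveat, the argument has three parts.

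\emph{Degenerations (covering).} I would use the two earlier geometric results as black boxes. By Theorem \ref{asscomjord}(2), every associative commutative superalgebra of type $(2,1)$ lies in $\overline{\mathcal{O}({\bf J}_{01})}\cup\overline{\mathcal{O}({\bf J}_{02})}$. By Theorem \ref{geoant21}(7), the anticommutative variety of type $(2,1)$ is irreducible of dimension $6$, so it equals $\overline{\bigcup_\alpha\mathcal{O}({\bf A}_{02}^{\alpha})}$; the degenerations listed there from ${\bf A}_{02}^{\alpha}$ show this. Hence it only remains to place each genuinely noncommutative ${\bf N}_i$ in one of the proposed components. I would look for explicit one-parameter degenerations of the form $E_1^t,E_2^t,F_1^t$, as in Theorem \ref{Kgeo1}, guided by the shape of $\theta$ in the proof of Theorem A2:
\begin{itemize}
\item ${\bf N}_{05}\to{\bf N}_{04}$ and ${\bf N}_{05}\to{\bf N}_{06}$, by scaling $e_2$ or $f_1$ to kill one of the two cocycle summands ($\alpha_1\Delta_{33}$ or $\alpha_2\Delta_{23}$);
\item ${\bf N}_{11}\to{\bf N}_{10},{\bf N}_{12}$, by the same scalings;
\item ${\bf N}_{19}\to{\bf N}_{20},{\bf N}_{21}$, in the same way.
\end{itemize}
This leaves ${\bf N}_{01},{\bf N}_{08},{\bf N}_{11},{\bf N}_{19}$ to be reached from ${\bf N}_{05}$, from ${\bf A}_{02}^{\alpha}$-type families, or from the fifth component. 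I would first try substitutions $e_1\mapsto e_1+te_2$ and $f_1\mapsto tf_1$ with parameter-dependent members, as in the table for ``Ant.'' in Theorem \ref{geoant21}.

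\emph{Dimensions.} I would compute $\dim{\rm Der}$ for each candidate and use $\dim\mathcal{O}(A)=5-\dim{\rm Der}(A)$, since $G={\rm GL}_2\times{\rm GL}_1$. For a one-parameter family, one is added to the orbit dimension. The expected outcome is:
\begin{itemize}
\item $\dim\overline{\mathcal{O}({\bf A}_{02}^{\alpha})}=6$, which gives the dimension of the variety;
\item ${\bf J}_{01}$, ${\bf J}_{02}$ and ${\bf N}_{05}$ are single orbits with open orbit in their component, which gives the $3$ rigid superalgebras;
\item the fifth component is a one-parameter family, hence not rigid.
\end{itemize}

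\emph{Non-degenerations.} I would use closed invariant conditions.
\begin{itemize}
\item Anticommutativity is closed, so ${\bf A}_{02}^{\alpha}\not\to$ anything non-anticommutative.
\item Commutativity is closed, so ${\bf J}_{01},{\bf J}_{02}\not\to$ noncommutative algebras.
\item ${\bf N}_{05}$ and the fifth family cannot degenerate to ${\bf J}_{01},{\bf J}_{02}$: I would show this by an orbit-dimension comparison where it applies, and otherwise by a Borel-stable condition via Lemma \ref{gmain}, for instance $\dim A^2$ or the vanishing of certain structure constants, as with ${\rm N}_{09}$ in Theorem \ref{Kgeo1}.
\item ${\bf N}_{05}$ and the fifth family must not degenerate to each other or to ${\bf A}_{02}^{\alpha}$. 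Here I would use dimension counts, together with the fact that ${\bf N}_{05}$ has $e_1e_1=e_1$, an idempotent that passes to its degenerations' semisimple part only under closed rank conditions.
\end{itemize}

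The main obstacle is the covering step, finding explicit degenerations onto ${\bf N}_{01},{\bf N}_{08},{\bf N}_{11},{\bf N}_{19}$, together with resolving the discrepancy over ${\bf N}_{09}^{\alpha}$. I would handle both first by a derivation-dimension table for all the ${\bf N}_i$, which tells which algebras are forced to be generic points of components.
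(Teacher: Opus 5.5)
You follow the same scheme as the paper: covering degenerations, orbit dimensions from ${\rm Der}$, and closed invariants. It cannot be completed, because the statement as printed is false, and not only because of the entry $\mathbf{N}_{09}^{\alpha}$ you flagged. The step that breaks is your covering step for $\mathbf{N}_{01}$ and $\mathbf{N}_{11}$.

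Both are Kokoris (their symmetrizations are $\mathbf{J}_{02}$ and $\mathbf{J}_{10}$), and both have ${\rm Der}=0$:
\begin{itemize}
\item In $\mathbf{N}_{01}$, the relations $e_1e_1=e_1$, $e_2e_2=e_2$, $e_1e_2=e_2e_1=0$ force $D(V_0)=0$, and then $f_1f_1=e_1$ forces $Df_1=0$.
\item In $\mathbf{N}_{11}$, $e_1e_1=e_1$ and $e_2e_2=0$ give $De_1=0$ and $De_2\in\mathbb{C}e_2$. Then $e_2f_1=f_1$ kills $De_2$, and $f_1f_1=e_2$ kills $Df_1$.
\end{itemize}
Hence both orbits have dimension $5=\dim G$, the maximum possible, so neither lies in the boundary of another orbit. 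By Theorem \ref{K2}, the only positive-dimensional families of Kokoris superalgebras of type $(2,1)$ are anticommutative, and anticommutativity is a closed condition. So $\mathbf{N}_{01}$ and $\mathbf{N}_{11}$ are rigid. No degeneration from $\mathbf{N}_{05}$, from the $\mathbf{A}_{02}^{\alpha}$ family, or from any fifth component can reach them.

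Your expected outcome also fails for $\mathbf{J}_{02}$: $\mathbf{N}_{01}\xrightarrow{(e_1,\,e_2,\,tf_1)}\mathbf{J}_{02}$, a degeneration the paper itself uses for associative superalgebras of type $(2,1)$. So $\mathbf{J}_{02}$ is not rigid. There is also a counting obstruction. All non-anticommutative Kokoris superalgebras of type $(2,1)$ are single orbits, and the anticommutative ones form a single component. Hence every other component is an orbit closure, and the number of rigid superalgebras is the number of components minus one. So ``five components, three rigid, the fifth a one-parameter family'' cannot occur.

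What your method actually proves is: dimension $6$, five components $\overline{\mathcal{O}(\mathbf{J}_{01})}$, $\overline{\mathcal{O}(\mathbf{A}_{02}^{\alpha})}$, $\overline{\mathcal{O}(\mathbf{N}_{01})}$, $\overline{\mathcal{O}(\mathbf{N}_{05})}$, $\overline{\mathcal{O}(\mathbf{N}_{11})}$, and four rigid superalgebras. This agrees with Theorem \ref{NCJgeo2}, whose four rigid superalgebras $\mathbf{J}_{01},\mathbf{N}_{01},\mathbf{N}_{05},\mathbf{N}_{11}$ are all Kokoris.

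For the covering, keep your treatment of the associative commutative and anticommutative parts. For the remaining algebras:
\begin{itemize}
\item Use $\mathbf{N}_{01}\to\mathbf{J}_{02},\mathbf{N}_{04},\mathbf{N}_{08},\mathbf{N}_{10},\mathbf{N}_{20}$, $\mathbf{N}_{05}\to\mathbf{N}_{06}$, $\mathbf{N}_{11}\to\mathbf{N}_{12}$ and $\mathbf{N}_{19}\to\mathbf{N}_{21}$, with the bases printed in the paper.
\item Use $\mathbf{N}_{05}\xrightarrow{(-te_1+e_2,\ t^2e_1,\ \sqrt{t}f_1)}\mathbf{N}_{19}$. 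The basis printed for this degeneration in Theorem \ref{NCJgeo2} gives $E_1E_1=-t^{-1}E_1+t^{-2}E_2$ in $\mathbf{N}_{05}$ and does not converge.
\end{itemize}

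The only non-degenerations needed are into $\mathbf{J}_{01}$:
\begin{itemize}
\item The even parts of $\mathbf{N}_{05}$ and $\mathbf{N}_{11}$ have degenerate trace form, a closed invariant condition, whereas $(\mathbf{J}_{01})_0\cong\mathbb{C}\oplus\mathbb{C}$ does not.
\item $\mathbf{N}_{01}$ satisfies the $G$-invariant polynomial identity $\det\big(L_x|_{V_0}-{\rm tr}(L_x|_{V_1})\,{\rm id}\big)=0$ for all $x\in V_0$. This fails in $\mathbf{J}_{01}$, where the left side is $ab$ for $x=ae_1+be_2$.
\end{itemize}

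The paper's proof has the same defect as your plan. It never treats $\mathbf{N}_{01}$ or $\mathbf{N}_{11}$, and argues instead about $\mathbf{N}_{09}^{\alpha}$, which is not Kokoris, so it cannot be used to fill the gap.
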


\begin{proof}
All needed degenerations are ${\bf N}_{05}  \ {\xrightarrow{ \big(
e_1,\
t^2e_2,\
 tf_1  \big)}} \  {\bf N}_{04}$ and
${\bf N}_{05} \ {\xrightarrow{ \big(
e_1,\
e_2,\
 tf_1  \big)}} \ {\bf N}_{06}.$

\noindent
 All non-degenerations follow from the following observations.
 \begin{enumerate}
     \item[$\bullet$]
$  ({\bf N}_{05})_0  \not\to
\big\{  ({\bf J}_{01})_0,  ({\bf J}_{02})_0 \big\},$ hence
$ {\bf N}_{05}  \not\to
\big\{  {\bf J}_{01},  {\bf J}_{02} \big\}.$

 \item[$\bullet$]
$   {\bf N}_{05}   \not\to  {\bf N}_{09}^{\alpha},$ since
${\bf N}_{05}$ satisfies $\{ c_{13}^3=-c_{31}^3, \ c_{23}^3=-c_{32}^3 \big\}.$

\item[$\bullet$]
${\rm dim} \ \overline{\mathcal{O}( {\bf N}_{05})}  \ = \ 5;$ \
${\rm dim} \ \overline{\mathcal{O}( {\bf N}_{09}^{\alpha})} \ = \ 4.$

 \end{enumerate}

\end{proof}

\subsection{Associative superalgebras}

\begin{theorem}\label{assgeo1}
The variety of complex $3$-dimensional associative superalgebras of type $(1,2)$  has
dimension  $4$   and it has  $12$  irreducible components defined by
\begin{center}
$\mathcal{C}_1=\overline{\mathcal{O}( {\rm J}_{07})},$ \
$\mathcal{C}_2=\overline{\mathcal{O}( {\rm J}_{09})},$ \
$\mathcal{C}_3=\overline{\mathcal{O}( {\rm N}_{03}^{\alpha})},$ \
$\mathcal{C}_4=\overline{\mathcal{O}( {\rm N}_{06}^{\alpha})},$ \\
$\mathcal{C}_5=\overline{\mathcal{O}( {\rm N}_{07}^{\frac 12})},$ \
$\mathcal{C}_6=\overline{\mathcal{O}( {\rm N}_{07}^{-\frac 12})},$ \
$\mathcal{C}_7=\overline{\mathcal{O}( {\rm N}_{08}^{\frac 12})},$ \
$\mathcal{C}_8=\overline{\mathcal{O}( {\rm N}_{08}^{-\frac 12})},$ \\
$\mathcal{C}_{9}=\overline{\mathcal{O}( {\rm N}_{09})},$ \
$\mathcal{C}_{10}=\overline{\mathcal{O}( {\rm N}_{10}^{\frac 12, \frac 12})},$ \
$\mathcal{C}_{11}=\overline{\mathcal{O}( {\rm N}_{10}^{-\frac 12, \frac 12})},$ \ and
$\mathcal{C}_{12}=\overline{\mathcal{O}( {\rm N}_{10}^{-\frac 12, -\frac 12})}.$ \

 \end{center}
In particular, there are only $10$ rigid superalgebras in this variety.

\end{theorem}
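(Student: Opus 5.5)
We follow the scheme of the proofs of Theorems \ref{Kgeo1} and \ref{Kgeo2}. The algebraic classification is Theorem \ref{ass1}, so the variety is the union of the orbit closures of the superalgebras listed there. The plan has four steps.

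\textbf{Step 1: the twelve proposed generic superalgebras.} We compute the dimension of each orbit as $\dim G - \dim {\rm Aut}$, where $G={\rm GL}_1\times{\rm GL}_2$ has dimension $5$ and the automorphisms are read off from the multiplication tables. For the one-parameter families ${\rm N}_{03}^{\alpha}$ and ${\rm N}_{06}^{\alpha}$ we add $1$ for the parameter. We expect the maximum value $4$ to be attained by ${\rm N}_{03}^{\alpha}$, ${\rm N}_{06}^{\alpha}$ and by some of the rigid ones, which gives the dimension of the variety.

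\textbf{Step 2: the remaining superalgebras lie in these closures.} We must show that ${\rm J}_{02},{\rm J}_{03},{\rm J}_{06},{\rm A}_{01},{\rm A}_{02},{\rm A}_{06},{\rm N}_{04}$ and ${\rm N}_{05}$ are degenerations of members of the list.
\begin{itemize}
\item Theorem \ref{Kgeo1} already gives ${\rm N}_{03}^{-t/(t+2)}\to{\rm N}_{04}$ and ${\rm N}_{09}^{0}\to{\rm J}_{03}$.
\item ${\rm J}_{02}$ should come from ${\rm N}_{03}^{\alpha}$ as $\alpha\to0$, using a scaling that suppresses the antisymmetric part. For instance, ${\rm N}_{03}^{\alpha}$ with basis $(e_1,f_1,f_2)$ rescaled suitably gives ${\rm J}_{02}$ in the limit.
\item ${\rm A}_{01}$ and ${\rm A}_{02}$ should arise in the same way from ${\rm N}_{03}^{1/t}$, after rescaling $e_1\mapsto te_1$, and from ${\rm N}_{04}$.
\item ${\rm A}_{06}$ should arise from ${\rm N}_{06}^{1/t}$, and ${\rm J}_{06}$ from ${\rm N}_{09}$ by scaling $f_1\mapsto tf_1$.
\end{itemize}
In each case we write down an explicit parametric basis change $E_i^t$, as in the earlier tables, and check the limit of the structure constants.

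\textbf{Step 3: no degenerations among the twelve.} Here we use invariants and Lemma \ref{gmain}.
\begin{itemize}
\item Among the unital or idempotent-containing algebras ${\rm J}_{07}$, ${\rm J}_{09}$, ${\rm N}_{07}^{\pm1/2}$, ${\rm N}_{08}^{\pm1/2}$, ${\rm N}_{09}$ and ${\rm N}_{10}^{\cdot,\cdot}$, all structures have an idempotent $e_1$ with $e_1e_1=e_1$. The Peirce data differ from algebra to algebra: the left and right actions of $e_1$ on $f_1,f_2$ have different eigenvalue pairs. These eigenvalues are invariant, since they are traces of $L_{e_1}$ and $R_{e_1}$ on the odd part, constant along the closure once $e_1$ is normalized. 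Together with equal orbit dimensions, this separates them.
\item Alternatively, we exhibit closed $\mathfrak B$-stable sets, for example $\{c_{12}^2=c_{11}^1,\ c_{21}^2=0,\dots\}$, to separate them.
\item The nilpotent families ${\rm N}_{03}^{\alpha}$ and ${\rm N}_{06}^{\alpha}$ cannot degenerate to non-nilpotent algebras.
\item Conversely, a superalgebra whose orbit has dimension at most that of a family of orbits of equal dimension cannot contain the generic member of that family in its closure.
\end{itemize}

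\textbf{Step 4: counting.} The rigid superalgebras are the $10$ single orbits: everything in the list except the two families.

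The main obstacle is Step 3 for the idempotent-type algebras whose orbit dimension is smaller than $4$, for example ${\rm N}_{07}^{\pm1/2}$ and ${\rm J}_{07}$. A dimension count alone does not rule out that they lie in the closure of a larger-dimensional idempotent algebra such as ${\rm N}_{10}^{\pm1/2,\pm1/2}$. For these we would first try the invariant
\[
\operatorname{tr}\bigl(L_{e}|_{{\rm A}_1}\bigr)\big/\operatorname{tr}\bigl(L_{e}|_{{\rm A}_0}\bigr),
\]
taken for an idempotent $e$, and ${\rm rank}$ conditions on ${\rm A}_0{\rm A}_1$ versus ${\rm A}_1{\rm A}_0$, encoded as Borel-stable closed conditions in the structure constants.
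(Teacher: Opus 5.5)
Your plan follows the paper's route: absorb the remaining algebras by explicit degenerations, compute orbit dimensions, then separate the twelve by invariants. Step 3, however, does not close as written.

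\textbf{The dimension of $\mathcal C_4$.} ${\rm Aut}({\rm N}_{06}^{\alpha})$ consists of the maps $e_1\mapsto ae_1$, $f_1\mapsto bf_1+cf_2$, $f_2\mapsto abf_2$. Hence $\dim\mathcal O({\rm N}_{06}^{\alpha})=2$ and $\dim\mathcal C_4=3$, not $4$; the $4$-dimensional closures are those of ${\rm N}_{03}^{\alpha}$ and ${\rm N}_{09}$. Your equal-dimension bullet therefore does not show that $\mathcal C_4$ is a component. Two degenerations remain to be excluded:
\begin{enumerate}
\item ${\rm N}_{03}^{\alpha}\to{\rm N}_{06}^{\beta}$. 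Both are nilpotent, so your nilpotency bullet does not apply. The paper uses the closed condition $A_0A_1=0$.
\item ${\rm N}_{09}\to{\rm N}_{06}^{\beta}$. Your trace conditions $\operatorname{tr}(L_{e_1}|_{A_1})=k\,c_{11}^1$ hold trivially here, because $c_{11}^1=0$ on ${\rm N}_{06}^{\beta}$. The paper's proof does not treat this pair either; its linear conditions for ${\rm N}_{09}$ are satisfied by ${\rm N}_{06}^{\beta}$.
\end{enumerate}

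\textbf{The eigenvalue claim.} It is false that the idempotent always acts with different eigenvalue pairs. For ${\rm N}_{09}$ and ${\rm N}_{10}^{-\frac12,\frac12}$, both $L_{e_1}|_{A_1}$ and $R_{e_1}|_{A_1}$ have spectrum $\{0,1\}$, so your trace ratios agree. Meanwhile the orbit dimensions are $4>3$, so nothing you state rules out ${\rm N}_{09}\to{\rm N}_{10}^{-\frac12,\frac12}$. You need an invariant that sees $L_{e_1}$ and $R_{e_1}$ jointly.

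\textbf{A fix.} The simplest invariant handles both ${\rm N}_{09}$ cases. ${\rm N}_{09}$ is commutative as an ungraded algebra, and $c_{ij}^k=c_{ji}^k$ is a closed $G$-stable condition that fails for ${\rm N}_{06}^{\beta\neq0}$ and for ${\rm N}_{10}^{-\frac12,\frac12}$. For the second case you could also use the spectrum of $L_{e_1}+R_{e_1}$ on $A_1$ ($\{2,0\}$ versus $\{1,1\}$), or $\dim(A_0A_1+A_1A_0)$ ($1$ versus $2$).

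With these added, your traces finish all remaining cases, once made precise as the closed $G$-stable conditions $\operatorname{tr}(L_{e_1}|_{A_1})=k\,c_{11}^1$ and $\operatorname{tr}(R_{e_1}|_{A_1})=k'\,c_{11}^1$. The pair $(k,k')$ is $(1,0),(0,1),(2,1),(1,2),(1,1)$ for ${\rm N}_{07}^{\frac12},{\rm N}_{07}^{-\frac12},{\rm N}_{08}^{\frac12},{\rm N}_{08}^{-\frac12},{\rm N}_{10}^{-\frac12,\frac12}$. It is $(0,0),(2,2),(2,0),(0,2)$ for ${\rm J}_{07},{\rm J}_{09},{\rm N}_{10}^{\frac12,\frac12},{\rm N}_{10}^{-\frac12,-\frac12}$. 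So the obstacle you anticipate for ${\rm N}_{07}^{\pm\frac12}$ and ${\rm J}_{07}$ does not arise. Also, ${\rm N}_{10}^{\frac12,\frac12}$ and ${\rm N}_{10}^{-\frac12,-\frac12}$ have $1$-dimensional orbits, so they cannot serve as sources.

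\textbf{Step 2.} The degeneration you take from Theorem \ref{Kgeo1} as ${\rm N}_{09}^{0}\to{\rm J}_{03}$ is a misprint there. The basis $(te_1,tf_1,t^2f_2)$ sends ${\rm N}_{09}$ to the zero algebra; it actually degenerates ${\rm N}_{05}^{0}$. You do not need it: ${\rm J}_{03}={\rm N}_{06}^{0}$ and ${\rm J}_{02}={\rm N}_{03}^{0}$ already lie in $\mathcal C_4$ and $\mathcal C_3$. Alternatively, ${\rm N}_{09}\to{\rm J}_{03}$ along $(te_1,\ tf_1+f_2,\ t^2f_1)$.

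You also put ${\rm N}_{05}$ on your list of algebras to absorb and never absorb it. That item should stay. ${\rm N}_{05}^{0}$, which is $x\mathbb C[x]/(x^4)$ with $x=f_1$ odd, is associative, although Theorem \ref{ass1} omits it. It lies in $\mathcal C_9$, since ${\rm N}_{09}$ degenerates to it along $(t^2e_1,\ tf_1+f_2,\ t^3f_1)$, so the twelve components are unaffected.
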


\begin{proof}
All needed degenerations are
${\rm N}_{06}^{\frac{2}{t}-1}\ {\xrightarrow{ \big(  -t^2e_1,\
tf_1,\
f_1+2f_2 \big)}} \ {\rm A}_{06}$ and
${\rm N}_{09} \ {\xrightarrow{ \big(  e_1,\
tf_1,\
f_2 \big)}} \ {\rm J}_{06}.$

\noindent The dimensions of orbit closures are given below:
\begin{longtable}{lclclclclclclclclcl}

 &&&& ${\rm dim}\ \overline{\mathcal{O}( {\rm N}_{03}^{\alpha})}$& $=$&

${\rm dim}\ \overline{\mathcal{O}( {\rm N}_{09})}$& $=$&$4$\\

&&
${\rm dim}\ \overline{\mathcal{O}( {\rm N}_{06}^{\alpha})}$& $=$&
${\rm dim}\ \overline{\mathcal{O}( {\rm N}_{07}^{\frac 12})}$& $=$ &
${\rm dim}\ \overline{\mathcal{O}( {\rm N}_{07}^{-\frac 12})}$& $=$ \\&&
${\rm dim}\ \overline{\mathcal{O}( {\rm N}_{08}^{\frac 12})}$& $=$&
${\rm dim}\ \overline{\mathcal{O}( {\rm N}_{08}^{-\frac 12})}$& $=$&
${\rm dim}\ \overline{\mathcal{O}( {\rm N}_{10}^{-\frac 12, \frac 12})}$&$=$&$3$\\


${\rm dim}\ \mathcal{O}( {\rm J}_{07})$& $=$&
${\rm dim}\ \mathcal{O}( {\rm J}_{09})$& $=$&
${\rm dim}\ \mathcal{O}( {\rm N}_{10}^{\frac 12, \frac 12})$& $=$&
${\rm dim}\ \mathcal{O}( {\rm N}_{10}^{-\frac 12, -\frac 12})$&$=$&$1$\\

\end{longtable}
\noindent All reasons for non-degenerations are given below.
\begin{enumerate}
    \item[$\bullet$]    ${\rm N}_{03}^{\alpha} \not\to {\rm N}_{06}^{\alpha},$
    since $ ({\rm N}_{03}^{\alpha})_0({\rm N}_{03}^{\alpha})_1=0.$
    \item[$\bullet$]
    ${\rm N}_{03}^{\alpha}$ and  ${\rm N}_{06}^{\alpha}$ are nilpotent, hence
     $\big\{ {\rm N}_{03}^{\alpha}, \  {\rm N}_{06}^{\alpha} \big\} \not\to \big\{ {\rm J}_{07}, \  {\rm J}_{09}, \ {\rm N}_{07}^{\alpha}, \ {\rm N}_{08}^{\alpha}, \  \ {\rm N}_{10}^{\alpha, \beta} \big\}.$

    \item[$\bullet$]
    ${\rm N}_{09}$ satisfies $\left\{
    \begin{array}{l}
    c_{11}^1=c_{12}^2=c_{21}^2,\\
    c_{13}^3=c_{31}^3=0
    \end{array}\right\},$ hence
       ${\rm N}_{09}  \not\to \big\{ {\rm J}_{07},   {\rm J}_{09}, {\rm N}_{03}^{\alpha}, {\rm N}_{07}^{\alpha},  {\rm N}_{08}^{\alpha},    {\rm N}_{10}^{\alpha, \beta} \big\}.$

    \item[$\bullet$]
    ${\rm N}_{07}^\alpha$ satisfies
    $\left\{
    \begin{array}{l}
    c_{12}^2= (\frac 12 +\alpha) c_{11}^1,\   c_{13}^3=c_{31}^3=0,\\
    c_{21}^2=(\frac 12 -\alpha) c_{11}^1
    \end{array} \right\},$ hence
     ${\rm N}_{07}^\alpha  \not\to \big\{ {\rm J}_{07},  \ {\rm J}_{09}, \  {\rm N}_{10}^{\alpha, \beta} \big\}.$

    \item[$\bullet$]
    ${\rm N}_{08}^\alpha$ satisfies
    $\left\{
    \begin{array}{l}
    c_{12}^2=  c_{11}^1, \   c_{13}^3= (\frac 12 +\alpha) c_{11}^1, \\
    c_{21}^2=  c_{11}^1, \ c_{31}^3=(\frac 12 -\alpha) c_{11}^1
    \end{array}\right\},$ hence
${\rm N}_{08}^\alpha  \not\to \big\{ {\rm J}_{07}, \  {\rm J}_{09},  \ {\rm N}_{10}^{\alpha, \beta} \big\}.$

\item[$\bullet$]
    ${\rm N}_{10}^{-\frac 12, \frac 12}$ satisfies $\big\{ c_{12}^2=0, c_{21}^2=  c_{11}^1, c_{13}^3=  c_{11}^1, c_{31}^3=0 \big\},$ hence
    ${\rm N}_{10}^{-\frac 12, \frac 12}  \not\to \big\{ {\rm J}_{07},   {\rm J}_{09},   {\rm N}_{10}^{\alpha, \alpha} \big\}.$

\end{enumerate}

 \end{proof}

\begin{theorem}\label{Kgeo2}
The variety of complex $3$-dimensional associative  superalgebras of type $(2,1)$  has
dimension  $5$   and it has  $13$  irreducible components defined by
\begin{center}
$\mathcal{C}_1=\overline{\mathcal{O}( {\bf J}_{01})},$ \
$\mathcal{C}_2=\overline{\mathcal{O}( {\bf N}_{01})},$ \
$\mathcal{C}_3=\overline{\mathcal{O}( {\bf N}_{02}^{\frac 12})},$ \
$\mathcal{C}_4=\overline{\mathcal{O}( {\bf N}_{02}^{-\frac  12})},$ \
$\mathcal{C}_5=\overline{\mathcal{O}( {\bf N}_{03}^{\frac 12})},$ \\
$\mathcal{C}_6=\overline{\mathcal{O}( {\bf N}_{13}^{\frac 12})},$ \
$\mathcal{C}_{7}=\overline{\mathcal{O}( {\bf N}_{13}^{-\frac  12})},$ \
$\mathcal{C}_{8}=\overline{\mathcal{O}( {\bf N}_{17}^{\frac 12, \frac 12})},$ \
$\mathcal{C}_{9}=\overline{\mathcal{O}( {\bf N}_{17}^{-\frac  12, \frac 12})},$ \\
$\mathcal{C}_{10}=\overline{\mathcal{O}( {\bf N}_{17}^{\frac 12, -\frac 12})},$ \
$\mathcal{C}_{11}=\overline{\mathcal{O}( {\bf N}_{17}^{-\frac  12, -\frac 12})},$ \
$\mathcal{C}_{12}=\overline{\mathcal{O}( {\bf N}_{18}^{\frac 12})},$ \ and
$\mathcal{C}_{13}=\overline{\mathcal{O}( {\bf N}_{18}^{-\frac  12})}.$ \

 \end{center}
In particular, there are only $13$ rigid superalgebras in this variety.

\end{theorem}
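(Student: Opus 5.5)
The plan follows the scheme used for Theorem \ref{assgeo1}. I first list the full set of associative superalgebras of type $(2,1)$ given in Theorem \ref{ass2} and compute the orbit dimensions of all of them. Here $\dim \mathcal{O}(A) = \dim G - \dim \operatorname{Der}(A)$ with $\dim G = \dim({\rm GL}_2\times{\rm GL}_1)=5$, so the required derivation algebras are small and easy to read off. The algebras claimed to give components are ${\bf J}_{01}$, ${\bf N}_{01}$, ${\bf N}_{02}^{\pm\frac12}$, ${\bf N}_{03}^{\frac12}$, ${\bf N}_{13}^{\pm\frac12}$, ${\bf N}_{17}^{\pm\frac12,\pm\frac12}$ and ${\bf N}_{18}^{\pm\frac12}$. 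These are all unital-type or semisimple-plus-module algebras, so I expect them to have small derivation algebras. Concretely, ${\bf J}_{01}=\mathbb C\oplus\mathbb C\oplus\mathbb C f_1$ with $f_1$ annihilated should have the maximal orbit dimension, and the variety dimension $5$ should be attained there.

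Next I show that every remaining algebra of Theorem \ref{ass2} lies in the closure of one of these thirteen orbits, by exhibiting explicit parametrized bases $E_i^t$ as in the earlier proofs. The natural guesses are the following.
\begin{enumerate}
\item ${\bf J}_{02}$, ${\bf J}_{05}$, ${\bf J}_{07}$ and ${\bf J}_{08}$ arise from ${\bf J}_{01}$ or ${\bf N}_{01}$ by rescaling, e.g. $e_2\mapsto te_2$ or $f_1\mapsto tf_1$.
\item ${\bf J}_{10}$ and ${\bf N}_{10}$ arise from ${\bf N}_{01}$ via $e_1\mapsto e_1+e_2$ type bases.
\item ${\bf J}_{14}$, ${\bf A}_{10}$, ${\bf N}_{04}$ and ${\bf N}_{20}$ are nilpotent or near-nilpotent and should arise by contractions of ${\bf N}_{01}$ or ${\bf J}_{01}$.
\item ${\bf N}_{07}^{\pm\frac12}$ and ${\bf N}_{09}^{\pm\frac12}$ arise from ${\bf N}_{02}^{\pm\frac12}$, ${\bf N}_{17}$ or ${\bf N}_{18}$ by sending $e_2\to te_2$ or combining $e_1+e_2$.
\end{enumerate}
This step is routine once the multiplication tables are written out, and I will verify each degeneration in the limit $t\to0$.

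The main obstacle is the list of non-degenerations among the thirteen claimed components. I must show that none of them lies in the orbit closure of another. I will first compare orbit dimensions, since a degeneration strictly lowers the orbit dimension. For pairs of equal orbit dimension nothing more is needed, because a degeneration between distinct orbits of the same dimension is impossible.

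For pairs of different dimensions I will use Lemma \ref{gmain}, choosing Borel-stable closed conditions on the structure constants, written as linear relations among the $c_{ij}^k$ in the style of the proof of Theorem \ref{assgeo1}. For example, ${\bf N}_{02}^{\frac12}$ satisfies $c_{13}^3=c_{11}^1$ and $c_{31}^3=0$, while ${\bf N}_{02}^{-\frac12}$ has the roles of $c_{13}^3$ and $c_{31}^3$ swapped. Such relations separate left-module from right-module actions on $f_1$ and separate the pairs ${\bf N}_{17}^{\pm\frac12,\pm\frac12}$. I will also use invariants: the dimension of $A^2$, whether $A_0A_1=0$ or $A_1A_0=0$, and the nilpotency of the odd part. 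If some orbit dimensions coincide with a candidate's dimension from a higher orbit, the Borel argument with a suitably ordered basis, putting idempotents first, will be my first attempt. The final count of $13$ rigid superalgebras then follows from the components being single orbit closures.
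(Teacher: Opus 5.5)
Your overall scheme (orbit dimensions, explicit degenerations, Borel-stable closed sets for non-degenerations) is the paper's. However, you put the top orbit at the wrong algebra, and this misdirects the step you yourself call the main obstacle.

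${\bf J}_{01}$ admits the even derivation $f_1\mapsto f_1$ (equivalently, the automorphisms $f_1\mapsto\lambda f_1$), so $\dim\mathcal O({\bf J}_{01})=4$. The unique $5$-dimensional orbit is that of ${\bf N}_{01}$. There the semisimple even part forces $D(e_1)=D(e_2)=0$, and $f_1f_1=e_1$ forces $D(f_1)=0$. The orbit dimensions are:
\begin{itemize}
\item $5$ for ${\bf N}_{01}$;
\item $4$ for ${\bf J}_{01}$, ${\bf N}_{02}^{\pm\frac12}$, ${\bf N}_{03}^{\frac12}$;
\item $2$ for ${\bf N}_{13}^{\pm\frac12}$, ${\bf N}_{17}^{\pm\frac12,\pm\frac12}$, ${\bf N}_{18}^{\pm\frac12}$.
\end{itemize}
So what must be proved is ${\bf N}_{01}\not\to$ each of the other twelve, and that the four dimension-$4$ algebras do not degenerate to the eight dimension-$2$ ones.

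Your sample relations (separating ${\bf N}_{02}^{\frac12}$ from ${\bf N}_{02}^{-\frac12}$, or the ${\bf N}_{17}$'s from each other) only treat equal-dimension pairs, which your own remark already disposes of. The invariants you list cannot do the real work:
\begin{itemize}
\item $A_0A_1=0$, $A_1A_0=0$ and nilpotency of $A_1$ are closed conditions, and $\dim A^2$ can only drop.
\item ${\bf N}_{01}$ fails all of them (it has $A^2=A$), so none can obstruct a degeneration out of ${\bf N}_{01}$.
\item They also fail to separate, e.g., ${\bf J}_{01}$ from ${\bf N}_{13}^{\pm\frac12}$: in both, $f_1$ annihilates everything and $A^2=A_0$.
\end{itemize}

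The missing ideas, which the paper uses, are the following.
\begin{itemize}
\item ${\bf N}_{01}$ is commutative as an ordinary algebra, a closed $G$-stable condition. So it cannot degenerate to any of the non-commutative algebras ${\bf N}_{02}^{\pm\frac12}$, ${\bf N}_{03}^{\frac12}$, ${\bf N}_{13}^{\pm\frac12}$, ${\bf N}_{17}^{\pm\frac12,\pm\frac12}$, ${\bf N}_{18}^{\pm\frac12}$.
\item The even parts of ${\bf J}_{01}$, ${\bf N}_{02}^{\pm\frac12}$, ${\bf N}_{03}^{\frac12}$ are commutative, while those of all the dimension-$2$ algebras are not ($e_1e_2\neq e_2e_1$).
\end{itemize}

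The delicate pair is ${\bf N}_{01}\not\to{\bf J}_{01}$: both algebras are commutative with even part $\mathbb C\oplus\mathbb C$, and the paper invokes the relation $c_{11}^1=c_{13}^3$ here. A clean version:
\begin{itemize}
\item Let $R$ be the set of structures satisfying $c_{22}^1=0$, $c_{12}^1+c_{21}^1=c_{23}^3$ and $c_{11}^1=c_{13}^3$.
\item $R$ is stable under the Borel subgroup of basis changes $e_1\mapsto ae_1+be_2$, $e_2\mapsto ce_2$, $f_1\mapsto df_1$, and it contains ${\bf N}_{01}$.
\item A representative of ${\bf J}_{01}$ in $R$ would have $c_{13}^3=c_{23}^3=0$ (since $A_0A_1=0$). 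By commutativity this gives $A_0^2\subseteq\langle e_2\rangle$, contradicting $A_0^2=A_0$.
\end{itemize}
Lemma \ref{gmain} then gives the non-degeneration.

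A small further slip: ${\bf N}_{17}$ and ${\bf N}_{18}$ have $2$-dimensional orbits, so they cannot be sources for ${\bf N}_{07}^{\pm\frac12}$ or ${\bf N}_{09}^{\pm\frac12}$, whose orbits have dimension $3$. Use ${\bf N}_{02}^{\pm\frac12}$, as in the paper.
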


\begin{proof}
All needed degenerations are given below.
\begin{longtable}{lcr|lcr}

${\bf N}_{01}$&${\xrightarrow{ \big(
t^2e_1 ,\
 te_2,\
 tf_1  \big)}}$&$ {\bf A}_{10}$ &

${\bf N}_{01}$&${\xrightarrow{ \big(
e_1 ,\
 e_2,\
 tf_1  \big)}}$&$ {\bf J}_{02}$ \\
\hline

${\bf N}_{01}$&${\xrightarrow{ \big(
e_2 ,\
 t^2e_1,\
 tf_1  \big)}}$&$ {\bf N}_{04}$ &

${\bf N}_{02}^\alpha $&${\xrightarrow{ \big(
e_1,\
te_2,\
 f_1  \big)}}$&$ {\bf N}_{07}^{\alpha}$ \\
\hline

${\bf N}_{01}$&${\xrightarrow{ \big(
e_1 ,\
 te_2,\
 f_1  \big)}}$&$ {\bf N}_{08}$ &

${\bf N}_{02}^\alpha$&${\xrightarrow{ \big(
e_1+e_2 ,\
 te_2,\
 f_1  \big)}}$&$ {\bf N}_{09}^\alpha$ \\ \hline

${\bf N}_{01}$&${\xrightarrow{ \big(
e_1+e_2,\
t (t+1)e_1+te_2,\
tf_1  \big)}}$&$ {\bf N}_{10}$   &

${\bf N}_{01} $&$ {\xrightarrow{ \big(
te_1+(t-t^3)e_2,\
(t^4+t^2)e_1+ \left(t^2-t^4\right)e_2,\
t^2f_1  \big)}} $&$  {\bf N}_{20}$    \\

\end{longtable}
\noindent
The dimensions of orbit closures are given below:
\begin{longtable}{lclclclclclclclclcl}

&&&&${\rm dim}\ \overline{\mathcal{O}( {\bf N}_{01})}$& $=$&$5$\\

${\rm dim}\ \overline{\mathcal{O}( {\bf J}_{01})}$& $=$&
${\rm dim}\ \overline{\mathcal{O}( {\bf N}_{02}^{\pm \frac12 })}$& $=$&
${\rm dim}\ \overline{\mathcal{O}( {\bf N}_{03}^{ \frac12 })}$& $=$& $4$\\


${\rm dim}\ \overline{\mathcal{O}( {\bf N}_{13}^{\pm \frac12 })}$& $=$&
${\rm dim}\ \overline{\mathcal{O}( {\bf N}_{17}^{\pm \frac12, \pm \frac 12 })}$& $=$ &
${\rm dim}\  \overline{\mathcal{O}( {\bf N}_{18}^{\pm \frac12 })}$& $=$&$2$\\

\end{longtable}
\noindent All reasons for non-degenerations are given below.
\begin{enumerate}
    \item[$\bullet$] ${\bf N}_{01}$ is commutative, hence ${\bf N}_{01} \not\to \big\{
 {\bf N}_{02}^{\pm \frac12 }, {\bf N}_{03}^{  \frac12 },
 {\bf N}_{13}^{\pm \frac12 },  {\bf N}_{17}^{\pm \frac12, \pm \frac 12 }, {\bf N}_{18}^{\pm \frac12 } \big\}.$

\item[$\bullet$]  $({\bf N}_{02}^{\pm \frac12 })_0$ and $({\bf N}_{03}^{  \frac12 })_0 $ are commutative,
 hence $ \big\{{\bf N}_{02}^{\pm \frac12 }, {\bf N}_{03}^{\pm \frac12 }  \big\} \not\to \big\{
 {\bf N}_{13}^{\pm \frac12 },  {\bf N}_{17}^{\pm \frac12, \pm \frac 12 }, {\bf N}_{18}^{\pm \frac12 } \big\}.$

\item[$\bullet$] ${\bf N}_{01}$ satisfies $\big \{ c_{11}^1=c_{13}^3, \  c_{22}^2=c_{23}^2 \big\}$,
hence ${\bf N}_{01} \not\to {\bf J}_{01}$





\end{enumerate}

\end{proof}

\subsection{Standard superalgebras}

\begin{theorem}\label{stgeo1}
The variety of complex $3$-dimensional standard superalgebras of type $(1,2)$  has
dimension  $4$   and it has  $16$  irreducible components defined by
\begin{center}
$\mathcal{C}_1=\overline{\mathcal{O}( {\rm J}_{01})},$ \
$\mathcal{C}_2=\overline{\mathcal{O}( {\rm J}_{04})},$ \
$\mathcal{C}_3=\overline{\mathcal{O}( {\rm J}_{05})},$ \
$\mathcal{C}_4=\overline{\mathcal{O}( {\rm J}_{07})},$ \
$\mathcal{C}_5=\overline{\mathcal{O}( {\rm J}_{10})},$ \
$\mathcal{C}_6=\overline{\mathcal{O}( {\rm J}_{11})},$ \
$\mathcal{C}_7=\overline{\mathcal{O}( {\rm N}_{03}^{\alpha})},$ \
$\mathcal{C}_8=\overline{\mathcal{O}( {\rm N}_{06}^{\alpha})},$ \
$\mathcal{C}_{9}=\overline{\mathcal{O}( {\rm N}_{07}^{\frac 12})},$ \
$\mathcal{C}_{10}=\overline{\mathcal{O}( {\rm N}_{07}^{-\frac 12})},$ \
$\mathcal{C}_{11}=\overline{\mathcal{O}( {\rm N}_{08}^{\frac 12})},$ \
$\mathcal{C}_{12}=\overline{\mathcal{O}( {\rm N}_{08}^{-\frac 12})},$ \
$\mathcal{C}_{13}=\overline{\mathcal{O}( {\rm N}_{09})},$ \
$\mathcal{C}_{14}=\overline{\mathcal{O}( {\rm N}_{10}^{\frac 12, \frac 12})},$ \
$\mathcal{C}_{15}=\overline{\mathcal{O}( {\rm N}_{10}^{-\frac 12, \frac 12})},$ \ and
$\mathcal{C}_{16}=\overline{\mathcal{O}( {\rm N}_{10}^{-\frac 12, -\frac 12})}.$ \

 \end{center}
In particular, there are only $14$ rigid superalgebras in this variety.

\end{theorem}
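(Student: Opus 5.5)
By Theorem \ref{S1}, the standard superalgebras of type $(1,2)$ are exactly the associative ones of Theorem \ref{ass1} together with the non-associative Jordan superalgebras ${\rm J}_{01},{\rm J}_{04},{\rm J}_{05},{\rm J}_{08},{\rm J}_{10},{\rm J}_{11}$ of Theorem \ref{(1,2)}. The variety is therefore the union of the associative variety (Theorem \ref{assgeo1}) and the Jordan variety (Theorem \ref{asscomjord}(3)). So I would take the $12$ components of Theorem \ref{assgeo1} and the $7$ components $\overline{\mathcal{O}({\rm J}_{01})},\overline{\mathcal{O}({\rm J}_{04})},\overline{\mathcal{O}({\rm J}_{05})},\overline{\mathcal{O}({\rm J}_{06})},\overline{\mathcal{O}({\rm J}_{07})},\overline{\mathcal{O}({\rm J}_{10})},\overline{\mathcal{O}({\rm J}_{11})}$, and discard those contained in another. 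This should leave the $16$ listed.

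The discarding step. $\overline{\mathcal{O}({\rm J}_{06})}$ disappears because ${\rm N}_{09}\to{\rm J}_{06}$, as shown in the proof of Theorem \ref{assgeo1}. The associative component $\overline{\mathcal{O}({\rm J}_{09})}$ must also lie in a Jordan component. The natural candidate is ${\rm J}_{11}\to{\rm J}_{09}$ via $(e_1,f_1,tf_2)$: this rescales $f_1\circ f_2=e_1$ to $t e_1$, which tends to $0$, and the limit is ${\rm J}_{09}$. So the list becomes ${\rm J}_{01},{\rm J}_{04},{\rm J}_{05},{\rm J}_{07},{\rm J}_{10},{\rm J}_{11}$ plus the ten remaining associative components, giving $16$.

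It remains to show that none of the $16$ closures lies in another. Containments inside each family are already excluded by Theorems \ref{asscomjord} and \ref{assgeo1}, so only cross-family containments need checking.
\begin{itemize}
\item A non-associative Jordan superalgebra cannot be a degeneration of an associative one, since associativity is a closed condition.
\item An associative non-commutative ${\rm N}_i$ cannot be a degeneration of a Jordan (supercommutative) superalgebra, since supercommutativity is closed. This handles all ${\rm N}_{03}^\alpha,{\rm N}_{06}^\alpha,{\rm N}_{07}^{\pm\frac12},{\rm N}_{08}^{\pm\frac12},{\rm N}_{09},{\rm N}_{10}^{\ldots}$.
\item The only commutative associative components left are $\overline{\mathcal{O}({\rm J}_{07})}$ and the Jordan closures themselves. ${\rm J}_{07}$ is a Jordan component by Theorem \ref{asscomjord}(3), so it is not contained in another Jordan closure. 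It is also not a degeneration of any ${\rm N}_i$ from Theorem \ref{assgeo1}.
\end{itemize}

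Rigidity. The paper states that exactly $14$ superalgebras are rigid. The two non-rigid components are presumably the parametric families $\overline{\mathcal{O}({\rm N}_{03}^\alpha)}$ and $\overline{\mathcal{O}({\rm N}_{06}^\alpha)}$, and every other component is a single orbit closure. The dimension is the maximum of the component dimensions, namely $4$; this comes from Theorem \ref{asscomjord} (Jordan components have dimension $\le 3$) and the dimension table of Theorem \ref{assgeo1}.

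The main obstacle is the bookkeeping in the cross-family step: confirming that ${\rm J}_{09}$ is absorbed by a Jordan component, and that no Jordan component is inside an associative one or the reverse. If the degeneration ${\rm J}_{11}\to{\rm J}_{09}$ fails, I would instead try ${\rm J}_{10}$ with a Peirce-type rescaling.
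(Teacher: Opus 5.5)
Your proposal is correct and follows the paper's route: by Theorem \ref{S1} the variety is the union of the associative and Jordan varieties, and the components are the maximal closures among those listed in Theorems \ref{asscomjord} and \ref{assgeo1}. You also supply the bookkeeping the paper leaves implicit: $\overline{\mathcal{O}({\rm J}_{06})}$ is discarded via ${\rm N}_{09}\to{\rm J}_{06}$, $\overline{\mathcal{O}({\rm J}_{09})}$ via ${\rm J}_{11}\to{\rm J}_{09}$ (this degeneration does work with $(e_1,f_1,tf_2)$, so no fallback is needed), and cross-family containments are excluded because associativity and supercommutativity are closed conditions.
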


\begin{proof}
Thanks to Theorem \ref{S1}, we have the union of the associative and Jordan parts,
hence, we have to choose irreducible components from Theorems \ref{asscomjord} and \ref{assgeo1}.

\end{proof}

\begin{theorem}\label{stgeo2}
The variety of complex $3$-dimensional    standard superalgebras of type $(2,1)$  has
dimension  $5$   and it has  $22$  irreducible components defined by
\begin{center}
$\mathcal{C}_1=\overline{\mathcal{O}( {\bf J}_{01})},$ \
$\mathcal{C}_2=\overline{\mathcal{O}( {\bf J}_{03})},$ \
$\mathcal{C}_3=\overline{\mathcal{O}( {\bf J}_{04})},$ \
$\mathcal{C}_4=\overline{\mathcal{O}( {\bf J}_{11})},$ \
$\mathcal{C}_5=\overline{\mathcal{O}( {\bf J}_{12})},$ \
$\mathcal{C}_6=\overline{\mathcal{O}( {\bf J}_{13})},$ \\
$\mathcal{C}_7=\overline{\mathcal{O}( {\bf N}_{01})},$ \
$\mathcal{C}_8=\overline{\mathcal{O}( {\bf N}_{02}^{\frac 12})},$ \
$\mathcal{C}_9=\overline{\mathcal{O}( {\bf N}_{02}^{-\frac  12})},$ \
$\mathcal{C}_{10}=\overline{\mathcal{O}( {\bf N}_{03}^{\frac 12})},$ \
$\mathcal{C}_{11}=\overline{\mathcal{O}( {\bf N}_{13}^{\frac 12})},$ \
$\mathcal{C}_{12}=\overline{\mathcal{O}( {\bf N}_{13}^{-\frac  12})},$ \\
$\mathcal{C}_{13}=\overline{\mathcal{O}( {\bf N}_{17}^{\frac 12, 0})},$ \
$\mathcal{C}_{14}=\overline{\mathcal{O}( {\bf N}_{17}^{0, \frac 12})},$ \
$\mathcal{C}_{15}=\overline{\mathcal{O}( {\bf N}_{17}^{0, -\frac 12})},$ \
$\mathcal{C}_{16}=\overline{\mathcal{O}( {\bf N}_{17}^{-\frac 12, 0})},$ \
$\mathcal{C}_{17}=\overline{\mathcal{O}( {\bf N}_{17}^{\frac 12, \frac 12})},$ \\
$\mathcal{C}_{18}=\overline{\mathcal{O}( {\bf N}_{17}^{-\frac  12, \frac 12})},$ \
$\mathcal{C}_{19}=\overline{\mathcal{O}( {\bf N}_{17}^{\frac 12, -\frac 12})},$ \
$\mathcal{C}_{20}=\overline{\mathcal{O}( {\bf N}_{17}^{-\frac  12, -\frac 12})},$ \
$\mathcal{C}_{21}=\overline{\mathcal{O}( {\bf N}_{18}^{\frac 12})},$ \ and
$\mathcal{C}_{22}=\overline{\mathcal{O}( {\bf N}_{18}^{-\frac  12})}.$ \

 \end{center}
In particular, there are only $22$ rigid superalgebras in this variety.

\end{theorem}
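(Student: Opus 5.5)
The plan follows the proof of Theorem \ref{stgeo1}. By Theorem \ref{S2}, the variety of standard superalgebras of type $(2,1)$ is a finite union of orbit closures. These come from three sources: the associative part (Theorem \ref{ass2}), the non-associative Jordan part (Theorem \ref{(2,1)}), and the four extra algebras ${\bf N}_{17}^{0,\pm\frac12}$ and ${\bf N}_{17}^{\pm\frac12,0}$. The varieties of associative and of Jordan superalgebras of type $(2,1)$ are both Zariski-closed. So the standard variety is the union of the associative variety, the Jordan variety, and the closures of the orbits of the four extra algebras. Its irreducible components are therefore the maximal elements among the following candidates:
\begin{itemize}
\item the $13$ associative components of Theorem \ref{Kgeo2};
\item the $7$ Jordan components of Theorem \ref{asscomjord}(4): ${\bf J}_{01},{\bf J}_{02},{\bf J}_{03},{\bf J}_{04},{\bf J}_{08},{\bf J}_{12},{\bf J}_{13}$;
\item the four extra orbit closures.
\end{itemize}

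First I would remove redundancies. ${\bf J}_{01}$ appears in both lists. ${\bf J}_{02}$ lies in $\overline{\mathcal O({\bf N}_{01})}$ by the degeneration ${\bf N}_{01}\to{\bf J}_{02}$ from Theorem \ref{Kgeo2}. For ${\bf J}_{08}$ I would look for a degeneration from an associative or Jordan structure of larger orbit. Candidates are ${\bf J}_{11}$ and ${\bf N}_{13}^{\pm\frac12}$, or ${\bf J}_{01}\to{\bf J}_{08}$ via the standard idempotent-plus-nilpotent change of basis $e_1+e_2,\ t e_2,\ f_1$. Since ${\bf J}_{11}$ appears in the claimed list and ${\bf J}_{08}$ does not, ${\bf J}_{08}$ must be absorbed somewhere, and ${\bf J}_{11}$ must be a component of the standard variety even though it is not one in the Jordan variety. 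This needs care. In the Jordan variety ${\bf J}_{11}$ presumably lies in the closure of ${\bf J}_{12}$ or ${\bf J}_{13}$. I would therefore exhibit an explicit degeneration into ${\bf J}_{08}$ (for instance from ${\bf J}_{01}$ or ${\bf N}_{01}$), and separately prove that ${\bf J}_{11}$ is not in the closure of any other listed candidate.

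Second, I would compute orbit dimensions for the four ${\bf N}_{17}$ algebras. They are rigid-type points of a two-parameter family, so I expect dimension $2$, like ${\bf N}_{17}^{\pm\frac12,\pm\frac12}$. No candidate of equal dimension can degenerate to one another, so these orbits can only lie in closures of higher-dimensional orbits. Those are ${\bf N}_{01}$ (dimension $5$), the $4$-dimensional ones ${\bf J}_{01},{\bf N}_{02}^{\pm\frac12},{\bf N}_{03}^{\frac12}$, and the Jordan ones ${\bf J}_{03},{\bf J}_{04},{\bf J}_{12},{\bf J}_{13}$.

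Third, I would supply non-degeneration invariants via Lemma \ref{gmain} and simple invariants:
\begin{itemize}
\item Commutativity: ${\bf N}_{01}$ and all Jordan algebras are commutative, while every ${\bf N}_{17}^{\alpha,\beta}$ with $(\alpha,\beta)\neq 0$ is not. This rules out all degenerations from commutative algebras to noncommutative ones.
\item The even part $({\bf N}_{02}^{\pm\frac12})_0$ and $({\bf N}_{03}^{\frac12})_0$ is commutative, while ${\bf N}_{17}^{\pm\frac12,0}$ has noncommutative even part. This excludes degenerations to ${\bf N}_{17}^{\pm\frac12,0}$.
\item For ${\bf N}_{17}^{0,\pm\frac12}$ I would use a Borel-stable set given by linear relations on structure constants, for instance $c_{12}^2=c_{21}^2$ together with a relation among $c_{13}^3,c_{31}^3$. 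These hold in ${\bf N}_{02},{\bf N}_{03}$ after suitable basis ordering but fail for the target.
\item Conversely, the associative and Jordan components cannot degenerate to non-Jordan non-associative algebras, since both varieties are closed. This keeps the ${\bf N}_{17}$ components independent of the rest.
\end{itemize}

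Finally, the dimension of the variety is $\max=5$, attained by ${\bf N}_{01}$. Every component is a single orbit closure, so all $22$ algebras are rigid.

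The main obstacle is the Jordan bookkeeping: showing that ${\bf J}_{08}$ and ${\bf J}_{02}$ are absorbed, while ${\bf J}_{11}$ becomes a component and ${\bf J}_{03},{\bf J}_{04},{\bf J}_{12},{\bf J}_{13}$ survive, not being in $\overline{\mathcal O({\bf N}_{01})}$ or the other associative closures. For the survival I would use the invariant that these Jordan algebras are non-associative, and that the associative variety is closed.
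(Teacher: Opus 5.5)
Your overall route is the paper's: the standard variety is the union of the closed associative variety, the closed Jordan variety, and the orbit closures of ${\bf N}_{17}^{\pm\frac12,0}$ and ${\bf N}_{17}^{0,\pm\frac12}$. Its components are the maximal members among the components of these pieces. Several of your steps are fine: removing ${\bf J}_{02}$ via ${\bf N}_{01}\to{\bf J}_{02}$, the orbit-dimension count for the four extra algebras, and the closedness argument that keeps them independent.

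The genuine gap is your treatment of ${\bf J}_{11}$. You claim that ${\bf J}_{11}$ ``must be a component of the standard variety even though it is not one in the Jordan variety''. This is impossible. If $\overline{\mathcal{O}({\bf J}_{11})}$ were a component of the standard variety, then, since it lies in the closed Jordan subvariety, it would automatically be maximal there as well.

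Your guess that ${\bf J}_{11}\in\overline{\mathcal{O}({\bf J}_{12})}\cup\overline{\mathcal{O}({\bf J}_{13})}$ is also false. All three algebras have $3$-dimensional automorphism groups ($e_1\mapsto e_1+ae_2$, $e_2\mapsto be_2$, $f_1\mapsto cf_1$), hence $2$-dimensional orbits. A proper degeneration strictly lowers orbit dimension. And if the guess were true, ${\bf J}_{11}$ could not be a component of the standard variety. So your plan, read literally against the cited list ${\bf J}_{01},{\bf J}_{02},{\bf J}_{03},{\bf J}_{04},{\bf J}_{08},{\bf J}_{12},{\bf J}_{13}$, cannot be completed consistently with the statement.

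The fix is to notice that this list, in the item of Theorem \ref{asscomjord} on Jordan superalgebras of type $(2,1)$, is misprinted.

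\begin{itemize}
\item ${\bf J}_{08}$ is not a component, because ${\bf J}_{01}\to{\bf J}_{08}$ via exactly the basis $(e_1+e_2,\ te_2,\ f_1)$ you proposed.
\item ${\bf J}_{11}$ is a component, because it lies in no other Jordan orbit closure:
\begin{itemize}
\item the associative superalgebras form a closed set;
\item every other non-associative Jordan superalgebra of type $(2,1)$ except ${\bf J}_{12},{\bf J}_{13}$ (namely ${\bf J}_{03},{\bf J}_{04},{\bf J}_{06},{\bf J}_{09}$) has an associative even part;
\item even parts degenerate to even parts, since $G$ preserves the grading, whereas $({\bf J}_{11})_0$ is not associative;
\item ${\bf J}_{12}$ and ${\bf J}_{13}$ have the same orbit dimension as ${\bf J}_{11}$.
\end{itemize}
\end{itemize}

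So the correct Jordan components are ${\bf J}_{01},{\bf J}_{02},{\bf J}_{03},{\bf J}_{04},{\bf J}_{11},{\bf J}_{12},{\bf J}_{13}$, which is what the paper's statement implicitly uses. With this list the bookkeeping is immediate.

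\begin{itemize}
\item Drop the duplicate ${\bf J}_{01}$, and drop ${\bf J}_{02}$ since it lies in $\overline{\mathcal{O}({\bf N}_{01})}$.
\item The surviving non-associative Jordan algebras lie in no associative orbit closure.
\item The associative components other than ${\bf J}_{01}$ are not Jordan, so they lie in no Jordan orbit closure: ${\bf N}_{01}$ has $f_1f_1\neq 0$ and so is not supercommutative, and the rest are noncommutative.
\end{itemize}
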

\begin{proof}
Considering non-associative non-Jordan standard superalgebras ${\bf N}_{17}^{\pm \frac 12, 0}$ and ${\bf N}_{17}^{0, \pm\frac 12},$
we found that they give irreducible components of dimension $2$.
Hence, all irreducible components of our variety will be defined by irreducible components of associative superalgebras,
non-associative Jordan superalgebras, and cited standard superalgebras.
\end{proof}

\subsection{Noncommutative Jordan superalgebras}

\begin{theorem}\label{NCJgeo1}
The variety of complex $3$-dimensional noncommutative Jordan superalgebras of type $(1,2)$  has
dimension  $7$   and it has  $9$  irreducible components defined by
\begin{center}
$\mathcal{C}_1=\overline{\mathcal{O}( {\rm J}_{07})},$ \
$\mathcal{C}_2=\overline{\mathcal{O}( {\rm J}_{11})},$ \
$\mathcal{C}_3=\overline{\mathcal{O}( {\rm A}_{11}^{\alpha,\beta})},$ \
$\mathcal{C}_4=\overline{\mathcal{O}( {\rm N}_{01}^{\alpha })},$ \
$\mathcal{C}_5=\overline{\mathcal{O}( {\rm N}_{07}^{\alpha})},$ \\
$\mathcal{C}_6=\overline{\mathcal{O}( {\rm N}_{08}^{\alpha})},$ \
$\mathcal{C}_7=\overline{\mathcal{O}( {\rm N}_{09})},$ \
$\mathcal{C}_8=\overline{\mathcal{O}( {\rm N}_{10}^{\alpha, \beta})},$ \ and
$\mathcal{C}_9=\overline{\mathcal{O}( {\rm N}_{12}^{\alpha})}.$ \

 \end{center}
In particular, there are only $3$ rigid superalgebras in this variety.

\end{theorem}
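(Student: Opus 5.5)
The proof will follow the pattern of the previous geometric results: show that every superalgebra in the classification of Theorem A1 (together with the Jordan superalgebras of Theorem \ref{(1,2)} and the anticommutative ones of Theorem \ref{ant12}) lies in one of the nine listed orbit closures, and then show that none of the nine closures is contained in another.

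\textbf{Covering step.} I would use the earlier theorems first.

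By Theorem \ref{geoant12}(5), the anticommutative variety is $\overline{\mathcal{O}({\rm A}_{11}^{\alpha,\beta})}$, of dimension $7$. So every ${\rm A}_{i}$ lies in $\mathcal{C}_3$.

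By Theorem \ref{asscomjord}(3), the Jordan part is covered by the closures of ${\rm J}_{01},{\rm J}_{04},{\rm J}_{05},{\rm J}_{06},{\rm J}_{07},{\rm J}_{10},{\rm J}_{11}$. It then suffices to exhibit the following degenerations:
\begin{itemize}
\item ${\rm N}_{01}^{\alpha}\to{\rm J}_{01}$ (let $\alpha\to0$);
\item ${\rm N}_{07}^{\alpha}\to{\rm J}_{04}$;
\item ${\rm N}_{08}^{\alpha}\to{\rm J}_{05}$;
\item ${\rm N}_{09}\to{\rm J}_{06}$ (already given in Theorem \ref{assgeo1});
\item ${\rm N}_{12}^{\alpha}\to{\rm J}_{10}$.
\end{itemize}
The parametric families degenerate to their $\alpha=0$ members, which are the Jordan ones. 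This is a standard trick: take a parameter depending on $t$ and use the identity basis change.

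For the remaining ${\rm N}$'s I would give explicit degenerations:
\begin{itemize}
\item ${\rm N}_{02}$, ${\rm N}_{05}^{\alpha}$, ${\rm N}_{06}^{\alpha}$ from ${\rm N}_{01}^{\alpha}$ or ${\rm N}_{12}^{\alpha}$;
\item ${\rm N}_{03}^{\alpha}$, ${\rm N}_{04}$ from ${\rm N}_{01}^{\alpha}$, via $(te_1,f_1,tf_2)$-type scalings that kill $e_1f_1$;
\item ${\rm N}_{11}^{\alpha}$ from ${\rm N}_{10}^{\alpha,\alpha+t}$ by a Jordan-block limit, as in the ${\rm A}_{08}\to{\rm A}_{17}$ degeneration.
\end{itemize}
The Kokoris degenerations of Theorem \ref{Kgeo1} can be reused. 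For example, ${\rm N}_{05}^{\alpha}\to{\rm N}_{06}^{\alpha}$ and ${\rm N}_{03}\to{\rm N}_{04}$ are already available. This reduces the work to showing that ${\rm N}_{03}^{\alpha}$, ${\rm N}_{05}^{\alpha}$ and ${\rm N}_{09}$ lie in the listed components. For instance, ${\rm N}_{05}^{\alpha}$ should lie in the closure of ${\rm N}_{12}^{\alpha}$ or ${\rm N}_{01}^{\alpha}$.

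\textbf{Independence step.} I would compute orbit-closure dimensions:
\begin{itemize}
\item $\mathcal{C}_3$ has dimension $7$, which gives the variety dimension $7$;
\item ${\rm N}_{10}^{\alpha,\beta}$ is a two-parameter family, with dimension to be computed;
\item the families ${\rm N}_{01}^{\alpha}$, ${\rm N}_{07}^{\alpha}$, ${\rm N}_{08}^{\alpha}$, ${\rm N}_{12}^{\alpha}$ are one-parameter families;
\item ${\rm J}_{07}$, ${\rm J}_{11}$, ${\rm N}_{09}$ are single orbits.
\end{itemize}
The rigid ones are ${\rm J}_{07}$, ${\rm J}_{11}$ and ${\rm N}_{09}$.

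Non-inclusions would come from invariants that are closed conditions:
\begin{itemize}
\item the anticommutative component cannot contain superalgebras with nonzero Jordan part, so $\mathcal{C}_3$ contains no other component;
\item conversely, the symmetrized product $\circ$ degenerates along any degeneration, so ${\rm N}^+\to{\rm N'}^+$. By the Jordan result, no superalgebra with $\mathrm{N}^+$ in the closure of ${\rm J}_{04}$ degenerates onto ${\rm N}_{08}$, and so on.
\end{itemize}
This Jordan-part argument separates $\mathcal{C}_1,\mathcal{C}_2,\mathcal{C}_4,\ldots,\mathcal{C}_9$ whenever their Jordan parts lie in different Jordan components or have incompatible dimensions. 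Where they coincide, I would use Borel-stable linear conditions on the structure constants, in the style of Lemma \ref{gmain}. Examples are the relations $c_{12}^2+c_{21}^2=c_{11}^1$ or $c_{13}^3+c_{31}^3=c_{11}^1$, which hold for ${\rm N}_{10}$ and ${\rm N}_{12}$.

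\textbf{Main obstacle.} I expect the main obstacle to be pairs of components of equal or nearby dimension with the same Jordan part, notably $\mathcal{C}_8$ versus $\mathcal{C}_5,\mathcal{C}_6$, and $\mathcal{C}_9$ versus $\mathcal{C}_2$ (since ${\rm J}_{10}$, the Jordan part of ${\rm N}_{12}$, is covered by Jordan rigidity). I would first try to separate them by orbit dimensions. When that fails, I would find a Borel-invariant set of linear relations satisfied by one family but violated by the other's generic member.
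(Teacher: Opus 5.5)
Your covering step is essentially the paper's. The paper uses ${\rm N}_{01}^{\alpha}\to{\rm N}_{02},{\rm N}_{03}^{\alpha},{\rm N}_{05}^{\alpha},{\rm N}_{06}^{\alpha}$, ${\rm N}_{03}\to{\rm N}_{04}$, ${\rm N}_{10}\to{\rm N}_{11}^{\alpha}$, ${\rm N}_{09}\to{\rm J}_{06}$ and ${\rm N}_{12}^{0}\to{\rm J}_{10}$, and takes ${\rm J}_{01},{\rm J}_{04},{\rm J}_{05}$ as the $\alpha=0$ members of ${\rm N}_{01},{\rm N}_{07},{\rm N}_{08}$.

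The genuine difference is in the independence step. The paper uses case-by-case closed conditions: nilpotency of ${\rm N}_{01}^{\alpha}$, and Borel-stable linear relations on structure constants for ${\rm N}_{07},{\rm N}_{08},{\rm N}_{09},{\rm N}_{10},{\rm N}_{12}$. It combines these with the orbit-closure dimensions: $7$ for $\mathcal{C}_3$; $5$ for $\mathcal{C}_4,\mathcal{C}_8,\mathcal{C}_9$; $4$ for $\mathcal{C}_5,\mathcal{C}_6,\mathcal{C}_7$; $2$ for $\mathcal{C}_2$; $1$ for $\mathcal{C}_1$.

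Your projection $\mu\mapsto\mu^{+}$ is linear and $G$-equivariant, and every listed family has constant Jordan part. So it sends each $\mathcal{C}_i$ into the orbit closure of a single Jordan superalgebra. By Theorem \ref{asscomjord}, ${\rm J}_{01},{\rm J}_{04},{\rm J}_{05},{\rm J}_{06},{\rm J}_{07},{\rm J}_{10},{\rm J}_{11}$ are rigid. Hence none of $\mathcal{C}_1,\mathcal{C}_2,\mathcal{C}_4,\mathcal{C}_5,\mathcal{C}_6,\mathcal{C}_7,\mathcal{C}_9$ lies in another listed component. This is more uniform than the paper's invariants.

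It does not decide everything, however. Since ${\rm J}_{10}\to{\rm J}_{08}$ via $(e_1,tf_1,f_2)$, the Jordan parts of $\mathcal{C}_8$ and $\mathcal{C}_9$ are not separated, so the dimension table is not optional. With it:
\begin{itemize}
\item $\mathcal{C}_8\not\subset\mathcal{C}_j$ for $j=1,2,5,6,7$, because $\dim\mathcal{C}_8=5$ exceeds $\dim\mathcal{C}_j$;
\item for $j=4,9$, equal dimension together with $\mathcal{C}_j\not\subset\mathcal{C}_8$ (from the projection) gives $\mathcal{C}_8\not\subset\mathcal{C}_j$.
\end{itemize}

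Several points in your write-up need correcting.
\begin{itemize}
\item The real obstacle is $\mathcal{C}_8$ versus $\mathcal{C}_9$. The pairs $\mathcal{C}_8$ versus $\mathcal{C}_5,\mathcal{C}_6$ and $\mathcal{C}_9$ versus $\mathcal{C}_2$ are already settled by the projection.
\item The relations $c_{12}^2+c_{21}^2=c_{11}^1$ and $c_{13}^3+c_{31}^3=c_{11}^1$ that you propose hold for both ${\rm N}_{10}$ and ${\rm N}_{12}$, so they cannot separate these two.
\item ${\rm N}_{12}^{0}$ is not Jordan: the brackets $[e_1,f_2]=f_1$ and $[f_2,f_2]=2e_1$ survive at $\alpha=0$. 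So ``let $\alpha\to0$'' does not produce ${\rm J}_{10}$. You need a genuine basis change such as $(e_1,t^{-1}f_1,tf_2)$, which multiplies both brackets by $t^{2}$ and keeps $f_1\circ f_2=e_1$.
\item $(te_1,f_1,tf_2)$ is an automorphism of ${\rm N}_{01}^{\alpha}$, so it kills nothing. The change $(te_1,tf_1,f_2)$ is the one that gives ${\rm N}_{01}^{\alpha}\to{\rm N}_{03}^{\alpha}$.
\item ${\rm N}_{09}$ generates $\mathcal{C}_7$ itself, so nothing needs to be shown for it.
\end{itemize}
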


\begin{proof}
Below, we   mention all necessary degenerations.

\begin{longtable}{lcr|lcr}
${\rm N}_{09} $&$ {\xrightarrow{ \big(
e_1,\
t f_1,\
 f_2
\big)}} $&$ {\rm J}_{06}$ &

${\rm N}_{12}^0 $&$ {\xrightarrow{ \big(
e_1,\
\frac{1}{t}f_1,\
t f_2
\big)}} $&$ {\rm J}_{10}$ \\
\hline

${\rm N}_{01}^{-\frac{t}{t+2}} $&$ {\xrightarrow{ \big(
t \sqrt{2t+4}e_1,\
\frac{\sqrt{t+2}}{\sqrt{2}}f_1-(t+2)f_2,\
\frac{t}{\sqrt{2t+4}}f_1+tf_2
\big)}} $&$  {\rm N}_{02}$   &

 ${\rm N}_{01}^{\alpha} $&$ {\xrightarrow{ \big(
t  e_1,\
t f_1 ,\
 f_2
\big)}} $&$ {\rm N}_{03}^\alpha$  \\
\hline

${\rm N}_{03}^{-\frac{t}{t+2}} $&${\xrightarrow{ \big(    2 te_1,\
tf_1,\
f_1+(t+2)f_2  \big)}}$&$ {\rm N}_{04}$ &

 ${\rm N}_{01}^{\alpha}$&$  {\xrightarrow{ \big(
2t\alpha  e_1,\
t f_1 +f_2 ,\
 2\alpha t^2 f_2
\big)}} $&$  {\rm N}_{05}^\alpha$   \\
\hline

 ${\rm N}_{01}^{\alpha} $&$ {\xrightarrow{ \big(
  e_1,\
t f_1 ,\
   f_2
\big)}} $&$ {\rm N}_{06}^\alpha$  &

 ${\rm N}_{10}^{\alpha-t, \alpha+t}$ &  ${\xrightarrow{ \big(
  e_1,\
tf_1+tf_2 ,\
- f_1+ f_2
\big)}}$ & ${\rm N}_{11}^\alpha$ \\
\hline

\end{longtable}

\noindent The dimensions of orbit closures are given below:
\begin{longtable}{lclclclclclclclclcl}

&&&&${\rm dim}\ \overline{\mathcal{O}( {\rm A}_{11}^{\alpha, \beta})}$& $=$& $7$\\

${\rm dim}\ \overline{\mathcal{O}( {\rm N}_{01}^{\alpha})}$& $=$&
${\rm dim}\ \overline{\mathcal{O}( {\rm N}_{10}^{\alpha,\beta})}$& $=$&
${\rm dim}\ \overline{\mathcal{O}( {\rm N}_{12}^\alpha)}$& $=$&$5$\\

${\rm dim}\ \overline{\mathcal{O}( {\rm N}_{07}^{\alpha})}$& $=$&
${\rm dim}\ \overline{\mathcal{O}( {\rm N}_{08}^{\alpha})}$& $=$&
${\rm dim}\ \overline{\mathcal{O}( {\rm N}_{09})}$& $=$&$4$\\

&&&&${\rm dim}\ \overline{\mathcal{O}( {\rm J}_{11})}$& $=$&$2$\\

&&&&${\rm dim}\ \overline{\mathcal{O}( {\rm J}_{07})}$& $=$&$1$\\

\end{longtable}
\noindent All reasons for non-degenerations are following:
\begin{enumerate}
    \item[$\bullet$]
    ${\rm N}_{01}^{\alpha}$ is nilpotent, hence
    ${\rm N}_{01}^{\alpha} \not\to
    \big\{   {\rm N}_{07}^{\alpha},   {\rm N}_{08}^{\alpha},   {\rm N}_{09},   {\rm J}_{11},   {\rm J}_{07} \big\}.$

 \item[$\bullet$]
    ${\rm N}_{10}^{\alpha,\beta}$ satisfies $\big\{
    c_{12}^2 + c_{21}^2=   c_{11}^1, \   c_{13}^3 +c_{31}^3=  c_{11}^1 \big\},$  hence
 ${\rm N}_{10}^{\alpha,\beta}   \not\to
    \big\{   {\rm N}_{07}^{\alpha},   {\rm N}_{08}^{\alpha},   {\rm N}_{09},   {\rm J}_{11},   {\rm J}_{07} \big\}.$

 \item[$\bullet$]
    ${\rm N}_{12}^{\alpha}$ satisfies $\left\{
\begin{array}{l}
c_{11}^1 = c_{12}^2 + c_{13}^3,\
c_{21}^3 + c_{12}^3 = 0, \\
    c_{31}^2 + c_{13}^2 = 0, \
    c_{11}^1 = c_{31}^3 + c_{13}^3
    \end{array}\right\},$    hence
 ${\rm N}_{12}^{\alpha}   \not\to
    \big\{   {\rm N}_{07}^{\alpha},   {\rm N}_{08}^{\alpha},   {\rm N}_{09},   {\rm J}_{11},   {\rm J}_{07} \big\}.$

 \item[$\bullet$]
    ${\rm N}_{07}^{\alpha}$ satisfies $\big\{
c_{11}^1 = c_{12}^2 + c_{21}^2,\
c_{13}^3 = c_{31}^3 = 0   \big\},$   hence
 ${\rm N}_{07}^{\alpha}   \not\to
    \big\{      {\rm J}_{11},   {\rm J}_{07} \big\}.$

 \item[$\bullet$]
    ${\rm N}_{08}^{\alpha}$ satisfies $\big\{
2c_{11}^1 = c_{12}^2 + c_{21}^2,\
c_{11}^1 =c_{13}^3 + c_{31}^3    \big\},$   hence
 ${\rm N}_{05}^{\alpha}   \not\to
    \big\{      {\rm J}_{11},   {\rm J}_{07} \big\}.$

     \item[$\bullet$]
    ${\rm N}_{09}$ satisfies $\big\{
2c_{11}^1 = c_{12}^2 + c_{21}^2,\
c_{13}^3 = c_{31}^3 = 0  \big\},$   hence
 ${\rm N}_{09}^   \not\to
    \big\{      {\rm J}_{11},   {\rm J}_{07} \big\}.$

\end{enumerate}

 \end{proof}

\begin{theorem}\label{NCJgeo2}
The variety of complex $3$-dimensional noncommutative Jordan superalgebras of type $(2,1)$  has
dimension  $6$   and it has  $10$  irreducible components defined by
\begin{center}
$\mathcal{C}_1=\overline{\mathcal{O}( {\bf J}_{01})},$ \
$\mathcal{C}_2=\overline{\mathcal{O}( {\bf A}_{02}^{\alpha})},$ \
$\mathcal{C}_3=\overline{\mathcal{O}( {\bf N}_{01} )},$ \
$\mathcal{C}_4=\overline{\mathcal{O}( {\bf N}_{02}^{\alpha })},$ \
$\mathcal{C}_5=\overline{\mathcal{O}( {\bf N}_{03}^{\alpha })},$ \\
$\mathcal{C}_6=\overline{\mathcal{O}( {\bf N}_{05} )},$ \
$\mathcal{C}_7=\overline{\mathcal{O}( {\bf N}_{11} )},$ \
$\mathcal{C}_8=\overline{\mathcal{O}( {\bf N}_{13}^{\alpha })},$ \
$\mathcal{C}_9=\overline{\mathcal{O}( {\bf N}_{14}^{\alpha })},$ \ and
$\mathcal{C}_{10}=\overline{\mathcal{O}( {\bf N}_{18}^{\alpha })}.$ \

 \end{center}
In particular, there are only $4$ rigid superalgebras in this variety.

\end{theorem}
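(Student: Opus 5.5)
The plan follows the pattern of Theorem \ref{NCJgeo1}. The variety of type $(2,1)$ noncommutative Jordan superalgebras is the union of the orbit closures of all superalgebras in Theorem A2 together with the Jordan superalgebras of Theorem \ref{(2,1)} and the anticommutative ones of Theorem \ref{ant21}. Its irreducible components are therefore among the closures of the listed orbits and families.

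First I will compute the dimensions of the candidate closures. ${\bf A}_{02}^{\alpha}$ is the generic anticommutative family, so by Theorem \ref{geoant21}(7) its closure has dimension $6$ and contains every anticommutative superalgebra of type $(2,1)$. I also expect the following.
\begin{itemize}
\item ${\bf N}_{01}$, ${\bf N}_{05}$ and ${\bf N}_{11}$ have closures of dimension $5$.
\item ${\bf N}_{02}^{\alpha}$, ${\bf N}_{03}^{\alpha}$, ${\bf N}_{13}^{\alpha}$, ${\bf N}_{14}^{\alpha}$ and ${\bf N}_{18}^{\alpha}$ are one-parameter families with orbit closures of dimension $4$ or $5$.
\item ${\bf J}_{01}$ has closure of dimension $4$, by Theorem \ref{asscomjord}.
\end{itemize}
Orbit dimensions come from $9-\dim\mathrm{Der}$, since $\dim G=4+1=5$ here; equivalently, $\dim\mathcal{O}=5-\dim\mathrm{Aut}$, with the automorphism groups computed in the proof of Theorem A2.

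Next comes the degeneration step: every remaining superalgebra must be shown to lie in one of the ten closures. I will reuse the degenerations already found.
\begin{itemize}
\item From Theorem \ref{Kgeo2}: ${\bf N}_{05}\to{\bf N}_{04},{\bf N}_{06}$ and ${\bf N}_{01}\to{\bf A}_{10},{\bf J}_{02},{\bf N}_{08},{\bf N}_{10},{\bf N}_{20}$.
\item From the associative case: ${\bf N}_{02}^{\alpha}\to{\bf N}_{07}^{\alpha},{\bf N}_{09}^{\alpha}$.
\item Specialisations within families: ${\bf N}_{02}^{0}={\bf J}_{03}$, ${\bf N}_{03}^{0}={\bf J}_{04}$, ${\bf N}_{13}^{0}={\bf J}_{11}$ and ${\bf N}_{18}^{0}={\bf J}_{13}$.
\end{itemize}
New degenerations I will look for with diagonal or near-diagonal parametrised bases, in the style of the tables above:
\begin{itemize}
\item ${\bf N}_{11}\to{\bf N}_{12}$ by scaling $f_1$, and ${\bf N}_{11}\to{\bf J}_{10},{\bf J}_{08}$;
\item ${\bf N}_{14}^{\alpha}\to{\bf N}_{15}^{\alpha}$ by $f_1\mapsto tf_1$, and ${\bf N}_{14}^{\alpha}\to{\bf N}_{16}^{\alpha,\beta}$ at $\beta=0$;
\item the general ${\bf N}_{16}^{\alpha,\beta}$ and ${\bf N}_{17}^{\alpha,\beta}$ from a parametrised ${\bf N}_{14}^{\alpha(t)}$ or ${\bf N}_{18}$, e.g. a base change $e_2\mapsto te_2$ to kill $f_1f_1$ and reach ${\bf N}_{17}$;
\item ${\bf J}_{12}$ from ${\bf N}_{14}^{0}$;
\item ${\bf J}_{05}$, ${\bf J}_{07}$ and ${\bf J}_{14}$ from ${\bf N}_{01}$ or ${\bf J}_{02}$;
\item ${\bf N}_{19}$ and ${\bf N}_{21}$ from ${\bf N}_{05}$ or ${\bf N}_{11}$ by a base change of the form $e_1\mapsto te_1+e_2$.
\end{itemize}

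The non-degenerations among the ten candidates will be obtained by combining the orbit-closure dimensions with invariants and Lemma \ref{gmain}-type closed conditions.
\begin{itemize}
\item The dimension of ${\bf A}_{02}^{\alpha}$ is maximal, so it suffices to show that nothing degenerates to it; every other candidate has a nonzero commutative part, while degeneration preserves anticommutativity only from anticommutative sources.
\item Conversely, ${\bf A}_{02}^{\alpha}$ is anticommutative, so it cannot degenerate to any non-anticommutative algebra.
\item For the dimension-$5$ candidates I will use Borel-stable linear relations on structure constants, as in Theorems \ref{Kgeo2} and \ref{NCJgeo1}. For example, ${\bf N}_{01}$ is commutative and satisfies $c_{11}^1=c_{13}^3$, $c_{22}^2=c_{23}^2$. ${\bf N}_{05}$ satisfies $c_{13}^3=-c_{31}^3$ and $c_{23}^3=-c_{32}^3$. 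For ${\bf N}_{11}$ I expect relations such as $c_{13}^3=c_{31}^3=c_{11}^1$ and $c_{23}^3+c_{32}^3=0$.
\item The families ${\bf N}_{02}^{\alpha}$, ${\bf N}_{03}^{\alpha}$, ${\bf N}_{13}^{\alpha}$, ${\bf N}_{14}^{\alpha}$ and ${\bf N}_{18}^{\alpha}$ are separated from each other and from the dimension-$5$ components by relations tying $c_{12}^2+c_{21}^2$ and $c_{13}^3+c_{31}^3$ to $c_{11}^1$, i.e. by the Peirce eigenvalues of the even idempotent in $N^{+}$. A degeneration cannot raise such an eigenvalue relation.
\item The closure of ${\bf J}_{01}$ needs an argument that no candidate maps onto it. Its even part is semisimple $\mathbb C\oplus\mathbb C$, and the only components whose even part is of that type are ${\bf N}_{01}$, ${\bf N}_{02}^{\alpha}$ and ${\bf N}_{03}^{\alpha}$. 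These are excluded by the ${\bf N}_{01}$ relation from Theorem \ref{Kgeo2} and by analogous relations for the families.
\end{itemize}

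The main obstacle is the degeneration step for the two-parameter families ${\bf N}_{16}^{\alpha,\beta}$ and ${\bf N}_{17}^{\alpha,\beta}$. It must be decided which of ${\bf N}_{14}$, ${\bf N}_{18}$ or ${\bf N}_{13}$ dominates them, or whether a full family closure is needed. I will first try parametric degenerations ${\bf N}_{14}^{\alpha+\epsilon(t)}$ with $f_1\mapsto t^{k}f_1$ and $e_2\mapsto t^{m}e_2$, adjusting exponents so that $e_2f_1$ vanishes while $f_1f_1$ survives. If $\beta$ cannot be produced this way, I will check whether the Peirce relations force an additional component.
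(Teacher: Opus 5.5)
Your overall plan (dimension table, degenerations, invariant-based non-degenerations) is the paper's. However, the step you flag as the main obstacle is a genuine gap, and your proposed fallback cannot close it.

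As long as $E_1=e_1$, diagonal rescalings $f_1\mapsto t^kf_1$, $e_2\mapsto t^me_2$ leave the coefficients of $E_1F_1$ and $F_1E_1$ equal to $\frac12$. Letting $\alpha$ vary in ${\bf N}_{14}^{\alpha(t)}$ only moves the $e_1$--$e_2$ action. So you reach ${\bf N}_{16}^{\alpha,0}$ and nothing else. ${\bf N}_{18}^{\alpha}$ cannot help either: its orbits have dimension $2$, while those of ${\bf N}_{16}^{\alpha,\beta}$ have dimension $3$.

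The missing idea is to change the idempotent. In ${\bf N}_{14}^{\alpha}$ the element $e_1+\beta e_2$ is again idempotent. Because $e_2f_1=f_1=-f_1e_2$, it acts on $f_1$ by $\frac12+\beta$ from the left and $\frac12-\beta$ from the right. Take $E_1=e_1+\beta e_2$, $E_2=te_1+t(\beta+t)e_2=tE_1+t^2e_2$, $F_1=tf_1$. Then $E_1E_2=(\frac12+\alpha)E_2+t(\frac12-\alpha)E_1$, $E_2E_2=tE_2$, $E_2F_1=t(\frac12+\beta+t)F_1$ and $F_1F_1=E_2-tE_1$. Hence ${\bf N}_{14}^{\alpha}\to{\bf N}_{16}^{\alpha,\beta}$ for every $\beta$. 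Then $f_1\mapsto tf_1$ gives ${\bf N}_{16}^{\alpha,\beta}\to{\bf N}_{17}^{\alpha,\beta}$; your $e_2\mapsto te_2$ would instead make $f_1f_1$ blow up.

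The gap is made worse by a wrong dimension count. The bracket of ${\bf N}_{14}^{\alpha}$ ($[e_2,f_1]=f_1$, $[f_1,f_1]=e_2$, $[e_1,f_1]=0$) forces $a_{21}=0$, $a_{22}=1$, $a_{33}=\pm1$ inside ${\rm Aut}({\bf J}_{12})$. So ${\rm Aut}({\bf N}_{14}^{\alpha})$ is finite and the family closure has dimension $5+1=6$, not $4$ or $5$. Likewise, the ${\bf N}_{13}^{\alpha}$ and ${\bf N}_{18}^{\alpha}$ closures have dimension $3$, and orbit dimensions are $5-\dim{\rm Aut}$, not $9-\dim{\rm Der}$.

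Since ${\rm Aut}({\bf N}_{16}^{\alpha,\beta})$ is $2$-dimensional, the ${\bf N}_{16}$ family has dimension $5$. With your count it could not lie in the closure of the ${\bf N}_{14}$ family, so your fallback would produce an eleventh component, contradicting the statement.

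The correct dimension also means that ${\bf N}_{14}^{\alpha}\not\to{\bf N}_{01},{\bf N}_{02}^{\alpha},{\bf N}_{03}^{\alpha},{\bf N}_{05},{\bf N}_{11},{\bf J}_{01}$ cannot follow from dimensions. It needs carefully chosen invariants. The coordinate relations $c_{11}^1=c_{12}^2+c_{21}^2=c_{13}^3+c_{31}^3$ do not exclude ${\bf N}_{05}$, since they hold in its basis $(e_2,e_1,f_1)$. A basis-free choice is ${\rm tr}(L^{+}_x|_{V_0})=3\,{\rm tr}(L^{+}_x|_{V_1})$ for all even $x$, where $L^{+}_x$ is multiplication by $x$ in the symmetrized superalgebra. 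It holds for ${\bf N}_{14}^{\alpha}$ but fails for each of ${\bf J}_{01},{\bf N}_{01},{\bf N}_{02}^{\alpha},{\bf N}_{03}^{\alpha},{\bf N}_{05},{\bf N}_{11},{\bf N}_{13}^{\alpha},{\bf N}_{18}^{\alpha}$.

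Finally, your claimed ${\bf N}_{11}\to{\bf J}_{08}$ is false: ${\bf N}_{11}$ satisfies ${\rm tr}(L^{+}_x|_{V_0})=2\,{\rm tr}(L^{+}_x|_{V_1})$ and ${\bf J}_{08}$ does not. ${\bf J}_{08}$ is nonetheless reached from ${\bf J}_{01}$ via $(e_1+e_2,\,te_1,\,f_1)$.
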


\begin{proof}
Below, we will mention all necessary degenerations.

\begin{longtable}{lcr|lcr}

${\bf N}_{01}$&${\xrightarrow{ \big(
e_2 ,\
 t^2e_1,\
 tf_1  \big)}}$&$ {\bf N}_{04}$ &

${\bf N}_{05} $&$ {\xrightarrow{ \big(
e_1,\
e_2,\
 tf_1  \big)}} $&$ {\bf N}_{06}$ \\
\hline

 ${\bf N}_{02}^\alpha $&${\xrightarrow{ \big(
e_1,\
te_2,\
 f_1  \big)}}$&$ {\bf N}_{07}^{\alpha}$ &

${\bf N}_{01}$&${\xrightarrow{ \big(
e_1 ,\
 te_2,\
 f_1  \big)}}$&$ {\bf N}_{08}$ \\ \hline

${\bf N}_{02}^\alpha$&${\xrightarrow{ \big(
e_1+e_2 ,\
 te_2,\
 f_1  \big)}}$&$ {\bf N}_{09}^\alpha$  &

${\bf N}_{01}$&${\xrightarrow{ \big(
e_1+e_2,\
t (t+1)e_1+te_2,\
tf_1  \big)}}$&$ {\bf N}_{10}$   \\\hline

 ${\bf N}_{11}  $&$ {\xrightarrow{ \big(
e_1,\
e_2,\
 t f_1
\big)}} $&$ {\bf N}_{12}$ &

${\bf N}_{14}^\alpha $&$ {\xrightarrow{ \big(
e_1,\
e_2,\
 t f_1
\big)}} $&$ {\bf N}_{15}^\alpha$ \\\hline

${\bf N}_{14}^\alpha $&$ {\xrightarrow{ \big(
e_1+ \beta e_2,\
te_1+t (\beta+t)e_2,\
tf_1
\big)}} $&$ {\bf N}_{16}^{\alpha,\beta}$ &

 ${\bf N}_{16}^{\alpha,\beta} $&$ {\xrightarrow{ \big(
e_1,\
e_2,\
 t f_1
\big)}} $&$ {\bf N}_{17}^{\alpha,\beta}$ \\\hline

\multicolumn{3}{c|}{
 ${\bf N}_{05}  \  {\xrightarrow{ \big(
te_1+(t-t^3)e_2,\
(t^4+t^2)e_1+ \left(t^2-t^4\right)e_2,\
t^2f_1
\big)}} \ {\bf N}_{19}$} &

\multicolumn{3}{c}{
${\bf N}_{01} \  {\xrightarrow{ \big(
te_1+(t-t^3)e_2,\
(t^4+t^2)e_1+ \left(t^2-t^4\right)e_2,\
t^2f_1  \big)}} \  {\bf N}_{20}$}    \\
\hline

\multicolumn{6}{c}{
 ${\bf N}_{19}  \  {\xrightarrow{ \big(
e_1,\
e_2,\
 t f_1
\big)}} \  {\bf N}_{21}$}  \\

\end{longtable}

The dimensions of orbit closures are given below:
\begin{longtable}{lclclclclclclclclcl}

&&&&&&${\rm dim}\ \overline{\mathcal{O}( {\bf A}_{02}^{\alpha})}$& $=$&
${\rm dim}\ \overline{\mathcal{O}( {\bf N}_{14}^{\alpha})}$& $=$&$6$\\

${\rm dim}\ \overline{\mathcal{O}( {\bf N}_{01})}$& $=$&
${\rm dim}\ \overline{\mathcal{O}( {\bf N}_{02}^{\alpha})}$& $=$&
${\rm dim}\ \overline{\mathcal{O}( {\bf N}_{03}^{\alpha})}$& $=$&
${\rm dim}\ \overline{\mathcal{O}( {\bf N}_{05})}$& $=$ &
 ${\rm dim}\ \overline{\mathcal{O}( {\bf N}_{11})}$& $=$&
$5$\\

&&&&
&&&&
${\rm dim}\ \overline{\mathcal{O}( {\bf J}_{01})}$& $=$&$4$\\

&&&&&&
${\rm dim}\ \overline{\mathcal{O}( {\bf N}_{13}^{\alpha})}$& $=$&
${\rm dim}\ \overline{\mathcal{O}( {\bf N}_{18}^{\alpha})}$& $=$&$3$\\

\end{longtable}

 \noindent All reasons for non-degenerations are as follows:
\begin{enumerate}
    \item[$\bullet$]

$({\bf N}_{14}^\alpha)_0$ is non-semisimple and ${\bf N}_{14}^\alpha$ satisfies

$\big\{
c_{11}^1 = c_{31}^3 + c_{13}^3, \
    c_{12}^2 c_{22}^2 = c_{11}^1 c_{21}^1, \
    c_{11}^1 = c_{21}^2 + c_{12}^2, \
      c_{22}^2 = c_{32}^3 + c_{23}^3, \
           c_{11}^1 c_{33}^1 = -c_{33}^2 c_{22}^2 \big\},$

    $ \quad\quad\quad\quad\quad\quad \quad\quad \quad\quad\quad\quad$       hence
  ${\bf N}_{14}^\alpha \not\to
  \big\{ {\bf N}_{01},  \ {\bf N}_{02}^{\alpha}, \ {\bf N}_{03}^{\alpha},\  {\bf N}_{05},   \  {\bf N}_{11}, \ {\bf N}_{13}^{\alpha}, \ {\bf N}_{18}^{\alpha},  \ {\bf J}_{01} \big\}.$

 \item[$\bullet$]

${\bf N}_{01}$ is commutative (as algebra) and   satisfies $\big\{
c_{11}^1 =  c_{13}^3, \ c_{12}^2c_{12}^2=c_{11}^1 c_{12}^2 + c_{11}^2 c_{22}^2       \big\},$

    $ \quad\quad\quad\quad\quad\quad \quad\quad\quad\quad\quad\quad\quad\quad\quad\quad\quad\quad\quad\quad\quad\quad\quad  $       hence
  ${\bf N}_{01}  \not\to
  \big\{    {\bf N}_{13}^{\alpha}, \ {\bf N}_{18}^{\alpha},  \ {\bf J}_{01} \big\}.$

 \item[$\bullet$]
${\bf N}_{02}^\alpha$  satisfies $\big\{
c_{11}^1 =  c_{13}^3+c_{31}^3, \ c_{12}^1=c_{23}^3=0        \big\},$
              hence
  ${\bf N}_{02}^\alpha  \not\to
  \big\{    {\bf N}_{13}^{\alpha}, \ {\bf N}_{18}^{\alpha},  \ {\bf J}_{01} \big\}.$

 \item[$\bullet$]
  ${\rm dim} \ \big( ({\bf N}_{05})_0 \big)^2 =1,$
              hence
  ${\bf N}_{05}  \not\to
  \big\{    {\bf N}_{13}^{\alpha}, \ {\bf N}_{18}^{\alpha},  \ {\bf J}_{01} \big\}.$

 \item[$\bullet$]
$({\bf N}_{11})_0$ is commutative and non-semisimple,           hence
  ${\bf N}_{11}  \not\to
  \big\{    {\bf N}_{13}^{\alpha}, \ {\bf N}_{18}^{\alpha},  \ {\bf J}_{01} \big\}.$

    \end{enumerate}

 \end{proof}


\end{document}